\newtheorem{theorem}{Theorem}[section]
\newtheorem{thm}[theorem]{Theorem}
\newtheorem{corollary}[theorem]{Corollary}
\newtheorem{lemma}[theorem]{Lemma}
\newtheorem{proposition}[theorem]{Proposition}
\newtheorem{definition}[theorem]{Definition}
\newtheorem{remark}[theorem]{Remark}
\newtheorem{example}[theorem]{Example}
\numberwithin{equation}{section}
\def\be{\begin{equation}}
\def\ee{\end{equation}}
\def\bes{\begin{equation*}}
\def\ees{\end{equation*}}
\def\vp{{\varphi}}
\def\wt{\widetilde}
\def\eps{\varepsilon}
\def\lam{{\lambda}}
\def\ol{\overline}
\def\qed{{\hfill $\square$ \bigskip}}
\def\supp{{\mathop {{\rm supp\, }}}}
\def\esssup{{\mathop{\rm ess \; sup \, }}}
\def\essinf{{\mathop{\rm ess \; inf \, }}}
\def\sE {{\cal E}}
\def\sF {{\cal F}}
\def\sL {{\cal L}}
\def\sD{{\cal D}}
\def\sN {{\cal N}}
\def\bE {{\mathbb E}} \def\bN {{\mathbb N}}
\def\bP {{\mathbb P}} \def\bR {{\mathbb R}} \def\bZ {{\mathbb Z}}
\def\sms{\smallskip}
\def\dcap{\mathrm{cap}}
\def\EHR{\mathrm{EHR}}
\def\PHR{\mathrm{PHR}}
\def\PHI{\mathrm{PHI}}
\def\UHK{\mathrm{UHK}}
\def\Gcap{\mathrm{Gcap}}
\def\LHK{\mathrm{LHK}}
\def\HK{\mathrm{HK}}
\def\CSA{\mathrm{CSA}}
\def\CSJ{\mathrm{CSJ}}
\def\CS{\mathrm{CS}}
\def\PI{\mathrm{PI}}
\def\FK{\mathrm{FK}}
\def\NL{\mathrm{NL}}
\def\VD{\mathrm{VD}}
\def\RVD{\mathrm{RVD}}
\def\PHI{\mathrm{PHI}}
\def\EP{\mathrm{EP}}
\def\UHKD{\mathrm{UHKD}}
\definecolor{dred}{rgb}{0.8, 0.0, 0.0}
\def\NDL{\mathrm{NDL}}
\def\UJS{\mathrm{UJS}}
\def\E{\mathrm{E}}
\def\J{\mathrm{J}}
\def\<{\langle}
\def\>{\rangle}
\def\FF{{\cal F}}
\def\T{\mathrm{Tail}}
\begin{document}
\title{\bf Heat kernel estimates
and parabolic Harnack inequalities for symmetric
Dirichlet forms}

\author{{\bf Zhen-Qing Chen}, \quad {\bf Takashi Kumagai} \quad and \quad
{\bf Jian Wang}}

\date{}
\maketitle

\begin{abstract} In this paper,
we consider the following symmetric Dirichlet forms on a metric measure space $(M,d,\mu)$:
$$
\sE(f,g)=\sE^{(c)}(f,g)+\int_{M\times M} (f(x)-f(y))(g(x)-g(y))\,J(dx,dy),
$$
where $\sE^{(c)}$ is a strongly local symmetric bilinear form and $J(dx,dy)$ is a symmetric Random measure on $M\times M$.
Under general volume doubling condition on $(M,d,\mu)$ and some mild assumptions on scaling functions, we establish
stability results for upper bounds of heat kernel (resp.\ two-sided heat kernel estimates)
in terms of the
jumping kernels,
the cut-off Sobolev inequalities,
and the Faber-Krahn inequalities (resp.\ the Poincar\'e inequalities). We also obtain characterizations
of parabolic Harnack inequalities.
Our
results apply to symmetric diffusions with jumps even when
the underlying spaces have walk dimensions larger than $2$.
\end{abstract}

\medskip
\noindent
{\bf AMS 2010 Mathematics subject classification}: Primary 60J35, 35K08, 60J60, 60J75; Secondary 31C25, 60J25, 60J45.

\medskip\noindent
{\bf Keywords and phrases}: symmetric Dirichlet form, metric measure space,
 heat kernel estimate, parabolic Harnack inequality, stability,
cut-off Sobolev inequality,
generalized capacity inequality,
jumping kernel,
L\'evy system

{\small
 \begin{tableofcontents}
 \end{tableofcontents} }

\section{Introduction and main results} \label{sec:intro}

\subsection{Setting and some history}
Let $(M, d, \mu)$ be a \emph{metric measure space}; that is, $(M,d)$
is a locally compact separable metric space, and $\mu$ is a positive
Radon measure on $M$ with full support. We assume that all balls are
relatively compact and assume for simplicity that $\mu (M)=\infty$
throughout the paper.
 Note that we do not assume $M$ to be connected nor
 $(M, d)$ to be geodesic.

Consider a regular \emph{Dirichlet form} $(\sE, \sF)$
on $L^2(M; \mu)$. The Beurling-Deny formula asserts that such a form can be decomposed into the strongly local term, the pure-jump term and the killing term (see \cite[Theorem 4.5.2]{FOT}).
Throughout this paper, we consider the Dirichlet form $(\sE, \sF)$  having both the strongly local term and  the pure-jump term,
and having no killing term.
That is,
\begin{equation}\label{e:1.1}
\begin{split}
\sE(f,g)=& \, \sE^{(c)}(f,g)+\int_{M\times M\setminus \textrm{diag}}(f(x)-f(y)(g(x)-g(y))\,J(dx,dy)\\
=&:\sE^{(c)}(f,g)+\sE^{(j)}(f,g),\qquad f,g\in \sF,\end{split}
\end{equation}
where $(\sE^{(c)},\sF)$ is the strongly local part of  $(\sE, \sF)$
(namely $\sE^{(c)}(f, g)=0$ for all $f, g\in \sF$ having $(f-c)g=0$ $\mu$-a.e.\ on $M$ for some constant $c\in \bR$)
 and  $J(\cdot,\cdot)$ is a symmetric Radon measure  $M\times M\setminus \textrm{diag}$;
see \cite[Theorems 4.3.3 and 4.3.11]{CF}.
Here and in what follows, we will always take a quasi-continuous version when we pick a function on $\sF$ (see \cite[Theorem
2.1.3]{FOT} for the definition and existence of a quasi-continuous
version of the element in $\sF$).
In this paper, we assume that neither $\sE^{(c)}(\cdot,\cdot)$ nor $J(\cdot,\cdot)$ are identically zero.

Let $(\sL, \sD (\sL))$ be the $L^2$-\emph{generator} of $(\sE,\sF)$ on $L^2(M; \mu)$; namely, $\sL$  is
the self-adjoint operator on $L^2(M; \mu)$ and
$\sD (\sL)$ is the domain. $f\in \sD (\sL)$ if $f\in \sF$ and
there exists (unique) $u\in L^2(M; \mu)$ such that
$$
  \sE (f,g) =  - \langle u,g \rangle \quad \hbox{ for all }
  g \in \sF ,
$$
where $\langle  \cdot , \cdot \rangle$ is the
inner product in $L^2(M; \mu)$. We write $\sL f :=  u$.
Let $\{P_t\}_{t\geq 0}$  be
the associated \emph{semigroup}. Given a regular
Dirichlet form $(\sE, \sF)$ on $L^2(M; \mu)$, there is an associated
$\mu$-symmetric
\emph{Hunt process} $X=\{X_t, t \ge 0; \, \bP^x, x \in M\setminus
\sN\}$ where $\sN \subset M$
is a properly exceptional set for $(\sE, \sF)$,
and the Hunt process is unique up to a
properly exceptional set --- see \cite[Theorem 4.2.8]{FOT} for details.
We fix $X$ and $\sN$, and write $M_0 = M \setminus \sN$.
For any bounded Borel measurable function $f$ on $M$, we may set
$$ P_t f(x) = \bE^x f(X_t), \quad x \in M_0. $$

The \emph{heat kernel} associated with the semigroup $\{P_t\}_{t\ge0}$ (if it exists) is a
measurable function
$p(t, x,y): (0,\infty)\times M_0 \times M_0 \to (0,\infty)$
that satisfies the following:
\begin{align} \label{e:hkdef}
\bE^x f(X_t) &= P_tf(x) = \int p(t, x,y) f(y) \, \mu(dy)  \quad \hbox{for all } x \in M_0,
f \in L^\infty(M;\mu), \\
p(t, x,y) &= p(t, y,x) \quad \hbox{for all } t>0,\, x ,y \in M_0, \\
\label{e:ck}
p(s+t , x,z) &= \int p(s, x,y) p(t, y,z) \,\mu(dy)
\quad \hbox{for all } s>0, t>0,
\,\, x,z \in M_0.
\end{align}
It is possible to regularize $p(t, x,y)$ so that \eqref{e:hkdef}--\eqref{e:ck} hold for every point in $M_0$, see \cite[Theorem 3.1]{BBCK} and \cite[Section 2.2]{GT} for details.
 Note that in some arguments of
our paper, we can extend (without further mention) $p(t,x,y)$ to all
$x$, $y\in M$ by setting $p(t,x,y)=0$ if either $x$ or $y$ is outside
$M_0$.

\medskip

There is a long history on the heat kernel estimates and related topics for strongly local Dirichlet forms. Let us briefly mention some of
the previous works which are related to our work.
For diffusions on manifolds, Grigor'yan \cite{Gr1} and Saloff-Coste \cite{Sa1} independently proved that the following are equivalent: (i)  Aronson-type Gaussian bounds for heat kernel,
(ii) parabolic Harnack equality, and (iii) VD and Poincar\'e inequality. The results are later extended to strongly local Dirichlet forms on metric measure spaces in \cite{BM, St1, St2}
and to graphs in \cite{De}.
Detailed heat kernel estimates are heavily related to the control of
harmonic and parabolic functions, and the origin of ideas and techniques used in this field goes back to the work by De Giorgi, Nash, Moser and Aronson.
For more details, see, for example, \cite{gribk,Sa2} and the references therein.
For anomalous diffusions on disordered media such as fractals
(where the so-called walk dimension being larger than 2),
the above equivalence still holds  but one needs to replace (i) by (i')
sub-Gaussian bounds for heat kernel, (iii)
by (iii') VD, Poincar\'e inequality and a cut-off Sobolev inequality; see \cite{AB, BB2, BBK1, GHL3}.

For heat kernel estimates of
symmetric jump processes in general metric measure spaces,
when $\alpha \in (0, 2)$ and
 the metric measure space $M$ is a $d$-set,
 characterizations of $\alpha$-stable-like heat kernel estimates were obtained in \cite{CK1}
which are stable under rough isometries.  This result has later
been extended to mixed stable-like processes in \cite{CK2} under
some growth  condition on the rate function $\phi$ such as
\be\label{eq:into2fb} \int_0^r \frac{s}{\phi(s)} \,ds\le \frac{c\,
r^2}{\phi(r)} \quad \hbox{for all }  r>0 \ee with some constant
$c>0$. For $\alpha$-stable-like processes where $\phi (r)=r^\alpha$,
condition \eqref{eq:into2fb} corresponds exactly to $0<\alpha<2$.
Some of the key methods used in \cite{CK1} were inspired by a
previous work \cite{BL} on random walks on integer lattice $\bZ^d$.
 A long standing open problem in this field is
to find a characterization of heat kernel estimates, which is stable
under rough isometries, for $\alpha$-stable-like processes even with
$\alpha\ge2$ when the underlying spaces have walk dimensions larger
than $2$. This question  has been resolved recently in \cite{CKW1}
under some mild volume growth condition. Actually, in \cite{CKW1} we
obtained stability of two-sided heat kernel estimates and upper
bound heat kernel estimates for symmetric jump processes of mixed
types on general metric measure spaces. There are also recent work
on stable-like jump processes with Ahlfors $d$-set condition in the
framework of metric measure spaces \cite{GHH} and in the framework
of infinite connected locally finite graphs \cite{MS}. The readers
can further refer to \cite{CKW2} for the stability results of
parabolic Harnack inequalities for
symmetric pure jump
Dirichlet forms, and for \cite{CKW3} for various characterizations of
elliptic Harnack inequalities.

\medskip

In this paper, we consider symmetric regular Dirichlet forms that
have
 both the strongly local term and the pure-jump term.
As mentioned above, we can also consider the corresponding operators and Hunt processes (diffusions with jumps).
We use the following example, which is a special case of our much more general results,
to illustrate the novelty and strength of our main results.

\begin{example}\label{Lip-dom} Let $U\subset \bR^d$ be
an unbounded global Lipschitz domain equipped with the Euclidean distance.
Let $X=\{X_t\}_{t\ge0}$ be
a symmetric reflected diffusion with jumps on $\overline U$
associated with the regular
 Dirichlet form  $(\sE, W^{1,2}(U))$
  on $L^2(U; dx)$  given by
 \begin{equation}\label{eq:DFw}
\sE (u,v)=\int_{U} \nabla u(x)\cdot A(x) \nabla
v(x)\,dx+ \int_{U}\int_{U} (u(x)-u(y))(v(x)-v(y))
\frac{c(x,y)}
{|x-y|^{d+\alpha}}
\,dx\,dy, \end{equation}
where $A(x)=(a_{ij}(x))_{1\leq i,
j\leq d}$ is a measurable uniformly elliptic and bounded
 $d\times d$ matrix-valued function on $U$,
$0<\alpha<2$, and $c(\cdot,\cdot)$ is a symmetric measurable
function on $U\times U$ that is bounded between two positive constants.
Its $L^2$-infinitesimal generator  is of the form
\[
\sL  u(x) =
\sum_{i, j=1}^d \frac{\partial}{\partial x_i} \left(a_{ij}(x)
 \frac{\partial u(x)}{\partial x_j}\right)
+ 2\lim_{\varepsilon \to 0} \int_{\{y\in U: \, |y-x|>\varepsilon\}}
 (u(y)-u(x)) \frac{c(x,y)}
{|x-y|^{d+\alpha}} \,dy
\]
with ``Neumann" boundary condition.
Then $\HK (\phi_c, \phi_j)$ and $\PHI (\phi)$ hold,
where $\HK (\phi_c, \phi_j)$ $($reps. $\PHI (\phi)$$)$ is the detailed
two-sided heat kernel estimates defined in \eqref{HKjum} $($resp.
the parabolic Harnack inequality defined in \eqref{e:phidef}$)$
with $\phi_c (r)=r^2$, $\phi_j(r)=r^\alpha$ and $\phi (r)=\phi_c (r) \wedge\phi_j(r)$.
\end{example}

To the best of our knowledge, even this result is
new. When $U=\bR^d$, these results are first obtained in   \cite{CK3}.
See \cite{CKKW2} for a different approach to this example without using the stability results of this paper.
The proof of Example
\ref{Lip-dom} will be given in Section \ref{Sect7}.

 \medskip

Although it is very natural and important to
study heat kernels for symmetric Dirichlet forms that have both the diffusive and jumping parts,
there are very limited work in literature
on this topic.
For a L\'evy process $X$ that is the independent sum of a Brownian motion $W$
and a symmetric
$\alpha$-stable process $Y$ on Euclidean spaces, its transition density is
the convolution of the transition densities of $W$ and $Y$.
In \cite{SV}, heat kernel estimates are derived for $X$ by computing the convolution
in four cases. In one of cases (the case of $|x|^2<t<|x|^\alpha\leq 1$),
the upper and lower bounds do not match.
Nevertheless,  parabolic Harnack
inequality for $X$ can be obtained from these estimates.
In \cite{CK3}, sharp and comparable upper and lower bounds are obtained for
a large class of symmetric diffusions with mixture stable-like jumps
on Euclidean spaces. This sharp two-sided heat kernel estimates were new even for L\'evy processes that are the independent
sum of Brownian motion and symmetric stable processes.
The results of \cite{CK3} have been further extended to general metric measure spaces in \cite{CKKW2} and to the cases where the jumping kernels can have exponential decay.
One of the
difficulties in obtaining fine properties for diffusions with jumps
and associated operators is that it exhibits two different scales: the
strongly local terms part has a diffusion scaling $r\mapsto
\phi_c(r)$ while the pure jump part has a different type of scaling
$r\mapsto \phi_j(r)$. On the other hand, as shown in \cite{CKW2}, in
contrast to the cases of local operators/diffusions, for symmetric
pure-jump processes, parabolic Harnack inequalities
are no
longer equivalent to (in fact weaker than) the two-sided heat kernel
estimates. This discrepancy is caused by the heavy tail of the
jumping kernel. This heavy tail phenomenon is also one of main
sources of difficulties in analyzing non-local operators/jump
processes.
Diffusions with jumps are even more
complex than pure jumps case studied in
\cite{CKW2}, in fact Theorem \ref{C:1.25} of this paper asserts
that, with $\phi (r):=\phi_c (r) \wedge \phi_j(r)$,
$$
\HK_- (\phi_c, \phi_j)\Longleftrightarrow\PHI(\phi)+ \J_{\phi_j};
$$
see Definition \ref{D:1.11}(ii) and \eqref{jsigm} for definitions of $\HK_- (\phi_c, \phi_j)$ and $\J_{\phi_j}$, respectively.
In the pure jump case, it holds that
 $$
 \HK (\phi_j)\Longleftrightarrow\PHI(\phi_j)+ \J_{\phi_j,\ge}
$$
 see \cite[Corollary 1.21]{CKW2}.
Intuitively speaking, this discrepancy is due to the fact that
the scale corresponding to the small time behavior of diffusions with jumps is dominated by the
diffusive part and hence one can not recover information about the jumping scale function $\phi_j (r)$
for $r\leq 1$ from $\PHI (\phi)$.

\medskip

Due to the above difficulties and differences, obtaining the complete picture of heat kernel estimates, and the stability of heat kernel estimates and parabolic Harnack inequalities for symmetric Dirichlet forms including both local and non-local terms/diffusions with jumps requires new ideas. Our approach contains the following
four key
ingredients, and all of them are highly non-trivial:

\begin{itemize}
\item[(i)] We adopt
the generalized capacity inequality formulation
from the recent study on
stable-like jumps processes
under the
Ahlfors $d$-set condition in the
framework of metric measure spaces \cite{GHH}, and
make use of the arguments depending on cut-off Sobolev inequality from \cite{AB,CKW1} to
derive some useful analytical
properties of Dirichlet forms consisting of both strongly local terms and pure jumps terms.
The generalized capacity condition $\Gcap (\phi)$ is clean to state
but it is not known whether it is stable under rough isometry or not,
while the cut-off Sobolev inequality condition $\CS (\phi)$ is lengthy to state but
is clearly
 stable under rough isometry.

\item[(ii)] We find a new self-improving argument for upper bounds for diffusions with jumps. The advantage of this technique is that it not only can take care of
different scales both from the strongly local term part and the
pure jump term, but also can treat the case that the volume of balls is not uniformly comparable.

\item[(iii)] As mentioned above, different from the assertions for diffusions or
symmetric pure jump processes,
in the present setting parabolic Harnack inequalities are not equivalent to two-sided heat kernel estimates.
Moreover,  the parabolic Harnack inequalities alone can not imply bounds (even upper bounds) of the jumping kernel. So, to obtain  the characterizations of parabolic Harnack inequalities for diffusions with jumps we shall consider the weaker upper bounds of jumping kernels $\J_{\phi,\le}$ instead of the exact upper bounds of jumping kernels $\J_{\phi_j,\le}$. This indicates that
only rough upper bounds of heat kernels $\UHK_{weak}$
together with $\NDL (\phi)$ and $\UJS$ (see \eqref{heat:remark1}, Definitions \ref{D:1.11}(vi) and \ref{thm:defUJS}
 for their definitions) is involved in the characterization of parabolic Harnack inequalities;
see Theorem \ref{C:1.25}.

\item[(iv)] Our results are obtained under some mild volume growth condition, called volume doubling and reverse volume doubling conditions; see Definition \ref{D:1.1} for details. This is much weaker
than the Ahlfors $d$-set condition and significant technical difficulties arise when working
under
 this setting.

\end{itemize}

\subsection{Functional inequalities and heat kernel estimates}

In this paper, we are concerned with
stable characterizations of
 both upper bounds and two-sided estimates on heat kernel, as well as of parabolic Harnack inequalities,
 for symmetric Dirichlet forms having both local and non-local terms on general metric measure spaces.
To state our results for heat kernel estimates precisely, we need a number of definitions; some of them are taken from \cite{CKW1}.
Denote the ball centered at $x$ with radius $r$ by
$B(x, r)$ and $\mu (B(x, r))$ by $V(x, r)$.

\begin{definition} \label{D:1.1}\rm
\noindent {\rm
(i) We say that $(M,d,\mu)$ satisfies the {\it volume doubling property} ($\VD$), if
there exists a constant ${C_\mu}\ge 1$ such that
for all $x \in M$ and $r > 0$,
\be \label{e:vd}
 V(x,2r) \le {C_\mu} V(x,r).
 \ee
(ii) We say that $(M,d,\mu)$ satisfies the {\it reverse volume doubling property} ($\RVD$),
 if there exist constants ${l_\mu}, {c_\mu}>1$ such that
for all $x \in M$ and $r > 0$,
\be \label{e:rvd2}
 V(x,{l_\mu}  r) \geq  {c_\mu} V(x,r).
 \ee }\end{definition}

VD condition \eqref{e:vd} (resp. RVD condition \eqref{e:rvd2})
is equivalent to the second (resp. the first) inequality in the following  display:
\be \label{e:vd2}
 {\wt c_\mu}  \Big(\frac Rr\Big)^{d_1}\le \frac{V(x,R)}{V(x,r)} \leq   {\wt C_\mu}  \Big(\frac Rr\Big)^{d_2} \quad \hbox{for all } x\in M \hbox{ and } R\geq r>0,
\ee
where $d_1,d_2, {\wt c_\mu}$ and ${\wt C_\mu}$  are positive constants.
Under RVD, $\mu (M)=\infty$ if and only if $M$ has infinite diameter.
If $M$ is connected and unbounded, then $\VD$ implies $\RVD$;
see \cite[Proposition 5.1 and Corollary 5.3]{GH}.

\sms

Let $\bR_+:=[0,\infty)$, and $\phi_c: \bR_+\to \bR_+$ (resp. $\phi_j: \bR_+\to \bR_+$)  be a strictly increasing continuous
function  with $\phi_c (0)=0$ (resp. $\phi_j(0)=0$),
$\phi_c(1)=1$ (resp. $\phi_j(1)=1$)
and satisfying that there exist constants $c_{1,\phi_c},c_{2,\phi_c}>0$ and $\beta_{2,\phi_c}\ge \beta_{1,\phi_c}>1$ (resp. $c_{1,\phi_j},c_{2,\phi_j}>0$ and $\beta_{2,\phi_j}\ge \beta_{1,\phi_j}>0$)
such that
\begin{equation}\label{polycon}
\begin{split}
 &c_{1,\phi_c} \Big(\frac Rr\Big)^{\beta_{1,\phi_c}} \leq
\frac{\phi_c (R)}{\phi_c (r)}  \ \leq \ c_{2,\phi_c} \Big(\frac
Rr\Big)^{\beta_{2,\phi_c}}
\quad \hbox{for all }
0<r \le R. \\
\bigg(\hbox{resp. }  &c_{1,\phi_j} \Big(\frac Rr\Big)^{\beta_{1,\phi_j}} \leq
\frac{\phi_j (R)}{\phi_j (r)}  \ \leq \ c_{2,\phi_j} \Big(\frac
Rr\Big)^{\beta_{2,\phi_j}}
\quad \hbox{for all }
0<r \le R.\bigg)\end{split}\end{equation}
Note that (\ref{polycon}) is
equivalent to the existence of constants
$ c_{3,\phi_c}, {l_{0,\phi_c}}>1$
such that
$$   c_{3,\phi_c}^{-1}\phi_c (r) \leq \phi_c ({l_{0,\phi_c}}r)
\leq c_{3,\phi_c} \, \phi_c (r)\quad \hbox{for all } r>0,
$$ the same as $\phi_j(r)$.
Throughout the paper, we assume that
 \begin{equation}\label{e:1.11}
 \phi_c(r)\le \phi_j(r)   \hbox{ for } r\in (0,1] \quad \hbox{ and } \quad \phi_c(r)\ge \phi_j(r) \hbox{  for } r\in [1,\infty).
 \end{equation}
 Since $\beta_{1,\phi_c}>1$, by  \cite[Definition, p.\ 65; Definition, p.\ 66; Theorem 2.2.4 and its remark, p.\ 73]{BGT}, there exists a strictly increasing
 continuous  function $\bar \phi_c (r): \bR_+\to \bR_+$
  such that there are  constants $c_2\geq c_1>0$ so that
 \begin{equation}\label{e:1.12}
  c_1 \frac{ \phi_c(r)}{r} \le \bar \phi_c (r)\le c_2 \frac{\phi_c(r)}{r}  \quad \hbox{for all } r>0.
  \end{equation}
 By   \eqref{polycon},
$\beta_{1,\phi_c}>1$ and \eqref{e:1.12},
  we have $\lim_{r\to 0} \bar \phi_c (r)=0$,  $\lim_{r\to \infty} \bar \phi_c (r)=\infty$, and
  there are constants $c_{1,\bar \phi_c}, c_{2,\bar \phi_c}>0$ such that
\begin{equation}\label{e:effdiff}c_{1,\bar\phi_c} \Big(\frac Rr\Big)^{\beta_{1,\phi_c}-1} \leq
\frac{\bar\phi_c (R)}{\bar\phi_c (r)}  \ \leq \ c_{2,\bar\phi_c} \Big(\frac
Rr\Big)^{\beta_{2,\phi_c}-1}
\quad \hbox{for all } 0<r \le R.
\end{equation}

Given $\phi_c$ and $\phi_j$ satisfying \eqref{e:1.11}, we set
\begin{equation}\label{e:scaling-}
\phi(r):=\phi_c(r)\wedge \phi_j(r)
=\begin{cases} \phi_c(r),\quad& r\in (0, 1], \\
\phi_j(r),\quad &r\in [1, \infty).
\end{cases}
\end{equation}
It is clear that $\phi(r)$ is a strictly increasing function on $\bR_+$ with $\phi(0)=0$ and $\phi(1)=1$, and satisfies that there
exist constants $c_{1,\phi},c_{2,\phi}>0$ so that
\begin{equation}\label{polycon000}c_{1,\phi} \Big(\frac Rr\Big)^{\beta_{1,\phi}} \leq
\frac{\phi (R)}{\phi (r)}  \ \leq \ c_{2,\phi} \Big(\frac
Rr\Big)^{\beta_{2,\phi}}
\quad \hbox{for all }
0<r \le R,\end{equation}  where $\beta_{1,\phi}=\beta_{1,\phi_c}\wedge \beta_{1,\phi_j}$ and $\beta_{2,\phi}=\beta_{2,\phi_c}\vee \beta_{2,\phi_j}$.
Throughout this paper, without any mention we will fix the notations for these three functions $\phi_c$, $\phi_j$ and $\phi$.

\begin{definition}{\rm Let $\psi:\bR_+\to \bR_+$.  We say condition $\J_{\psi}$ holds if there exists a non-negative symmetric
function $J(x, y)$ on $M\times M$ so that for $\mu\times \mu $-almost  all $x, y \in M$,
\begin{equation}\label{e:1.2}
J(dx,dy)=J(x, y)\,\mu(dx)\, \mu (dy),
\end{equation} and
\begin{equation}\label{jsigm}
 \frac{c_1}{V(x,d(x, y)) \psi (d(x, y))}\le J(x, y) \le \frac{c_2}{V(x,d(x, y)) \psi (d(x, y))}.
 \end{equation}
We say that $\J_{\psi,\le}$ (resp. $\J_{\psi,\ge}$) if \eqref{e:1.2} holds and the upper bound (resp. lower bound) in \eqref{jsigm} holds.}
\end{definition}

Note that, without loss of generality,
we may and do assume that in condition $\J_{\psi}$ ($\J_{\psi, \geq}$ and $\J_{\psi, \leq}$, respectively)
that \eqref{jsigm} (and the corresponding inequality) holds for every $x, y \in M$. Note also that, under $\VD$,
 the bounds in condition \eqref{jsigm}
are consistent with the symmetry of $J(x, y)$. See \cite[Remark 1.3]{CKW1} for more details.

Since $\phi(r)\le \phi_j(r)$ for all $r>0$,  $\J_{\phi_j,\le}$ implies $\J_{\phi,\le}$; that is,
condition $\J_{\phi,\le}$ is weaker than condition $\J_{\phi_j,\le}$. We will frequently use this fact
in the paper.

\begin{definition} \rm
Let $U \subset V$ be open sets of $M$ with
$U \subset \ol U \subset V$, and  $\kappa\ge 1$.
We say a non-negative bounded measurable function $\vp$ is a {\it $\kappa$-cut-off function for $U \subset V$},
if $\vp \ge 1$ on $U$,  $\vp=0$ on $V^c$ and $0\leq \vp \leq \kappa$ on $M$. Any 1-cut-off function is simply referred to as a {\it cut-off function}.
\end{definition}

It is obvious that for any $\kappa$-cut-off function $\vp$ for $U \subset V$, $1\wedge\vp$ is a cut-off function  for $U \subset V$.

Motivated by \cite{GHH} for
pure jump Dirichlet forms, we formulate the generalized capacity condition for non-local Dirichlet forms that have diffusive parts. For this, we consider the following function space
$$
 \sF'_b:=\{u+a: u\in \sF_b,\,  a\in \bR\},
$$
where $\sF_b:=\sF \cap L^\infty (M; \mu)$.

\begin{definition}\rm  We say that the {\it generalized capacity inequality} $\Gcap(\phi)$ holds, if there exist constants $\kappa\ge1$ and $C>0$ such that for every $0<r<R$, any $f\in \sF'_b$  and for almost all
$x_0\in M$, there is a $\kappa$-cut-off function $\vp\in \sF_b$ for $B(x_0,R)\subset B(x_0,R+r)$ so that
$$
\sE(f^2\vp,\vp)\le \frac{C}{\phi(r)} \int_{B(x_0,R+r)}f^2\,d\mu.
$$
\end{definition}

Recall that for any subsets $A\subset B$,
the relative capacity
$\dcap(A,B)$ is defined by
$$\dcap(A,B)=\inf\{\sE(\vp,\vp):
\vp \in \sF_b
\hbox{ is a cut-off function
for } A\subset B\}.
$$
Following \cite[Definition 1.7]{GHH}, for any
subsets $A\subset B$,
$f\in \sF'_b$ and a constant
$\kappa\ge1$,
we define the generalized relative capacity $\dcap_f^{(\kappa)}(A,B)$ by
$$
\dcap_f^{(\kappa)}(A,B)=\inf \left\{\sE(f^2\vp,\vp):
\vp \in \sF_b
\hbox{ is a }\kappa\hbox{-cut-off function
for } A\subset B \right\}.
$$
In particular, when $f=1$ and $\kappa=1$,
$\dcap_f^{(\kappa)}(A,B)=\dcap(A,B).$ As mentioned in \cite[Remarks
1.8 and 1.9]{GHH}, the quantity $\sE(f^2\vp,\vp)$ in the definition
of the generalized capacity is well defined,
and the generalized capacity can take negative values.
With this notation, the generalized
capacity inequality $\Gcap(\phi)$ is equivalent to that there
exist constants $\kappa\ge1$ and $C>0$ such that for every $0<r<R$, any $u\in \sF'_b$ and almost all $x_0\in M$ so that
$$\dcap_u^{(\kappa)}(B(x_0,R), B(x_0,R+r))\le \frac{C}{\phi(r)} \int_{B(x_0,R+r)}u^2\,d\mu.$$

Denote by $C_c(M)$ the space of continuous functions on $M$ with compact support.
It is well known that for any
$f\in \sF_b$, there exist unique positive Random measures $\Gamma(f,f)$ and $\Gamma_c(f, f)$ on $M$ so that
for every $g\in \sF\cap C_c(M)$,
$$
\int_M g \, d\Gamma(f,f)=\sE(f, fg)-\frac 12\sE(f^2,g),
$$
and
$$
\int_M g \, d\Gamma_c(f,f)=\sE^{(c)}(f, fg)-\frac 12\sE^{(c)}(f^2,g).
$$
The energy measures $\Gamma(f,f)$  and $\Gamma_c (f, f)$ can be uniquely extended to any $f\in \sF$
as the increasing limit of $\Gamma (f_n, f_n)$ and $\Gamma_c (f_n, f_n)$, respectively, where
$f_n:=((-n)\vee f)\wedge n$.
The measure $\Gamma(f,f)$ (resp. $\Gamma_c (f, f)$) is called the \emph{energy measure} of $f$ (which is also called the
 {\it carr\'e du champ} in the literature) for $\sE$ (resp. its strongly local part $\sE^{(c)}$).

To make use  of
the generalized capacity inequality
$\Gcap(\phi)$, we need to introduce
a version of a cut-off Sobolev inequality
that controls
the energy of cut-off functions.

\begin{definition} \rm We say that condition $\CS(\phi)$ holds if there exist constants $C_0\in (0,1]$ and $C_1, C_2>0$
such that for every
$0<r\le R$, almost all $x_0\in M$ and any $f\in \sF$, there exists
a cut-off function $\vp\in \sF_b$ for $B(x_0,R) \subset B(x_0,R+r)$ so that the following holds:
\be \label{e:csj1} \begin{split}
 &\int_{B(x_0,R+(1+C_0)r)} f^2 \, d\Gamma (\vp,\vp)\\
&\le C_1 \bigg(\int_{B(x_0,R+r)} \vp^2\,d\Gamma_c(f,f)\\
&\qquad\quad+\int_{B(x_0,R+r)\times B(x_0,R+(1+C_0)r)}\vp^2(x) (f(x)-f(y))^2\,J(dx,dy)\bigg) \\
&\quad + \frac{C_2}{\phi(r)}  \int_{B(x_0,R+(1+C_0)r)} f^2  \,d\mu.
\end{split}
\ee
\end{definition}

\begin{remark}\label{rek:scj}\rm
\begin{itemize}
\item[(i)]
Clearly, unlike $\Gcap (\phi)$,
condition $\CS (\phi)$ is stable under rough isometry.
$\CS(\phi)$ is a combination of $\CSA(\phi)$ for strongly local Dirichlet
forms and $\CSJ(\phi)$ for
pure jump
Dirichlet forms.  $\CSA(\phi)$ was introduced in \cite{AB} for strongly local Dirichlet
forms as a weaker version of the so called cut-off Sobolev
inequality in \cite{BB2,BBK1}; while $\CSJ(\phi)$, as a
counterpart
of $\CSA(\phi)$ for
pure jump Dirichlet form, was given in \cite{CKW1}. As pointed out in \cite[Remark 1.6(ii)]{CKW1}, the main difference
between $\CSJ(\phi)$ and $\CSA(\phi)$ is that the
integrals in the left hand side and in the second term of the right
hand side of the inequality \eqref{e:csj1} are over $B(x_0,R+(1+C_0)r)$ instead of over $B(x_0,R+r)$ for \cite{AB}. Note that the integral
over $B(x_0,R+r)^c$ is zero in the left hand side of \eqref{e:csj1} for the
case of strongly local Dirichlet forms. As we see in \cite{CKW1} for the arguments of the stability of heat kernel estimates for jump processes, it is important to
enlarge
the ball $B(x_0,R+r)$ and integrate over
$B(x_0,R+(1+C_0)r)$ rather than over $B(x_0,R+r)$. In the present setting, we will deal with Dirichlet forms
having both local and non-local
parts
(i.e., the associated Hunt process have both the diffusive and jumping parts),
and so it is natural to use the formula similar to $\CSJ(\phi)$
of \cite{CKW1}. As $\supp [\varphi]\subset B(x_0, R+r)$,
we could  replace $B(x_0, R+r)$   by $B(x_0, R+(1+C_0)r)$
in the integral region of the first term on the right hand side of
\eqref{e:csj1}.

 \item[(ii)] Denote by $\sF_{loc}$ the space of functions
locally in $\sF$; that is, $f\in \sF_{loc}$
if and only if for any relatively compact open set $U\subset M$ there exists $g\in \sF$ such that $f=g$ $\mu$-a.e.\ on $U$.
 Since each ball is relatively compact and
 \eqref{e:csj1} uses the property of $f$ on $B(x_0,R+(1+C_0)r)$ only, $\CS(\phi)$ also holds for any $f\in \sF_{loc}.$

 \item[(iii)] As mentioned in \cite[page 1492]{GHL3} and \cite[Remark 1.7]{CKW1}, if the non-negative cut-off function $\vp$ in $\CS(\phi)$ can be chosen as a Lipschitz continuous  function, then $\CS(\phi)$ always holds under  $\VD$, \eqref{polycon} and $\J_{\phi,\le}$. For example, this is the case that any geodesically complete Riemannian manifold with $\phi_c(r)=r^2$ and $\phi_j(r)=r^\alpha$ with some $\alpha\in (0,2)$.  See also Example \ref{Lip-dom}.

\end{itemize}
\end{remark}

For $\alpha >0$,  define
$$
\sE_\alpha (f, g)=\sE(f, g)+\alpha \int_M f(x)g(x)\,\mu (dx) \qquad \hbox{for }
f, g\in \sF.
$$
We next introduce the Faber-Krahn inequality.
For any open set $D \subset M$, let $\sF^D$ be the
$\sE_1$-closure of  $\sF\cap C_c(D)$ in $\sF$.
Define
\be \label{e:lam1}
 \lam_1(D)
= \inf \left\{ \sE(f,f):  \,  f \in \sF^D \hbox{ with }  \|f\|_2 =1 \right\},
\ee
the bottom of the Dirichlet spectrum of the corresponding self-adjoint operator in $D$.

\begin{definition}\label{Faber-Krahnww}
{\rm We say that the {\em Faber-Krahn
inequality} $\FK(\phi)$ holds if there exist positive constants $C$ and
$\nu$ such that for any ball $B(x,r)$ and any open set $D \subset
B(x,r)$, \be \label{e:fki}
 \lam_1 (D) \ge \frac{C}{\phi(r)} (V(x,r)/\mu(D))^{\nu}.
\ee
} \end{definition}

Since $V(x,r)\ge \mu(D)$
for $D\subset B(x, r)$,
 if \eqref{e:fki} holds for some $\nu=\nu_0>0$, it then holds
for every $\nu \in (0, \nu_0)$.  So without loss of generality, we may and do assume $0<\nu<1$.

\begin{definition} {\rm
We say that
the {\em $($weak$)$ Poincar\'e inequality}
$\PI(\phi)$
holds if there exist constants $C>0 $ and $\kappa\ge1$ such that
for any  ball $B_r=B(x,r)$ with $x\in M$  and for any $f \in \sF_b$,
\begin{equation}\label{eq:PIn}
\int_{B_r} (f-\ol f_{B_r})^2\, d\mu \le C \phi(r)\left(\int_{B_{\kappa r}}\,\Gamma_c(f,f)+ \int_{{B_{\kappa r}}\times {B_{\kappa r}}} (f(y)-f(x))^2\,J(dx,dy)\right),
\end{equation}
where $\Gamma_c$ is the energy measure of local bilinear form of $(\sE,\sF)$,
and $\ol f_{B_r}= \frac{1}{\mu({B_r})}\int_{B_r} f\,d\mu$ is the average value of $f$ on ${B_r}$.} \end{definition}
 If the integral on the right hand side of \eqref{eq:PIn} is over ${B_{r}}\times {B_r}$ (i.e. $\kappa=1$), then it is called strong Poincar\'e inequality.
If the metric is geodesic, it is known that
(weak) Poincar\'e inequality implies
strong Poincar\'e inequality (see for instance \cite[Section 5.3]{Sa1}), but in general they are not the same. In this paper, we only use weak Poincar\'e inequality.
Note also that the left hand side of \eqref{eq:PIn} is
equal to $\inf_{a \in \bR} \int_{B_r} (f-a)^2\, d\mu$.

Recall that $X=\{X_t\}_{t\ge0}$ is the Hunt process associated with
the regular Dirichlet form $(\sE,\sF)$ on $L^2(M; \mu)$ with
properly exceptional set $\sN$, and $M_0:= M\setminus \sN$. For a
set $A\subset M$, define the exit time $\tau_A = \inf\{ t >0 : X_t
\notin A\}.$

\begin{definition}{\rm (i) We say that $\E_\phi$ holds if
there is a constant $c_1>1$ such that for all $r>0$ and  all $x\in M_0$,
$$
c_1^{-1}\phi(r)\le \bE^x [ \tau_{B(x,r)} ] \le c_1\phi(r).
$$
We say that $\E_{\phi,\le}$ (resp. $\E_{\phi,\ge}$) holds
if the upper bound (resp. lower bound) in the inequality above holds.

(ii) We say $\EP_{\phi,\le}$
holds if there is a constant $c>0$ such that for all $r,t>0$ and  all $x\in M_0$,
 $$
 \bP^x(  \tau_{B(x,r)} \le t ) \le \frac{ct}{\phi(r)}.
 $$
We say  $\EP_{\phi,\le, \varepsilon}$ holds,
 if there exist constants $\varepsilon, \delta\in (0,1)$ such that for
any $x_0\in M$ and $r>0$,
$$
\bP^x (  \tau_{B(x_0,r)} \le \delta \phi(r) ) \le \eps
\quad \hbox{for all } x\in  B(x_0, r/4)  \cap M_0 .
$$
}\
\end{definition}

 It is clear that $\EP_{\phi,\le}$ implies $\EP_{\phi,\le,\eps}$.
It is also easy to see that, under \eqref{polycon},
$\E_\phi$ implies $\EP_{\phi,\le, \eps}$; see Proposition \ref{P:2.4} below.

\medskip

We use $\phi_c^{-1}(t)$ (resp. $\phi_j^{-1}(t)$) to denote the inverse function of the strictly increasing function
$t\mapsto \phi_c (t)$ (resp. $t\mapsto \phi_j(t)$).
Throughout the paper,
we write
$f(s, x)\simeq g(s, x)$,
if there exist constants $c_{1},c_{2}>0$ such that
$
c_{1}g(s, x)\leq f(s, x)\leq c_{2}g(s, x)
$
for the specified range of the argument $(s, x)$. Similarly, we write
$f(s, x)\asymp g(s, x)$,
if there exist constants $c_k>0$, $k=1, \cdots, 4$, such that
$
c_1g(c_2 s, x)\leq f(s, x)\leq c_3 f(c_4 s, x)
$
for the specified range of $(s, x)$.

\medskip

We consider
the following two-sided estimates of heat kernel for the local Dirichlet forms.
Define
\begin{equation}\label{eq:fibie2}
p^{(c)}(t,x,y):=\frac{1}{V(x,\phi_c^{-1}(t))} \exp\left(- \sup_{s>0}\left\{\frac{d(x,y)}{s}-\frac{t}{\phi_c(s)}\right\}\right),\quad t>0,x,y\in M_0.
\end{equation}
This kernel arises in the two-sided estimates of heat kernel for strongly local Dirichlet forms;
see e.g.\ \cite{AB}. In the literature (see \cite{GT,HK}), there is another expression of two-sided heat kernel estimates for the local Dirichlet forms
given by
\begin{equation}\label{eq:fibie3}
p^{(c)}(t,x,y)= \frac{1}{V(x, \phi_c^{-1}(t))}
\exp\left(-\frac{d(x, y)}{\bar \phi_c^{-1}(t/d(x, y))} \right),
 \quad t>0, x,
y\in M_0.
\end{equation}
Note that
\begin{equation}\label{e:1.22}
m(t, r) := \frac{r}{\bar \phi_c^{-1}(t/r)}
\end{equation}
is the unique solution of
\begin{equation}\label{e:scdf}
 \bar \phi_c \left(\frac{r}{m(t,r)}\right)=\frac{t}{r},
\quad t,r>0.
\end{equation}
We will
show
that \eqref{eq:fibie2} and \eqref{eq:fibie3} are
equivalent to each other in our setting in the sense that there are constants $c_k>0$, $k=1, \cdots, 4$, so that
for $p^{(c)}(t, x, y)$ given by \eqref{eq:fibie2},
\begin{equation}\label{e:1.20}\begin{split}
\frac{c_1}{ V(x,\phi_c^{-1}(t))} \exp\left(- \frac{c_2 d(x,y)}{\bar \phi_c^{-1}(t/d(x,y))}\right)&
\leq
p^{(c)}(t,x,y)\\
& \leq \, \frac{c_3}{V(x,\phi_c^{-1}(t))} \exp\left(- \frac{c_4 d(x,y)}{\bar \phi_c^{-1}(t/d(x,y))}\right)
\end{split}\end{equation}
for every $t>0$ and $x,y\in M_0$.
See Lemma \ref{L:diff} and Corollary \ref{C:diff} for the proofs.
On the other hand, we note that, in all the literature we know,
for example
\cite{BB1}, \cite{HK} and \cite[Page 1217--1218]{GT},
the lower bound in the estimate \eqref{eq:fibie3} (more explicitly, the lower bound of off-diagonal estimate in \eqref{eq:fibie3})
was established under assumptions that include $(M,d,\mu ) $ being connected and satisfying the chain condition; that is,
there exists a constant $C>0$ such that, for any $x,y\in
M$ and for any $n\in {\mathbb N}$, there exists a sequence
$\{x_{i}\}_{i=0}^{n}\subset M$ such that $x_{0}=x$, $x_{n}=y$ and
$
d(x_{i},x_{i+1})\leq C { d(x,y)}/{n}$ for all $i=0,1,\cdots,n-1$.

In the following, we designate  \eqref {eq:fibie3} as the expression of $p^{(c)}(t, x, y)$.
Note that
$$\frac{ d(x,y)}{\bar \phi_c^{-1}(t/d(x,y))} \preceq 1
\quad \hbox{if and only if} \quad  \phi_c(d(x, y))/t\preceq 1.
$$
So for each fixed $a>0$,
\begin{equation}\label{e:1.25}
p^{(c)}(t, x, y)
\simeq 1/V(x, \phi_c^{-1}(t))
\quad \hbox{when } d(x, y)\leq a \phi_c^{-1}(t).
\end{equation} Set
\begin{equation}\label{e:1.26}
p^{(j)}(t, x, y):= \frac{1}{V(x, \phi_j^{-1}(t))}\wedge \frac{t}{V(x, d(x, y)) \phi_j (d(x, y))}.
\end{equation}
It is easy to see that for each fixed $a>0$,
\begin{equation}\label{e:1.27}
p^{(j)}(t, x, y)
\simeq \frac{1}{V(x, \phi_j^{-1}(t))}
\quad \hbox{when } d(x, y) \leq a \phi^{-1}_j (t).
\end{equation}

\begin{definition}\label{D:1.11}   \rm \begin{description}
\item{\rm (i)} We say that
$\HK(\phi_c, \phi_j)$
holds if there exists a heat kernel $p(t, x,y)$
of the semigroup $\{P_t\}_{t\ge0}$ associated with $(\sE,\sF)$
and the following estimates hold
for all $t>0$ and all $x,y\in M_0$,
\begin{equation}\label{HKjum}\begin{split}
&   c_1\Big(\frac 1{V(x,\phi_c^{-1}(t))}\wedge \frac 1{V(x,\phi_j^{-1}(t))} \wedge
 \big(p^{(c)}(c_2 t,x,y)+p^{(j)}(t,x,y)\big)\Big) \\
 & \le \   p(t, x,y)  \\
  &\le c_3\Big(\frac 1{V(x,\phi_c^{-1}(t))}\wedge \frac 1{V(x,\phi_j^{-1}(t))} \wedge
\big(p^{(c)}(c_4 t,x,y)+p^{(j)}(t,x,y)\big)\Big),
\end{split}
\end{equation}
where $c_k>0$, $k=1, \cdots, 4$,  are constants independent of $x,y\in M_0$ and $t>0$.
For simplicity and by abusing the notation, we abbreviate the two-sided estimate \eqref{HKjum} as
$$
p(t, x, y) \asymp \frac 1{V(x,\phi_c^{-1}(t))}\wedge \frac 1{V(x,\phi_j^{-1}(t))} \wedge
\left( p^{(c)}(t,x,y)+p^{(j)}(t,x,y) \right).
$$

\item{(ii)} We say
$\HK_- (\phi_c, \phi_j)$
holds if the upper bound in \eqref{HKjum} holds but the lower bound is replaced by
the following: there are constants $c_0, c_1>0$ so that for all $x,y\in M_0$,
\begin{equation}\label{e:1.31}\begin{split}
p(t, x, y) \geq c_0\bigg( &\frac{1}{V(x,\phi^{-1}(t))}{\bf 1}_{\{d(x, y) \leq c_1 \phi^{-1}(t)\}}\\
&+\frac{t}{V(x,d(x,y))\phi_j(d(x,y))}{\bf 1}_{\{d(x, y) >c_1 \phi^{-1}(t)\}} \bigg).\end{split}
\end{equation}

\item{(iii)} We say $\UHK (\phi_c, \phi_j)$ (resp. $\LHK (\phi_c, \phi_j)$) holds if the
upper bound (resp. the lower bound) in \eqref{HKjum} holds.

\item{(iv)}  We say $\UHKD(\phi)$ holds if there is a constant $c>0$ such that for all $t>0$ and all $x\in M_0$,
$$p(t, x,x)
\leq  \frac{c}{V(x, \phi^{-1}(t))} .
$$

\item{(v)}
We say a near-diagonal lower bound heat kernel estimate
$\NL(\phi)$ holds if there are constants $c_1,c_2>0$ such that for all $t>0$ and all $x,y\in M_0$ with $d(x,y)\le c_1\phi^{-1}(t)$,
$$p(t, x,y)\ge  \frac{c_2}{V(x,\phi^{-1}(t))}.$$

\item{(vi)}
Denote by $(P_t^D)_{t\ge0}$ the (Dirichlet) semigroups of $(\sE,\sF^D)$, and by $p^D(t,x,y)$ the corresponding (Dirichlet) heat kernel.
We say that \emph{a
near diagonal lower bounded estimate for Dirichlet heat kernel} $\NDL(\phi)$ holds, if there exist $\eps\in (0,1)$ and $c_1>0$ such that for any $x_0\in M$, $r>0$,
$0<t\le \phi(\eps r)$ and $B=B(x_0,r)$,
\be\label{NDLdf1} p^{B}(t, x
,y )\ge \frac{c_1}{V(x_0, \phi^{-1}(t))},\quad x ,y\in B(x_0,
\eps\phi^{-1}(t)) \cap M_0.
\ee
\end{description}
\end{definition}

\begin{figure}[t]
\centerline{\epsfig{file=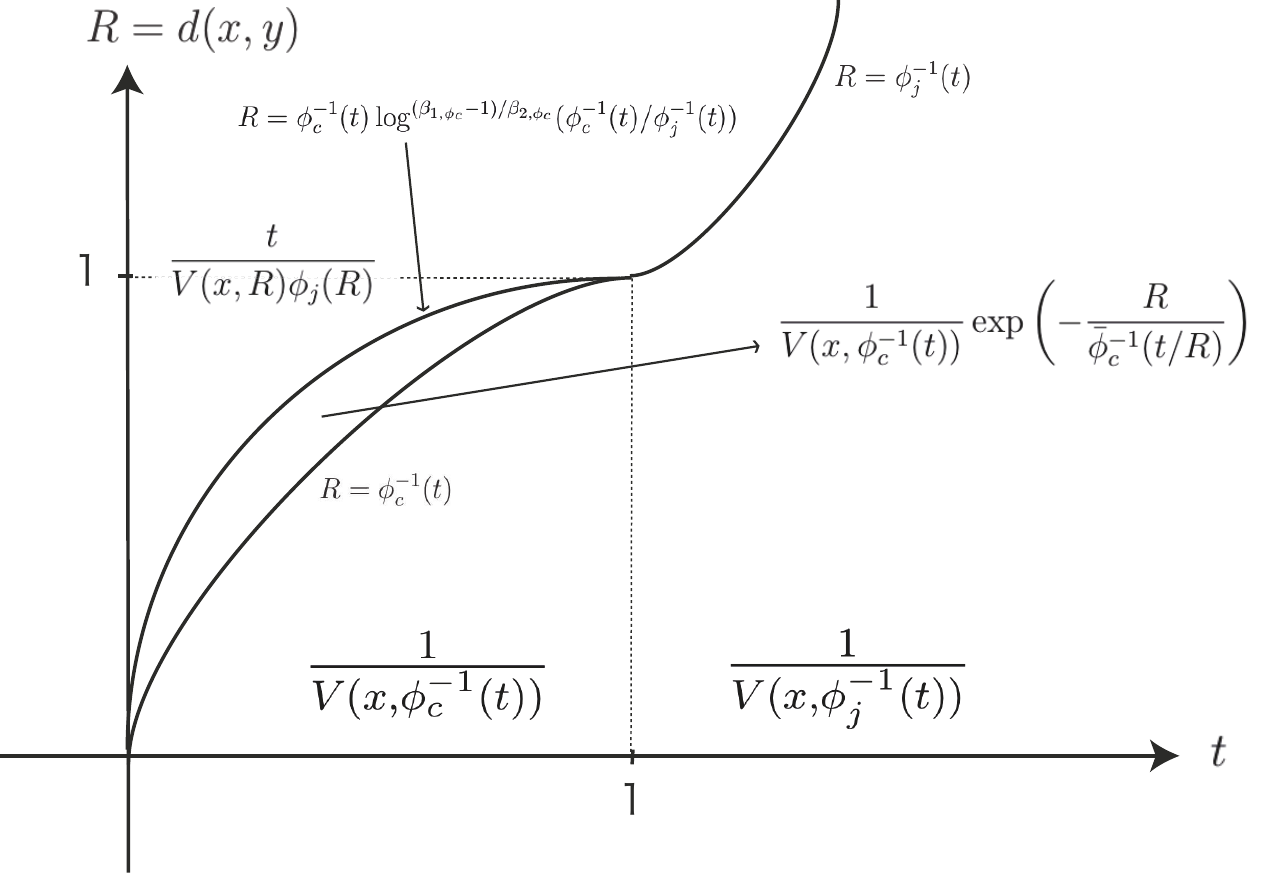, height=2in}}
\caption{Dominant term in the heat kernel estimates
$\HK (\phi_c, \phi_j)$ for  $p(t,x,y)$}\label{domfig}
\end{figure}

\begin{remark}\label{R:1.22} \rm
We have
five
remarks about this definition.

\begin{itemize}
\item[(i)] Note that the scaling of $\HK (\phi_c, \phi_j)$ (also $\HK_-(\phi_c, \phi_j)$, $\UHK (\phi_c, \phi_j)$ and $\LHK(\phi_c,\phi_j)$) is
NOT completely
determined by $\phi$. Indeed, it also includes the information of
$\phi_j(r)$ for $0<r\le 1$. Yet, we use this notation since $\phi$ gives us the space-time relation of the heat kernel estimates.
Furthermore, it follows from Theorem \ref{C:1.25} that $\HK_-(\phi_c, \phi_j)$
(and so $\HK(\phi_c, \phi_j)$) is  stronger than $\PHI(\phi)$, and, consequently,  $\PHR(\phi)$;
see Definition \ref{def:PHI}(ii) and Definition \ref{PER}(i) for precise definitions. In particular, this implies that $\HK_-(\phi_c, \phi_j)$ and $\HK(\phi_c, \phi_j)$ hold for all $x,y \in M$ (not only for all $x,y\in M_0$).

\item[(ii)]
Since $\phi (r):= \phi_c (r) \wedge \phi_j(r)$, $\phi^{-1}(r)=\phi^{-1}_c(r) \vee
\phi^{-1}_j (r)$ and so
$$
 \frac{1}{V(x,\phi^{-1}(t))} = \frac 1{V(x,\phi_c^{-1}(t))
}\wedge \frac 1{V(x,\phi_j^{-1}(t))} .
$$
It follows from \eqref{e:1.25} and \eqref{e:1.27} that
under
  $\HK_-(\phi_c,\phi_j)$,
 for each fixed $c>0$,
$$
p(t, x, y)
\simeq 1/ V(x,\phi^{-1}(t))
\quad \hbox{when } d(x, y) \leq c \phi^{-1} (t).
$$
In particular,  $\NL (\phi)$ holds under
 $\HK_-(\phi_c,\phi_j)$, and so under $\HK(\phi_c,\phi_j)$.
 The off-diagonal estimates of $\HK (\phi_c, \phi_j)$ is expressed by the factor
$p^{(c)}(c_kt,x,y)
+p^{(j)}(t,x,y)$. In particular,
$\HK (\phi_c, \phi_j)$ is equivalent to
\begin{align*}
&p(t,x,y) \\
&  \asymp \, \frac{1}{V(x,\phi^{-1}(t))}
\wedge \left(\frac{t}{V(x,d(x,y))\phi_j(d(x,y))}+\frac{1}{V(x,\phi_c^{-1}(t))}  \exp\left(-
\frac{ d(x,y)}
{\bar \phi_c^{-1}(t/d(x,y))}\right)\right)\\
& \asymp \, \frac{1}{V(x,\phi^{-1}(t))}\wedge \left( \frac{1}{V(x,d(x,y))}\left(\frac{t}{\phi_j(d(x,y))}+\exp\left(-
\frac{ d(x,y)}
{\bar \phi_c^{-1}(t/d(x,y))}\right)\right)\right).
\end{align*}

\item[(iii)]
We can express
$\HK (\phi_c, \phi_j)$ in the following way. For $0< t\le 1$,
\begin{align*}
p(t,x,y)\asymp\begin{cases}\frac{1}{V(x,\phi_c^{-1}(t))},\quad &d(x,y)\le c_1\phi_c^{-1}(t),\\
\frac{t}{V(x,d(x,y))\phi_j(d(x,y))}+\frac{1}{V(x,\phi_c^{-1}(t))}  \exp\left(-
\frac{ d(x,y)}
{\bar \phi_c^{-1}(t/d(x,y))}\right),\quad&d(x,y)\ge c_1\phi_c^{-1}(t); \end{cases}\end{align*}for $t\ge1$,
\begin{align*}p(t,x,y)
\simeq p^{(j)}(t,x,y)
\simeq\begin{cases}\frac{1}{V(x,\phi_j^{-1}(t))},\quad &d(x,y)\le c_2\phi_j^{-1}(t),\\
\frac{t}{V(x,d(x,y))\phi_j(d(x,y))},\quad&d(x,y)\ge c_2\phi_j^{-1}(t). \end{cases}\end{align*}
In particular, for $t\ge 1$, heat kernel estimates are dominated by the non-local part of Dirichlet form $(\sE, \sF)$.
Furthermore, for $0<t\le 1$, we
have the following more explicit expression for $\HK (\phi_c, \phi_j)$:
\begin{align*}p(t,x,y)\asymp\begin{cases}\frac{1}{V(x,\phi_c^{-1}(t))},\quad &d(x,y)\le c_1\phi_c^{-1}(t),\\
\frac{1}{V(x,\phi_c^{-1}(t))} \exp\left(-
\frac{ d(x,y)}
{\bar \phi_c^{-1}(t/d(x,y))}\right),\quad&  c_1\phi_c^{-1}(t)\le d(x,y)\le
t_*,\\
\frac{t}{V(x,d(x,y))\phi_j(d(x,y))},\quad &d(x,y)\ge t_*,
\end{cases}\end{align*} where $t_*$ satisfies that
$$c_3\phi_c^{-1}(t)\log^{(\beta_{1,\phi_c}-1)/\beta_{2,\phi_c}}(\phi^{-1}_c(t)/\phi^{-1}_j(t))\le
t_*\le c_4
\phi_c^{-1}(t)\log^{(\beta_{2,\phi_c}-1)/\beta_{1,\phi_c}}(\phi^{-1}_c(t)/\phi^{-1}_j(t)),
$$ and $\beta_{1,\phi_c}$ and $\beta_{2,\phi_c}$ are given in
\eqref{polycon}. See the proof of Proposition \ref{thm:ujeuhkds} for
more details. Figure \ref{domfig} indicates which term is the dominant one for the estimate of $p(t,x,y)$ in each region.

\item[(iv)]
 For any $D\subset M$, it holds for  $x,y\in M_0$ and $t>0$ that
$p(t,x,y)\ge p^D(t,x,y),$ and so $\NDL(\phi)$ is stronger than $\NL(\phi)$. Furthermore, under $\VD$ and \eqref{polycon} we can prove that $\NL(\phi)$ together with $\UHK (\phi_c, \phi_j)$ implies $\NDL(\phi)$, see Lemma \ref{near-}.
We also note that, under $\VD$, $V(x_0, \phi^{-1}(t))$ in the definition of $\NDL(\phi)$ can be replaced
by either $V(x, \phi^{-1}(t))$ or $V(y, \phi^{-1}(t))$.
Under \eqref{polycon}, we may also replace $ \phi(\eps r)$ and
$\eps\phi^{-1}(t)$ in the definition of $\NDL(\phi)$ by $\eps\phi(r)$ and
$\phi^{-1}(\eps t)$, respectively.

\item[(v)] If in the lower bound for the definition of $\HK_- (\phi_c, \phi_j)$, we assume
$$
p(t, x, y) \geq c_0\left(\frac 1{V(x,\phi_c^{-1}(t))} \wedge p^{(j)}(t,x,y)  \right)
$$
instead of  \eqref{e:1.31}, then we only have
$$
p(t, x, y) \geq c_2\left( \frac 1{V(x,\phi_c^{-1}(t))
}\wedge \frac 1{V(x,\phi_j^{-1}(t))}   \right) = \frac{c_2}{V(x, \phi^{-1}(t))}
\quad \hbox{for } d(x, y) \leq c_1 \phi_j^{-1}(t)
$$
Note that as $\phi^{-1}(t)= {\bf1}_{[0, 1]}(t) \phi_c^{-1} (t) + {\bf 1}_{(1, \infty)} \phi_j^{-1} (t)$
and $\phi_c^{-1}(t) \geq \phi_j^{-1}(t)$ on $[0, 1]$,  the above inequality is weaker than $\NL (\phi)$
(for instance when $\phi_c (r)=r^2$ and $\phi_j (r)=r^\alpha$ with $0<\alpha<2$).
\end{itemize}
\end{remark}

We say
{\it $(\sE, \sF)$ is conservative}   if its associated Hunt process $X$
has infinite lifetime.  This is equivalent to $P_t 1 =1$ a.e.\ on $M_0$ for every $t>0$.
It follows from
\cite[Proposition 3.1(ii)]{CKW1} that $\VD$ and $\NL(\phi)$ imply
that $(\sE, \sF)$ is conservative.

\begin{thm} \label{T:main}
Assume that the metric measure space $(M, d , \mu)$ satisfies
$\VD$ and $\RVD$, and that
the scale functions $\phi_c$ and $\phi_j$ satisfy  \eqref{polycon} and \eqref{e:1.11}. Let $\phi:=\phi_c\wedge \phi_j$.
The following are equivalent:
\begin{itemize}
\item[\rm (i)] $\HK_- (\phi_c, \phi_j)$.

\item[\rm (ii)] $\UHK (\phi_c, \phi_j)$, $\NL(\phi)$ and $\J_{\phi_j}$.

\item[\rm (iii)] $\UHKD(\phi)$, $\NDL(\phi)$ and $\J_{\phi_j}$.

\item[\rm (iv)] $\PI(\phi)$, $\J_{\phi_j}$ and $\Gcap(\phi)$.

\item[\rm (v)] $\PI(\phi)$, $\J_{\phi_j}$ and $\CS(\phi)$.
\end{itemize}
If, additionally, $(M,d,\mu)$ is connected and satisfies the chain condition, then all the conditions above are equivalent to:
\begin{itemize}
\item[\rm (vi)] $\HK (\phi_c, \phi_j)$.
\end{itemize}
\end{thm}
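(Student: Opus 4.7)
The plan is to establish the equivalence by running a cyclic chain (i)$\Rightarrow$(ii)$\Rightarrow$(iii)$\Rightarrow$(iv)$\Rightarrow$(v)$\Rightarrow$(i), and then handle (vi) separately under the additional chain condition. The easy half of the cycle is (i)$\Rightarrow$(ii)$\Rightarrow$(iii). Indeed, $\HK_-(\phi_c,\phi_j)$ contains $\UHK(\phi_c,\phi_j)$ by definition, and the near-diagonal lower bound \eqref{e:1.31} is exactly $\NL(\phi)$. The jumping kernel bound $\J_{\phi_j}$ is obtained from the standard L\'evy system argument: the off-diagonal lower bound in \eqref{e:1.31} yields $\J_{\phi_j,\ge}$, while $\UHK(\phi_c,\phi_j)$ yields $\J_{\phi_j,\le}$ by integrating $p(t,x,y)$ against test functions supported on disjoint balls and passing $t\to 0$ (as in \cite{CKW1}). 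For (ii)$\Rightarrow$(iii), setting $y=x$ in $\UHK(\phi_c,\phi_j)$ gives $\UHKD(\phi)$, and $\NDL(\phi)$ follows from $\NL(\phi)+\UHK(\phi_c,\phi_j)$ by the argument indicated in Remark \ref{R:1.22}(iv) (Lemma \ref{near-}), using $\VD$ and \eqref{polycon} to absorb the contribution from outside a small ball via the upper bound.

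For (iii)$\Rightarrow$(iv), I would extract $\PI(\phi)$ from $\NDL(\phi)+\UHKD(\phi)+\J_{\phi_j}$ by the Jerison--Saloff-Coste style argument adapted to the mixed setting (as done for pure jumps in \cite{CKW2}): integrating $(p^B(t,x,y)-V(x_0,\phi^{-1}(t))^{-1})$ type quantities and using the spectral representation. To obtain $\Gcap(\phi)$, I would construct the cut-off function probabilistically using the resolvent $U^B_\lambda {\bf 1}_{B(x_0,R+r)\setminus B(x_0,R)}$ associated to the part process killed on leaving a slightly larger ball, and bound $\sE(f^2\vp,\vp)$ by splitting into strongly local and pure-jump parts and using the survival estimates that follow from $\UHKD(\phi)+\NDL(\phi)$ (this is essentially the scheme of \cite{GHH} extended to include the diffusive component).

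For (iv)$\Rightarrow$(v), the point is that $\Gcap(\phi)$ already contains a cut-off inequality of capacity type, and the self-improving argument of \cite{GHH} (combined with the treatment of the strongly local energy measure via the Leibniz rule $d\Gamma_c(f\vp,f\vp)\le 2\vp^2 d\Gamma_c(f,f)+2f^2 d\Gamma_c(\vp,\vp)$) produces a cut-off function $\vp$ whose energy is controlled by the right-hand side of \eqref{e:csj1}; the key manipulation is to absorb $\int f^2 d\Gamma_c(\vp,\vp)$ into $\sE(f^2\vp,\vp)$ using $\Gam_c$'s strong locality, exactly as in $\CSA$, while handling the jump contribution as in $\CSJ$. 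The step (v)$\Rightarrow$(i) is the technical heart of the theorem and the main obstacle. Here one must first derive $\UHK(\phi_c,\phi_j)$ from $\PI(\phi)+\J_{\phi_j}+\CS(\phi)$; this is where the new self-improving argument described in ingredient (ii) of the introduction is used, as one needs to iterate a Davies--Nash--Moser type scheme that simultaneously respects the diffusive scale $\phi_c$ and the jump scale $\phi_j$, while only having $\VD+\RVD$ rather than Ahlfors regularity. After $\UHK(\phi_c,\phi_j)$ is obtained, $\PI(\phi)+\CS(\phi)$ yields $\E_\phi$ and then $\NDL(\phi)$ by the standard parabolic mean value inequality, which supplies the near-diagonal lower bound in \eqref{e:1.31}; the off-diagonal lower bound follows from a single-jump estimate using the L\'evy system with $\J_{\phi_j,\ge}$ together with $\E_{\phi,\ge}$.

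Finally, under the chain condition, (i)$\Leftrightarrow$(vi) follows by the classical chaining argument of \cite{HK}: iterating the near-diagonal lower bound in (i) along a chain of $n\asymp d(x,y)/\phi_c^{-1}(t/n)$ balls of radius comparable to $\phi_c^{-1}(t/n)$, optimizing in $n$ via $m(t,r)$ in \eqref{e:1.22}, recovers the Gaussian-type factor $\exp(-c\,d(x,y)/\bar\phi_c^{-1}(t/d(x,y)))$ in the regime where the strongly local part dominates; elsewhere the jump term already agrees. The main obstacle throughout remains step (v)$\Rightarrow$(i), since it requires simultaneously running the diffusive Davies method and the jump-type truncation arguments of \cite{CKW1} within a single iterative scheme under only $\VD+\RVD$.
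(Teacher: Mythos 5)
Your cyclic chain (i)$\Rightarrow$(ii)$\Rightarrow$(iii)$\Rightarrow$(iv)$\Rightarrow$(v)$\Rightarrow$(i), together with (vi) via chaining, is exactly the decomposition the paper uses, and the easy steps (i)$\Rightarrow$(ii)$\Rightarrow$(iii), (iv)$\Rightarrow$(v), and (i)$\Leftrightarrow$(vi) match the paper's Propositions \ref{l:jk}, Lemma \ref{near-}, Proposition \ref{P:csp} and Proposition \ref{P:twosided}. Two of your intermediate steps, however, are not quite what the paper does. For (iii)$\Rightarrow$(iv), the paper does not build $\Gcap(\phi)$ from scratch via a resolvent potential of the killed process; it instead observes that $\NDL(\phi)$ with $\RVD$ gives $\E_\phi$ (Proposition \ref{P:4.1}(ii)), hence $\EP_{\phi,\le,\eps}$, and then cites the short argument of Proposition \ref{P:2.4} (adapted from \cite[Lemma 2.8]{GHH}) for $\E_\phi\Rightarrow\Gcap(\phi)$; in the written proof this is packaged by routing through the (ii)$\Rightarrow$(iii) part of Theorem \ref{T:main-1} and Proposition \ref{T:gdCS}. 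Your idea is morally the same (exit-time control $\Rightarrow$ generalized capacity), just described more heavily.

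The more substantive discrepancy is in your description of (v)$\Rightarrow$(i). You characterize the upper bound derivation as a ``Davies--Nash--Moser type scheme'' run simultaneously in both scales. The paper's actual mechanism is different: $\PI(\phi)\Rightarrow\FK(\phi)$ (Proposition \ref{P:3.1}), then $\FK+\CS+\J_{\phi,\le}\Rightarrow\E_\phi$ via the $L^1$-mean value inequality (Propositions \ref{P:mvi2}, \ref{P:exit}), then the on-diagonal bound $\UHKD(\phi)$ by a Nash-type argument (Proposition \ref{ehi-ukdd}), and finally the full off-diagonal $\UHK(\phi_c,\phi_j)$ by Meyer's decomposition into $\rho$-truncated Dirichlet forms plus a \emph{self-improving tail-probability iteration} (Lemmas \ref{P:truncated}, \ref{L:lemmastep1} and, crucially, Lemma \ref{L:selftail} fed into Proposition \ref{thm:ujeuhkds}). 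A Davies-type exponential bound does appear, but only as an ingredient for the truncated kernel $q^{(\rho)}$; the novelty flagged in the introduction is the bootstrapping of the tail estimate across the two scaling regimes, not a perturbed-semigroup Davies argument for the diffusive part. This distinction matters because it is precisely how the argument remains workable under only $\VD+\RVD$ with two incomparable scaling functions, which is the ``obstacle'' you correctly identify; so your instinct about where the difficulty lies is right, but the actual resolving technique is the iterated tail estimate rather than a simultaneous Davies scheme.
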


In the process of establishing  Theorem \ref{T:main}, we also obtain the following characterizations for $\UHK(\phi_c, \phi_j)$.

\begin{thm} \label{T:main-1}
Assume that the metric measure space $(M, d, \mu)$ satisfies $\VD$ and $\RVD$, and
that the scale functions $\phi_c$ and $\phi_j$ satisfy  \eqref{polycon} and \eqref{e:1.11}. Let $\phi:=\phi_c\wedge \phi_j$.
Then the following are equivalent:
\begin{itemize}
\item[\rm (i)]  $\UHK(\phi_c, \phi_j)$ and $(\sE, \sF)$ is conservative.

\item[\rm (ii)]  $\UHKD(\phi)$, $\J_{\phi_j,\le}$ and $\E_\phi$.

\item[\rm (iii)]  $\FK(\phi)$, $\J_{\phi_j,\le}$ and $\Gcap(\phi)$.

\item[\rm (iv)]  $\FK(\phi)$, $\J_{\phi_j,\le}$ and $\CS(\phi)$.
\end{itemize}
\end{thm}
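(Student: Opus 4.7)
\proof[Proof sketch proposal]
The plan is to establish the four equivalences by a cycle $(i)\Rightarrow(ii)\Rightarrow(iii)\Rightarrow(iv)\Rightarrow(i)$. The conservativeness hypothesis in $(i)$ will propagate through the cycle, reappearing naturally at the end via $\E_\phi$ and $\NL(\phi)$-type arguments combined with \cite[Proposition 3.1(ii)]{CKW1}.

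For $(i)\Rightarrow(ii)$, the diagonal bound $\UHKD(\phi)$ is immediate by setting $x=y$ in $\UHK(\phi_c,\phi_j)$ and using the definition of $\phi$. To extract $\J_{\phi_j,\le}$, I would use the L\'evy system formula for $X$, testing it against disjoint ball pairs and using the off-diagonal upper bound on $p(t,x,y)$ for large $t/\phi_j(d(x,y))$; the polynomial growth condition \eqref{polycon} together with $\VD$ reduces matters to the standard Meyer-type jumping kernel identification (the strongly local part of $\sE$ makes no contribution at disjoint balls). For $\E_\phi$, the upper estimate follows from the tail bound obtained by integrating the heat kernel upper bound outside $B(x,r)$ and using conservativeness, while the lower estimate follows from $\NL(\phi)$ (which is automatic from $\UHK$ paired with the lower bound part of $\HK_-$ once one observes that $\UHK$ alone combined with conservativeness already forces a matching near-diagonal lower bound up to a different constant).

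For $(ii)\Rightarrow(iii)$, the Faber-Krahn inequality $\FK(\phi)$ is derived from $\UHKD(\phi)$ by the standard Nash-inequality argument, exactly as in \cite[Section 4]{CKW1}, which only uses $\VD$, $\RVD$ and $\UHKD$. The generalized capacity inequality $\Gcap(\phi)$ is the main point here: following the strategy adopted from \cite{GHH}, I would build the cut-off function $\vp$ probabilistically via $\vp(x)=\bE^x[e^{-\lambda\tau_{B(x_0,R+r)}}\cdot h(X_{\tau_{B(x_0,R+r)}})]$ or via a suitable equilibrium potential for the capacitor $(B(x_0,R),B(x_0,R+r))$, and bound $\sE(f^2\vp,\vp)$ using the product-rule for the jump part together with $\E_\phi$ for the diffusive part; the jump contribution is handled by $\J_{\phi_j,\le}$ and $\VD$. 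Note that because $\Gcap(\phi)$ need not be stable under rough isometry, unlike $\CS(\phi)$, one is free to use this less robust but more tractable formulation here.

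For $(iii)\Rightarrow(iv)$, I need to pass from $\Gcap(\phi)$ to the rough-isometry-stable $\CS(\phi)$. This is where the main analytic work lies. Following ingredient $(i)$ of the introduction, I would test $\Gcap(\phi)$ against the function $f^2$ itself, use the Leibniz-type identity for energy measures
\begin{equation*}
\int g\,d\Gamma_c(f\vp,f\vp)=\int g f^2\,d\Gamma_c(\vp,\vp)+2\int g f\vp\,d\Gamma_c(f,\vp)+\int g\vp^2\,d\Gamma_c(f,f),
\end{equation*}
and a corresponding pointwise expansion for the jump bilinear form, to isolate $\int f^2\,d\Gamma(\vp,\vp)$ on the left of \eqref{e:csj1}. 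The ``chain rule'' / absorption step is the delicate point: cross-terms involving $\Gamma_c(f,\vp)$ and mixed jump differences must be absorbed into the right-hand side via Cauchy-Schwarz with a tunable parameter, and the resulting error, integrated against the jump kernel, must be controlled using $\J_{\phi_j,\le}$ and $\VD$. Finally, for $(iv)\Rightarrow(i)$, with $\FK(\phi)$, $\CS(\phi)$ and $\J_{\phi_j,\le}$ in hand, the upper heat kernel estimate is obtained by the Nash-Moser iteration with Davies-type perturbation $e^{\pm\psi}$ tailored to the double scale $\phi_c,\phi_j$: the exponential factor produces Gaussian-type off-diagonal decay at scale $\phi_c$, while a Meyer decomposition of $X$ into a ``small jump'' part and a ``big jump'' part at threshold determined by $d(x,y)$ and $t$ yields the polynomial tail from the big jumps, leading to $p^{(c)}(ct,x,y)+p^{(j)}(t,x,y)$ exactly as in \eqref{HKjum}; conservativeness follows from $\FK(\phi)$ and $\CS(\phi)$ via the standard integration-of-Davies-estimates argument.

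The principal obstacle I anticipate is the implication $(iii)\Rightarrow(iv)$ together with the new self-improving argument promised as ingredient $(ii)$ in the introduction: handling the diffusive $\Gamma_c$-part and the jump $\J$-part simultaneously at two different scales $\phi_c$ and $\phi_j$ inside the cut-off Sobolev machinery, and doing so without a uniform volume comparability, will require carefully book-keeping of ``annular'' Meyer-type decompositions around $B(x_0,R+r)$ versus the enlarged $B(x_0,R+(1+C_0)r)$, and this is precisely the technical novelty flagged in Remark \ref{rek:scj}(i).
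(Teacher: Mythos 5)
The statement you claim parenthetically — that $\UHK(\phi_c,\phi_j)$ plus conservativeness already forces a near-diagonal lower bound $\NL(\phi)$ — is false, and it breaks your cycle at the step $(i)\Rightarrow(ii)$. An upper bound on the heat kernel carries no pointwise lower-bound information, even with $P_t1=1$: $\int p(t,x,\cdot)\,d\mu=1$ together with $p(t,x,\cdot)\le c/V(x,\phi^{-1}(t))$ gives a lower bound on $\mu\{y:p(t,x,y)\ge\varepsilon/V(x,\phi^{-1}(t))\}$ but says nothing about $p$ at a prescribed near-diagonal point $y$. You have also interchanged the sources of the two halves of $\E_\phi$: integrating the heat kernel upper bound over $B(x,r)^c$ gives $\bP^x(X_t\notin B(x,r))\lesssim t/\phi(r)$, hence (by the strong Markov property under conservativeness) $\bP^x(\tau_{B(x,r)}\le t)\lesssim t/\phi(r)$, i.e.\ $\EP_{\phi,\le}$; that yields $\E_{\phi,\ge}$, not $\E_{\phi,\le}$. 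Conversely, $\E_{\phi,\le}$ comes from the on-diagonal bound $\UHKD(\phi)$ plus $\RVD$ (via $\FK(\phi)$, as in Proposition \ref{P:3.1} and \cite[Lemma 4.14]{CKW1}), since it is the survival probability $\bP^x(\tau_{B(x,r)}>t)$ that must decay. So what you in fact get from (i) without invoking $\NL(\phi)$ is exactly $\EP_{\phi,\le}$ (this is the content of Lemma \ref{Conserv}), which yields both $\E_{\phi,\ge}$ and, via Proposition \ref{P:2.4}, $\Gcap(\phi)$. That is why the paper runs its cycle as $(ii)\Rightarrow(i)\Rightarrow(iii)\Rightarrow(iv)\Rightarrow(ii)$ rather than yours: it gets $\Gcap$ cheaply from (i), and recovers $\E_\phi$ (in particular the delicate $\E_{\phi,\ge}$) only in the step $(iv)\Rightarrow(ii)$, via $\FK(\phi)+\CS(\phi)+\J_{\phi,\le}\Rightarrow\E_\phi$ (Proposition \ref{P:exit}), which rests on the $L^1$-mean value inequality from the truncated form.

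On the remaining steps: your $(iii)\Rightarrow(iv)$ matches Proposition \ref{P:csp}, and the idea for $(ii)\Rightarrow(iii)$ (constructing a cut-off via exit times and then passing to $\Gcap$) is the content of Proposition \ref{P:2.4}. But for $(iv)\Rightarrow(i)$ the paper does not use a Davies-perturbation argument; it proves $(iv)\Rightarrow(ii)$ first and then $(ii)\Rightarrow(i)$ via a genuine self-improvement: one starts from the rough truncated bound of Lemma \ref{P:truncated} and Meyer decomposition, establishes an intermediate stretched-exponential tail (Lemma \ref{L:lemmastep1}), and feeds that back into the bootstrap mechanism of Lemma \ref{L:selftail} to sharpen the off-diagonal factor to $p^{(c)}(ct,x,y)+p^{(j)}(t,x,y)$, treating the regimes $d(x,y)\lesssim r_*(t)$ and $d(x,y)\gtrsim r^*(t)$ separately. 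A one-pass Davies scheme tuned to the two scales is not ruled out, but it is not what the paper does and would need to be worked out separately.
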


The proof of Theorem \ref{T:main-1} is given at the end of Section \ref{section4},
while the proof of Theorem \ref{T:main} is given at the end of Section \ref{S:5}.
We point out that $\UHK(\phi_c, \phi_j)$ alone
does not imply the conservativeness of the associated Dirichlet form $(\sE, \sF)$.
See \cite[Proposition 3.1 and Remark 3.2]{CKW1} for more details.
Under $\VD$, $\RVD$ and \eqref{polycon}, $\NDL(\phi)$ implies $\E_\phi$
(see Proposition \ref{P:4.1}(ii) below),
and so $ {\rm (iii)}$ in Theorem \ref{T:main} is stronger than ${\rm (ii)}$ in Theorem \ref{T:main-1}.
 We also note that $\RVD$
 is only used
in the implications of $\UHKD(\phi)\Longrightarrow \FK(\phi)$ and  $\PI(\phi)\Longrightarrow
 \FK(\phi)$;
see Proposition \ref{P:3.1} below.
In particular,
 $ {\rm (iii)}\Longrightarrow  {\rm (iv)} \Longrightarrow  {\rm (i)}$
  in Theorem
 \ref{T:main-1} holds true under $\VD$ and \eqref{polycon}.
 See Figure \ref{diagfig} below for various relations among $\UHK(\phi_c,\phi_j)$, $\HK_-(\phi_c,\phi_j)$ and $\HK(\phi_c,\phi_j)$.
 \begin{figure}[t]
\centerline{\epsfig{file=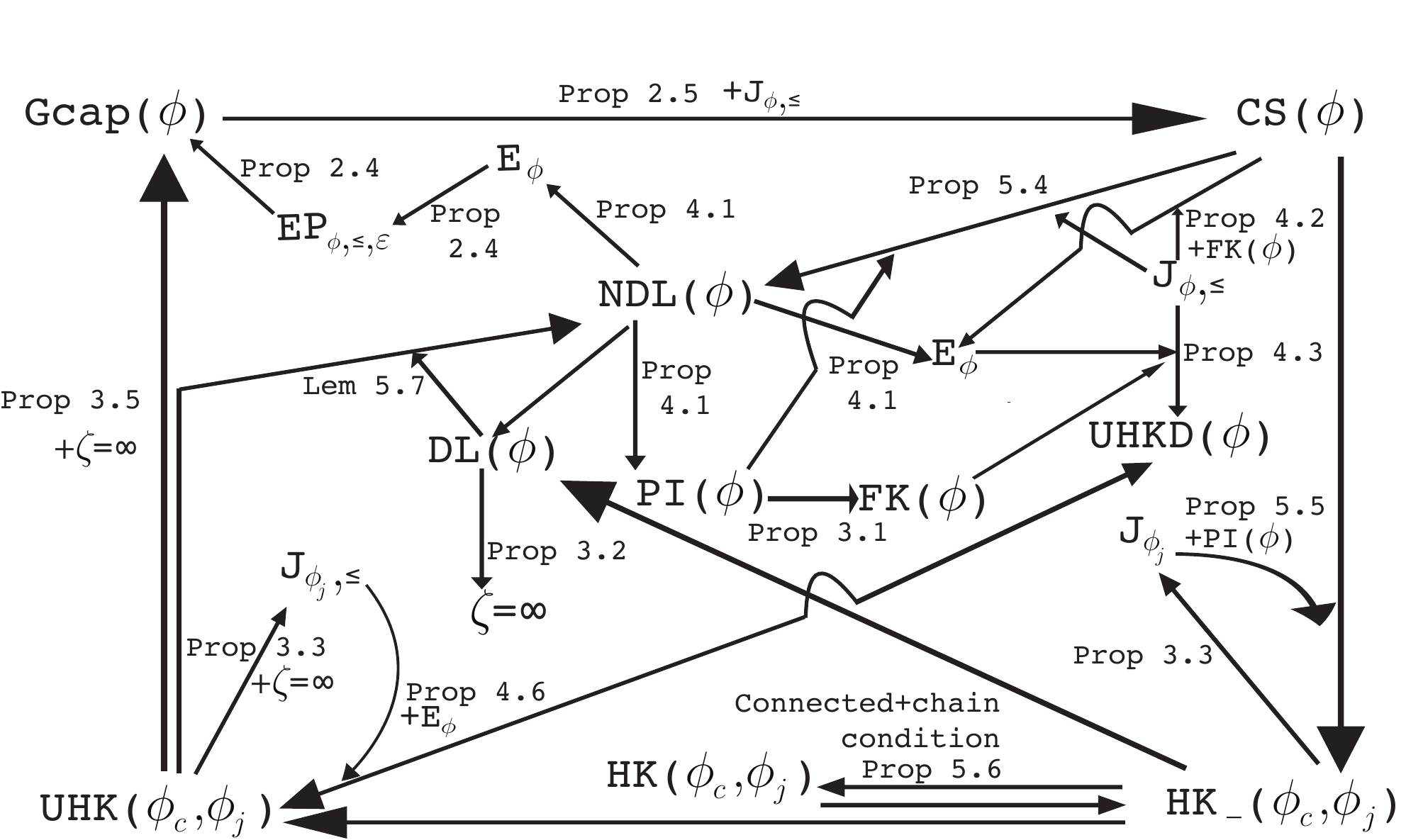, height=3in}}
\caption{Diagram for heat kernel estimates}\label{diagfig}
\end{figure}

 We emphasize again that the connectedness and the chain condition of the underlying metric measure space $(M,d,\mu)$
are only used to derive optimal
lower bounds
off-diagonal estimates for heat kernel when the time is small (i.e., from $\HK_- (\phi_c, \phi_j)$ to $\HK (\phi_c, \phi_j)$), while for other statements in the two main results above,
the metric measure space $(M, d,\mu)$ is only assumed to satisfy the general VD and RVD; that is, neither do we assume
$M$ to be connected nor $(M, d)$ to be geodesic.
Furthermore, we do not assume the uniform comparability of volume of balls; that is, we do not assume
the existence of a non-decreasing function $V$ on $[0, \infty)$ with $V(0)=0$ so that
 $\mu(B(x,r))\asymp V(r)$ for all $x\in M$ and $r>0$.

\subsection{Parabolic Harnack inequalities}

Let $Z:=\{V_s,X_s\}_{s\ge0}$ be the   space-time process corresponding to $X$,
where $V_s=V_0-s$.
The augmented
filtration generated by $Z$ satisfying the usual conditions will be denoted by $\{\widetilde{\mathcal{F}}_s;s\ge0\}$. The law of the space-time process $s\mapsto Z_s$ starting from $(t,x)$ will be denoted by $\bP^{(t,x)}$. For every open subset $D$ of $[0,\infty)\times M$, define
$\tau_D=\inf\{s>0:Z_s\notin D\}.$

\begin{definition}\label{def:PHI} \rm \begin{description}
\item{(i)} We say that a Borel measurable function $u(t,x)$ on
$[0,\infty)\times M$ is \emph{parabolic} (or \emph{caloric}) on
$D=(a,b)\times B(x_0,r)$ for the process $X$ if there is a properly
exceptional set $\mathcal{N}_u$ associated with the process $X$ so that for every
relatively compact open subset $U$ of $D$,
$u(t,x)=\bE^{(t,x)}u(Z_{\tau_{U}})$ for every $(t,x)\in
U\cap([0,\infty)\times (M\backslash \mathcal{N}_u)).$

\item{(ii)}
We say that the \emph{parabolic Harnack inequality} ($\PHI(\phi)$) holds for the process $X$, if there exist constants $0<C_1<C_2<C_3<C_4$,  $C_5>1$ and  $C_6>0$ such that for every $x_0 \in M $, $t_0\ge 0$, $R>0$ and for
every non-negative function $u=u(t,x)$ on $[0,\infty)\times M$ that is parabolic on cylinder $Q(t_0, x_0,\phi(C_4R),C_5R):=(t_0, t_0+\phi(C_4R))\times B(x_0,C_5R)$,
\be\label{e:phidef}
  \esssup_{Q_- }u\le C_6 \,\essinf_{Q_+}u,
 \ee where $Q_-:=(t_0+\phi(C_1R),t_0+\phi(C_2R))\times B(x_0,R)$ and $Q_+:=(t_0+\phi(C_3R), t_0+\phi(C_4R))\times B(x_0,R)$.
\end{description}
\end{definition}

The above $\PHI(\phi)$ is called a weak parabolic Harnack inequality in \cite{BGK},
in the sense that \eqref{e:phidef} holds for some $C_1, \cdots, C_5>0$.
It is called a parabolic Harnack inequality in \cite{BGK}
if \eqref{e:phidef} holds for any choice of positive constants $C_1, \cdots, C_5$
with $C_6=C_6(C_1, \dots, C_5)<\infty$. Since our underlying metric measure space may not be geodesic,
one can not expect to deduce  parabolic Harnack inequalities from weak  parabolic Harnack inequalities.

The following definition
was initially
 introduced in \cite{BBK2} in the setting of graphs.
See \cite{CKK2}
for the general setting of metric measure spaces.

\begin{definition}\label{thm:defUJS} {\rm
We say that $\UJS$ holds if
there is a non-negative symmetric function $J(x, y)$ on $M\times M$ so that
for $\mu\times \mu$ almost all $x,y\in M$, \eqref{e:1.2} holds, and that there is a constant $c>0$ such that for $\mu$-a.e. $x, y\in M$ with $x\not= y$,
\begin{equation}\label{ujs}
J(x,y)\le  \frac{c}{V(x,r)}\int_{B(x,r)}J(z,y)\,\mu(dz)
\quad\hbox{for every }
0<r\le   d(x,y) /2.
\end{equation}
} \end{definition}

The following are the main results for parabolic Harnack inequalities.
See Section \ref{S:6}
and Remark \ref{Heat:remark} for  notations appeared in the statement.

\begin{theorem}\label{C:1.25} Suppose that the metric measure space  $(M, d,  \mu)$ satisfies $\VD$ and $\RVD$, and
that the scale functions $\phi_c$ and $\phi_j$ satisfy  \eqref{polycon} and \eqref{e:1.11}. Let $\phi:=\phi_c\wedge \phi_j$.
Then
the following statements are equivalent.
 \begin{itemize}
 \item[\rm (i)]  $\PHI(\phi)$ .

 \item[\rm (ii)]  $\UHK_{weak} (\phi) + \NDL (\phi) +\UJS $.

 \item[\rm (iii)] $\PHR(\phi)+\E_\phi+ \UJS +\J_{\phi,\le}$.

 \item[\rm (iv)] $\EHR +\E_\phi +\UJS+\J_{\phi,\le }$.

 \item[\rm (v)]  $\PI(\phi)+\J_{\phi,\le }+\Gcap(\phi)+\UJS$.

 \item[\rm (vi)] $\PI(\phi)+\J_{\phi,\le }+\CS(\phi)+\UJS$.
 \end{itemize}
Consequently, we have
\begin{equation}\label{e:HKPHI}
\HK_- (\phi_c, \phi_j)\Longleftrightarrow \PHI(\phi)+\J_{\phi_j}.
\end{equation}
 If in additional, the metric measure space $(M,d,\mu)$ is connected and satisfies the chain condition, then
\begin{equation}\label{eq:5-6Fuz}
\HK (\phi_c, \phi_j)\Longleftrightarrow \PHI(\phi)+\J_{\phi_j}.
\end{equation}
 \end{theorem}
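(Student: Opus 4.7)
The plan is to establish the six-fold equivalence (i)$\Leftrightarrow\cdots\Leftrightarrow$(vi) via a cyclic chain of implications, and then deduce the consequences \eqref{e:HKPHI} and \eqref{eq:5-6Fuz} as corollaries using Theorem~\ref{T:main}. A convenient cycle is $(i)\Rightarrow(ii)\Rightarrow(iii)\Rightarrow(v)\Leftrightarrow(vi)\Rightarrow(i)$, with $(iii)\Leftrightarrow(iv)$ handled separately via the standard passage between parabolic and elliptic Harnack ratios on caloric and harmonic functions (using $\E_\phi$ to integrate ratios along suitable time slices).

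For $(i)\Rightarrow(ii)$: applying $\PHI(\phi)$ to the Dirichlet heat kernel $(t,x)\mapsto p^{B}(t,x_0,x)$ yields $\NDL(\phi)$, while a mean-value sup estimate combined with conservativeness (obtained from $\NL(\phi)$) gives $\UHK_{weak}(\phi)$; $\UJS$ is extracted by applying $\PHI$ to a caloric version of $J(\cdot,y)$ and using the L\'evy system to compare $J(x,y)$ with its local ball average for $2r\le d(x,y)$. For $(ii)\Rightarrow(iii)+(iv)$: $\NDL(\phi)$ yields $\E_\phi$ by Proposition~\ref{P:4.1}(ii); the L\'evy system applied to $\UHK_{weak}(\phi)$ gives $\J_{\phi,\le}$; and $\PHR(\phi)$, $\EHR$ follow by combining $\NDL$ with $\UHK_{weak}$ on near- and off-diagonal regimes. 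For $(iii)\Rightarrow(v)$, $\PHR(\phi)+\E_\phi$ delivers $\PI(\phi)$ by standard Whitney-covering oscillation arguments, and $\Gcap(\phi)$ is verified by direct testing against cut-off functions, using $\UJS+\J_{\phi,\le}$ to bound the non-local contribution.

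The equivalence $(v)\Leftrightarrow(vi)$ follows by adapting the self-improving cut-off argument of \cite{CKW1}: $\CS(\phi)$ implies $\Gcap(\phi)$ upon testing $\sE(f^2\varphi,\varphi)$ against the $\CS$-cut-off and invoking $\J_{\phi,\le}$, while $\Gcap(\phi)$ returns $\CS(\phi)$ through a localized self-improvement producing cut-off functions with controlled energy. The hardest step is $(vi)\Rightarrow(i)$: starting from $\PI(\phi)+\J_{\phi,\le}+\CS(\phi)+\UJS$, one runs a Nash--Moser--De~Giorgi iteration on the mixed form $\sE^{(c)}+\sE^{(j)}$, whose two parts scale differently; $\CS(\phi)$ supplies the crucial energy localization, $\PI(\phi)$ gives the $L^2$--$L^p$ Sobolev step, and $\UJS$ propagates upper bounds at the jump scale not captured by $\phi$. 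The output is $\UHK_{weak}(\phi)+\NDL(\phi)$, from which $\PHI(\phi)$ is extracted by a Moser oscillation lemma over the cylinders $Q_-,Q_+$.

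For the consequences: in \eqref{e:HKPHI}, $(\Rightarrow)$ combines Theorem~\ref{T:main} (which yields $\PI(\phi)+\J_{\phi_j}+\CS(\phi)$ from $\HK_-$) with the observation that $\J_{\phi_j}$ entails both $\J_{\phi,\le}$ and $\UJS$ (the lower bound in $\J_{\phi_j}$ trivializes \eqref{ujs}), so condition (vi) here applies and gives $\PHI(\phi)$; $(\Leftarrow)$ uses $\J_{\phi_j}\Rightarrow\J_{\phi,\le}+\UJS$ and the current theorem to obtain $\PI(\phi)+\Gcap(\phi)$, then restores the full jump bound $\J_{\phi_j}$ and invokes Theorem~\ref{T:main}(iv) to conclude $\HK_-(\phi_c,\phi_j)$. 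The equivalence \eqref{eq:5-6Fuz} follows from the same argument combined with the additional clause of Theorem~\ref{T:main} upgrading $\HK_-$ to $\HK$ under connectedness and the chain condition. The principal obstacle is $(vi)\Rightarrow(i)$, where the two scales $\phi_c$ (for $r\le 1$) and $\phi_j$ (for $r\ge 1$) must be reconciled: $\PHI(\phi)$ alone encodes no jump-scale information at small scales, so $\UJS$ is indispensable to bridge this gap, and the iteration must be partitioned carefully at the transition scale $r=1$.
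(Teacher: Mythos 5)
Your overall architecture (a cycle through (i)–(vi) plus the consequences via Theorem~\ref{T:main}) is in the right spirit, and the deduction of \eqref{e:HKPHI} and \eqref{eq:5-6Fuz} from the (i)$\Leftrightarrow$(v) equivalence and Theorem~\ref{T:main} is correct, as is the observation that $\J_{\phi_j}$ supplies both $\J_{\phi,\le}$ and $\UJS$. However, two steps in your cycle carry genuine gaps.

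The most serious is your claim that $\CS(\phi)\Rightarrow\Gcap(\phi)$ ``upon testing $\sE(f^2\varphi,\varphi)$ against the $\CS$-cut-off.'' Unfolding $\sE(f^2\varphi,\varphi)$ by the Leibniz rule produces the cross term $2\int\varphi f\,d\Gamma_c(f,\varphi)$ plus its non-local analogue, and Cauchy--Schwarz trades this for a multiple of $\int\varphi^2\,d\Gamma_c(f,f)$ --- a quantity that does \emph{not} appear on the right-hand side of the $\Gcap(\phi)$ inequality and for which $\CS(\phi)$ offers no a~priori control, since $f\in\sF_b'$ can have arbitrarily large energy. The paper does \emph{not} attempt a direct analytic implication $\CS\Rightarrow\Gcap$; it establishes $\Gcap(\phi)$ only by going through $\PHI(\phi)$ to obtain conservativeness and $\UHK_{weak}(\phi)$, hence $\EP_{\phi,\le}$, and then invoking Proposition~\ref{P:2.4} ($\EP_{\phi,\le,\eps}\Rightarrow\Gcap(\phi)$). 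So the equivalence $(v)\Leftrightarrow(vi)$ in the paper is $\Gcap\Rightarrow\CS$ directly (Proposition~\ref{P:csp}) but $\CS\Rightarrow\Gcap$ only via the detour $(vi)\Rightarrow(i)\Rightarrow(v)$.

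Second, the implication $(vi)\Rightarrow(i)$ is not accomplished by a single Nash--Moser iteration delivering $\UHK_{weak}+\NDL$ all at once. The paper's route is: $\PI(\phi)\Rightarrow\FK(\phi)$ (Proposition~\ref{P:3.1}, here is where $\RVD$ enters), then $\FK+\CS+\J_{\phi,\le}\Rightarrow\E_\phi$ via the $L^1$-mean-value inequality for truncated forms (Proposition~\ref{P:exit}), then $\PI+\CS+\J_{\phi,\le}\Rightarrow\EHR$ (Proposition~\ref{P:EHR} via the log-Caccioppoli estimate of Proposition~\ref{L:log-l}), and finally $\EHR+\E_\phi+\UJS+\J_{\phi,\le}\Rightarrow\PHI$ (Theorem~\ref{Thm: prop2}, which itself routes through $\NDL$, $\FK$, $\UHKD$, $\UHK_{weak}$). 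Your sketch omits the step $\PI\Rightarrow\FK$ entirely, does not explain where $\RVD$ is used, and compresses the exit-time/$L^1$-mean-value machinery that produces $\E_\phi$ and $\EHR$ --- the separation into local and nonlocal scales that you flag as ``the principal obstacle'' is in fact handled through these exit-time estimates and the truncated form $\sE^{(\rho)}$, not by a single iteration scheme split at $r=1$.

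A minor point: your direct conversion $(iii)\Leftrightarrow(iv)$ ``via integrating Harnack ratios along time slices'' is not the paper's approach; $(iii)\Rightarrow(iv)$ is trivial ($\PHR\Rightarrow\EHR$) and $(iv)\Rightarrow(iii)$ follows by the loop $(iv)\Rightarrow(i)\Rightarrow(iii)$.
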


The equivalence between {\rm (i)} and {\rm (ii)} will be  proved in Theorem \ref{Thm: prop1},   the equivalence between
{\rm (i)},  {\rm (iii)} and  {\rm (iv)}  will be established in Theorem \ref{Thm: prop2}, while the equivalence between
{\rm (i)},  {\rm (v)} and  {\rm (vi)} will be given in Theorem  \ref{Thm:anal}.  The last
two assertions of Theorem \ref{C:1.25} follow
from the equivalence between {\rm (i)},  {\rm (v)} and Theorem \ref{T:main}.

We emphasize that, different from the purely non-local setting as
studied in \cite{CKW2}, $\PHI(\phi)$ alone
can only imply $\J_{\phi,\le }$ but not the stronger $\J_{\phi_j,\le}$.
See Example
\ref{PHIexm} for a counterexample.

 \medskip

The rest of the paper is organized as follows. In the next section, we present some preliminary results about $\Gcap(\phi)$.
We show in Proposition \ref{P:csp} that
$\Gcap(\phi)$ along with $\J_{\phi,\le}$ yields
$\CS(\phi)$.
This immediately yields  $ {\rm (iv)} \Longrightarrow  {\rm (v)}$ in Theorem \ref{T:main} and
$ {\rm (iii)} \Longrightarrow  {\rm (iv)}$ in Theorem \ref{T:main-1}.
Furthermore,
$\CS(\phi)$ enjoys the self-improving property, and enables us to make full use of the ideas in \cite{CKW1, CKW2}.
For example, via them we can obtain the $L^1$-mean value inequalities in the present setting, which play a key tool to obtain $\E_{\phi}$.  In Section \ref{section3}, we
investigate
consequences of $\UHK(\phi_c, \phi_j)$, and establish ${\rm (i)} \Longrightarrow  {\rm (iii)}$ of Theorem \ref{T:main-1}. Section \ref{section4} is devoted to obtaining $\UHK(\phi_c, \phi_j)$, which is the most difficult part of the paper.
The crucial step is to apply rough tail probability estimates to derive sharp $\UHK(\phi_c, \phi_j)$, which
requires detailed analysis of the roles of the
local and non-local parts in different time and space regions.
The proof of Theorem \ref{T:main-1} is given at the end of Section \ref{section4}.
Section \ref{S:5} is devoted to the two-sided heat kernel estimates and
the proof of Theorem \ref{T:main}.
Various characterizations of $\PHI(\phi)$ are given in Section \ref{S:6}.  In the last section, some examples are shown to illustrate the applications of our results, and a counterexample is also given to indicate that alone $\PHI(\phi)$ does not imply $\J_{\phi_j,\le}.$

\medskip

Throughout this paper, we will use $c$, with or without subscripts,
to denote strictly positive finite constants whose values are
insignificant and may change from line to line.
For $p\in [1, \infty]$, we will use $\| f\|_p$ to denote the $L^p$-norm in $L^p(M;\mu)$. For $a,b\in \bR_+$, $a\wedge b=:\min\{a,b\}$ and $a\vee b=:\max\{a,b\}.$
For $B=B(x_0, r)$ and $a>0$, we use $a B$ to denote
the ball $B(x_0, ar)$,
and $\bar{B}:=\{x\in M:d(x,x_0)\le r\}$. For any subset $D$ of $M$,
$D^c$ denotes its complement in $M$.

\section{Preliminaries}\label{section2}

In this section, we mainly present some preliminary results about $\Gcap(\phi)$.  For our later use to establish the characterizations of parabolic Harnack inequalities, we always assume that $\J_{\phi,\le}$ is satisfied in this section. Since $\phi(r)\le \phi_j(r)$ for all $r>0$, $\J_{\phi,\le}$ is weaker than $\J_{\phi_j,\le}$, and so all the results in this section still hold true with $\J_{\phi,\le}$ replaced by $\J_{\phi_j,\le}$.

\subsection{Properties of $\phi_c$ and $\phi_j$}

We recall the following statement from \cite{CKW1}.

 \begin{lemma}\label{intelem}$(${\rm\cite[Lemma 2.1]{CKW1}}$)$ Assume that $\VD$ and
 \eqref{polycon} hold. If $\J_{\phi_j,\le}$ $($resp. $\J_{\phi,\le}$$)$ holds,
then there exists a constant $c_1>0$ such that
\[
 \int_{B(x,r)^c}
J(x,y)\,\mu (dy)\le \frac{c_1}{\phi_j (r)}
\quad\mbox{for every }
x\in M \hbox{ and } r>0.
\]
\[\left ( resp.\
 \int_{B(x,r)^c}
J(x,y)\,\mu (dy)\le \frac{c_1}{\phi (r)}
\quad\mbox{for every }
x\in M \hbox{ and } r>0.
\right)\]
\end{lemma}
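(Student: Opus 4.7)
The plan is to establish this tail estimate by decomposing the complement of the ball into dyadic annuli and controlling the $J$-mass of each annulus using the pointwise upper bound on $J(x,y)$, condition $\VD$, and the polynomial growth bounds on $\phi_j$ (resp. $\phi$). This is the standard template for passing from a pointwise bound on a jumping kernel to an integrated tail bound, and the whole argument is driven by the fact that the lower exponent $\beta_{1,\phi_j}>0$ (resp. $\beta_{1,\phi}=\beta_{1,\phi_c}\wedge\beta_{1,\phi_j}>0$) makes a geometric series converge.

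First I would write
$$
B(x,r)^c=\bigsqcup_{k\ge 0} A_k, \qquad A_k:=B(x,2^{k+1}r)\setminus B(x,2^kr),
$$
so that for every $y\in A_k$ one has $d(x,y)\in[2^kr,2^{k+1}r)$. Using $\J_{\phi_j,\le}$, monotonicity of $\phi_j$, and the fact that $V(x,d(x,y))\ge V(x,2^kr)$, I would estimate
$$
\int_{A_k} J(x,y)\,\mu(dy)\le \frac{c_2\,\mu(A_k)}{V(x,2^kr)\,\phi_j(2^kr)}\le \frac{c_2\,V(x,2^{k+1}r)}{V(x,2^kr)\,\phi_j(2^kr)}\le \frac{c_2\,C_\mu}{\phi_j(2^kr)},
$$
where the last inequality is $\VD$.

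Next I would invoke the lower scaling estimate in \eqref{polycon}, namely $\phi_j(2^kr)\ge c_{1,\phi_j}\,2^{k\beta_{1,\phi_j}}\phi_j(r)$, and sum over $k\ge 0$:
$$
\int_{B(x,r)^c} J(x,y)\,\mu(dy)\le \sum_{k\ge 0}\frac{c_2\,C_\mu}{\phi_j(2^kr)}\le \frac{c_2\,C_\mu}{c_{1,\phi_j}\,\phi_j(r)}\sum_{k\ge 0}2^{-k\beta_{1,\phi_j}}=\frac{c_1}{\phi_j(r)}.
$$
The geometric series converges precisely because $\beta_{1,\phi_j}>0$, and all constants depend only on $C_\mu$, $c_{1,\phi_j}$, $\beta_{1,\phi_j}$, and the constant $c_2$ from $\J_{\phi_j,\le}$, so $c_1$ is uniform in $x$ and $r$.

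For the parenthetical statement, I would run the identical argument with $\phi$ in place of $\phi_j$ throughout: the pointwise bound from $\J_{\phi,\le}$ is used, and the relevant scaling exponent is $\beta_{1,\phi}=\beta_{1,\phi_c}\wedge\beta_{1,\phi_j}>0$, which is still positive and therefore again makes the dyadic sum converge. There is really no obstacle here; the only thing to be mindful of is that the calculation should not accidentally appeal to $\beta_{1,\phi}>1$ (which would be false in general), only to $\beta_{1,\phi}>0$. In short, this lemma is a two-line consequence of dyadic decomposition together with $\VD$ and the lower scaling in \eqref{polycon}.
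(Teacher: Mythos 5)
Your proof is correct, and it is the standard dyadic-annulus argument — indeed the same one used in the cited reference \cite[Lemma 2.1]{CKW1} (this paper does not reprove the lemma but simply cites it). Your remark that only $\beta_{1,\phi}>0$ (not $>1$) is needed for the $\phi$ case is exactly the right thing to be careful about, and the argument is otherwise complete.
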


The following lemma is concerned with the exponential function in estimates for $p^{(c)}(t,x,y)$.

\begin{lemma}\label{L:diff}
Under  \eqref{polycon},
 for any $c_0>0$ there exists a constant $C:=C(c_0)\ge1$ so that for any $t,r>0$,
$$ \frac{r}{C\bar \phi_c^{-1}(t/r)}\le \sup_{s>0}\left\{\frac{r}{s}-\frac{c_0t}{\phi_c(s)}\right\}\le \frac{Cr}{\bar \phi_c^{-1}(t/r)}.$$ \end{lemma}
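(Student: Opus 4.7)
The plan is to reduce the supremum to a quantitative calculation around the natural scale $s_0 := \bar\phi_c^{-1}(t/r)$, which is exactly the quantity that appears on the right-hand side. By \eqref{e:1.12}, $\bar\phi_c(s_0) = t/r$ translates to $\phi_c(s_0)/s_0 \asymp t/r$, so $c_0 t/\phi_c(s_0) \asymp c_0 r/s_0$. The target bound $r/s_0$ is precisely $r/\bar\phi_c^{-1}(t/r)$, so it suffices to compare $\sup_{s>0}\{r/s - c_0 t/\phi_c(s)\}$ with $r/s_0$ up to multiplicative constants depending only on $c_0$ and the scaling constants of $\phi_c$. The key input is \eqref{polycon} together with $\beta_{1,\phi_c}>1$.

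For the upper bound, I would fix $s>0$ and split into two cases. If $s \ge s_0$, then trivially $r/s - c_0 t/\phi_c(s) \le r/s \le r/s_0$. If $s < s_0$, I invoke \eqref{polycon} to get $\phi_c(s_0)/\phi_c(s) \ge c_{1,\phi_c} (s_0/s)^{\beta_{1,\phi_c}}$, which combined with $t/\phi_c(s_0) \asymp r/s_0$ yields $c_0 t/\phi_c(s) \ge c_3(r/s_0)(s_0/s)^{\beta_{1,\phi_c}}$ for some $c_3 = c_3(c_0)>0$. Writing $u := s_0/s > 1$, the bracket becomes $(r/s_0)\bigl(u - c_3 u^{\beta_{1,\phi_c}}\bigr)$; since $\beta_{1,\phi_c}>1$, the one-variable function $u \mapsto u - c_3 u^{\beta_{1,\phi_c}}$ is uniformly bounded on $(0,\infty)$ by a constant depending only on $c_3$.

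For the lower bound, I would test the supremum at $s = \lambda s_0$ for a single choice of $\lambda>1$ determined by $c_0$. Again by \eqref{polycon}, $\phi_c(\lambda s_0) \ge c_{1,\phi_c}\lambda^{\beta_{1,\phi_c}} \phi_c(s_0)$, so $c_0 t/\phi_c(\lambda s_0) \le c_4 (r/s_0)\lambda^{-\beta_{1,\phi_c}}$ with $c_4 = c_4(c_0)$. Therefore
$$\frac{r}{\lambda s_0} - \frac{c_0 t}{\phi_c(\lambda s_0)} \ge \frac{r}{s_0}\Bigl(\lambda^{-1} - c_4\lambda^{-\beta_{1,\phi_c}}\Bigr) = \frac{r}{\lambda s_0}\Bigl(1 - c_4\lambda^{-(\beta_{1,\phi_c}-1)}\Bigr),$$
and since $\beta_{1,\phi_c}>1$, fixing $\lambda$ large enough (depending only on $c_4$, hence on $c_0$ and the scaling constants) makes the last factor at least $1/2$, giving the desired lower bound $r/(C s_0)$. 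There is no real obstacle in this proof — the whole argument is a direct convexity-style calculation that exploits the strict inequality $\beta_{1,\phi_c}>1$; the only mild care is ensuring that all constants depend only on $c_0$ and the scaling constants of $\phi_c$, not on $t$ or $r$.
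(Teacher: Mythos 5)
Your proof is correct and takes essentially the same route as the paper: both test the supremum at a point comparable to $\bar\phi_c^{-1}(t/r)$ for the lower bound, and both split the supremum at a multiple of $\bar\phi_c^{-1}(t/r)$ for the upper bound, exploiting \eqref{e:1.12}, \eqref{polycon} and $\beta_{1,\phi_c}>1$. Your upper-bound step phrases the conclusion as ``the one-variable function $u\mapsto u-c_3u^{\beta_{1,\phi_c}}$ is bounded above'', while the paper instead tunes the splitting radius $a\bar\phi_c^{-1}(t/r)$ so that the inner supremum is non-positive — but these are cosmetic variants of the same calculation.
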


\begin{proof} We fix $c_0>0$ throughout the proof.
Let $c_2\geq c_1>0$ be the positive constants in \eqref{e:1.12}. For any $b>c_0/c_2+1$ and $t,r>0$,
\begin{align*}
\sup_{s>0}\left\{\frac{r}{s}-\frac{c_0t}{\phi_c(s)}\right\}
&\ge  \frac{r}{\bar\phi_c^{-1}(bt/r)}-\frac{c_0t}{\phi_c(\bar\phi_c^{-1}(bt/r))}\\
&\geq \frac{r}{\bar\phi_c^{-1}(bt/r)}-\frac{c_0r}{c_2 b\bar\phi_c^{-1}(bt/r)}=\frac{(bc_2 -c_0)r}{bc_2\bar\phi_c^{-1}(bt/r)}\\
&\ge\frac{1}
{c_{2,\bar \phi_c} b^{1+1/(\beta_{1,\phi_c}-1)}}\frac{r}{ \bar\phi_c^{-1}(t/r)},
\end{align*}
where we used \eqref{e:1.12} in the second inequality and \eqref{e:effdiff} in the last inequality. On the other hand,
for any $0<a\le 1$ and $t,r>0$,
\begin{align*}
\sup_{s>0}\left\{\frac{r}{s}-\frac{c_0t}{\phi_c(s)}\right\}
&\le \sup_{0<s\le a \bar\phi_c^{-1}(t/r)}\left\{\frac{r}{s}-\frac{c_0t}{\phi_c(s)}\right\}+ \sup_{s> a \bar\phi_c^{-1}(t/r)}\left\{\frac{r}{s}-\frac{c_0t}{\phi_c(s)}\right\}\\
&\le   \sup_{0<s\le a \bar\phi_c^{-1}(t/r)}\left\{\frac{r}{s}-\frac{c_0 r \bar \phi_c(s/a)}{\phi_c(s)}\right\}+\frac{r}{a \bar\phi_c^{-1}(t/r)}\\
&\leq
 \sup_{0<s\le a \bar\phi_c^{-1}(t/r)}\left\{\frac{r}{s}\left(1-\frac{a c_0  c_1\phi_c(s/a)}{\phi_c(s)}\right)\right\} +\frac{r}{a \bar\phi_c^{-1}(t/r)}\\
&\le \sup_{0<s\le a
\bar\phi_c^{-1}(t/r)}\left\{\frac{r}{s}\left(1-c_0 c_1 c_{1,\phi_c}a^{-\beta_{1,\phi_c}+1}\right)\right\}+\frac{r}{a
\bar\phi_c^{-1}(t/r)},
\end{align*}
where we used \eqref{e:1.12} in the third inequality and
\eqref{polycon} in the last inequality. Taking $a\in (0,1]$ such
that $a=a_0:=(1+(1/(c_0c_1 c_{1,\phi_c})))^{-1/(\beta_{1,\phi_c}-1)}$ in
the inequality above, we find that
$$\sup_{s>0}\left\{\frac{r}{s}-\frac{c_0t}{\phi_c(s)}\right\}\le \frac{r}{a_0 \bar\phi_c^{-1}(t/r)}.$$

The desired assertion follows from the above two  conclusions. \qed
\end{proof}

\begin{corollary}\label{C:diff} Under  \eqref{polycon},  the expressions \eqref{eq:fibie2} and \eqref{eq:fibie3} for $p^{(c)}(t,x,y)$ are equivalent
in the sense that \eqref{e:1.20} holds.
\end{corollary}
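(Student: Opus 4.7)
The plan is that this corollary is an immediate consequence of Lemma \ref{L:diff}, which has already done all of the heavy lifting by comparing the Legendre-type supremum in \eqref{eq:fibie2} with the simpler quantity $r/\bar\phi_c^{-1}(t/r)$ appearing in \eqref{eq:fibie3}. So the proposal is essentially a bookkeeping exercise, not a new argument.

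Concretely, I would first handle the generic case $x \neq y$ by specializing Lemma \ref{L:diff} to $c_0 = 1$ and $r = d(x,y) > 0$. This yields a constant $C = C(1) \geq 1$ such that
\[
\frac{d(x,y)}{C\,\bar\phi_c^{-1}(t/d(x,y))} \;\le\; \sup_{s>0}\left\{\frac{d(x,y)}{s} - \frac{t}{\phi_c(s)}\right\} \;\le\; \frac{C\,d(x,y)}{\bar\phi_c^{-1}(t/d(x,y))}.
\]
Next I would exponentiate the negative of this chain (which reverses the inequalities) and multiply by $1/V(x,\phi_c^{-1}(t))$, producing
\[
\frac{1}{V(x,\phi_c^{-1}(t))}\exp\!\left(-\frac{C\,d(x,y)}{\bar\phi_c^{-1}(t/d(x,y))}\right) \le p^{(c)}(t,x,y) \le \frac{1}{V(x,\phi_c^{-1}(t))}\exp\!\left(-\frac{d(x,y)}{C\,\bar\phi_c^{-1}(t/d(x,y))}\right),
\]
which is precisely \eqref{e:1.20} with $c_1 = c_3 = 1$, $c_2 = C$ and $c_4 = 1/C$.

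Finally I would dispose of the degenerate case $x = y$. Here the supremum in \eqref{eq:fibie2} is taken over $\{-t/\phi_c(s) : s > 0\}$, which tends to $0$ as $s \to \infty$ and is bounded above by $0$; hence $p^{(c)}(t,x,x) = 1/V(x,\phi_c^{-1}(t))$. On the other hand, in \eqref{eq:fibie3} and \eqref{e:1.20} we interpret the exponential as the limit $d(x,y) \downarrow 0$, which equals $1$ since $\bar\phi_c^{-1}(t/d(x,y)) \to \infty$ by \eqref{e:effdiff}. Both sides of \eqref{e:1.20} therefore reduce to $c_i/V(x,\phi_c^{-1}(t))$, so the inequality is trivial in this case.

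There is essentially no obstacle to overcome: the only nontrivial content is the two-sided estimate on the supremum, which is already established in Lemma \ref{L:diff}, and the remaining argument is merely monotonicity of $\exp$ and a trivial check at the diagonal. I would allocate at most a few lines of LaTeX to this proof in the final paper.
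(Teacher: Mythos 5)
Your proof is correct and follows the same route as the paper, which simply invokes Lemma \ref{L:diff} directly; specializing to $c_0=1$ and $r=d(x,y)$, exponentiating, and noting the $x=y$ case is trivial is exactly the bookkeeping the paper leaves implicit.
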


\begin{proof}
This is a direct consequence of Lemma \ref{L:diff}.
\qed\end{proof}

\subsection{$\EP_{\phi, \leq, \eps} \Longrightarrow \Gcap (\phi)$ and
$\Gcap(\phi)+\J_{\phi,\le}\Longrightarrow \CS(\phi)$.}\label{section2.2}

Recall that  $\Gamma(f,f)$ (resp. $\Gamma_c (f, f)$) is the \emph{energy measure} of $f$ for $\sE$ (resp. its strongly local part $\sE^{(c)}$).
For any $f,g\in \sF$, the signed measure $\Gamma(f,g)$ is defined by
$$
\Gamma(f,g)=\frac{1}{2}\big(\Gamma(f+g,f+g)-\Gamma(f,f)-\Gamma(g,g)\big).
$$
Similar definition applies to $\Gamma_c (f, g)$.
The measure $ \Gamma(f,g)$ is symmetric and bilinear in  $(f,g)$.
The following  Cauchy-Schwarz inequality holds:
\begin{equation}\label{e:sc-ine}\begin{split}
\left|\int_M fg\,d\Gamma(u,v)\right|\le & \left(\int_M f^2\,d\Gamma(u,u)\right)^{1/2}\left(\int_M g^2\,d\Gamma(v,v)\right)^2\\
\le& \frac{1}{2\lambda}\int_M f^2\,d\Gamma(u,u)+\frac{\lambda}{2}\int_M g^2\,d\Gamma(v,v)
\end{split}\end{equation}
for all $u,v\in \sF$, bounded $f, g$ on $M$, and $\lambda>0$.
When the Dirichlet form $(\sE, \sF)$ admits no killings as the one given by \eqref{e:1.1},
$$
\sE(f,g)= \Gamma(f,g)(M) \quad \hbox{for } f, g \in \sF.
$$

The following Leibniz and chain rules hold for the energy  measure
$\Gamma_c$ for the strongly local part $\sE^{(c)}$ of
the Dirichlet form $(\sE, \sF)$: for all $f,g,h\in \sF_b$,
$$
\Gamma_c(fg,h)(dx) =g (x) \Gamma_c(f,h) (dx) +f (x) \Gamma_c(g,h) (dx)
$$
and
$$
\Gamma_c(\Phi(f),g) (dx)= \Phi'(f) (x) \Gamma_c(f,g) (dx),
$$
where $\Phi:\bR\to \bR$ is any smooth function with $\Phi(0)=0$.
The measure $\Gamma_c$  has the strong local property in the sense that  if $f$ is constant on a  set $F$,
then
\begin{equation}\label{e:comment-1}
\Gamma_c(f,f ) (F)=0 .
\end{equation}
see \cite[Theorem 4.3.8 and Exercise 4.3.12]{CF}.
For $f, g\in \sF$,  define
$$
\Gamma_j (f, g) (dx) := \int_{y\in M} (f(x)-f(y))(g(x)-g(y))\,J(d x,d y).
$$
It is easy to check that
 the following chain rule holds:
 $$
\int_M\,d\Gamma_j(f g,h)=\int_Mf \, d\Gamma_j(g,h)+\int_Mg\, d\Gamma_j(f,h)
\quad \hbox{for  } f,g,h \in \sF_b .
$$
See \cite[Lemma 3.5 and Theorem 3.7]{CKS} for more details.

\medskip

The following proposition extends \cite[Lemma 2.8]{GHH}
from strongly local Dirichet forms on metric measure spaces
to symmetric Dirichlet forms having both local and nonlocal terms
on general metric measure spaces.

\begin{proposition}\label{P:2.4}
Suppose that
\eqref{polycon} holds. Then $\EP_{\phi,\le, \eps}
\Longrightarrow \Gcap (\phi)$. Consequently,
$$\EP_{\phi,\leq}\Longrightarrow\EP_{\phi,\le, \eps}\Longrightarrow \Gcap (\phi),$$ and
$$\E_\phi
\Longrightarrow\EP_{\phi,\le, \eps}\Longrightarrow \Gcap (\phi).$$
\end{proposition}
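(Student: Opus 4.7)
The idea is to construct, for each fixed triple $(x_0, R, r)$, a single cut-off function $\varphi$ that works for every $f\in\sF'_b$, built probabilistically from the Dirichlet semigroup on $V:=B(x_0,R+r)$. With $\delta,\varepsilon$ the constants from $\EP_{\phi,\le,\varepsilon}$, set $t_0:=\delta\phi(r)$ and define
\[
\varphi(x):=\frac{1}{(1-\varepsilon)t_0}\int_0^{t_0}P_s^V\mathbf{1}_V(x)\,ds=\frac{\bE^x[\tau_V\wedge t_0]}{(1-\varepsilon)t_0}.
\]
Since each $P_s^V\mathbf{1}_V\in\sF^V$ and the time-average $\int_0^{t_0}P_s^V\mathbf{1}_V\,ds\in \sD(\sL^V)\subset\sF^V$, we have $\varphi\in\sF_b$ with $\supp\varphi\subset\overline V$, $\varphi\equiv 0$ on $V^c$, and $0\le\varphi\le\kappa:=1/(1-\varepsilon)$. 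For $x\in B(x_0,R)$ the ball $B(x,r)$ sits inside $V$, and $x\in B(x,r/4)$ trivially, so $\EP_{\phi,\le,\varepsilon}$ centred at $x$ with radius $r$ gives $\bP^x(\tau_V>t_0)\ge\bP^x(\tau_{B(x,r)}>t_0)\ge 1-\varepsilon$; hence $\bE^x[\tau_V\wedge t_0]\ge(1-\varepsilon)t_0$ and $\varphi\ge 1$ on $B(x_0,R)$. Thus $\varphi$ is a $\kappa$-cut-off function for $B(x_0,R)\subset V$.

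\medskip

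The energy estimate comes from the heat equation on $V$: for each $s>0$ and $\psi\in\sF^V$, $\sE(P_s^V\mathbf{1}_V,\psi)=-\langle\sL^V P_s^V\mathbf{1}_V,\psi\rangle=-\langle\partial_s P_s^V\mathbf{1}_V,\psi\rangle$, and integration in $s\in(0,t_0)$ telescopes into
\[
\sE(\varphi,\psi)=\frac{1}{(1-\varepsilon)t_0}\langle\mathbf{1}_V-P_{t_0}^V\mathbf{1}_V,\psi\rangle,\qquad \psi\in\sF^V,
\]
where $0\le\mathbf{1}_V-P_{t_0}^V\mathbf{1}_V\le\mathbf{1}_V$. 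Writing $f=u+a$ with $u\in\sF_b$ and $a\in\bR$, the product $f^2\varphi=(u^2+2au+a^2)\varphi$ lies in $\sF_b\cap\sF^V$ because $\varphi\in\sF^V$ is bounded and compactly supported in $\overline V$. Plugging $\psi=f^2\varphi\ge 0$ into the identity, using the symmetry of $\sE$ and $\varphi\le\kappa$, yields
\[
\sE(f^2\varphi,\varphi)=\sE(\varphi,f^2\varphi)\le\frac{\kappa}{(1-\varepsilon)t_0}\int_V f^2\,d\mu=\frac{C}{\phi(r)}\int_{B(x_0,R+r)}f^2\,d\mu
\]
with $C=\kappa/((1-\varepsilon)\delta)$, establishing $\Gcap(\phi)$.

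\medskip

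The two remaining implications are elementary. For $\EP_{\phi,\le}\Rightarrow\EP_{\phi,\le,\varepsilon}$: for $x\in B(x_0,r/4)$, $B(x,3r/4)\subset B(x_0,r)$ gives $\tau_{B(x,3r/4)}\le\tau_{B(x_0,r)}$, so $\EP_{\phi,\le}$ and \eqref{polycon} yield $\bP^x(\tau_{B(x_0,r)}\le\delta\phi(r))\le c'\delta$, which is $<1$ for small $\delta$. For $\E_\phi\Rightarrow\EP_{\phi,\le,\varepsilon}$: the same inclusion and $\E_\phi$ give $\bE^x[\tau_{B(x_0,r)}]\ge c\phi(r)$, while strong Markov at time $t$ together with $\sup_{y\in M_0}\bE^y[\tau_{B(x_0,r)}]\le c_1\phi(2r)\le c'\phi(r)$ (applying $\E_{\phi,\le}$ to $B(y,2r)\supset B(x_0,r)$) yields $\bE^x[\tau_{B(x_0,r)}]\le t+c'\phi(r)\bP^x(\tau_{B(x_0,r)}>t)$; choosing $t=\delta\phi(r)$ with $\delta$ small makes $\bP^x(\tau_{B(x_0,r)}\le t)\le\varepsilon<1$. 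The main technical point is verifying the inclusion $f^2\varphi\in\sF^V$ used in Step~3, since $f\in\sF'_b$ has no compact support of its own: this reduces to showing that multiplication by the bounded $\sF_b$-function $(u+a)^2$ preserves the ``quasi-vanishing on $V^c$'' condition defining $\sF^V$, which is done via an approximation of $\varphi$ by elements of $\sF\cap C_c(V)$ together with the Leibniz rule for both $\Gamma_c$ and $\Gamma_j$; beyond this, the argument is routine.
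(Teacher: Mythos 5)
Your proof is correct and, to the extent one can tell from the paper, it follows the same route: the paper proves $\EP_{\phi,\le,\eps}\Rightarrow\Gcap(\phi)$ simply by citing \cite[Lemma~2.8]{GHH} (observing that the argument there does not use any special feature of pure-jump forms), defers $\E_\phi\Rightarrow\EP_{\phi,\le,\eps}$ to \cite[Lemma~4.16]{CKW1}, and declares $\EP_{\phi,\le}\Rightarrow\EP_{\phi,\le,\eps}$ clear. The cut-off you build, $\vp=\bE^{\cdot}[\tau_V\wedge t_0]/((1-\eps)t_0)\in\sD(\sL^V)$, with the identity $\sE(\vp,\psi)=\frac{1}{(1-\eps)t_0}\langle \mathbf 1_V-P_{t_0}^V\mathbf 1_V,\psi\rangle$ and the pointwise bound $0\le \mathbf 1_V - P_{t_0}^V\mathbf 1_V\le\mathbf 1_V$, is exactly the standard ``exit-time/tension-function'' construction underlying the cited lemma, and your verification that $\vp$ is a $\kappa$-cut-off for $B(x_0,R)\subset B(x_0,R+r)$ via $\EP_{\phi,\le,\eps}$ applied at center $x$ with radius $r$ is right. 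The one point you correctly flag as needing care, $f^2\vp\in\sF^V$, does follow from the ideal property of $\sF^V\cap L^\infty$ in $\sF_b$ after writing $f^2\vp=(u^2+2au+a^2)\vp$ with $u\in\sF_b$: each summand has quasi-continuous version vanishing q.e.\ on $V^c$ because $\tilde\vp$ does. Your sketches of the two auxiliary implications (monotonicity of exit times under ball inclusion plus \eqref{polycon} for the first; the two-sided $\E_\phi$ bounds combined with the Markov property at $t=\delta\phi(r)$ for the second) are the standard arguments and match what \cite{CKW1} does. The only cosmetic inaccuracy is writing ``the bounded $\sF_b$-function $(u+a)^2$'': $(u+a)^2\notin\sF_b$ in general because of the additive constant $a^2$, but as your own decomposition shows, the constant piece is harmless, so the conclusion stands.
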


\begin{proof} The proof of  $\EP_{\phi,\le, \eps}
\Longrightarrow \Gcap (\phi)$ is the same as that of \cite[Lemma 2.8]{GHH}.
We note that while \cite[Leamm 2.8]{GHH} concerns
with purely non-local Dirichlet forms, its proof does not use any character of pure jump Dirichlet forms and
works for general symmetric Dirichlet forms.
Clearly, $\EP_{\phi, \leq}$ implies $\EP_{\phi,\le, \eps}$. By the same proof as that of
\cite[Lemma 4.16]{CKW1},
we have $\E_\phi\Longrightarrow \EP_{\phi,\le, \eps}$.
This establishes the last assertion.
\qed
\end{proof}

\medskip

The next proposition
extends \cite[Lemma 2.4]{GHH} from strongly local Dirichet forms on metric measure spaces
satisfying the $d$-set upper bound condition
to
symmetric Dirichlet forms having both local and nonlocal terms
on general metric measure spaces with $\VD$ condition. It
in particular
gives the implication $ {\rm (iv)} \Longrightarrow  {\rm (v)}$ in Theorem \ref{T:main}
and $ {\rm (iii)} \Longrightarrow  {\rm (iv)}$ in Theorem \ref{T:main-1}.

\begin{proposition}\label{P:csp} Under $\VD$ and \eqref{polycon},
$$\Gcap(\phi) \hbox{ and } \J_{\phi,\le}\Longrightarrow \CS(\phi).$$ \end{proposition}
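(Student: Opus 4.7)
The plan is to construct the cut-off function required by $\CS(\phi)$ from the one provided by $\Gcap(\phi)$, and then to pass from the energy bound of $\Gcap(\phi)$ to $\CS(\phi)$ via a Leibniz-type identity for $\sE(f^2\vp,\vp)$. Since $C_0 \in (0,1]$ in $\CS(\phi)$ is free, after rescaling $r \mapsto r/2$ it suffices to establish $\CS(\phi)$ in the form with $C_0 = 1$. Assume without loss of generality that $f \in \sF_b$ by truncation. Using the regularity of $(\sE, \sF)$, choose a cut-off $\chi \in \sF_b$ with $\chi \equiv 1$ on $B(x_0, R+r/2)$ and $\chi \equiv 0$ on $B(x_0, R+r)^c$, and set $\tilde f := f\chi$, so that $\tilde f \equiv f$ on $B(x_0, R+r/2)$ and $\tilde f \equiv 0$ on $B(x_0, R+r)^c$. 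Apply $\Gcap(\phi)$ to $\tilde f$ at scale $r/4$ to obtain a $\kappa$-cut-off $\vp' \in \sF_b$ for $B(x_0, R) \subset B(x_0, R+r/4)$ with
$$\sE(\tilde f^2 \vp', \vp') \le \frac{C}{\phi(r/4)}\int_{B(x_0, R+r/4)} \tilde f^2\,d\mu \le \frac{C'}{\phi(r)}\int_{B(x_0, R+r)} f^2\,d\mu$$
by \eqref{polycon} and the fact that $\tilde f = f$ on $B(x_0, R+r/4)$. Let $\vp := 1 \wedge \vp'$; since $t \mapsto 1 \wedge t$ is a normal contraction, $\Gamma(\vp,\vp) \le \Gamma(\vp',\vp')$ as measures and $(\vp')^2 \le \kappa^2 \vp^2$ pointwise.

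The analytic core is the Leibniz-type inequality
$$\int_M \tilde f^2 \,d\Gamma(\vp',\vp') \le 2\sE(\tilde f^2\vp',\vp') + 4\int (\vp')^2\,d\Gamma_c(\tilde f, \tilde f) + 4\int\!\!\int (\vp')^2(x)\,(\tilde f(x) - \tilde f(y))^2\,J(dx, dy),$$
obtained by expanding $\sE^{(c)}(\tilde f^2\vp', \vp')$ via the chain rule and the Cauchy-Schwarz inequality \eqref{e:sc-ine}, and $\sE^{(j)}(\tilde f^2\vp',\vp')$ via the symmetric decomposition $\tilde f_x^2\vp'_x - \tilde f_y^2\vp'_y = \tfrac{1}{2}(\tilde f_x^2 + \tilde f_y^2)(\vp'_x - \vp'_y) + \tfrac{1}{2}(\vp'_x + \vp'_y)(\tilde f_x^2 - \tilde f_y^2)$ combined with AM-GM (with parameter $\lam = 1/2$) and symmetrization in $J$. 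The first term on the right is controlled by $\Gcap(\phi)$. For the second, strong locality of $\Gamma_c$ on $\supp(\vp') \subset \{\chi = 1\}$ gives $\Gamma_c(\tilde f, \tilde f) = \Gamma_c(f, f)$ there, so $\int (\vp')^2\,d\Gamma_c(\tilde f, \tilde f) \le \kappa^2 \int \vp^2\,d\Gamma_c(f, f)$. The LHS of $\CS(\phi)$ splits according to whether $x \in B(x_0, R+r/2)$ (where $f = \tilde f$, and hence the corresponding piece is bounded by $\int \tilde f^2\,d\Gamma(\vp',\vp')$) or $x \in B(x_0, R+r) \setminus B(x_0, R+r/2)$; in the latter case $\vp_x = 0$, forcing $y \in \supp(\vp) \subset B(x_0, R+r/4)$ and $d(x, y) \ge r/4$, so Lemma \ref{intelem} together with $\J_{\phi, \le}$ and \eqref{polycon} bound this contribution by $\frac{c}{\phi(r)}\int_{B(x_0, R+r)} f^2\,d\mu$.

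The main obstacle is the third term of the Leibniz inequality, whose $y$-integration ranges over all of $M$ while $\CS(\phi)$ only permits $y \in B(x_0, R+r)$. Split the $y$-integration at $\partial B(x_0, R+r)$. The near part $\{y \in B(x_0, R+r)\}$ is handled by $(\tilde f(x) - \tilde f(y))^2 \le 2(f(x) - f(y))^2 + 2 f(y)^2(1 - \chi(y))^2$: the first summand yields the target middle RHS term (since the integration domain lies in $B(x_0, R+r/2) \times B(x_0, R+r)$), and the second, concentrated on the annulus $B(x_0, R+r) \setminus B(x_0, R+r/2)$ and paired with $d(x, y) \ge r/4$, is bounded via Lemma \ref{intelem} by $\frac{c}{\phi(r)}\int_{B(x_0, R+r)} f^2\,d\mu$. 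For the far part $\{y \notin B(x_0, R+r)\}$, apply $(\tilde f(x) - \tilde f(y))^2 \le 2\tilde f(x)^2 + 2\tilde f(y)^2$; the summand with $\tilde f(x)^2 = f(x)^2$ (for $x \in \supp(\vp')$, so $d(x, y) \ge 3r/4$) is bounded by $\frac{c}{\phi(r)}\int_{B(x_0, R+r)} f^2\,d\mu$ by Lemma \ref{intelem}, and the summand with $\tilde f(y)^2$ \emph{vanishes identically}, because $\chi \equiv 0$ outside $B(x_0, R+r)$ by construction. This engineered vanishing of the far-field term, enabled by premultiplying $f$ with $\chi$ whose support lies within $B(x_0, R+r)$, is the decisive step that closes the estimate; the remaining task is the routine bookkeeping of the constants $\kappa, C_0, C_1, C_2$.
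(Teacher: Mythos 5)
Your proof is correct, and it takes a genuinely different route from the paper's. The paper applies $\Gcap(\phi)$ directly to $f$, takes a $\kappa$-cut-off $\vp$ for $B(x_0,R)\subset B(x_0,R+r/2)$, and then exploits the observation that the contributions to $\sE^{(j)}(f^2\vp,\vp)$ from $B_2^c\times B_2$ and $B_2\times B_2^c$ are \emph{nonnegative} (since $\vp=0$ on $B_1^c$), which gives a lower bound for $\sE(f^2\vp,\vp)$ in terms of the $B_2\times B_2$ integral; combined with Lemma \ref{L:dirichlet00} restricted to $D=B_2$ and the estimates from \cite{GHH}, the LHS is then extended from $B_2$ to $B_3$ via Lemma \ref{intelem}. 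Your proof instead premultiplies $f$ by the cut-off $\chi$ so that $\tilde f=f\chi$ has compact support in $B(x_0,R+r)$, applies $\Gcap(\phi)$ to $\tilde f$, and uses the Leibniz inequality (equivalent to Lemma \ref{L:dirichlet00} with $\eta=2$, $D=M$) over all of $M\times M$. The crucial observation that $\tilde f(y)=0$ outside $B(x_0,R+r)$ explicitly kills the far-field contribution, replacing the paper's sign-positivity mechanism. Both proofs ultimately lean on $\Gcap(\phi)$, the same Leibniz-type inequality, and Lemma \ref{intelem}; your version is more self-contained (it does not refer the reader to the proof in \cite{GHH}) and makes the $\sF_b'$-membership requirement of $\Gcap(\phi)$ explicit via the truncation, whereas the paper's proof is shorter but relies on the somewhat subtle positivity argument. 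One presentational caveat: the sentence about "rescaling $r\mapsto r/2$" is unclear as written — what you actually do is prove $\CS(\phi)$ with $C_0=1$ using intermediate radii $R+s/2$, $R+s$, $R+2s$ for the given parameter $s$, and your LHS and annulus estimates are consistent under that relabeling — but the mathematical content underneath is sound.
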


To prove Proposition \ref{P:csp}, we need the following
lemma.

\begin{lemma} \label{L:dirichlet00}
${\rm(i)}$ For each $f,g\in \sF_b$ and $\eta>0$,
$$(1-\eta^{-1})\int f^2d\Gamma_c(g,g)\le \int d\Gamma_c(g,gf^2)+\eta\int g^2d\Gamma_c(f,f).$$
${\rm(ii)}$ For each $f,g\in \sF_b$, $\eta>0$ and any subset $D \subset M$,
\be\label{eq:enowec}\begin{split}
&(1-\eta^{-1})\int_{D\times D} f^2(x)(g(x)-g(y))^2\,J(dx,dy)\\
&\le \int_{D\times D}(g(x)f^2(x)-g(y)f^2(y))(g(x)-g(y))\,J(dx,dy)\\
 &\quad +\eta\int_{D\times D} g^2(x)(f(x)-f(y))^2\,J(dx,dy).
\end{split}\ee
\end{lemma}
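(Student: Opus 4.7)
My plan is to prove the two parts independently via standard Leibniz/chain-rule manipulations plus a pointwise Young inequality, treating (i) through the algebra of the strongly local energy measure $\Gamma_c$, and (ii) through a pointwise algebraic identity that is then symmetrized over $J(dx,dy)$.

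For part (i), I would first apply the Leibniz rule for $\Gamma_c$ to the product $g\cdot f^2$, namely $\Gamma_c(gf^2,g)(dx)=g(x)\Gamma_c(f^2,g)(dx)+f^2(x)\Gamma_c(g,g)(dx)$, and then the chain rule $\Gamma_c(f^2,g)(dx)=2f(x)\Gamma_c(f,g)(dx)$. This yields
\begin{equation*}
\int d\Gamma_c(g,gf^2)=\int f^2\,d\Gamma_c(g,g)+2\int fg\,d\Gamma_c(f,g).
\end{equation*}
Applying the Cauchy--Schwarz/Young inequality \eqref{e:sc-ine} to the cross integral with parameter $\eta$ gives $2\left|\int fg\,d\Gamma_c(f,g)\right|\le \eta^{-1}\int f^2\,d\Gamma_c(g,g)+\eta\int g^2\,d\Gamma_c(f,f)$, and rearranging produces the claim.

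For part (ii), the key step is the pointwise identity
\begin{equation*}
(gf^2)(x)-(gf^2)(y)=g(x)(f(x)+f(y))(f(x)-f(y))+f^2(y)(g(x)-g(y)),
\end{equation*}
which is easily verified by direct expansion. Multiplying by $(g(x)-g(y))$, integrating over $D\times D$ against $J(dx,dy)$, and exploiting the symmetry of $J$ to replace the asymmetric remainder $f^2(y)(g(x)-g(y))^2$ by its symmetrized version $\tfrac12(f^2(x)+f^2(y))(g(x)-g(y))^2$ (and likewise to symmetrize the $g$-prefactor in the cross term), I obtain
\begin{equation*}
\int_{D\times D}\bigl((gf^2)(x)-(gf^2)(y)\bigr)(g(x)-g(y))\,J(dx,dy)=\int_{D\times D}f^2(x)(g(x)-g(y))^2\,J+\tfrac12\int_{D\times D}uv\,J,
\end{equation*}
with $u:=(g(x)+g(y))(f(x)-f(y))$ and $v:=(f(x)+f(y))(g(x)-g(y))$.

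It remains to estimate the cross term, and this is the only step requiring care. Applying Young's inequality $|uv|\le \tfrac{\eta'}{2}v^2+\tfrac{1}{2\eta'}u^2$ (with $\eta':=\eta^{-1}$), together with $(a+b)^2\le 2(a^2+b^2)$ to pass from $(f(x)+f(y))^2$ and $(g(x)+g(y))^2$ to $f^2(x)+f^2(y)$ and $g^2(x)+g^2(y)$, and one more symmetrization over $J$, bounds $\tfrac12\int uv\,J$ from below by $-\eta^{-1}\int f^2(x)(g(x)-g(y))^2\,J-\eta\int g^2(x)(f(x)-f(y))^2\,J$. Substituting into the identity above and rearranging yields \eqref{eq:enowec}. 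The main subtlety is bookkeeping the symmetrizations so that the final absorption constant in front of $\int f^2(x)(g(x)-g(y))^2\,J$ is exactly $(1-\eta^{-1})$ rather than a larger loss; I expect this to be routine once the decomposition $uv$ is written symmetrically.
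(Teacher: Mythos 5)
Your proof is correct. Part~(i) is essentially verbatim the paper's argument: expand $\int d\Gamma_c(g,gf^2)$ via the Leibniz and chain rules to $\int f^2\,d\Gamma_c(g,g)+2\int fg\,d\Gamma_c(f,g)$, then absorb the cross term by the weighted Cauchy--Schwarz inequality \eqref{e:sc-ine} with parameter chosen so the coefficients are $\eta^{-1}$ and $\eta$; this matches the paper exactly.

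For part~(ii) the paper does not supply an argument but simply cites \cite[Lemma 3.5]{CKW1}, so your self-contained derivation goes beyond what the paper records. Your route is sound: the pointwise identity
$(gf^2)(x)-(gf^2)(y)=g(x)\bigl(f(x)+f(y)\bigr)\bigl(f(x)-f(y)\bigr)+f^2(y)\bigl(g(x)-g(y)\bigr)$
is correct, multiplying by $g(x)-g(y)$ and integrating against the symmetric measure $J$ lets you replace $f^2(y)$ by $\tfrac12(f^2(x)+f^2(y))$ and $g(x)$ by $\tfrac12(g(x)+g(y))$, Young's inequality $|uv|\le\tfrac{\eta^{-1}}{2}v^2+\tfrac{\eta}{2}u^2$ together with $(a+b)^2\le 2(a^2+b^2)$ then bounds the symmetrized cross term by $\eta^{-1}\int f^2(x)(g(x)-g(y))^2\,J+\eta\int g^2(x)(f(x)-f(y))^2\,J$, and rearranging gives \eqref{eq:enowec} with precisely the claimed constants. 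This is the natural direct computation; the only thing worth flagging is that the symmetrization step uses the symmetry of the \emph{restricted} measure $J|_{D\times D}$, which does hold since $D\times D$ is itself a symmetric set, so nothing is lost.
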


\begin{proof}\eqref{eq:enowec} has been proved in \cite[Lemma 3.5]{CKW1}, and so we only need to show (i). For any $f,g\in \sF_b$ and $\eta>0$, by Leibniz and chain rules  and the Cauchy-Schwarz inequality \eqref{e:sc-ine},
\begin{align*}\int \,d\Gamma_c(g,f^2g)=&2\int g f\,d\Gamma_c(f,g)+\int f^2\,d\Gamma_c(g,g)\\
\ge&-\eta^{-1}\int f^2\,d\Gamma_c(g,g)-\eta\int g^2\,d\Gamma_c(f,f)+\int f^2\,d\Gamma_c(g,g),\end{align*}proving the assertion (i).  \qed\end{proof}

\noindent
{\bf Proof of Proposition \ref{P:csp}.}\quad  For fixed $x_0\in M$, $0<r\le R$ and $C_0\in (0,1]$, set $B_0=B(x_0,R)$, $B_1=B(x_0,R+r/2)$, $B_2=B(x_0,R+r)$ and $B_3=B(x_0, R+(1+C_0)r)$. For any $f\in \sF$,
by $\Gcap(\phi)$
there is a $\kappa$-cut-off function $\vp$ for $B_0\subset B_1$ such that
\begin{equation}
\sE(f^2\vp,\vp)\le \frac{C_1}{\phi(r)}\int_{B_1}f^2\,d\mu.
\end{equation}

Since $\vp=0$ on $B_1^c$, we have
\begin{align*}
\sE(f^2\vp,\vp)
&=\sE^{(c)}(f^2\vp,\vp)+\sE^{(j)}(f^2\vp,\vp)\\
&=\sE^{(c)}(f^2\vp,\vp)\\
&\quad+\left(\int_{B_2\times B_2}+\int_{B_2^c\times B_2}+\int_{B_2\times B_2^c}\right)(\vp(x)-\vp(y))(f^2(x)\vp(x)-f^2(y)\vp(y))\,J(dx,dy)\\
&\ge\int_{B_2}d\Gamma_c(f^2\vp,\vp)+\int_{B_2\times B_2}(\vp(x)-\vp(y))(f^2(x)\vp(x)-f^2(y)\vp(y))\,J(dx,dy).
\end{align*}
This along with Lemma \ref{L:dirichlet00} with $\eta=2$
and $\Gcap(\phi)$ yields that
\begin{align*}&\int_{B_2}f^2\,d\Gamma_c(\vp,\vp)+\int_{B_2\times B_2}f^2(x)(\vp(x)-\vp(y))^2\,J(dx,dy)\\
&\le 2\left(\int_{B_2}d\Gamma_c(f^2\vp,\vp)+\int_{B_2\times B_2}
(\vp(x)-\vp(y))(f^2(x)\vp(x)-f^2(y)\vp(y))\,J(dx,dy)\right)\\
&\quad+ 4\left(\int_{B_2}\vp^2\,d\Gamma_c(f,f)+\int_{B_2\times B_2}\vp^2(x)(f(x)-f(y))^2\,J(dx,dy)\right)\\
 &\le 2 \sE(f^2\vp,\vp)+ 4\left(\int_{B_2}\vp^2\,d\Gamma_c(f,f)+\int_{B_2\times B_2}\vp^2(x)(f(x)-f(y))^2\,J(dx,dy)\right)\\
&\le \frac{2C_1}{\phi(r)}\int_{B_1}f^2\,d\mu + 4\left(\int_{B_2}\vp^2\,d\Gamma_c(f,f)+\int_{B_2\times B_2}\vp^2(x)(f(x)-f(y))^2\,J(dx,dy)\right).
\end{align*}
Replacing \cite[(2.4) on page 447]{GHH} with the inequality above, and following the proof of \cite[Lemma 2.4]{GHH} from \cite[(2.4) on page 447]{GHH} to the end, we can obtain that
\begin{align*}&\int_{B_3}f^2\,d\Gamma_c(\vp,\vp)+\int_{B_3\times B_3}f^2(x)(\vp(x)-\vp(y))^2\,J(dx,dy)\\
&=\int_{B_2}f^2\,d\Gamma_c(\vp,\vp)+\int_{B_3\times B_3}f^2(x)(\vp(x)-\vp(y))^2\,J(dx,dy)\\
&\le \frac{C_2}{\phi(r)}\int_{B_3}f^2\,d\mu + 4\left(\int_{B_2}\vp^2\,d\Gamma_c(f,f)+
\int_{B_2\times B_2}\vp^2(x)(f(x)-f(y))^2\,J(dx,dy)\right).
\end{align*}
(Indeed, this can be seen by replacing $B$ and $\Omega$ in the proof of  \cite[Lemma 2.4]{GHH} with $B_2$ and $B_3$, respectively.)
 We note that for the argument above, we used Lemma \ref{intelem}.
Again by  Lemma \ref{intelem}, we can further improve the inequality above into
 \begin{align*}\int_{B_3}f^2\,d\Gamma(\vp,\vp)
=& \int_{B_3}f^2\,d\Gamma_c(\vp,\vp)+\int_{B_3\times B_3}f^2(x)(\vp(x)-\vp(y))^2\,J(dx,dy)\\
&+\int_{B_3\times B_3^c}f^2(x)(\vp(x)-\vp(y))^2\,J(dx,dy)\\
\le& \int_{B_3}f^2\,d\Gamma_c(\vp,\vp)+\int_{B_3\times B_3}f^2(x)(\vp(x)-\vp(y))^2\,J(dx,dy)\\
&+\int_{B_1\times B_3^c}f^2(x)\,J(dx,dy)\\
 \le & 4\left(\int_{B_2}\vp^2\,d\Gamma_c(f,f)+\int_{B_2\times B_2}\vp^2(x)(f(x)-f(y))^2\,J(dx,dy)\right)+\frac{C_3}{\phi(r)}\int_{B_3}f^2\,d\mu.
\end{align*}This proves the desired assertion.
 \qed

\subsection{$\rho$-truncated version of $\CS(\phi)$}

To deal with processes with long rang jumps, we will frequently use the truncation. Fix $\rho>0$ and define  a bilinear form $(\sE^{(\rho)}, \sF)$ by
\begin{align*}\sE^{(\rho)}(u,v)=&\sE^{(c)}(u,v)+\int_{M\times M} (u(x)-u(y))(v(x)-v(y)){\bf 1}_{\{d(x,y)\le \rho\}}\, J(dx,dy)\\
=&:\sE^{(c)}(u,v)+\sE^{(\rho)}_j(u,v).\end{align*}
Clearly, the form $\sE^{(\rho)}(u,v)$ is well defined for $u,v\in \sF$, and $\sE^{(\rho)}(u,u)\le \sE(u,u)$ for all $u\in \sF$. Assume that $\VD$, \eqref{polycon}  and $\J_{\phi,\le}$ hold. Then  we have by  Lemma \ref{intelem}
  that for all $u\in \sF$,
\begin{align*}
   \sE(u,u)-\sE^{(\rho)}(u,u)&= \int_{M\times M}(u(x)-u(y))^2{\bf 1}_{\{d(x,y)>\rho\}}\,J(dx,dy)\\
&\le 4\int_Mu^2(x)\,\mu(dx)\int_{B(x,\rho)^c}J(x,y)\,\mu(dy)\le \frac{c_0\|u\|_{2}^2 }{\phi(\rho)}.
\end{align*}
Thus $\sE_1 (u, u)$ is equivalent to $\sE^{(\rho)}_1(u,u):=
\sE^{(\rho)}(u,u)+ \|u\|_2^2$ for every $u\in \sF$. Hence $(\sE^{(\rho)},
\sF)$ is a regular Dirichlet form on $L^2(M; \mu)$.
Throughout this paper, we call $(\sE^{(\rho)}, \sF)$  $\rho$-truncated
Dirichlet form. The Hunt process associated with $(\sE^{(\rho)}, \sF)$, denoted by $(X_t^{(\rho)})_{t\ge0}$, can be identified in distribution with
the Hunt process of the original Dirichlet form $(\sE, \sF)$ by removing those jumps of size larger than
 $\rho$.

 Define
 $J(x,dy)=J(x,y)\,\mu(dy)$.
   Let $J^{(\rho)}(dx,dy)={\bf 1}_{\{d(x,y)\le \rho\}} J(dx,dy)$, $J^{(\rho)}(x,dy)={\bf 1}_{\{d(x,y)\le \rho\}} J(x,dy)$, and  $\Gamma^{(\rho)}_j(f,g)$ be the
    carr\'e du champ of the non-local part $\sE^{(\rho)}_j$ for the $\rho$-truncated Dirichlet form $(\sE^{(\rho)}, \sF)$; namely,
$$ \sE^{(\rho)}_j(f,g)=\int_M \,\mu(dx)\int_M(f(x)-f(y))(g(x)-g(y))\,J^{(\rho)}(x,dy)
=:\int_M d\Gamma^{(\rho)}_j(f,g). $$ We also set
$$\Gamma^{(\rho)}(f,g):=\Gamma_c(f,g)+\Gamma^{(\rho)}_j(f,g).$$

\begin{lemma}\label{L:cs-trun}Under $\VD$, \eqref{polycon} and $\J_{\phi,\le}$, if $\CS(\phi)$ holds, then $\CS^{(\rho)}(\phi)$ holds too, i.e.,\  there exist constants $C_0\in (0,1]$ and $C_1, C_2>0$
such that for every
$0<r\le R$, almost all $x_0\in M$ and any $f\in \sF$, there exists
a cut-off function $\vp\in \sF_b$ for $B(x_0,R) \subset B(x_0,R+r)$ so that the following holds for all $\rho\in (0,\infty]$:
 \begin{align*}
 &\int_{B(x_0,R+(1+C_0)r)} f^2 \, d\Gamma^{(\rho)} (\vp,\vp)\\
&\le C_1 \left(\int_{B(x_0,R+r)}\vp^2\, d\Gamma_c(f,f)+\int_{B(x_0,R+r)\times B(x_0,R+(1+C_0)r)}\vp^2(x)(f(x)-f(y))^2\,J^{(\rho)}(dx,dy)\right) \\
&\quad + \frac{C_2}{\phi(r\wedge\rho)}  \int_{B(x_0,R+(1+C_0)r)} f^2  \,d\mu.\end{align*}  \end{lemma}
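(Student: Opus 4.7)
\textbf{Proof plan for Lemma \ref{L:cs-trun}.} The strategy is to reuse the same cut-off function $\vp$ produced by $\CS(\phi)$, and then split the long-range jumps off from both sides of the inequality, absorbing the long-range contributions into the $L^2$ error term at the expense of replacing $\phi(r)$ by $\phi(r\wedge \rho)$. Fix $x_0\in M$, $0<r\le R$ and $f\in \sF$, and let $\vp\in\sF_b$ be the cut-off function for $B(x_0,R)\subset B(x_0,R+r)$ supplied by $\CS(\phi)$.

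For the left-hand side, since $J^{(\rho)}=\mathbf{1}_{\{d(x,y)\le \rho\}}J\le J$ as measures on $M\times M$, we immediately get $\Gamma^{(\rho)}_j(\vp,\vp)\le \Gamma_j(\vp,\vp)$ as measures on $M$, hence
\[
\int_{B(x_0,R+(1+C_0)r)}f^2\,d\Gamma^{(\rho)}(\vp,\vp)\le \int_{B(x_0,R+(1+C_0)r)}f^2\,d\Gamma(\vp,\vp),
\]
so that $\CS(\phi)$ applied to $f$ bounds the left-hand side by the usual $\CS(\phi)$ right-hand side.

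For the right-hand side, I would decompose the jumping integral appearing in $\CS(\phi)$ according to $\{d(x,y)\le \rho\}$ and $\{d(x,y)>\rho\}$:
\[
\int_{B(x_0,R+r)\times B(x_0,R+(1+C_0)r)}\vp^2(x)(f(x)-f(y))^2\,J(dx,dy)=I_{\le\rho}+I_{>\rho}.
\]
The near-range piece $I_{\le\rho}$ is by definition the $J^{(\rho)}$-integral required on the right-hand side of $\CS^{(\rho)}(\phi)$. For the far-range piece, use $(f(x)-f(y))^2\le 2(f^2(x)+f^2(y))$ together with $0\le \vp\le 1$ to split $I_{>\rho}$ into two single-variable integrals; the symmetry of $J$ and Lemma \ref{intelem} (which requires precisely $\VD$, \eqref{polycon}, and $\J_{\phi,\le}$) then yield
\[
\int_{B(x,\rho)^c}J(x,y)\,\mu(dy)\le \frac{c_1}{\phi(\rho)},
\]
leading to $I_{>\rho}\le \dfrac{c}{\phi(\rho)}\int_{B(x_0,R+(1+C_0)r)}f^2\,d\mu$.

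Combining these estimates, the error terms on the right-hand side take the form $\bigl(\tfrac{C_2}{\phi(r)}+\tfrac{c C_1}{\phi(\rho)}\bigr)\int_{B(x_0,R+(1+C_0)r)}f^2\,d\mu$, and since $\phi$ is increasing one has $\frac{1}{\phi(r)}+\frac{c}{\phi(\rho)}\le \frac{c'}{\phi(r\wedge \rho)}$, yielding $\CS^{(\rho)}(\phi)$ with constants independent of $\rho\in(0,\infty]$. The argument is essentially bookkeeping; no self-improvement or choice of $\vp$ beyond what $\CS(\phi)$ already gives is required, and the only genuine input is the tail bound from Lemma \ref{intelem}, which is the main (but standard) technical step.
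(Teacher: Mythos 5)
Your proof is correct and follows essentially the approach the paper intends: reuse the $\CS(\phi)$ cut-off, bound the left-hand side by monotonicity $J^{(\rho)}\le J$, split the jump integral on the right-hand side at range $\rho$, and absorb the far-range piece via Lemma \ref{intelem} into the $L^2$-error term with $\phi(r\wedge\rho)$ in place of $\phi(r)$. The paper omits the proof and cites Lemma \ref{intelem} together with the analogous pure-jump argument in \cite[Proposition 2.3(1)]{CKW1}, which is exactly this bookkeeping.
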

\begin{proof} The proof is based on Lemma \ref{intelem}, and we omit it here. See \cite[Proposition 2.3(1)]{CKW1}.
\qed \end{proof}

We also note that, by the proof of \cite[Proposition 2.3(4)]{CKW1}, under $\VD$, \eqref{polycon} and $\J_{\phi,\le}$, if $\CS(\phi)$ (resp.\ $\CS^{(\rho)}(\phi)$) holds for some $C_0\in (0,1]$, then for any $C_0'\in (C_0,1]$, there exist constants $C_1, C_2>0$ (where $C_2$ depends on $C_0'$) such that $\CS(\phi)$ (resp.\ $\CS^{(\rho)}(\phi)$) holds for $C_0'$.

\begin{proposition}\label{CSJ-equi}
Assume that $\VD$, \eqref{polycon} and $\J_{\phi,\le}$ hold.
 If $\CS(\phi)$ holds, then there is a constant $c_0>0$ such that for  every
$0<r\le R$, $\rho>0$ and almost all $x\in M$,
$$
\mbox{\rm Cap} ^{(\rho)} (B(x,R),B(x,R+r))\le c_0\frac{V(x,R+r)}{\phi(r\wedge \rho)}.
$$
In particular, we have
\be\label{eq:fneoobo3}
\mbox{\rm Cap} (B(x,R),B(x,R+r))\le c_0\frac{V(x,R+r)}{\phi(r)}.
\ee

\end{proposition}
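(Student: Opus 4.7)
The plan is to apply $\CS^{(\rho)}(\phi)$ (which follows from $\CS(\phi)$ by Lemma \ref{L:cs-trun}) with the test function $f \equiv 1$. Because $\Gamma_c(1,1)=0$ and $(1-1)^2=0$, the right-hand side of $\CS^{(\rho)}(\phi)$ collapses to its measure term: there exists a cut-off function $\varphi \in \sF_b$ for $B(x,R) \subset B(x,R+r)$ such that
$$
\int_{B(x,R+(1+C_0)r)} d\Gamma^{(\rho)}(\varphi,\varphi) \le \frac{C_2}{\phi(r\wedge\rho)}\, V(x,R+(1+C_0)r).
$$
By $\VD$ the right-hand side is dominated by $c\,V(x,R+r)/\phi(r\wedge\rho)$.

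Since $\mathrm{Cap}^{(\rho)}(B(x,R),B(x,R+r)) \le \sE^{(\rho)}(\varphi,\varphi) = \int_M d\Gamma^{(\rho)}(\varphi,\varphi)$, it suffices to control the missing portion $\int_{B(x,R+(1+C_0)r)^c} d\Gamma^{(\rho)}(\varphi,\varphi)$. The strongly local part contributes nothing there: $\varphi$ vanishes on $B(x,R+r)^c \supset B(x,R+(1+C_0)r)^c$, so by the strong local property \eqref{e:comment-1}, $\Gamma_c(\varphi,\varphi)$ assigns zero mass to $B(x,R+(1+C_0)r)^c$. For the jump part, since $\varphi(x_1)=0$ on $B(x,R+(1+C_0)r)^c$ and $\varphi(y_1)\le \mathbf 1_{B(x,R+r)}(y_1)$, symmetry of $J$ and Fubini give
\begin{align*}
&\int_{B(x,R+(1+C_0)r)^c \times M} (\varphi(x_1)-\varphi(y_1))^2 \mathbf 1_{\{d(x_1,y_1)\le\rho\}}\, J(dx_1,dy_1) \\
&\quad \le \int_{B(x,R+r)} \mu(dy_1) \int_{B(x,R+(1+C_0)r)^c} J(y_1,x_1)\, \mu(dx_1).
\end{align*}

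For such $y_1,x_1$ the triangle inequality gives $d(x_1,y_1) \ge C_0 r$, so the inner integral is bounded by $\int_{B(y_1, C_0 r)^c} J(y_1,x_1)\,\mu(dx_1) \le c/\phi(C_0 r) \le c'/\phi(r) \le c'/\phi(r\wedge\rho)$ using Lemma \ref{intelem} and \eqref{polycon}. Hence the boundary jump contribution is at most $c'\,V(x,R+r)/\phi(r\wedge\rho)$. Adding this to the inside bound yields the claimed inequality. The unqualified bound \eqref{eq:fneoobo3} follows by letting $\rho = \infty$, in which case $\sE^{(\rho)} = \sE$, $\mathrm{Cap}^{(\rho)} = \mathrm{Cap}$, and $\phi(r\wedge\rho)=\phi(r)$.

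I do not foresee any serious obstacle beyond carefully handling the boundary jump term: one must exploit symmetry of $J$ to transfer the integration region from the complement of the enlarged ball back onto $B(x,R+r)$, and then combine the geometric gap $d(x_1,y_1)\ge C_0 r$ with Lemma \ref{intelem} to compare with $\phi(r\wedge\rho)$. The vanishing of the strongly local part outside $B(x,R+r)$ is the essential reason $\CS(\phi)$ is strong enough to give a capacity bound despite controlling the energy measure only on the enlarged ball.
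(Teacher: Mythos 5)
Your proof is correct and essentially reproduces the paper's argument. Both apply $\CS^{(\rho)}(\phi)$ to a function that equals $1$ on the relevant enlarged ball so the two comparison terms drop out (the paper chooses a compactly supported $f\in\sF$ equal to $1$ on $B_{R+2r}$; you use $f\equiv1$ via the $\sF_{loc}$ extension of Remark~\ref{rek:scj}(ii), which is equally valid), and both control the energy measure of $\vp$ on the complement of the enlarged ball through the tail estimate of Lemma~\ref{intelem}, exploiting the gap $d(x_1,y_1)\ge C_0 r$, the bound $\vp\le \indicator_{B(x,R+r)}$, and the vanishing of the strongly local part outside $B(x,R+r)$. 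The one place you diverge is the passage to \eqref{eq:fneoobo3}: you set $\rho=\infty$ directly, which is justified because Lemma~\ref{L:cs-trun} furnishes a single cut-off $\vp$ valid for all $\rho\in(0,\infty]$ (and $\Gamma^{(\infty)}=\Gamma$, $\phi(r\wedge\infty)=\phi(r)$), whereas the paper first proves the bound for $\rho<\infty$ and then passes to $\rho=\infty$ via a Ces\`aro-mean limit of capacitary potentials; your shortcut is a legitimate and slightly cleaner route to the same conclusion.
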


\begin{proof}
According to Lemma \ref{L:cs-trun} and the remark below its proof,  $\CS^{(\rho)}(\phi)$ holds for every $\rho >0$ and we
may and do take
$C_0=1$ in \eqref{e:csj1}. Fix $x_0\in M$ and
write $B_s:=B(x_0,s)$ for $s\ge 0$. Let
$f\in \sF$ with $0\le f\le 1$ such that $f|_{B_{R+2r}}=1$ and $f|_{B_{R+3r}^c}=0$. For any $0<r\le R$, let $\vp\in \sF_b$ be the cut-off function for $B_R\subset B_{R+r}$ associated with $f$ in $\CS^{(\rho)}(\phi)$. Then for any $\rho>0$,
\begin{align*}
\mbox{{\rm Cap}}^{(\rho)} (B_R, B_{R+r})\le& \int_{B_{R+2r}}\,d\Gamma^{(\rho)}(\vp,\vp)+\int_{B_{R+2r}^c}\,d\Gamma^{(\rho)}(\vp,\vp)\\
=&\int_{B_{R+2r}}f^2\,d\Gamma^{(\rho)}(\vp,\vp)+\int_{B_{R+2r}^c}\,d\Gamma^{(\rho)}(\vp,\vp)\\
\le &c_1\left(\int_{B_{R+r}} \,d\Gamma_c(f,f)+\int_{B_{R+r}\times B_{R+2r}}(f(x)-f(y))^2\,J^{(\rho)}(dx,dy)\right)
\\
&+\frac{c_2}{\phi(r\wedge \rho)}\int_{B_{R+2r}}f^2\,d\mu+\int_{B_{R+2r}^c}\,\mu(dx)\int_{B_{R+r}} \vp^2(y)J(x,y)\,\mu(dy)\\
\leq & \, 0+0+
\frac{c_2\mu(B_{R+2r})}{\phi(r\wedge \rho)}+\frac{c_3\mu(B_{R+r})}{\phi(r)}\\
\le &\frac{c_4\mu(B_{R+r})}{\phi(r\wedge \rho)},
\end{align*}
where we used $\CS^{(\rho)}(\phi)$ in the second inequality, and applied Lemma
\ref{intelem} and $\VD$ in the third inequality.

Now let $f_\rho$ be
the potential whose $\sE^{(\rho)}$-norm gives the capacity. Then the
Ces\`{a}ro mean of a subsequence of $f_\rho$ converges in
$\sE_1$-norm, say to $f$, and $\sE(f, f)$ is no less than the
capacity corresponding to $\rho =\infty$.  So \eqref{eq:fneoobo3} is
proved.\qed\end{proof}

\subsection{Self-improvement of $\CS(\phi)$}
We next show that the leading  constant in $\CS^{(\rho)}(\phi)$ is self-improving
in the following sense.

\begin{proposition} \label{L:cswk}
Suppose that $\VD$, \eqref{polycon} and $\J_{\phi,\le}$ hold. If $\CS^{(\rho)}(\phi)$ holds, then there exists a constant $C_0\in (0,1]$ so that for any $\varepsilon>0$, there exists a constant $c_1(\varepsilon)>0$ such that for every $0<r\le R$, almost all $x_0\in M$ and any $f\in \sF$, there exists a cut-off function $\vp\in \sF_b$ for $B(x_0,R)\subset B(x_0,R+r)$ so that the following holds for all $\rho>0$:
\begin{equation}\label{e:csj3} \begin{split}
 &\int_{B(x_0,R+(1+C_0)r)} f^2 \, d\Gamma^{(\rho)} (\vp,\vp)\\
&\le\varepsilon \bigg(\int_{B(x_0,R+r)} \vp^2\,d\Gamma_c(f,f)\\
&\qquad\quad+\int_{B(x_0,R+r)\times B(x_0,R+(1+C_0)r)}\vp^2(x)(f(x)-f(y))^2\,J^{(\rho)}(dx,dy)\bigg) \\
&\quad + \frac{c_1(\varepsilon)}{\phi(r\wedge\rho)}  \int_{B(x_0,R+(1+C_0)r)} f^2  \,d\mu.\end{split} \end{equation}
\end{proposition}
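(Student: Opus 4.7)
\medskip

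\noindent\textbf{Proof plan.} The plan is a layering argument adapted from \cite[Proposition 2.3(2)]{CKW1} (which handles the purely non-local case), using the strong locality of $\Gamma_c$ to boost the improvement in the local part. Fix $\varepsilon>0$ and take a large integer $N$ to be chosen at the end. For $k=1,\ldots,N$, apply $\CS^{(\rho)}(\phi)$ at scale $r/N$ to the nested pairs $B(x_0,R+(k-1)r/N)\subset B(x_0,R+kr/N)$ to produce cutoff functions $\psi_k$, and set
$$\vp:=\frac{1}{N}\sum_{k=1}^N\psi_k.$$
Then $\vp$ is a valid cutoff for $B(x_0,R)\subset B(x_0,R+r)$, and on the $k$-th annulus $A_k:=B(x_0,R+kr/N)\setminus B(x_0,R+(k-1)r/N)$ one has the staircase identity $\vp=\tfrac{1}{N}(\psi_k+N-k)$.

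For the strongly local piece, strong locality of $\Gamma_c$ together with the fact that only $\psi_k$ is non-constant on $A_k$ gives $d\Gamma_c(\vp,\vp)=N^{-2}\,d\Gamma_c(\psi_k,\psi_k)$ on $A_k$, hence a factor $N^{-2}$ after summing over $k$. For the non-local piece, I would split $\int f^2(x)(\vp(x)-\vp(y))^2\,J^{(\rho)}$ into a near-range part $\{d(x,y)\le r/N\}$ and a far-range part. On the near range, $x$ and $y$ lie in at most two adjacent annuli, so only $O(1)$ of the differences $\psi_k(x)-\psi_k(y)$ are non-zero, giving
$$(\vp(x)-\vp(y))^2\le \frac{C}{N^2}\sum_k(\psi_k(x)-\psi_k(y))^2,$$
which is the critical refinement over the naive Cauchy--Schwarz bound of $1/N$. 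On the far range, Lemma \ref{intelem} applied at threshold $r/N$ together with $\phi((r/N)\wedge\rho)^{-1}\le CN^{\beta_2}\phi(r\wedge\rho)^{-1}$ from \eqref{polycon} absorbs this contribution into the $L^2$ error $cN^{\beta_2}/\phi(r\wedge\rho)\|f\|_2^2$. Applying $\CS^{(\rho)}(\phi)$ to each $\psi_k$ (again losing a factor $N^{\beta_2}$ in the $L^2$ coefficient) and summing yields
$$\int f^2\,d\Gamma^{(\rho)}(\vp,\vp)\le \frac{CC_1}{N^2}\sum_{k=1}^N\Big(\int\psi_k^2\,d\Gamma_c(f,f)+\int\psi_k^2(x)(f(x)-f(y))^2\,J^{(\rho)}\Big)+\frac{c_1(N)}{\phi(r\wedge\rho)}\|f\|_2^2.$$

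The remaining step is to convert the $\psi_k^2$-weighted sums on the right into the $\vp^2$-weighted energies appearing in \eqref{e:csj3}. The key pointwise identity is $\sum_k\psi_k^2\le N\vp$, which is immediate from $\psi_k^2\le\psi_k$ and $N\vp=\sum_k\psi_k$; it gives $N^{-2}\sum_k\int\psi_k^2\,d\Gamma_c(f,f)\le N^{-1}\int\vp\,d\Gamma_c(f,f)$, and likewise for the jump sum. To pass from a $\vp$-weight to a $\vp^2$-weight I would decompose $M$ into $\{\vp\ge 1/2\}$ (where $\vp\le 2\vp^2$, handling that piece) and $\{\vp<1/2\}$; by the staircase structure the second set is contained in $B(x_0,R+r)\setminus B(x_0,R+r/2)$, and its contribution is absorbed into the $L^2$ error term by one further application of $\CS^{(\rho)}(\phi)$ to a cutoff for $B(x_0,R+r/2)\subset B(x_0,R+r)$ together with the tail bound in Lemma \ref{intelem}. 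Choosing $N$ so that $CC_1/N\le\varepsilon$ and collecting constants into $c_1(\varepsilon)$ completes the argument. The main obstacle is precisely this last step: the pointwise bound naturally produces $\vp$, not $\vp^2$, on the right, and the mismatch is largest in the outer annuli where $\vp$ is small but several $\psi_k$ are still non-trivial; the decomposition together with the absorption/iteration via the tail estimate is what makes the self-improvement go through in the present mixed local/non-local setting.
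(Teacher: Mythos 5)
Your layering plan is correct in spirit, but the equal-width/equal-weight decomposition $\vp=N^{-1}\sum_{k=1}^N\psi_k$ has a genuine gap, and the paper uses a different construction precisely to avoid it. The paper takes shells of \emph{geometrically shrinking} width $s_n=c_0 r e^{-n\lambda/(2\beta_{2,\phi})}$ and an \emph{exponentially decaying} weighting $\vp=\sum_{n\ge1}(b_{n-1}-b_n)\vp_n$ with $b_n=e^{-n\lambda}$, following \cite[Lemma 5.1]{AB} for the local piece and \cite[Proposition 2.4]{CKW1} for the non-local piece. The point of that choice is that on the $n$-th shell one has $\vp\approx b_{n-1}$ and the weight increments satisfy $(b_{n-1}-b_n)^2\lesssim\lambda^2 b_{n-1}^2\lesssim\lambda^2\vp^2$ pointwise, so the whole energy of $\vp$ comes with a built-in multiplier $\lambda^2\vp^2$ at each point, uniformly down to the outer boundary; sending $\lambda\to0$ gives the $\varepsilon$-gain.

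Your construction fails exactly at the outer shells. On $A_m=B(x_0,R+mr/N)\setminus B(x_0,R+(m-1)r/N)$ one has $\vp\approx(N-m)/N$, and after applying $\CS^{(\rho)}(\phi)$ to each $\psi_k$ the local part of the right-hand side is controlled by $N^{-2}\sum_k\psi_k^2\approx(N-m)/N^2$, while the target weight is $\varepsilon\vp^2\approx\varepsilon(N-m)^2/N^2$. The ratio is $\tfrac{1}{\varepsilon(N-m)}$, which is not small on the last $O(1/\varepsilon)$ shells (and is order one on $A_N$ itself). Your pointwise bound $\sum_k\psi_k^2\le N\vp$ only cancels one factor of $N$, leaving $N^{-1}\vp$ against the needed $\varepsilon\vp^2$, and $N^{-1}\vp\le\varepsilon\vp^2$ is false wherever $\vp\lesssim 1/(\varepsilon N)$, i.e. precisely in the outer shells.

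The proposed repair does not close this. The surplus you want to absorb is of the form $\int_{\{\vp<1/2\}}\psi_k^2\,d\Gamma_c(f,f)$ for the outer indices $k$, where $\psi_k^2$ is $O(1)$ — an \emph{energy} of $f$, not an $L^2$ norm of $f$. One further use of $\CS^{(\rho)}(\phi)$ to a new cutoff $\chi$ for $B(x_0,R+r/2)\subset B(x_0,R+r)$ controls $\int f^2\,d\Gamma^{(\rho)}(\chi,\chi)$ (energy of $\chi$ weighted by $f^2$) in terms of $\int\chi^2\,d\Gamma_c(f,f)$ plus $\|f\|_2^2$; it never lets you trade a $d\Gamma_c(f,f)$ weight down to an $L^2(f)$ term. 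Likewise Lemma \ref{intelem} only bounds tail integrals of the jump kernel and cannot touch the local piece. So the mismatch in the outer annuli is not absorbable by the tools you invoke, and the equal-weight scheme has to be replaced by the exponential one to make the self-improvement go through.
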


\begin{proof} By the remark before Proposition \ref{CSJ-equi},
we may and do assume that $\CS^{(\rho)}(\phi)$ holds with $C_0=1$.
Fix $x_0\in M$, $0<r\le R$ and $f\in \sF$. For $s >0$, set $B_{s}=B(x_0,s)$.
The goal is to construct a cut-off function $\vp\in \sF_b$ for  $B_R
\subset B_{R+r}$
so that \eqref{e:csj3} holds.
Without loss of generality, in the following we may and do assume that $\int_{B_{R+2r}} f^2\,d\mu>0$; otherwise, \eqref{e:csj3} holds trivially.

For $\lam>0$ whose exact value to be determined later,
let
$$ s_n = c_0 r e^{- n \lam/(2\beta_{2,\phi})}, $$
where $c_0:=c_0(\lam)$ is chosen so that $\sum_{n=1}^\infty s_n =r $, and $\beta_{2,\phi}:=\beta_{2,\phi_c}\vee \beta_{2,\phi_j}$ is given in \eqref{polycon000}.
Set $r_0=0$ and
$$ r_n = \sum_{k=1}^n s_k,\quad n\ge 1.$$
Clearly, $R < R+r_1 < R+r_2 < \dots < R+r$.
For any $n\ge 0$, define $U_n:= B_{R+r_{n+1}} \setminus B_{R+r_n}$, and
$U_n^*:=B_{R+r_{n+1}+s_{n+1} } \setminus  B_{R+r_n-s_{n+1}}$.
Let $\theta>0$,
whose value also to be determined later,
 and define $f_\theta:=|f|+\theta$.
By $\CS^{(\rho)}(\phi)$
(with $R=R+r_n$, $r=r_{n+1}-r_n=s_{n+1}$),
 there exists a cut-off function $\vp_n$ for
$B_{R+r_n} \subset B_{R+r_{n+1}}$
such that
\begin{align*}
 &\int_{B_{R+r_{n+1}+s_{n+1}}} f_\theta^2 \, d\Gamma^{(\rho)}(\vp_n,\vp_n)\\
 &\le C_1 \left( \int_{B_{R+r_{n+1}}}\vp_n^2\,d\Gamma_c(f_\theta,f_\theta)+\int_{B_{R+r_{n+1}}\times B_{R+r_{n+1}+s_{n+1}}} \vp_n^2(x)(f_\theta(x)-f_\theta(y))^2\, J^{(\rho)}(dx,dy) \right) \\
&\quad+ \frac{C_2 }{\phi(s_{n+1}\wedge \rho)} \int_{B_{R+r_{n+1}+s_{n+1}}}   f_\theta^2 \,d\mu,\end{align*}
 where $C_1,C_2$ are positive constants independent of $f_\theta$ and $\vp_n$.
Here, we mention that since $(\sE, \sF)$ is a regular Dirichlet form on $L^2(M,\mu)$, $f_\theta\in \sF_{loc}$, and so, by Remark \ref{rek:scj}(ii), $\CS^{(\rho)}(\phi)$ can be applied to $f_\theta.$

Let $b_n = e^{-n \lam}$ and define
\be \label{phi}
 \vp = \sum_{n=1}^\infty (b_{n-1} - b_n) \vp_n.
\ee
 Then  $\vp$ is a cut-off
function for $B_R \subset B_{R+r}$, because $\vp=1$ on $B_R$ and  $\vp=0$ on $B_{R+r}^c$.
Hence, combining the proof of \cite[Lemma 5.1]{AB} with that of \cite[Proposition 2.4]{CKW1}, we can verify that the function $\vp$ defined by \eqref{phi} satisfies \eqref{e:csj3}
with $C_0=1$ and
$\vp\in \sF_b$. (In particular, by \cite[(5.7) in the proof of Lemma 5.1]{AB}, we can insert the function $\vp^2$ in front of $d\Gamma_c(f,f)$.) The details are omitted here.
\qed
\end{proof}

As a direct consequence of Lemma \ref{L:cs-trun} and Proposition \ref{L:cswk}, we have the following corollary.
\begin{corollary} \label{C:cswk-1}
Suppose that $\VD$, \eqref{polycon}, $\J_{\phi,\le}$ and $\CS(\phi)$ hold.
 Then  there exists a constant $c_1>0$
such that for every $0<r\le R$, almost all $x_0 \in M$ and any $f\in \sF$, there exists
a cut-off function $\vp\in \sF_b$ for $B(x_0,R) \subset B(x_0,R+r)$ so that the following holds for all $\rho\in (0,\infty]$,
\be \label{e:csa2} \begin{split}
 &\int_{B(x_0,R+2r)} f^2 \, d\Gamma^{(\rho)}(\vp,\vp)\\
 & \le \frac 18 \bigg(\int_{B(x_0,R+r)} \vp^2\,d\Gamma_c(f,f)+\int_{B(x_0,R+r)\times B(x_0,R+2r)}\vp^2(x)(f(x)-f(y))^2\,J^{(\rho)}(dx,dy)\bigg)\\
&\quad + \frac{c_1}{\phi(r\wedge \rho)}  \int_{B(x_0,R+2r)} f^2 \, d\mu.
\end{split}\ee  \end{corollary}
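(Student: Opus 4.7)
The plan is essentially a bookkeeping exercise: the corollary is obtained by chaining together Lemma \ref{L:cs-trun}, the remark immediately following its proof about adjusting the enlargement parameter $C_0$, and Proposition \ref{L:cswk} applied with $\varepsilon = 1/8$. No fresh analytic idea is needed beyond what is already established there. First I would use Lemma \ref{L:cs-trun} to pass from the hypothesis $\CS(\phi)$ to its $\rho$-truncated version $\CS^{(\rho)}(\phi)$, uniformly in $\rho \in (0, \infty]$; this replaces the jumping kernel $J$ by $J^{(\rho)}$, the energy measure $\Gamma$ on the left-hand side by $\Gamma^{(\rho)}$, and upgrades the error factor $1/\phi(r)$ to $1/\phi(r \wedge \rho)$, matching exactly the form of the target inequality \eqref{e:csa2}.

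Next, whatever value $C_0 \in (0,1]$ emerged from that step, the remark recorded right after the proof of Lemma \ref{L:cs-trun} allows me to upgrade $C_0$ to any value in $(C_0, 1]$, in particular $C_0 = 1$, at the cost of enlarging the constant $C_2$. With $C_0 = 1$, the enlarged shell on the left-hand side of $\CS^{(\rho)}(\phi)$ becomes exactly the ball $B(x_0, R+2r)$ that is required in \eqref{e:csa2}.

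Finally, with $\CS^{(\rho)}(\phi)$ in hand with $C_0 = 1$, I would apply Proposition \ref{L:cswk} with $\varepsilon = 1/8$. This supplies, for every $f \in \sF$ and almost every $x_0 \in M$, a cut-off function $\vp \in \sF_b$ for $B(x_0, R) \subset B(x_0, R+r)$ for which the leading constant in front of the energy sum on the right-hand side is exactly $1/8$ and the tail coefficient is $c_1 := c_1(1/8)$; since $\vp$ is supported in $B(x_0, R+r)$, the $\vp^2$-weighted integrands on the right automatically vanish outside that smaller ball, which lets me write the first and second terms on the right of \eqref{e:csa2} as integrals over $B(x_0, R+r)$ rather than $B(x_0, R+2r)$. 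The only minor bookkeeping point is to check that the parameter $C_0$ in the conclusion of Proposition \ref{L:cswk} can be taken equal to the $C_0$ of its hypothesis, so that $C_0 = 1$ is preserved; this is transparent from the partition-of-unity construction \eqref{phi} inside its proof, where $\vp$ is built inside $B(x_0, R+r)$ and only the shell expansions by $s_{n+1}$ reach into $B(x_0, R+2r)$. Combining these three steps yields \eqref{e:csa2} uniformly in $\rho \in (0, \infty]$.
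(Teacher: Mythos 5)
Your proposal is correct and takes essentially the same route as the paper, which simply declares the corollary ``a direct consequence of Lemma~\ref{L:cs-trun} and Proposition~\ref{L:cswk}.'' You have simply spelled out the bookkeeping that the paper leaves implicit: pass to $\CS^{(\rho)}(\phi)$ via Lemma~\ref{L:cs-trun}, normalize to $C_0=1$ using the remark following its proof (which is exactly the first step in the paper's own proof of Proposition~\ref{L:cswk}), and then specialize $\varepsilon=1/8$ in Proposition~\ref{L:cswk}.
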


\subsection{Consequences of $\CS(\phi)$: Caccioppoli and $L^1$-mean value inequalities} \label{s:cac}

 In this subsection, we establish mean value inequalities for subharmonic functions.
For
stability results for heat kernel estimates, we only need  mean value inequalities for the $\rho$-truncated Dirichlet form $(\sE^{(\rho)},\sF)$, while the mean value inequalities for the original Dirichlet form $(\sE,\sF)$ will be used as one of the key tools in the study of
 characterizations of parabolic Harnack inequalities.
We
will first
 present these inequalities for subharmonic functions of the original Dirichlet form $(\sE,\sF)$
 and then indicate similar inequalities for subharmonic functions
 of the $\rho$-truncated Dirichlet form $(\sE^{(\rho)},\sF)$.

\begin{definition}\rm
Let ${D}$ be an open subset of $M$.
\begin{description}
\item{\rm (i)} We say that a bounded nearly Borel measurable function $u$ on $M$
is \emph{$\sE$-subharmonic}  (resp. \emph{$\sE$-harmonic, $\sE$-superharmonic})
in ${D}$ if $u\in\sF^{D}_{loc}$ satisfies
\begin{equation*}\label{an-har}
\sE(u,\varphi)\le 0 \quad (\textrm{resp.}\ =0, \ge0)
\end{equation*}
for any $0\le\varphi\in\sF^{D}.$

\item{\rm (ii)} A nearly Borel measurable function $u$ on $M$ is said to be \emph{subharmonic}  (resp. \emph{harmonic, superharmonic})
in ${D}$ (with respect to the process $X$)
if for any relatively compact subset $U\subset D$,
$t\mapsto   u (X_{t\wedge \tau_U}) $ is a uniformly integrable submartingale
(resp. martingale, supermartingale) under $\bP^x$ for q.e. $x\in M$.
\end{description}
\end{definition}

The following result is established in \cite[Theorem 2.11  and Lemma 2.3]{Chen} first for harmonic functions,
and then extended in \cite[Theorem 2.9]{ChK} to subharmonic functions.

\begin{theorem}\label{equ-har}Let ${D}$ be an open subset of $M$, and let $u$ be a bounded function.
Then $u$ is $\sE$-harmonic $($resp.  $\sE$-subharmonic$)$ in ${D}$ if and only if $u$ is  harmonic
 $($resp. subharmonic$)$
 in ${D}$.
 \end{theorem}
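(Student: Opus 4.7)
The plan is to establish the equivalence by leveraging Fukushima's decomposition combined with the Revuz correspondence, which bridges the analytic (energy/bilinear form) and probabilistic (martingale) formulations. Since the conclusion is claimed for bounded $u$, the local nature of the assertion means we may localize to relatively compact open $U\subset D$ throughout.

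First I would treat the harmonic case. For the direction ``$\sE$-harmonic $\Longrightarrow$ harmonic'', the idea is to apply the generalized Fukushima decomposition to $u\in \sF^D_{\mathrm{loc}}$: on any relatively compact $U\subset D$, one can find $\tilde u\in \sF$ that agrees with $u$ on $U$, and write
\[
\tilde u(X_t)-\tilde u(X_0) = M_t^{[\tilde u]} + N_t^{[\tilde u]},
\]
where $M^{[\tilde u]}$ is a martingale additive functional of finite energy and $N^{[\tilde u]}$ is a continuous additive functional of zero energy. The condition $\sE(u,\varphi)\le 0$ for all $0\le \varphi\in \sF^D$ with $\mbox{supp}\,\varphi\subset U$, interpreted through Revuz correspondence, forces the Revuz measure of $N^{[\tilde u]}$ to be $0$ on $U$; hence $N_{t\wedge\tau_U}^{[\tilde u]}=0$ for q.e.\ starting point, and $u(X_{t\wedge\tau_U})=\tilde u(X_{t\wedge \tau_U})$ is a martingale. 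Boundedness of $u$ gives uniform integrability. For the converse, starting from the probabilistic martingale property I would use the optional sampling and the formula $\bE^x[u(X_{t})\varphi(X_t)]-u(x)\varphi(x)$ (for $\varphi\in \sF^D$ with compact support in $U$) to recover $\sE(u,\varphi)=0$, by passing to the infinitesimal limit and using that $\varphi\in \sD(\sL)$ locally.

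For the subharmonic case I would adapt the same scheme: the key input is that the Fukushima-type decomposition for $\sE$-subharmonic $u$ yields $N_t^{[\tilde u]}$ equal to a non-increasing CAF on $U$ (its Revuz measure is a non-positive smooth measure, because $\sE(u,\varphi)\le 0$ for $\varphi\ge 0$). Thus $u(X_{t\wedge\tau_U})=u(X_0)+M_{t\wedge\tau_U}^{[\tilde u]}-(-N_{t\wedge\tau_U}^{[\tilde u]})$ is the sum of a uniformly integrable martingale and a non-decreasing process, i.e.\ a submartingale. The reverse implication runs symmetrically: from submartingality derive $\sE(u,\varphi)\le 0$ via the Dynkin-type formula $\bE^x\!\int_0^{\tau_U}(-\sL\varphi)(X_s)\,ds$ and duality $\langle u,-\sL\varphi\rangle = \sE(u,\varphi)$ for smooth compactly supported $\varphi\in \sD(\sL)$.

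The principal obstacle is the non-local term: the jump part of $\sE$ contributes purely discontinuous pieces to $M^{[\tilde u]}$, and one must verify that Revuz correspondence handles these correctly so that $\sE(u,\varphi)$ genuinely detects the sign of the drift component $N^{[\tilde u]}$ of $u$. Since $u$ is only assumed bounded (not in $L^2$), a localization/truncation argument using cutoff functions and the fact that $\sF_{\mathrm{loc}}$ is preserved under multiplication by cutoff functions (together with $\CS(\phi)$-type estimates on energy of cutoffs) will be needed to produce suitable $\tilde u\in \sF$ on each relatively compact $U$. Since these points are exactly those addressed in \cite[Theorem 2.11, Lemma 2.3]{Chen} and \cite[Theorem 2.9]{ChK}, the proof effectively reduces to invoking these results once the setup above is aligned.
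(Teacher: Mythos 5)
Your proposal ultimately takes the same route as the paper: the theorem is not reproved but cited from \cite[Theorem 2.11 and Lemma 2.3]{Chen} and \cite[Theorem 2.9]{ChK}, and your closing paragraph correctly defers to those references. The explanatory sketch of the internals (Fukushima decomposition plus Revuz correspondence) is the right framework, but there are two points worth correcting.

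First, there is a sign inconsistency in the subharmonic case. With $\sE(u,\varphi)=-\langle \sL u,\varphi\rangle$, the condition $\sE(u,\varphi)\le 0$ for all $\varphi\ge 0$ in $\sF^D$ forces $\sL u\ge 0$ on $D$, so the zero-energy CAF $N^{[\tilde u]}_t=\int_0^t \sL \tilde u(X_s)\,ds$ (for smooth enough $\tilde u$) is \emph{non-decreasing} and its Revuz measure is \emph{non-negative}, not non-increasing/non-positive as you wrote. Your subsequent display $u(X_{t\wedge\tau_U})=u(X_0)+M^{[\tilde u]}_{t\wedge\tau_U}-(-N^{[\tilde u]}_{t\wedge\tau_U})$ then only produces a submartingale once this sign is fixed; as stated it is circular. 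Second, invoking $\CS(\phi)$-type cut-off estimates to produce $\tilde u\in \sF$ on each relatively compact $U$ is superfluous and in fact out of place: $\CS(\phi)$ is one of the conditions being characterized by the paper, and this theorem must hold for arbitrary regular Dirichlet forms without it. The required localization is built into the definition of $\sF^D_{\mathrm{loc}}$: by definition, for each relatively compact open $U$ with $\ol U\subset D$ there already exists $g\in\sF^D$ agreeing with $u$ a.e.\ on $U$, so no extra energy estimates are needed.
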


To establish the Caccioppoli inequality, we also need the following definition.

\begin{definition} \rm Let $\psi:\bR_+\to \bR_+$.
For a Borel measurable function $u$ on $M$, we define its
\emph{nonlocal tail}   in the ball
$B(x_0,r)$ with respect to the function $\psi$ by
\begin{equation} \label{def-T}
\T_\psi\, (u; x_0,r)=\psi(r)\int_{B(x_0,r)^c}\frac{|u(z)|}{V(x_0,d(x_0,z))\psi(d(x_0,z))}\,\mu(dz).
\end{equation}
\end{definition}

Suppose that $\VD$ and \eqref{polycon} hold.
By
Lemma \ref{intelem},  both $\T_{\phi_j}\, (u; x_0,r)$  and $\T_{\phi}\, (u; x_0,r)$  are finite if $u$ is bounded.

 \ \

We first show that $\CS(\phi)$ enables us to prove
a Caccioppoli
inequality for $\sE$-subharmonic functions.

\begin{lemma} \label{L:ci}
{\bf (Caccioppoli inequality)}\,
Suppose that $\VD$, \eqref{polycon}, $\CS(\phi)$ and
$\J_{\phi,\le}$ hold.
For $x_0 \in M$ and $s>0$, let $B_s= B(x_0, s)$.
For $0<r <  R$, let $u$ be an
$\sE$-subharmonic function on $B_{R+r}$ for the Dirichlet
form $(\sE, \sF)$, and $v := (u-\theta)^+$ for $\theta>0$.
Let $\vp$ be the cut-off function for $B_{R-r} \subset B_{R}$ associated with $v$ in
$\CS(\phi)$. Then there exists a constant $c>0$ independent of
$x_0, R, r$ and $\theta$ such that
\begin{equation}\label{e:cacc}\begin{split}
\int_{B_{R+r}}\,d\Gamma(v\vp,v\vp)\le  \frac{c }{\phi(r)} \left[ 1+\frac{1}{\theta}\left(1+\frac{R}{r}\right)^{d_2+\beta_{2,\phi}-\beta_{1,\phi}}
\T_\phi\,(u; x_0, R+r)\right]\int_{B_{R+r}} u^2\,d\mu,
\end{split}\end{equation} where $\beta_{1,\phi}$ and $\beta_{2,\phi}$ are given in \eqref{polycon000}.
\end{lemma}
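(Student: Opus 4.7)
The plan is to exploit the $\sE$-subharmonicity of $u$ by testing against $\psi := v\vp^2 \ge 0$, which lies in $\sF^{B_{R+r}}$ since $\vp$ is supported in $B_R$; this gives $\sE^{(c)}(u,\psi) + \sE^{(j)}(u,\psi) \le 0$. For the local part, strong locality of $\Gamma_c$ together with the fact that $u-v$ is constant on $\{v>0\}$ yields $\sE^{(c)}(u,\psi) = \sE^{(c)}(v,\psi)$; a Leibniz/chain rule expansion and a weighted Cauchy--Schwarz inequality (cf.\ \eqref{e:sc-ine}) then give
$$\sE^{(c)}(u,\psi) \;\ge\; \tfrac12\int\vp^2\,d\Gamma_c(v,v) - 2\int v^2\,d\Gamma_c(\vp,\vp).$$
For the non-local part I would use the pointwise inequality $(u(x)-u(y))(v(x)-v(y)) \ge (v(x)-v(y))^2$ (valid since $s\mapsto(s-\theta)^+$ is a monotone contraction) and split the integration over $M\times M$ according to whether $x,y$ lie in $B_R$, $B_{R+r}\setminus B_R$, or $B_{R+r}^c$. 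Standard symmetrizations (as in \cite{CKW1,CKW2}) together with Cauchy--Schwarz then yield the lower bound
$$\sE^{(j)}(u,\psi) \;\ge\; c_1 A - c_2 B - 2\mathcal{I},$$
where $A := \int\!\!\int_{B_{R+r}\times B_{R+r}}\vp^2(x)(v(x)-v(y))^2\,J(dx,dy)$, $B := \int\!\!\int v^2(x)(\vp(x)-\vp(y))^2\,J(dx,dy)$, and $\mathcal{I} := \int_{B_R}v(x)\vp^2(x)\,\mu(dx)\int_{B(x_0,R+r)^c}|u(y)|\,J(x,y)\,\mu(dy)$ is the tail contribution.

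Next I would apply the self-improved $\CS(\phi)$ from Corollary~\ref{C:cswk-1} with $f=v$ to obtain
$$\int v^2\,d\Gamma_c(\vp,\vp) + B \;\le\; \tfrac18\Bigl(\int\vp^2\,d\Gamma_c(v,v) + A\Bigr) + \frac{c}{\phi(r)}\int_{B_{R+r}}v^2\,d\mu.$$
Plugging the two lower bounds and this $\CS(\phi)$-bound into $\sE(u,\psi)\le 0$ and absorbing the diagonal $v$-energy to the left yields
$$\int\vp^2\,d\Gamma_c(v,v) + A \;\le\; \frac{C}{\phi(r)}\int_{B_{R+r}} v^2\,d\mu + C'\,\mathcal{I}.$$
Since $v\le u$, the first summand is already dominated by $\frac{C}{\phi(r)}\int_{B_{R+r}} u^2\,d\mu$, accounting for the ``$1$'' term in the bracket of the conclusion.

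The main technical step is the tail estimate for $\mathcal{I}$, which is where the exponent $d_2+\beta_{2,\phi}-\beta_{1,\phi}$ enters. For $x\in B_R$ and $y\in B(x_0,R+r)^c$ one has $d(x,y)\ge r$, and the triangle inequality shows $d(x,y)$ and $d(x_0,y)$ are comparable up to a factor of $1+R/r$. Combining $\J_{\phi,\le}$ with the $\VD$ estimate \eqref{e:vd2} (contributing $d_2$) and the upper estimate in \eqref{polycon000} (contributing $\beta_{2,\phi}$) yields
$$\int_{B(x_0,R+r)^c}\frac{|u(y)|}{V(x,d(x,y))\phi(d(x,y))}\,\mu(dy) \;\le\; \frac{c(1+R/r)^{d_2+\beta_{2,\phi}}}{\phi(R+r)}\,\T_\phi(u;x_0,R+r),$$
and the lower estimate $\phi(R+r)/\phi(r)\ge c(1+R/r)^{\beta_{1,\phi}}$ from \eqref{polycon000} then reduces the denominator to $\phi(r)$ at the cost of lowering the exponent by $\beta_{1,\phi}$. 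Using $v\vp^2\le v\le u^2/\theta$ on $\{v>0\}$ converts $\mathcal{I}$ into $\tfrac{c(1+R/r)^{d_2+\beta_{2,\phi}-\beta_{1,\phi}}}{\theta\phi(r)}\,\T_\phi(u;x_0,R+r)\int_{B_{R+r}}u^2\,d\mu$, producing the second summand in the bracket. Finally, the elementary inequality
$$\int d\Gamma(v\vp,v\vp) \;\le\; 2\Bigl(\int\vp^2\,d\Gamma_c(v,v) + A\Bigr) + 2\Bigl(\int v^2\,d\Gamma_c(\vp,\vp) + B\Bigr),$$
obtained by expanding $v\vp(x)-v\vp(y)=\vp(x)(v(x)-v(y))+v(y)(\vp(x)-\vp(y))$ and using $(a+b)^2\le 2a^2+2b^2$ for the jump part together with the Leibniz rule for $\Gamma_c$, combined with the bounds above and one more application of $\CS(\phi)$ to control the second parenthesized group, completes the proof. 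The main obstacle is the symmetrization/absorption argument inside $\sE^{(j)}(u,v\vp^2)$ and the careful bookkeeping of the $(1+R/r)$ exponent in the comparison of $V(x,\cdot)\phi(\cdot)$ with $V(x_0,\cdot)\phi(\cdot)$.
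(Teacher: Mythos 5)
Your proposal is correct and follows essentially the same route as the paper's proof: test the $\sE$-subharmonicity against $v\vp^2$, split into local and nonlocal parts, use strong locality of $\Gamma_c$ to replace $u$ by $v$ in the local term, the pointwise monotonicity inequality for the jump term, the self-improved $\CS(\phi)$ (Corollary \ref{C:cswk-1}, or Proposition \ref{L:cswk} if one needs an $\eps$ smaller than $1/8$ to absorb with your particular constants $c_1,c_2$) to absorb the cut-off energies, and the $\VD$/\eqref{polycon000} comparison to move the tail from $V(x,d(x,y))\phi(d(x,y))$ to $V(x_0,d(x_0,y))\phi(d(x_0,y))$ at the cost of the factor $(1+R/r)^{d_2+\beta_{2,\phi}-\beta_{1,\phi}}$. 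The only cosmetic difference is the final bookkeeping: the paper forms a single linear combination $a\cdot\eqref{e:cac33}+\eqref{e:ci2}$ and chooses $a=1/9$ so the coefficients vanish exactly after substituting \eqref{e:csa2}, whereas you first derive the Caccioppoli bound for $\int\vp^2\,d\Gamma_c(v,v)+A$, then bound $\int d\Gamma(v\vp,v\vp)$ by the elementary Leibniz expansion plus a second application of $\CS(\phi)$ — both variants are valid, and your version, while slightly more wasteful, is arguably more transparent. One small imprecision worth noting: your ``elementary inequality'' for $\int d\Gamma(v\vp,v\vp)$ as written omits the integration over $B_R\times B_{R+r}^c$ in the jump part (your $A$ is restricted to $B_{R+r}\times B_{R+r}$); that piece produces an extra $\frac{c}{\phi(r)}\int u^2\,d\mu$ error term, which is exactly what the cited estimate \cite[(4.6)]{CKW1} supplies, so this is a matter of notation rather than a gap.
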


\begin{proof} (i) Since $u$ is $\sE$-subharmonic in $B_{R+r}$ for the Dirichlet
form $(\sE, \sF)$ and $\vp^2v\in \sF^{B_{R}}$,
we have $u\in \FF_{B_{R+r}}^{loc}$ and $\sE(u,\vp^2v)=\sE^{(c)}(u,\vp^2v)+\sE^{(j)}(u,\vp^2v)\le 0.$

As $u-v=u 1_{\{u\leq \theta\}} + \theta 1_{\{u>\theta\}}$ and $v=0$ on $\{u\leq \theta\}$, we have by \eqref{e:comment-1} that
$\Gamma_c (u-v, v)=0$ on $M$.
Hence by  the Leibniz and chain rules as well as
 the Cauchy-Schwarz inequality \eqref{e:sc-ine},
 \begin{align*}
 \sE^{(c)}(u,\vp^2v)&=\int_M \vp^2\,d\Gamma_c(u,v)+2\int_M \vp v\,d\Gamma_c(u,\vp)\\
&=\int_M \vp^2\,d\Gamma_c(v,v)+2\int_M \vp v\,d\Gamma_c(v,\vp)\\
&\ge \int_M \vp^2\,d\Gamma_c(v,v)-\left( 4\int_M v^2\,d\Gamma_c(\vp,\vp)+\frac{1}{4}\int_M \vp^2\,d\Gamma_c(v,v)\right)\\
&\ge \frac{3}{4}\int_M \vp^2\,d\Gamma_c(v,v)-4\int_M v^2\,d\Gamma_c(\vp,\vp)\\
&=\frac{3}{4}\int_{B_{R}} \vp^2\,d\Gamma_c(v,v)-4\int_{B_{R}} v^2\,d\Gamma_c(\vp,\vp).
\end{align*}
On the other hand, by \cite[(4.5)]{CKW1}, we have
 \begin{align*}
\sE^{(j)}(u,\vp^2v)&\ge \int_{B_{R}\times B_{R+r}} \vp^2(x)(v(x)-v(y))^2 \, J(dx,dy)-4\int_{B_{R+r}} v^2\, d\Gamma_j(\vp,\vp)\\
&\quad- \frac{c_1 }{\theta\phi(r)}
\bigg[ \left(1+\frac{R}{r}\right)^{d_2+\beta_{2,\phi}-\beta_{1,\phi}} \T_\phi\,(u; x_0,
R+r)\bigg] \int_{B_{R}} u^2\,d\mu.
\end{align*}
Combining all the estimates above, we arrive at
\begin{equation}\label{e:ci2}\begin{split}
0\le &4\int_{B_{R+r}} v^2\, d\Gamma(\vp,\vp)\\
&
-\frac{3}{4}\Big(\int_{B_{R}} \vp^2\,d\Gamma_c(v,v)+\int_{B_{R}\times B_{R+r}} \vp^2(x)(v(x)-v(y))^2 \, J(dx,dy)\Big) \\
& +\frac{c_1 }{\theta\phi(r)}
\bigg[ \left(1+\frac{R}{r}\right)^{d_2+\beta_{2,\phi}-\beta_{1,\phi}} \T_\phi\,(u; x_0,
R+r)\bigg] \int_{B_{R}} u^2\,d\mu.
\end{split}\end{equation}

(ii) It is easy to see
 from  the Leibniz rule and
 the Cauchy-Schwarz inequality \eqref{e:sc-ine} that
$$
\int_{B_{R+r}}\,d\Gamma_c(v\vp, v\vp)\le 2\int_{B_{R+r}} v^2\,d\Gamma_c(\vp,\vp)+2 \int_{B_{R+r}}\vp^2\,d\Gamma_c(v,v).
$$
According to \cite[(4.6)]{CKW1},
\begin{align*}
\int_{B_{R+r}}\,d\Gamma_j(v\vp,v\vp)
\le& 2\int_{B_{R+r}}v^2 \,d\Gamma_j(\vp,\vp)
+2\int_{B_{R}\times B_{R+r}}\vp^2(x)(v(x)\!-\!v(y))^2\,J(dx,dy)\\
& +\frac{c_2}{\phi(r)}\int_{B_{R}}u^2\,d\mu.
\end{align*}
Hence,
\begin{equation}\label{e:cac33}\begin{split}
\int_{B_{R+r}}\,d\Gamma(v\vp,v\vp)
\le&   2\int_{B_{R+r}}v^2 \,d\Gamma(\vp,\vp)
+2 \int_{B_{R}\times B_{R+r}}\vp^2(x)(v(x)\!-\!v(y))^2\,J(dx,dy) \\
& +2 \int_{B_{R}}\vp^2\,d\Gamma_c(v,v)
 +\frac{c_2}{\phi(r)}\int_{B_{R}}u^2\,d\mu.
\end{split} \end{equation}
Combining \eqref{e:ci2} with \eqref{e:cac33}, we have for $a>0$,
\begin{equation}\label{eq:nefow}\begin{split}
&\, a\int_{B_{R+r}}\,d\Gamma(v\vp,v\vp)\\
&\le (2a+4)\int_{B_{R+r}}v^2\,d\Gamma(\vp,\vp)\\
&\quad
 +\left(2a-\frac{3}{4}\right)\left(\int_{B_{R}}\vp^2\,d\Gamma_c(v,v)+\int_{B_{R}\times B_{R+r}}\vp^2(x)(v(x)\!-\!v(y))^2\,J(dx,dy)\right)\\
&\quad+\frac{c_3(1+a) }{\phi(r)}
\bigg[ 1+\frac{1}{\theta}\left(1+\frac{R}{r}\right)^{d_2+\beta_{2,\phi}-\beta_{1,\phi}}
\T_\phi\,(u; x_0, R+r)\bigg] \int_{B_{R}} u^2\,d\mu.
\end{split}\end{equation}

 Next by using \eqref{e:csa2} for $v$ with $\rho=\infty$,  we have
\begin{align*}
& \int_{B_{R+r}}v^2\,d\Gamma(\vp,\vp)\\
& \le \frac 18 \left[\int_{B_{R}}\vp^2\,d\Gamma_c(v,v)+\int_{B_{R} \times
B_{R+r}}\vp^2(x)(v(x)-v(y))^2\,J(dx,dy) \right] + \frac{c_0}{\phi(r)} \int_{B_{R+r}}
v^2  \,d\mu\\
& \le\frac 18 \left[\int_{B_{R}}\vp^2\,d\Gamma_c(v,v)+\int_{B_{R} \times
B_{R+r}}\vp^2(x)(v(x)-v(y))^2\,J(dx,dy) \right] + \frac{c_0}{\phi(r)} \int_{B_{R+r}}
u^2  \,d\mu.
\end{align*}
Plugging this into \eqref{eq:nefow} with $a=1/9$ (so that $(4+2a)/8+(2a-(3/4))=0$), we obtain
\begin{align*}
\frac 19\int_{B_{R+r}}\,d\Gamma(v\vp,v\vp)\le  \frac{c_4 }{\phi(r)}
\bigg[1+\frac{1}{\theta}\left(1+\frac{R}{r}\right)^{d_2+\beta_{2,\phi}-\beta_{1,\phi}}
\T_\phi\,(u; x_0, R+r)\bigg]\int_{B_{R+r}} u^2\,d\mu,
\end{align*} which proves the desired assertion.
\qed\end{proof}

\begin{proposition} \label{P:mvi2g}
{\bf($L^2$-mean value inequality)}\,
Assume $\VD$, \eqref{polycon}, $\FK(\phi)$, $\CS(\phi)$ and
$\J_{\phi,\le}$ hold.  For $x_0\in M$ and $R>0$,  let $u$
be a  bounded
$\sE$-subharmonic function
in $B(x_0,R)$.  Then  for any $\delta>0$,
$$
 \esssup_{B(x_0,R/2)} u
 \le  c_1\left[ \left(\frac{(1+\delta^{-1})^{1/\nu}}{V(x_0,R)}\int_{B(x_0,{R})} u^2\,d\mu \right)^{1/2}+\delta\T_\phi\, (u; x_0,R/2)
  \right] ,
$$
 where $\nu$ is the constant appearing in the $\FK(\phi)$
inequality \eqref{e:fki}, and $c_1>0$ is a constant independent of
$x_0$, $R$ and $\delta$.
In particular, there is a constant $c>0$ independent of
$x_0$ and $R$ so that
$$
 \esssup_{B(x_0,R/2)} u
 \le  c\left[  \left(\frac{1}{V(x_0,R)}\int_{B(x_0,{R})} u^2\,d\mu \right)^{1/2}+ \T_\phi\, (u; x_0,R/2)\right].
$$
\end{proposition}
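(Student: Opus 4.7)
The plan is a Moser-type iteration combining the Caccioppoli inequality (Lemma \ref{L:ci}) with the Faber-Krahn inequality $\FK(\phi)$. Replacing $u$ by $u_+$ (which remains $\sE$-subharmonic since $t\mapsto t_+$ is convex and, by Theorem \ref{equ-har}, subharmonicity is equivalent to the submartingale property), I may assume $u \ge 0$. Fix $\delta > 0$, abbreviate
$$T := \T_\phi(u; x_0, R/2), \qquad I := \frac{1}{V(x_0,R)} \int_{B(x_0,R)} u^2\,d\mu,$$
and for a level $\theta_*>0$ to be chosen, introduce $r_n := R/2 + R/2^{n+1}$ (so $r_n \downarrow R/2$), thresholds $\theta_n := \theta_*(1-2^{-n})$, truncations $v_n := (u-\theta_n)_+$, and normalized energies $\tilde A_n := V(x_0,R)^{-1}\int_{B(x_0,r_n)} v_n^2\,d\mu$. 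The goal is to choose $\theta_*$ so that $\tilde A_n \to 0$, which forces $\esssup_{B(x_0,R/2)} u \le \theta_*$.

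Apply Lemma \ref{L:ci} to $v_n$ (subharmonic on $B(x_0,r_n)$) with truncation threshold $\theta_{n+1}-\theta_n=\theta_*/2^{n+1}$ and inner/outer radii $r_{n+1}, r_n$ (so the radial gap is $\simeq R/2^{n+3}$); using $v_n \le u$, doubling of $\phi$ and volume, and the nesting $B(x_0,r_n)^c \subset B(x_0,R/2)^c$ to obtain $\T_\phi(v_n; x_0, r_n)\le c\,T$, this produces a cut-off $\vp_n$ for $B(x_0,r_{n+1})$ with
$$
\sE(v_{n+1}\vp_n, v_{n+1}\vp_n) \le \frac{c_1\,2^{n\beta_{2,\phi}}}{\phi(R)}\left[1+\frac{c_2\,2^{n(\gamma+1)}}{\theta_*}\,T\right]\int_{B(x_0,r_n)} v_n^2\,d\mu,
$$
where $\gamma := d_2+\beta_{2,\phi}-\beta_{1,\phi}$. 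The function $f_n := v_{n+1}\vp_n$ is supported in $D_n \subset \{v_n > \theta_*/2^{n+1}\}\cap B(x_0,r_n)$, so Chebyshev yields $\mu(D_n)\le 4^{n+1}V(x_0,R)\tilde A_n/\theta_*^2$. Applying $\FK(\phi)$ on $B(x_0, r_n)$ to bound $\|f_n\|_2^2$, substituting the Caccioppoli estimate, using $\phi(r_n) \asymp \phi(R)$ and $V(x_0,r_n) \asymp V(x_0,R)$, and dividing through by $V(x_0,R)$, I obtain the recursion
$$
\tilde A_{n+1} \le \frac{C_0 K^n}{\theta_*^{2\nu}}\left(1+\frac{K^n T}{\theta_*}\right)\tilde A_n^{1+\nu}
$$
for constants $C_0, K>1$ depending only on the fixed data.

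Imposing the constraint $\theta_*\ge \delta T$ collapses the bracket to $\le C(1+\delta^{-1})K^n$, giving $\tilde A_{n+1}\le C_1(1+\delta^{-1})\theta_*^{-2\nu} K_\star^n \tilde A_n^{1+\nu}$. The substitution $W_n:= K_\star^{n/\nu}\tilde A_n$ eliminates the geometric factor and reduces this to the standard super-linear recurrence $W_{n+1}\le C_2(1+\delta^{-1})\theta_*^{-2\nu} W_n^{1+\nu}$, whose iterates tend to $0$ provided $W_0=\tilde A_0\le I$ satisfies $I\le c\,\theta_*^2(1+\delta^{-1})^{-1/\nu}$. Choosing
$$
\theta_* := c_1\bigl[(1+\delta^{-1})^{1/\nu}\,I\bigr]^{1/2} + \delta\,T
$$
with $c_1$ large enough simultaneously enforces $\theta_*\ge \delta T$ and this smallness condition, so $\tilde A_n\to 0$ and the first inequality of the proposition follows; the simplified version is the $\delta=1$ specialization. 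The main obstacle is the book-keeping of the tail term $T$ across the iteration: Caccioppoli produces a geometrically growing factor $c\,2^{n(\gamma+1)}T/\theta_*$ at step $n$, and only the trade-off $\theta_*\ge \delta T$ is delicate enough to collapse these into the single final factor $(1+\delta^{-1})^{1/\nu}$ with the correct exponent on $(1+\delta^{-1})$.
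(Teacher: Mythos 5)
Your proposal is correct and follows essentially the same route as the paper: the paper's proof defers to \cite[Lemma 4.8 and Proposition 4.10]{CKW1}, which carry out exactly this De Giorgi/Moser-type iteration combining the Caccioppoli inequality (Lemma \ref{L:ci}) with $\FK(\phi)$ and a Chebyshev estimate, with the level $\theta_*$ chosen large enough to simultaneously absorb the nonlocal tail and close the super-linear recurrence. You have simply written out the details the paper omits.
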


\begin{proof} With the aid of \eqref{e:cacc}, one can see that the comparison results over balls as stated in \cite[Lemma 4.8]{CKW1} still hold true.
We can then follow the proof of \cite[Proposition 4.10]{CKW1} line by the line to obtain the desired assertion. We omit details here.   \qed
\end{proof}

In the following, we consider $L^2$ and $L^1$ mean value
inequalities for $\sE$-subharmonic functions for truncated Dirichlet
forms.
In the truncated situation we can no longer
 use the nonlocal tail of subharmonic functions. The  remedy is to enlarge the integral regions
 in the right hand side of the mean value inequalities.
Since the proof is almost the same as these of \cite[Proposition 4.11 and Corollary 4.12]{CKW1} (with some necessary modifications as done in the proof of Lemma \ref{L:ci}), we omit it here.

\begin{proposition} \label{P:mvi2}{\bf($L^2$ and $L^1$ mean value inequalities for $\rho$-truncated Dirichlet forms)}
Assume $\VD$, \eqref{polycon}, $\FK(\phi)$,
$\CS(\phi)$ and $\J_{\phi,\le}$ hold.
There are positive constants $c_1, c_2>0$ so that for $x_0\in M$, $\rho,
R>0$, and for any bounded $\sE^{(\rho)}$-subharmonic function $u$ on $B(x_0,R)$ for the $\rho$-truncated
Dirichlet form $(\sE^{(\rho)}, \sF)$, we have
\begin{itemize}
\item[{\rm(i)}]
$$
 \esssup_{B(x_0,R/2)} u^2  \le  \frac{c_1
 }{V(x_0,R)}\left(1+\frac{\rho}{R}\right)^{d_2/\nu} \Big(1+\frac{R}{\rho}\Big)^{\beta_{2,\phi}/\nu}
    \int_{B(x_0,{R+\rho})} u^2\,d\mu;
$$
\item[{\rm(ii)}] \be \label{e:mvi}
 \esssup_{B(x_0,R/2)} u \le  \frac{c_2}{V(x_0,R)} \Big(1+\frac{\rho}{R}\Big)^{d_2/\nu}\Big(1+\frac{R}{\rho}\Big)^{\beta_{2,\phi}/\nu}
    \int_{B(x_0,R+\rho)} u \,d\mu.
\ee \end{itemize}
 Here, $\nu$ is the constant in $\FK(\phi)$, $d_2$ and  $\beta_{2,\phi}$ are the exponents in \eqref{e:vd2} from $\VD$
  and \eqref{polycon000} respectively.
  \end{proposition}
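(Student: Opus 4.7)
The plan is to adapt the Moser iteration argument of \cite[Propositions 4.10--4.11, Corollary 4.12]{CKW1} to the mixed local/nonlocal truncated form $(\sE^{(\rho)},\sF)$, using Lemma \ref{L:cs-trun} and Proposition \ref{L:cswk} in place of the analogous tools for the full form. The first step is a truncated Caccioppoli inequality: for a bounded $\sE^{(\rho)}$-subharmonic function $u$ on $B(x_0,R+r)$, $\theta>0$, $v=(u-\theta)^+$, and $\vp$ the cut-off function for $B_{R-r}\subset B_R$ associated with $v$ via $\CS^{(\rho)}(\phi)$, I will show
\[
\int_{B_{R+r}} d\Gamma^{(\rho)}(v\vp,v\vp)\le \frac{c}{\phi(r\wedge\rho)}\int_{B_{R+r+\rho}} u^2\,d\mu.
\]
The derivation mirrors Lemma \ref{L:ci}: one tests $\sE^{(\rho)}(u,\vp^2 v)\le 0$, handles the local part exactly as before via $\Gamma_c(u-v,v)\equiv 0$, and treats the $J^{(\rho)}$-part as in \cite[(4.5)]{CKW1}. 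The crucial modification is that the ``bad'' off-ball jump term now runs only over pairs $(x,y)$ with $d(x,y)\le\rho$, so the nonlocal tail $\T_\phi(u;x_0,R+r)$ that appeared in Lemma \ref{L:ci} is replaced by an integral of $u^2$ over $B_{R+r+\rho}$. The self-improved form of $\CS^{(\rho)}(\phi)$ from Corollary \ref{C:cswk-1} is then used to absorb the $v^2\,d\Gamma^{(\rho)}(\vp,\vp)$-terms with a coefficient $1/8$, giving the clean estimate above.

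Next I combine this Caccioppoli bound with $\FK(\phi)$ to run a Moser-type iteration. Choose a geometric sequence of radii $R_k=R/2+2^{-k}(R/2)$ and thresholds $\theta_k\nearrow \theta_\infty$, set $v_k=(u-\theta_k)^+$, and apply $\FK(\phi)$ to a suitable function supported on $\{u>\theta_k\}\cap B_{R_k+\rho}$ together with H\"older's inequality, exactly as in \cite[Proposition 4.10]{CKW1}. This yields an improvement of integrability of the form $\|v_{k+1}\|_{L^2}^2 \lesssim (A_k/\phi(r_k\wedge\rho))\|v_k\|_{L^{2(1+\nu)}}^{2/(1+\nu)}$ with $r_k=R_{k-1}-R_k$; iterating and summing the series produces
\[
\esssup_{B(x_0,R/2)} u^2 \le \frac{c_1}{V(x_0,R)}\Bigl(1+\tfrac{\rho}{R}\Bigr)^{d_2/\nu}\Bigl(1+\tfrac{R}{\rho}\Bigr)^{\beta_{2,\phi}/\nu}\int_{B(x_0,R+\rho)} u^2\,d\mu,
\]
where the factor $(1+\rho/R)^{d_2/\nu}$ comes from comparing $V(x_0,R_k+\rho)$ with $V(x_0,R)$ via $\VD$, and $(1+R/\rho)^{\beta_{2,\phi}/\nu}$ arises from $\phi(r_k\wedge\rho)^{-1}$ through \eqref{polycon000} when $r_k<\rho$.

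Finally, to pass from the $L^2$ to the $L^1$ mean value inequality, I use a standard Bombieri--De Giorgi-type argument (as in \cite[Corollary 4.12]{CKW1}): write $u^2 = u\cdot u$, apply the $L^2$-MVI on a one-parameter family of nested balls and take the $L^\infty$ norm out with a small power, then use Young's inequality to convert the sup-norm raised to a power between $0$ and $2$ into a term of the shape $\tfrac12\esssup u$ plus a constant times $(\int u\,d\mu)/V$. A short covering/interpolation completes the bootstrap, the factors $(1+\rho/R)^{d_2/\nu}(1+R/\rho)^{\beta_{2,\phi}/\nu}$ being preserved throughout.

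The main obstacle, and the reason the argument is not merely a reproduction of \cite{CKW1}, is quantitative bookkeeping: when $\rho<R$ the scale $r_k$ of the Moser step eventually drops below $\rho$, so $\phi(r_k\wedge\rho)=\phi(r_k)$ and the usual iteration exponents apply; but when $r_k>\rho$ one has $\phi(r_k\wedge\rho)=\phi(\rho)$, a constant independent of $k$, and one must verify that the geometric series in the Moser scheme still converges with the claimed dependence $(1+R/\rho)^{\beta_{2,\phi}/\nu}$. This is handled by splitting the iteration into the two regimes $r_k\lessgtr\rho$ and estimating them separately, as in \cite[(4.20)--(4.23)]{CKW1}. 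Full details are omitted because, modulo these modifications and the replacement of the tail term by integration over $B(x_0,R+\rho)$, the argument is a verbatim repetition of \cite[Proposition 4.11 and Corollary 4.12]{CKW1}.
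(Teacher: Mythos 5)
Your proof is correct and takes exactly the approach the paper uses: the paper simply cites \cite[Proposition 4.11 and Corollary 4.12]{CKW1} with the modification (already carried out in Lemma \ref{L:ci}) of replacing the nonlocal tail by integration over the enlarged ball $B(x_0,R+\rho)$ for the $\rho$-truncated form, which is precisely what you outline. There is a small transposition in your final paragraph — the factor $(1+R/\rho)^{\beta_{2,\phi}/\nu}$ arises from the regime $r_k>\rho$ where $\phi(r_k\wedge\rho)=\phi(\rho)$, as you correctly say in your last sentence, not from the regime $r_k<\rho$ as stated earlier — but this is a typo rather than a gap.
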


\section{Implications of heat kernel estimates} \label{section3}

First we note that by the same proof of \cite[Proposition 7.6]{CKW1}, we have the following.

\begin{proposition}\label{P:3.1}
Under $\VD$, $\RVD$ and \eqref{polycon},
$$
\UHKD(\phi) \Longrightarrow\FK(\phi) \qquad \hbox{and} \qquad \PI (\phi) \Longrightarrow\FK (\phi).
$$
\end{proposition}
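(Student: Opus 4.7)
The plan is to follow \cite[Proposition 7.6]{CKW1} and derive $\FK(\phi)$ from each of $\UHKD(\phi)$ and $\PI(\phi)$ by passing through a localized Nash-type (or variance-type) inequality and then optimizing in the scale. Fix $x_0\in M$, $r>0$, an open set $D\subset B(x_0,r)$, and $f\in \sF^D$ with $\|f\|_2=1$; the goal in both cases is to bound $\lambda_1(D)$ from below in the form demanded by \eqref{e:fki}.

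For $\UHKD(\phi)\Longrightarrow \FK(\phi)$, I would set $u(t):=\|P^D_tf\|_2^2$, observe $u(t)\le e^{-2\lambda_1(D)t}$ from the spectral theorem, and simultaneously bound
$$u(t)\le \mu(D)\,\|P^D_tf\|_\infty^2\le \mu(D)\,\|P^D_{2t}\|_{1\to\infty}\le c\,\mu(D)\,\sup_{x\in D}p(2t,x,x)\le \frac{c\,\mu(D)}{\inf_{x\in D}V(x,\phi^{-1}(2t))}$$
using symmetry/duality, domain monotonicity of Dirichlet heat kernels, and $\UHKD(\phi)$. By $\VD$, for $x\in B(x_0,r)$ and $s\le r$ one has $V(x,s)\ge c(s/r)^{d_2}V(x_0,r)$, so $u(t)\le c\mu(D)r^{d_2}/(V(x_0,r)\phi^{-1}(2t)^{d_2})$ as long as $\phi^{-1}(2t)\le r$. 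Choosing $t$ so that this bound equals $1/2$ and using \eqref{polycon} to convert $\phi^{-1}$-powers into powers of $\phi$ yields
$$\lambda_1(D)\ge \frac{c}{\phi(r)}\left(\frac{V(x_0,r)}{\mu(D)}\right)^{\nu}$$
for some $\nu>0$ depending on $\beta_{1,\phi}$ and $d_2$, which is $\FK(\phi)$ in the regime $\mu(D)/V(x_0,r)\le c_0$ where $\phi^{-1}(2t)\le r$ can be achieved.

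For $\PI(\phi)\Longrightarrow \FK(\phi)$, I would use a variance computation: when $\mu(D)/V(x_0,r)\le 1/4$, Cauchy-Schwarz gives $V(x_0,r)\,\ol f_{B(x_0,r)}^{\,2}\le \mu(D)/V(x_0,r)\le 1/4$, so that $\int_{B(x_0,r)}(f-\ol f_{B(x_0,r)})^2\,d\mu\ge 3/4$. Applying $\PI(\phi)$ on $B(x_0,\kappa r)$ together with \eqref{polycon} (to absorb the factor $\kappa$) then forces $\sE(f,f)\ge c/\phi(r)$, which is again the desired Faber-Krahn bound up to a uniformly bounded factor $(V(x_0,r)/\mu(D))^\nu$.

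The regime $\mu(D)/V(x_0,r)>c_0$ in both implications is handled by a common scale-reduction step: by $\RVD$ there is $s\le r$ with $V(x_0,s)\simeq \mu(D)$, so $D$ occupies only a bounded fraction of $V(x_0,s)$; rerunning the previous argument at scale $s$ and converting $1/\phi(s)$ into the desired power of $V(x_0,r)/\mu(D)$ divided by $\phi(r)$ via \eqref{polycon} and the lower volume exponent from $\RVD$ yields $\FK(\phi)$ with a uniform exponent $\nu\in(0,1)$. The main obstacle is precisely this bookkeeping: one must ensure the Faber-Krahn exponent produced by combining the volume growth exponents $d_1,d_2$ with the scaling exponents $\beta_{1,\phi},\beta_{2,\phi}$ is strictly positive and independent of $D$, and this is exactly where $\RVD$ is indispensable---without it, there is no control of the ratio $\phi(s)/\phi(r)$ in terms of $V(x_0,s)/V(x_0,r)$ for the chosen $s$.
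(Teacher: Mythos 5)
Your overall plan — a semigroup-decay argument for $\UHKD(\phi)\Rightarrow\FK(\phi)$ and a variance/Poincar\'e argument for $\PI(\phi)\Rightarrow\FK(\phi)$ — is the right genre of proof (the paper simply defers to \cite[Proposition 7.6]{CKW1}), but as written there are genuine gaps in both halves.

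In the $\UHKD(\phi)\Rightarrow\FK(\phi)$ half, the spectral step points the wrong way. You pair $u(t)\le e^{-2\lambda_1(D)t}$ with the heat-kernel upper bound on $u(t)$; two \emph{upper} bounds on the same quantity yield nothing about $\lambda_1(D)$. What is needed is a \emph{lower} bound on $u(t)$ in terms of the Dirichlet energy of $f$: by Jensen applied to the spectral measure of $f$ (or, equivalently, by the convexity of $t\mapsto\|P^D_tf\|_2^2$ together with $u(0)=1$, $u'(0^+)=-2\sE(f,f)$) one gets $u(t)\ge e^{-2t\sE(f,f)}$. Only then does choosing $t$ so that the heat-kernel bound equals $1/2$ force $\sE(f,f)\gtrsim 1/t$, and the bookkeeping you outline (the $V(x,s)\ge c(s/r)^{d_2}V(x_0,r)$ estimate and the inversion of $\phi^{-1}$ via \eqref{polycon}) then gives $\FK(\phi)$ with $\nu=\beta_{1,\phi}/d_2$ when $\mu(D)/V(x_0,r)$ is small. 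The case of $\mu(D)$ comparable to $V(x_0,r)$ is handled by enlarging the ball to $B(x_0,Kr)$ with $K$ bounded, using $\RVD$; your ``indispensable'' remark about $\RVD$ is morally right but the reason is the enlargement step, not the positivity of $\nu$.

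The $\PI(\phi)\Rightarrow\FK(\phi)$ half does not close. The single-ball variance estimate you carry out yields only $\sE(f,f)\gtrsim 1/\phi(r)$ (for $\mu(D)/V(x_0,r)\le 1/4$), which is strictly weaker than $\FK(\phi)$ because the factor $(V(x_0,r)/\mu(D))^\nu$ is missing. That factor has to come from working at the smaller length scale $s\sim r\,(\mu(D)/V(x_0,r))^{1/d_2}$, but one cannot simply apply the Poincar\'e inequality on a single ball $B(x_0,\kappa s)$ because $D$ need not be contained in $B(x_0,s)$: think of $D$ a thin tendril reaching out to $\partial B(x_0,r)$. The correct device is a bounded-overlap cover of $B(x_0,r)$ by $s$-balls $B(x_i,s)$ with $s$ chosen (using $\VD$ and the estimate $V(x_i,s)\ge c(s/r)^{d_2}V(x_0,r)$) so that $\mu(D\cap B_i)/V(x_i,s)\le 1/4$ uniformly in $i$; applying $\PI(\phi)$ on each $B_i$, summing, and using the bounded overlap of $\{\kappa B_i\}$ gives $\sE(f,f)\gtrsim 1/\phi(s)$, and then \eqref{polycon} converts $1/\phi(s)$ to the desired power of $V(x_0,r)/\mu(D)$ divided by $\phi(r)$. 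Finally, your ``scale-reduction'' paragraph is also inverted: in the regime $\mu(D)/V(x_0,r)>c_0$ you cannot improve matters by taking $s\le r$ with $V(x_0,s)\simeq\mu(D)$ — that makes the density ratio at scale $s$ close to $1$, not small, and $\phi(s)\simeq\phi(r)$ so no new power appears — rather, large-density $D$ is handled by \emph{enlarging} the ball, while the small-density regime $\mu(D)/V(x_0,r)\le c_0$ is where the covering at scale $s<r$ is needed.
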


\smallskip

Denote by $\zeta$ the lifetime of the Hunt process $X$ associated with the regular Dirichlet form $(\sE, \sF)$
on $L^2(M; \mu)$.
We have the following two facts.

\begin{proposition}\label{P:3.2}
\begin{description}
\item{\rm (i)}
 Under $\VD$, $\NL (\phi) \Longrightarrow \zeta =\infty$.

\item{\rm (ii)}  Assume that $\VD$, \eqref{polycon}, $\E_\phi$ and $\J_{\phi, \leq }$ hold.
Then $(\sE, \sF)$ is conservative; that is, $\zeta =\infty$.
\end{description}
\end{proposition}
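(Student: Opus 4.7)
For part (i), the plan is, first, to derive a uniform positive lower bound on $P_t 1$. By $\NL(\phi)$ applied on the ball $B(x, c_1\phi^{-1}(t))$ together with $\VD$,
\begin{equation*}
P_t 1(x) \;\ge\; \int_{B(x,\, c_1\phi^{-1}(t))} p(t,x,y)\,\mu(dy) \;\ge\; \frac{c_2\, V(x, c_1\phi^{-1}(t))}{V(x,\phi^{-1}(t))} \;\ge\; c_0 \;>\; 0
\end{equation*}
for every $t>0$ and $x\in M_0$, with $c_0$ independent of $t$ and $x$. The harder step is to upgrade this to the conservativeness identity $P_t 1 = 1$. Since $t \mapsto P_t 1$ is non-increasing, the limit $q(x) := \lim_{t\to\infty} P_t 1(x) = \bP^x(\zeta = \infty) \in [c_0, 1]$ exists and is $P_s$-invariant for every $s>0$ by the Markov property. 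Exploiting the symmetry of $P_s$ together with the absence of a killing term in $(\sE, \sF)$, I would then apply an essential-supremum argument on the upper level sets of $q$: setting $M := \esssup q$ and $A_\eps := \{q\ge M-\eps\}$, the identity $q = P_s q$ yields $(M-\eps)(1-P_s 1)\le \eps$ on $A_\eps$, forcing $P_s 1 = 1$ on a set of positive measure; the near-diagonal positivity of $p(s,\cdot,\cdot)$ encoded in $\NL(\phi)$ then propagates this identity to all of $M_0$.

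For part (ii), I would first invoke Proposition \ref{P:2.4} to derive $\EP_{\phi,\le,\eps}$ from $\E_\phi$. The plan is to prove
\begin{equation*}
\bP^{x_0}\bigl(\tau_{B(x_0, R)} \le T\bigr) \;\longrightarrow\; 0 \quad \text{as } R \to \infty,
\end{equation*}
for every fixed $T>0$ and a.e.\ $x_0\in M_0$, from which $\bP^{x_0}(\zeta \le T) \le \bP^{x_0}(\tau_{B(x_0, R)} \le T) \to 0$ follows and hence $\zeta = \infty$ a.s. To establish the exit-time decay, I would separate the exit event into two contributions: the event that some jump of size $\ge R/2$ occurs during $[0,T]$, and its complement. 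The first is controlled directly by the L\'evy system together with Lemma \ref{intelem}, which gives $\int_{B(x,R/2)^c} J(x,y)\,\mu(dy) \le c_1/\phi(R/2) \le c_2/\phi(R)$, hence a bound of order $T/\phi(R)\to 0$. On the complementary event the process agrees with the truncated process $X^{(R/2)}$, whose exit from $B(x_0, R)$ can be iteratively bounded by $\EP_{\phi,\le,\eps}$ applied on a sequence of concentric balls via the strong Markov property, combined with the fact (from $\E_{\phi,\ge}$) that $\bE^{x_0}[\tau_{B(x_0, R)}]\ge c\phi(R)\to\infty$, which rules out a finite expected lifetime.

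The main technical obstacle lies in part (ii): since $\EP_{\phi,\le,\eps}$ gives only a fixed constant $\eps$ rather than a vanishing bound, the iteration on nested balls must be set up carefully so that the starting point at each restart lies in the inner quarter-ball required by $\EP_{\phi,\le,\eps}$; this is achieved by re-centering at the current position of the process and book-keeping displacements geometrically. In part (i), the conceptual subtlety is the upgrade from the uniform lower bound $P_t 1 \ge c_0$ to the exact equality $P_t 1 = 1$, which requires the symmetry of $P_s$, the no-killing structure of $(\sE,\sF)$, and the propagation via heat-kernel positivity sketched above.
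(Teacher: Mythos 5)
The paper's own proof is a bare citation: part (i) is taken directly from \cite[Proposition 3.1(ii)]{CKW1}, and part (ii) follows \cite[Lemma 4.21]{CKW1} verbatim. So you are attempting to reconstruct two non-trivial arguments from CKW1 rather than matching a proof supplied here.

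For part (i), your opening step (deriving $P_t 1 \ge c_0 > 0$ from $\NL(\phi)$ and $\VD$) is correct and is the entry point of the CKW1 argument. The inequality $(M-\eps)(1-P_s 1)\le \eps$ on $A_\eps = \{q\ge M-\eps\}$ is also correctly derived (from $M-\eps\le q = P_s q\le M P_s 1$). However, the conclusion you draw from it, ``forcing $P_s 1 = 1$ on a set of positive measure,'' does not follow. What you actually obtain is $1-P_s 1\le \eps/(M-\eps)$ on $A_\eps$; as $\eps\downarrow 0$ the sets $A_\eps$ decrease to $\{q=M\}$, which may have $\mu$-measure zero, so one cannot pass to the limit and extract a positive-measure set where $P_s 1=1$. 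The subsequent ``propagation to all of $M_0$'' is also not justified: knowing $P_s 1=1$ on some set $E$ does not transfer to nearby points merely from near-diagonal positivity of $p(s,\cdot,\cdot)$ (the integral $\int p(s,x,y)\,\mu(dy)$ gets contributions from all $y$, not just $y\in E$), and the paper explicitly does \emph{not} assume $M$ connected, so there is no global irreducibility to invoke. In short, the argument establishes $\esssup q = 1$ but not $q\equiv 1$, and that gap is exactly where the non-trivial content of the cited result lies.

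For part (ii), the overall strategy — bound $\bP^{x_0}(\zeta\le T)$ by $\bP^{x_0}(\tau_{B(x_0,R)}\le T)$, split into ``a jump of size $\ge R/2$ occurs by time $T$'' (controlled by Lemma \ref{intelem} and the L\'evy system, giving a $cT/\phi(R/2)\to 0$ contribution) and its complement (where the process agrees with a truncated process and one iterates $\EP_{\phi,\le,\eps}$ on nested balls via the strong Markov property) — is indeed the structure of the cited CKW1 argument, and your awareness of the re-centering issue shows you understand where the care is needed. But two points should be flagged. First, the remark that $\bE^{x_0}[\tau_{B(x_0,R)}]\ge c\phi(R)\to\infty$ ``rules out a finite expected lifetime'' is a non sequitur: $\bE^{x_0}[\zeta]=\infty$ does not imply $\zeta=\infty$ $\bP^{x_0}$-a.s., so this observation contributes nothing and should be dropped; the work must be done entirely by the iterated $\EP$ bound giving $\eps^n\to 0$. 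Second, the iteration requires a coordinated choice of parameters — the inner radius $r$ must satisfy $\delta\phi(r)\ge T$ so that $\EP$ is applicable in a single step, the truncation level $\rho$ must grow to $\infty$ (so the big-jump term vanishes) yet remain $o(R)$ (so the number of forced annulus crossings $n\sim R/(r+\rho)\to\infty$) — and without pinning this down the argument is only a sketch. These are precisely the details CKW1 Lemma 4.21 supplies; your plan points in the right direction but the ``book-keeping'' you defer is exactly where the proof lives.
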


\begin{proof}
(i) is taken directly from \cite[Proposition 3.1(ii)]{CKW1}, which holds for any symmetric
Markov process.

(ii) can be proved by exactly the same argument as that
of \cite[Lemma 4.21]{CKW1}.
The details
are
omitted here.  \qed
\end{proof}

\subsection{$\UHK(\phi_c, \phi_j) +
(\sE, \sF) \hbox{ is conservative}
\Longrightarrow \J_{\phi_j,\le}$ and  $\HK_- (\phi_c, \phi_j) \Longrightarrow \J_{\phi_j}$}

\begin{proposition}\label{l:jk}
Under $\VD$ and \eqref{polycon},
$$
\UHK(\phi_c, \phi_j)
\hbox{ and }
(\sE, \sF) \hbox{ is conservative}
\Longrightarrow \J_{\phi_j,\le}
$$ and  $$\HK_- (\phi_c, \phi_j)  \Longrightarrow \J_{\phi_j}.$$
In particular,
$\HK (\phi_c, \phi_j)  \Longrightarrow \J_{\phi_j}$.
\end{proposition}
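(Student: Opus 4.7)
The plan is to use the L\'evy system formula as the bridge between the jumping kernel $J$ and the heat kernel $p$. Recall that for every nonnegative Borel function $f$ on $M\times M$ vanishing on the diagonal,
\begin{equation*}
\bE^{x}\Big[\sum_{s\le t}f(X_{s-},X_s)\Big]=\int_0^t\!\int_M p(s,x,u)\int_M f(u,v)J(u,v)\,\mu(dv)\,\mu(du)\,ds.
\end{equation*}
Specializing to $f=\mathbf{1}_{B_1\times B_2}$ with disjoint balls $B_1=B(x_0,r)$ and $B_2=B(y_0,r)$, where $r\ll R:=d(x_0,y_0)$, this identity says that the expected number of $B_1\to B_2$ jumps of $X$ before time $t$ equals a double integral of $p(s,x_0,u)\int_{B_2}J(u,v)\mu(dv)$. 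Since heuristically $p(t,x,y)/t\to J(x,y)$ as $t\to 0^+$ for $y$ far from $x$, heat-kernel bounds should transfer to $J$, with the quantitative form resting on balancing $t$ against $\phi_j(R)$.

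For the implication $\UHK(\phi_c,\phi_j)$ plus conservativeness $\Longrightarrow\J_{\phi_j,\le}$: first extract $\EP_{\phi,\le}$ from $\UHK(\phi_c,\phi_j)$ via standard semigroup/exit-time arguments, so that $\bP^{u}(X_s\in B_1)\ge 1/2$ for $u$ in a central sub-ball $\tilde B_1\subset B_1$ and $s\le c\phi(r)$. Integrating the L\'evy system identity over the starting point $x_0$ in $\tilde B_1$ and using Fubini together with the symmetry of $p$, one bounds the left-hand side from below by $c\phi(r)\int_{\tilde B_1\times B_2}J(u,v)\mu(du)\mu(dv)$. The right-hand side counts entries of $X$ into $B_2$ and is bounded above via the sojourn-time trick: each entry is followed by an expected time of order $\phi(r)$ in $B_2$ (by $\EP_{\phi,\le}$ applied near $y_0$), so the entry count is at most $(C/\phi(r))\int_0^t\bP^{x_0}(X_s\in B_2)\,ds$, and this last is controlled by the off-diagonal part of $\UHK(\phi_c,\phi_j)$ by $Ct\mu(B_2)/(V(x_0,R)\phi_j(R))$ for the choice $t\asymp\phi(r)$. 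Combining these gives $\int_{\tilde B_1\times B_2}J\,d(\mu\otimes\mu)\lesssim\mu(B_1)\mu(B_2)/(V(x_0,R)\phi_j(R))$, and $\J_{\phi_j,\le}$ follows at $\mu\otimes\mu$-a.e.\ $(x_0,y_0)$ by the Lebesgue differentiation theorem upon sending $r\to 0$.

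For $\HK_-(\phi_c,\phi_j)\Longrightarrow\J_{\phi_j}$: the upper bound $\J_{\phi_j,\le}$ follows from the first part, since $\HK_-(\phi_c,\phi_j)$ implies $\NL(\phi)$ by Remark~\ref{R:1.22}(ii) and hence conservativeness by Proposition~\ref{P:3.2}(i). For $\J_{\phi_j,\ge}$, the off-diagonal lower bound \eqref{e:1.31} gives $p(t,x_0,y_0)\ge ct/(V(x_0,R)\phi_j(R))$ for $t\le c'\phi(R)$, so integrating over $B_2$ and using $\VD$ yields $\bP^{x_0}(X_t\in B_2)\gtrsim t\mu(B_2)/(V(x_0,R)\phi_j(R))$. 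For $t\asymp\phi(R)$, the continuous-part transition density $p^{(c)}$ decays exponentially in $R/\bar\phi_c^{-1}(t/R)$ over the gap of size $\ge R/2$, so the probability that $X$ enters $B_2$ without any jump is negligible compared with $\bP^{x_0}(X_t\in B_2)$, and hence $\bE^{x_0}[\sum_{s\le t}\mathbf{1}_{B_2}(X_s)\mathbf{1}_{\{X_{s-}\neq X_s\}}]\gtrsim\bP^{x_0}(X_t\in B_2)$. Using the L\'evy system identity together with the near-diagonal concentration $\int_{B_1}p(s,x_0,u)\mu(du)\to 1$ as $s\to 0$ (from $\EP_{\phi,\le}$) to restrict to $u\in B_1$, and the already-established $\J_{\phi_j,\le}$ to control the far-field contribution from $u\in B_1^c$, one obtains $\int_{B_1\times B_2}J(u,v)\mu(du)\mu(dv)\gtrsim\mu(B_1)\mu(B_2)/(V(x_0,R)\phi_j(R))$; then $\J_{\phi_j,\ge}$ follows by Lebesgue differentiation.

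The main obstacle is the two-sided sojourn-time bookkeeping: the expected number of $B_1\to B_2$ jumps need not equal $\bP^{x_0}(X_t\in B_2)$ because the trajectory may re-enter $B_2$ many times, so $\EP_{\phi,\le}$ must be deployed to convert entry counts into occupation times while losing only constant factors. Coordinating the pure-jump scale $\phi_j(R)$ appearing in the target bound with the mixed scale $\phi=\phi_c\wedge\phi_j$ governing sojourn times and heat-kernel near-diagonal behaviour is delicate. Finally, circularity between the upper-bound and lower-bound arguments is avoided by establishing $\J_{\phi_j,\le}$ first and then using it to control the higher-order contributions in the lower-bound proof.
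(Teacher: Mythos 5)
Your proof takes a genuinely different route from the paper's. The paper uses a clean Dirichlet-form argument: for $t>0$, set $\sE^{(t)}(f,g)=\langle f-P_t f,g\rangle/t$, which by conservativeness can be written as a symmetric double integral of $(f(x)-f(y))(g(x)-g(y))p(t,x,y)$, and $\sE^{(t)}(f,g)\to\sE(f,g)$ as $t\to 0$. Taking $f,g\in\sF$ with disjoint compact supports $A,B$, the strongly local part of $\sE$ vanishes, so the limit is $-\int_A\int_B f(x)g(y)\,J(dx,dy)$, while $\sE^{(t)}(f,g)=-\tfrac1t\int_A\int_B f(x)g(y)p(t,x,y)\,d\mu\,d\mu$. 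The Gaussian factor in $p^{(c)}(t,x,y)$ makes $p^{(c)}(t,\cdot,\cdot)/t\to 0$ uniformly on $A\times B$, so only $p^{(j)}(t,x,y)/t\asymp 1/(V(x,d(x,y))\phi_j(d(x,y)))$ survives, giving both bounds on $J$ (and absolute continuity) in one stroke. Your approach instead goes through the L\'evy system, exit-time estimates, and occupation-time bookkeeping. This is a recognized strategy in related contexts, but it is substantially longer and, in its current sketch, has genuine gaps.

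Two specific problems. First, the L\'evy-system identity as you state it already presupposes $J(dx,dy)=J(x,y)\,\mu(dx)\,\mu(dy)$, which is part of what must be proved; you would need to phrase everything in terms of the L\'evy kernel $N(x,dy)$ and then derive absolute continuity from the two-sided bounds at the end, which is delicate. Second, and more seriously, the lower-bound argument is incomplete. From $\HK_-(\phi_c,\phi_j)$ you obtain a lower bound for $\bP^{x_0}(X_t\in B_2)$, but to extract a lower bound on $\int_{B_1\times B_2}J$ via the L\'evy system you must lower bound the expected number of \emph{jump} entrances into $B_2$ \emph{from} $B_1$. You need to show that (a) paths that reach $B_2$ nearly always do so via a jump landing in $B_2$ (not by diffusing through the gap, nor by a cascade of small jumps), and (b) the entrance jumps predominantly originate from $u\in B_1$ rather than from $u\in B_1^c$. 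For (b) you invoke $\J_{\phi_j,\le}$, but near $B_2$ the rate $\int_{B_2}J(u,dv)$ can be large, and the occupation density $p(s,x_0,u)$ of those $u$ is not obviously small enough; a na\"ive estimate produces a contribution of the \emph{same} order as the target, not a smaller one. This requires a further time-splitting (taking $t\ll\phi(r)$ so that $\bP^{x_0}(X_s\notin B_1)\lesssim s/\phi(r)$ and integrating in $s$), and even then the paths passing through the near-$B_2$ region need a separate argument. None of this is present in the sketch, and it is precisely where the "coordinating the two scales" issue you flagged stops being merely delicate and becomes a real obstruction. The paper's $\sE^{(t)}$ argument sidesteps all of this.
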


\begin{proof}  We only prove that case that
$\HK_- (\phi_c, \phi_j)\Longrightarrow
\J_{\phi_j}$, and the other two cases can be verified similarly.
For $t>0$, consider the bilinear form $\sE^{(t)}(f,g):=\langle f-P_{t}
f,g\rangle /t$. Since $(\sE, \sF)$ is
conservative by
Proposition \ref{P:3.2}(i),
we can write
\begin{equation*}
\sE^{(t)}(f,g)=\frac{1}{2t}\int_{M}\int_{M}(f(x)-f(y))(g(x)-g(y))p(t,x,y)\,\mu
(dx)\,\mu (dy).
\end{equation*}
It is well known that $\lim_{t\to 0}\sE^{(t)}(f,g)=
\sE(f,g)$ for all $f,g\in \sF$. Let $A$, $B$ be disjoint compact sets, and
take $f,g\in \sF$ such that $\mathrm{supp}\,f\subset A$ and $\mathrm{supp}\, g\subset B$. Then in view of the strongly local property of  $\sE^{(c)}(f,g)$,
\begin{equation*}
\sE^{(t)}(f,g)=-\frac{1}{t}\int_{A}\int_{B}f(x)g(y)p(t,x,y)\,\mu (dy)\,\mu (dx)
\overset{t\rightarrow 0}{\longrightarrow }-\int_{A}\int_{B}f(x)g(y)\,J(dx,dy).
\end{equation*}
Let
$r_0:=d(A,B)$. For any $0<t\le \phi_c(r_0)$, by $\VD$, \eqref{polycon}, \eqref{e:1.12} and \eqref{e:effdiff},
\begin{align*}&\sup_{x\in A,y\in B} \frac{1}{V(x,\phi_c^{-1}(t))}\exp\left(-c_1\frac{d(x,y)}{\bar \phi_c^{-1}(t/d(x,y))}\right)\\
&\le c_2 \sup_{x\in A,y\in B} \frac{1}{V(x,d(x,y))}\left(\frac{d(x,y))}{\phi_c^{-1}(t)}\right)^{d_2}\exp\left(-c_3\left(\frac{\phi_c(d(x,y))}{t}\right)^{1/(\beta_{2,\phi_c}-1)}\right)\\
&\le c_4 \sup_{x\in A,y\in B} \frac{1}{V(x,d(x,y))}\left(\frac{\phi_c(d(x,y)))}{t}\right)^{d_2\beta_{2,\phi_c}}\exp\left(-c_3\left(\frac{\phi_c(d(x,y))}{t}\right)^{1/(\beta_{2,\phi_c}-1)}\right)\\
&\le  c_5 \sup_{x\in A,y\in B}\frac{1}{V(x,d(x,y))} \exp\left(-\frac{c_3}{2}\left(\frac{\phi_c(d(x,y))}{t}\right)^{1/(\beta_{2,\phi_c}-1)}\right)\\
&\le c_6\sup_{x\in A}\frac{1}{V(x,r_0)}\exp\left(-\frac{c_3}{2}\left(\frac{\phi_c(r_0))}{t}\right)^{1/(\beta_{2,\phi_c}-1)}\right),\end{align*} where in the third inequality we used the fact that
$$r^{d_2\beta_{2,\phi_c}}\le c_7\exp\left(\frac{c_3}{2}r^{1/(\beta_{2,\phi_c}-1)}\right),\quad r\ge 1.$$ The inequality above
yields that $$\lim_{t\to0}\frac{1}{t}\int_{A}\int_{B}f(x)g(y)p^{(c)}(t,x,y)\,\mu (dy)\,\mu (dx)=0.$$
Hence, using
$\HK_- (\phi_c, \phi_j)$, we obtain
\begin{equation*}
\int_{A}\int_{B}f(x)g(y)\,J(dx,dy)\asymp\int_{A}\int_{B}\frac{f(x)g(y)}{
V(x,d(x,y))\phi_j(d(x,y))}\,\mu (dy)\,\mu (dx)
\end{equation*}
for all $f,g\in \sF$ such that $\mathrm{supp}\,f\subset A$ and $\mathrm{supp}\,g\subset B$.
Since $A$, $B$ are arbitrary disjoint compact sets,  it follows that
$J(dx,dy)$ is absolutely continuous w.r.t. $\mu (dx)\,\mu (dy)$, and
$\J_{\phi_j}$ holds.  \qed
\end{proof}

\subsection{$\UHK(\phi_c, \phi_j)
\hbox{ and } (\sE, \sF) \hbox{  is conservative}
\Longrightarrow\Gcap(\phi)$}

In this subsection, we give the proof that $\UHK(\phi_c, \phi_j)$
and the conservativeness of $(\sE, \sF)$ imply $\Gcap(\phi)$.

We begin with the following lemma.

\begin{lemma}\label{Conserv} Assume that
$\VD$, \eqref{polycon} and $\UHK(\phi_c, \phi_j)$ hold and that $(\sE, \sF)$ is conservative.
Then $\EP_{\phi,\le}$ holds.
\end{lemma}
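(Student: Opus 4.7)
The plan is to first establish the pointwise tail bound
\[
\bP^x(d(x,X_t)\ge r/2)\le \frac{C\,t}{\phi(r)}\qquad\text{for all }x\in M_0,\ r,t>0,
\]
and then to promote it to the exit-time bound $\EP_{\phi,\le}$ via a standard L\'evy-type maximal inequality exploiting the strong Markov property. Conservativeness lets us identify $\bP^x(d(x,X_t)\ge r/2)=\int_{B(x,r/2)^c}p(t,x,y)\,\mu(dy)$, while $\UHK(\phi_c,\phi_j)$ dominates $p(t,x,y)$ in the off-diagonal regime by $c_3(p^{(c)}(c_4 t,x,y)+p^{(j)}(t,x,y))$, so the jump and diffusion contributions can be handled separately.

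For the jump part, the same annular decomposition that underlies Lemma~\ref{intelem} yields, using $\VD$,
\[
\int_{B(x,r/2)^c}\frac{t}{V(x,d(x,y))\phi_j(d(x,y))}\,\mu(dy)\le \frac{C\,t}{\phi_j(r/2)}\le \frac{C'\,t}{\phi_j(r)}.
\]
For the diffusion part, we may assume $\phi_c(r/2)\ge t$, for otherwise $Ct/\phi(r)\ge Ct/\phi_c(r)\ge c$ and the target bound is trivial. By Lemma~\ref{L:diff} we have $d(x,y)/\bar\phi_c^{-1}(c_4 t/d(x,y))\ge c(\phi_c(d(x,y))/t)^{1/(\beta_{2,\phi_c}-1)}$, and decomposing $B(x,r/2)^c$ into dyadic annuli $A_k=B(x,2^{k+1}r/2)\setminus B(x,2^{k}r/2)$ and using $\VD$ to absorb the volume ratios $V(x,2^{k+1}r/2)/V(x,\phi_c^{-1}(t))$ into a constant power of $2^k(\phi_c(r/2)/t)^{d_2/\beta_{1,\phi_c}}$ yields
\[
\int_{B(x,r/2)^c}p^{(c)}(c_4 t,x,y)\,\mu(dy)\le C\exp\!\bigl(-c\,(\phi_c(r/2)/t)^{1/(\beta_{2,\phi_c}-1)}\bigr).
\]
Since $u\mapsto u\,e^{-c\,u^{1/(\beta_{2,\phi_c}-1)}}$ is bounded on $[1,\infty)$, the right-hand side is at most $C't/\phi_c(r/2)\le C''t/\phi_c(r)$. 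Because $1/\phi(r)=1/\phi_c(r)\vee 1/\phi_j(r)$, combining the two estimates gives the pointwise tail bound.

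For the upgrade, set $\tau:=\tau_{B(x,r)}$ and $q:=\sup_{s\le t,\ z\in M_0}\bP^z(d(z,X_s)>r/2)$. On $\{\tau\le t,\ d(x,X_t)<r/2\}$ the triangle inequality forces $d(X_\tau,X_t)>r/2$ (since $d(x,X_\tau)\ge r$), so the strong Markov property at $\tau$ yields
\[
\bP^x(\tau\le t)\le \bP^x(d(x,X_t)\ge r/2)+q\,\bP^x(\tau\le t).
\]
If $q\le 1/2$ we rearrange to obtain $\bP^x(\tau\le t)\le 2Ct/\phi(r)$. If instead $q>1/2$, then the pointwise bound applied to some $s\le t$ and $z$ forces $Cs/\phi(r)>1/2$, so $Ct/\phi(r)\ge 1/2$ and the trivial bound $\bP^x(\tau\le t)\le 1\le 2Ct/\phi(r)$ suffices. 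Either way, $\EP_{\phi,\le}$ follows.

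The main technical point is reconciling the two scales $\phi_c$ and $\phi_j$ inside a single polynomial bound $Ct/\phi(r)$. The diffusion tail naturally produces sub-Gaussian rather than polynomial decay, but since $u\,e^{-c\,u^\alpha}$ is bounded for $u\ge 1$, this stronger decay can be traded for the weaker polynomial $t/\phi_c(r)$ at no cost, and the jump tail is already polynomial of the correct order, so both contributions merge cleanly into $t/(\phi_c\wedge\phi_j)(r)$.
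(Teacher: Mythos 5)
Your proof is correct and follows essentially the same strategy as the paper's: establish the pointwise tail bound $\int_{B(x,r)^c} p(t,x,y)\,\mu(dy)\le Ct/\phi(r)$ by handling the $p^{(j)}$ and $p^{(c)}$ contributions separately via dyadic annuli (trading sub-Gaussian decay for the polynomial $t/\phi_c(r)$ in the regime $\phi_c(r)\gtrsim t$), then upgrade to an exit-time bound using conservativeness and the strong Markov property. The only substantive difference is in the upgrade step: the paper looks at $X_{2t}$ and bounds both resulting terms directly by the tail estimate, whereas you use the bootstrap variant of the L\'evy maximal inequality (rearranging in $\bP^x(\tau\le t)$ when $q\le 1/2$, trivial otherwise); both are standard and work equally well here, and your minor misattribution of the inequality $d(x,y)/\bar\phi_c^{-1}(c_4t/d(x,y))\gtrsim(\phi_c(d(x,y))/t)^{1/(\beta_{2,\phi_c}-1)}$ to Lemma \ref{L:diff} (it really follows from \eqref{e:1.12} and \eqref{e:effdiff}) is immaterial.
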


\begin{proof}
We first verify that there is a constant $c_1>0$ such that for each $t,r>0$ and for almost all $x\in M$,
$$
\int_{B(x,r)^c} p(t, x,y)\,\mu(dy)\le \frac{c_1 t}{\phi(r)}.
$$
 Indeed, we only need to consider the case that $\phi(r)>t$; otherwise, the inequality above holds trivially with $c_1=1$.
 According to $\UHK(\phi_c, \phi_j)$,
 $\VD$, \eqref{polycon}, \eqref{e:1.12} and \eqref{e:effdiff}, for any $t,r>0$ with $\phi(r)>t$ and almost all $x\in M$,
\begin{align*}
&\int_{B(x,r)^c}p(t, x,y)\,\mu(dy)\\
&=\sum_{i=0}^\infty\int_{B(x,2^{i+1}r)\setminus B(x,2^ir)}p(t, x,y)\,\mu(dy)\\
&\le \sum_{i=0}^\infty \frac{c_2 V(x,2^{i+1}r)}{V(x,\phi_c^{-1}(t))}\exp\left(-\frac{c_3 2^i r}{\bar\phi_c^{-1}(t/(2^ir))}\right)+\sum_{i=0}^\infty \frac{c_2t V(x,2^{i+1}r)}{V(x,2^{i}r)\phi_j(2^ir)}\\
&\le c_4\sum_{i=0}^\infty \left(\frac{2^ir}{\phi_c^{-1}(t)}\right)^{d_2}\exp\left(-\frac{c_5 2^i r}{\bar\phi_c^{-1}(t/r)}\right)+ \frac{c_4t}{\phi_j(r)}\sum_{i=0}^\infty 2^{-i\beta_{1,\phi_j}}\\
&\le c_6\sum_{i=0}^\infty\left(2^i\left(\frac{\phi_c(r)}{t}\right)^{1/\beta_{1,\phi_c}}\right)^{d_2}\exp\left(-c_72^i\left(\frac{\phi_c(r)}{t}\right)^{1/(\beta_{2,\phi_c}-1)}\right)+\frac{c_6t}{\phi_j(r)}\\
&\le c_6\sum_{i=0}^\infty\left(2^{i(\beta_{2,\phi_c}-1)}\frac{\phi_c(r)}{t}\right)^{d_2(1+1/(\beta_{2,\phi_c}-1))}\exp\left(-c_7\left(2^{i(\beta_{2,\phi_c}-1)}\frac{\phi_c(r)}{t}\right)^{1/(\beta_{2,\phi_c}-1)}\right)+\frac{c_6t}{\phi_j(r)}\\
&\le c_8\sum_{i=0}^\infty\exp\left(-\frac{c_7}{2}\left(2^{i(\beta_{2,\phi_c}-1)}\frac{\phi_c(r)}{t}\right)^{1/(\beta_{2,\phi_c}-1)}\right)+\frac{c_6t}{\phi_j(r)}\\
&\le c_{9}t\left(\frac{1}{\phi_c(r)}+\frac{1}{\phi_j(r)}\right)\le  \frac{c_{10}t}{\phi(r)},
\end{align*} where in the arguments above we used $\phi_c(r)\wedge \phi_j(r)=\phi(r)$ for all $r>0$, in the second inequality we used the fact that $\bar \phi_c(r)$ is increasing on $(0,\infty)$, in the fourth inequality we used the fact that $\beta_{1,\phi_c}>1$, and in the fifth and the sixth inequalities we used the facts that there is a constant $c_{11}>0$ such that
$$s^{d_2(1+1/(\beta_{2,\phi_c}-1))}\le c_{11}\exp\left(\frac{c_7}{2}s^{1/(\beta_{2,\phi_c}-1)}\right),\quad s\ge 1$$ and
$$\sum_{i=0}^\infty \exp\Big(-\frac{c_7}{2} (2^is)^{1/(\beta_{2,\phi_c}-1)}\Big)\le c_{11}/s,\quad s\ge 1,$$ respectively.

Now, since $(\sE, \sF)$ is conservative, by the strong Markov property, for any each $t,r>0$ and for almost all $x\in M$,
\begin{align*}\bP^x(\tau_{B(x,r)}\le t)&=\bP^x(\tau_{B(x,r)}\le t, X_{2t}\in B(x,r/2)^c)+\bP^x(\tau_{B(x,r)}\le t, X_{2t}\in B(x,r/2))\\
&\le\bP^x( X_{2t}\in B(x,r/2)^c)+ \sup_{z\notin B(x,r)^c, s\le t}\bP^z( X_{2t-s}\in B(z,r/2)^c)\\
&\le \frac{c_{13}t}{\phi(r)},\end{align*} which yields
$\EP_{\phi,\le}$. (Note that the conservativeness of $(\sE, \sF)$ is used in the equality above. Indeed, without the conservativeness,
there must be an extra term $\bP^x(\tau_{B(x,r)}\le t, \zeta\le 2t)$
in the right hand side of the above equality, where $\zeta$ is the
lifetime of $X$.)\qed\end{proof}

\begin{proposition} \label{T:gdCS}
Suppose that $\VD$, \eqref{polycon} and $\UHK(\phi_c, \phi_j)$
hold, and that $(\sE, \sF)$ is conservative.
Then $\Gcap(\phi)$ holds.
\end{proposition}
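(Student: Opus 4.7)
The plan is to route the implication $\UHK(\phi_c,\phi_j) \Longrightarrow \Gcap(\phi)$ through the exit-time probability estimate $\EP_{\phi,\le,\eps}$, since both required implications have already been set up in the excerpt. Concretely, I would argue $\UHK(\phi_c,\phi_j) + \text{conservative} \Longrightarrow \EP_{\phi,\le} \Longrightarrow \EP_{\phi,\le,\eps} \Longrightarrow \Gcap(\phi)$.

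The first arrow is exactly Lemma \ref{Conserv}, which under $\VD$, \eqref{polycon}, $\UHK(\phi_c,\phi_j)$ and conservativeness produces the uniform bound $\bP^x(\tau_{B(x,r)}\le t) \le c t/\phi(r)$ by splitting $B(x,r)^c$ into dyadic annuli, bounding the diffusive contribution via the Gaussian factor $p^{(c)}$ and the jumping contribution via the polynomial tail $t/\phi_j$, and finally invoking the strong Markov inequality $\bP^x(\tau_{B(x,r)}\le t)\le \bP^x(X_{2t}\notin B(x,r/2)) + \sup_{z,s\le t}\bP^z(X_{2t-s}\notin B(z,r/2))$ with conservativeness to discard the lifetime term. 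The second arrow is trivial from the definition: given $\EP_{\phi,\le}$ with constant $c$, choose $\delta\in(0,1)$ with $c\delta<\eps$, and use \eqref{polycon} to absorb the factor $\phi(r)/\phi(r/4)$ so that for $x\in B(x_0,r/4)$, $B(x_0,r)\supset B(x,3r/4)$ and hence $\bP^x(\tau_{B(x_0,r)}\le \delta\phi(r))\le \bP^x(\tau_{B(x,3r/4)}\le \delta\phi(r))\le \eps$. The third arrow is Proposition \ref{P:2.4}.

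There is essentially no obstacle beyond correctly chaining the two already-established results; the real content sits inside Lemma \ref{Conserv} (heat-kernel tail estimates) and inside Proposition \ref{P:2.4} (construction, from a short-time exit-probability bound, of the $\kappa$-cut-off function realizing the generalized capacity inequality). The purpose of recording Proposition \ref{T:gdCS} separately is to have the complete step $\UHK(\phi_c,\phi_j)+\text{conservative}\Longrightarrow\Gcap(\phi)$ available as input to the equivalences in Theorems \ref{T:main} and \ref{T:main-1}, in particular for the direction $(\mathrm{i})\Longrightarrow(\mathrm{iii})$ of Theorem \ref{T:main-1}.
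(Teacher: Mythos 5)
Your proposal is correct and is essentially identical to the paper's own proof: the paper also obtains $\EP_{\phi,\le}$ from Lemma \ref{Conserv} and then invokes Proposition \ref{P:2.4} (which already records $\EP_{\phi,\le}\Longrightarrow\EP_{\phi,\le,\eps}\Longrightarrow\Gcap(\phi)$) to conclude. The one intermediate step you spell out, $\EP_{\phi,\le}\Longrightarrow\EP_{\phi,\le,\eps}$, is stated without proof in the paper just before Proposition \ref{P:2.4}, and your verification of it is correct.
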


\begin{proof}
According to Lemma \ref{Conserv}, $\EP_{\phi,\le}$ hold true.
Thus the desired assertion follows from Proposition \ref{P:2.4}.
\qed\end{proof}

\section{Implications of $\FK(\phi)$, $\NDL (\phi)$, $\CS (\phi)$  and  $\J_{\phi_j, \le}$}\label{section4}

\subsection{
$\NDL(\phi)\Longrightarrow \PI (\phi) + \E_{\phi}$}\label{S:4.1}

\begin{proposition}\label{P:4.1}
Assume that $\VD$ and \eqref{polycon} hold.

\begin{description}
\item{\rm (i)}  $\NDL (\phi) \Longrightarrow \PI (\phi) + \E_{\phi, \geq}$.

\item{\rm (ii)} If in addition $\RVD$ holds,
 then  $\NDL (\phi) \Longrightarrow \E_\phi$.
\end{description}
\end{proposition}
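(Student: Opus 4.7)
The plan is to treat the three conclusions separately.

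For $\E_{\phi,\geq}$, apply $\NDL(\phi)$ to the ball $B=B(x,r)$ at time $t_0=\phi(\eps r)$, so $\phi^{-1}(t_0)=\eps r$. Then $p^B(t_0,x,y)\ge c_1/V(x,\eps r)$ for $y\in B(x,\eps^2 r)\cap M_0$, and integrating together with $\VD$ gives
\[
\bP^x(\tau_B>t_0)\ge c_1\frac{V(x,\eps^2 r)}{V(x,\eps r)}\ge c_0>0.
\]
Hence $\bE^x[\tau_B]\ge c_0 t_0\ge c'\phi(r)$ via \eqref{polycon}.

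For $\PI(\phi)$, the plan is the following spectral/semigroup argument. First observe $\NDL(\phi)\Rightarrow\NL(\phi)$ (since $p\ge p^B$), so $(\sE,\sF)$ is conservative by Proposition~\ref{P:3.2}(i). Applying $\NDL(\phi)$ to the enlarged ball $B^*:=B(x_0,r/\eps^2)$ at $t:=\phi(r/\eps)$ gives
\[
p(t,x,y)\ge p^{B^*}(t,x,y)\ge \frac{c'}{V(x_0,r)}\quad\text{for all }x,y\in B:=B(x_0,r)\cap M_0,
\]
by $\VD$. Using conservativeness ($\int p(t,x,y)\mu(dy)=1$) and symmetry,
\[
\int_{M\times M}(f(x)-f(y))^2 p(t,x,y)\,\mu(dx)\mu(dy)=2\langle f,(I-P_t)f\rangle=2\int_0^t\sE(f,P_s f)\,ds\le 2t\sE(f,f),
\]
where the last bound uses Cauchy--Schwarz together with energy contraction $\sE(P_s f,P_s f)\le\sE(f,f)$. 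Restricting the outer integral to $B\times B$, inserting the pointwise lower bound, and applying the variance identity $\int_B(f-\bar f_B)^2 d\mu=(2V(B))^{-1}\int_{B\times B}(f(x)-f(y))^2\mu(dx)\mu(dy)$ yields $\int_B(f-\bar f_B)^2 d\mu\le C\phi(r)\sE(f,f)$. To localize the right-hand energy to $B_{\kappa r}$ (with $\kappa\asymp 1/\eps^2$), I would replace $f$ by $\tilde f:=\chi(f-\bar f_B)+\bar f_B$ with $\chi\in\sF_b$ a cutoff equal to $1$ on $B$ and $0$ outside $B_{\kappa r}$, so that $\bar{\tilde f}_B=\bar f_B$ and $\tilde f\equiv f$ on $B$, and bound $\sE(\tilde f,\tilde f)$ by a constant times the localized energy of $f$ on $B_{\kappa r}$ via Leibniz and chain rules for $\Gamma_c$ and $J$.

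For (ii), it remains to prove $\E_{\phi,\le}$. Using $\RVD$ iteratively, choose $L\ge 1/\eps^2$ large enough (depending only on $\eps, d_1, c_\mu$) that $V(x,\eps^2 Lr)\ge 2V(x,r)$. Apply $\NDL(\phi)$ to $B(x,Lr)$ at $t:=\phi(\eps Lr)$: for every $y\in B(x,r)\subset B(x,\eps^2 Lr)$ and $z\in B(x,\eps^2 Lr)\setminus B(x,r)$, $p^{B(x,Lr)}(t,y,z)\ge c/V(x,\eps Lr)$. Integrating $z$ over the annulus (of $\mu$-measure at least $V(x,r)$ by the choice of $L$) and using $\VD$ gives
\[
\bP^y\big(X_t^{B(x,Lr)}\in B(x,\eps^2 Lr)\setminus B(x,r)\big)\ge\beta>0
\]
uniformly in $y\in B(x,r)$. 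Since this event forces $\tau_{B(x,r)}\le t$, the strong Markov property applied iteratively gives $\bP^x(\tau_{B(x,r)}>nt)\le(1-\beta)^n$, whence $\bE^x[\tau_{B(x,r)}]\le t/\beta\le C\phi(r)$ by \eqref{polycon}.

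The main obstacle is the cutoff construction in the $\PI(\phi)$ step: under only $\VD$ and \eqref{polycon} (no $\CS(\phi)$, $\Gcap(\phi)$, or $\J_{\phi,\le}$), there is no a~priori supply of well-behaved cutoffs in $\sF_b$ with controlled energy density. My plan is to build $\chi$ potential-theoretically from the semigroup itself, e.g. $\chi(x):=c^{-1}\bP^x(\tau_{B^{**}}>s_*)$ for an intermediate ball $B^{**}\subset B_{\kappa r}$ and time $s_*\asymp\phi(\kappa r)$; the uniform lower bound $\chi\ge c$ on $B$ follows from $\NDL(\phi)$ applied to $B^{**}$, while its $L^2$-energy can be controlled via the elementary semigroup bound $\sE(P_s u,P_s u)\le s^{-1}\|u\|_2^2$. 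This step is the most delicate in the argument.
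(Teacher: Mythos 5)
Your treatment of $\E_{\phi,\ge}$ and (after upgrading to the iterated strong-Markov argument) of $\E_{\phi,\le}$ under $\RVD$ is correct and matches standard exit-time arguments. The gap is in the $\PI(\phi)$ step, and it is not a matter of detail: the cutoff strategy cannot work under the stated hypotheses.

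Concretely, after you write $\tilde f=\chi(f-\bar f_B)+\bar f_B$ and apply your global bound to $\tilde f$, you get $\int_B(f-\bar f_B)^2\,d\mu\le C\phi(r)\,\sE(\chi h,\chi h)$ with $h:=f-\bar f_B$. The Leibniz rule for $\Gamma_c$ (and its nonlocal analogue) produces, besides the good term $\int\chi^2\,d\Gamma_c(h,h)\le\int_{B_{\kappa r}}d\Gamma_c(f,f)$, a term of the form $\int h^2\,d\Gamma_c(\chi,\chi)$ plus $\int h^2(x)(\chi(x)-\chi(y))^2 J(dx,dy)$. Even in the most favorable scenario where $\chi$ has the pointwise energy-density bound $d\Gamma_c(\chi,\chi)\le C\phi(r)^{-1}d\mu$ (which is exactly what $\CS(\phi)$ provides, and is \emph{not} available under only $\VD$ and \eqref{polycon}), these cross terms are of size $C\phi(r)^{-1}\int_{B_{\kappa r}}h^2\,d\mu\ge C\phi(r)^{-1}\int_B h^2\,d\mu$. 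Plugging this back gives $\int_B h^2\,d\mu\le C'\phi(r)\sE_{loc}(f)+C''\int_{B_{\kappa r}}h^2\,d\mu$ with $C''\ge1$, so the argument cannot be closed by absorption: the quantity you are estimating reappears with a constant you cannot beat (the implicit constant in $\NDL$ is not close to $1$). Your proposed potential-theoretic cutoff $\chi:=c^{-1}\bP^x(\tau_{B^{**}}>s_*)$ makes things strictly worse, since the bound $\sE(P_s u,P_s u)\le s^{-1}\|u\|_2^2$ only controls the \emph{total} energy $\sE(\chi,\chi)$, not the density $d\Gamma_c(\chi,\chi)/d\mu$ needed to even write down the cross terms — and, as just explained, even the density bound would be insufficient.

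The correct route, which is the one followed in \cite[Proposition 3.5]{CKW2} (via \cite{CKK2,BGK}), avoids cutoffs altogether by working with the \emph{restricted} (Neumann/reflected) Dirichlet form $\tilde\sE$ on $B^*:=B(x_0,\kappa r)$, defined by $\tilde\sE(g,g)=\int_{B^*}d\Gamma_c(g,g)+\int_{B^*\times B^*}(g(x)-g(y))^2J(dx,dy)$ with $g$ ranging over restrictions to $B^*$ of elements of $\sF$. Since this form has fewer jump terms and a larger domain than the killed form on $B^*$, its semigroup $\tilde Q_t$ dominates $P^{B^*}_t$, so $\NDL(\phi)$ gives $\tilde Q_t(x,dy)\ge p^{B^*}(t,x,y)\mu(dy)\ge cV(x_0,r)^{-1}\mu(dy)$ for $x,y\in B$ at $t\asymp\phi(r)$. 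The variance identity together with sub-Markovianity of $\tilde Q_t$ and the spectral gap $\langle f,(I-\tilde Q_t)f\rangle_{L^2(B^*)}\le t\,\tilde\sE(f,f)$ then yield $\int_B(f-\bar f_B)^2\,d\mu\le c^{-1}t\,\tilde\sE(f,f)$, which is exactly the localized energy in the definition of $\PI(\phi)$ — no cutoff needed, and no circularity.
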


\begin{proof} The proof is the same as that
of \cite[Proposition 3.5]{CKW2}, so it is omitted.
\qed
\end{proof}

\subsection{$\FK(\phi)+\J_{\phi,\le}+\CS(\phi)\Longrightarrow \E_\phi$  and $\FK(\phi)+\J_{\phi,\le}+ \E_\phi\Longrightarrow \UHKD(\phi)$}\label{S:4.2}

The next two propositions can be proved by following the arguments in \cite{CKW1}, which are based on the probabilistic ideas and
are valid for general symmetric  Dirichlet forms.

\begin{proposition} \label{P:exit}
Assume $\VD$, \eqref{polycon}, $\FK(\phi)$, $\J_{\phi,\le}$ and $\CS(\phi)$ hold.  Then  $\E_{\phi}$ holds.
\end{proposition}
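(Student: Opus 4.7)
The plan is to establish the two halves of $\E_\phi$ separately: $\bE^x[\tau_{B(x,r)}] \le c\phi(r)$ and $\bE^x[\tau_{B(x,r)}] \ge c^{-1}\phi(r)$. The upper bound is relatively clean and comes from $\FK(\phi)$ alone, while the lower bound is the substantive content of the proposition and is where $\CS(\phi)$ and $\J_{\phi,\le}$ enter.

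For the upper bound, I would extract from $\FK(\phi)$ two pieces of information: (a) the spectral bound $\lambda_1(B(x,r)) \ge c/\phi(r)$, which is immediate from the definition applied to $D=B(x,r)$; and (b) an on-diagonal upper bound $p^{B(x,r)}(t,x,x) \le C/V(x,\phi^{-1}(t))$ for $0<t\le \phi(r)$, obtained by iterating the Nash-type inequality that is equivalent to $\FK(\phi)$. Combining these through
\[ \bP^x(\tau_B > 2t) = \int_B p^B(t,x,y) P^B_t\mathbf{1}_B(y)\,d\mu(y) \le p^B(2t,x,x)^{1/2} \mu(B)^{1/2} e^{-\lambda_1(B)t}, \]
evaluated at $t=\phi(r)$ and combined with $\VD$, yields $\bP^x(\tau_{B(x,r)} > 2\phi(r)) \le \delta$ for some $\delta<1$ independent of $x,r$. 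The Markov property then upgrades this to exponential tail decay and in particular $\bE^x[\tau_{B(x,r)}] \le C\phi(r)$.

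For the lower bound I would pursue a Meyer-style decomposition: if $X^{(\rho)}$ is the $\rho$-truncated process associated with $(\sE^{(\rho)},\sF)$, then
\[ \bP^x(\tau_{B(x,r)} \le t) \le \bP^x\bigl(\tau^{(\rho)}_{B(x,r)} \le t\bigr) + \bP^x\bigl(X \text{ has a jump of size} > \rho \text{ before time } t\bigr). \]
Taking $\rho = r$, the second term is controlled by Lemma \ref{intelem} (which bounds the large-jump intensity of the L\'evy system by $c/\phi(\rho)$), giving a contribution of size $ct/\phi(r)$. For the truncated exit probability, the strategy is to apply the self-improved cut-off inequality $\CS^{(r)}(\phi)$ from Corollary \ref{C:cswk-1} with a cut-off $\vp$ for $B(x,r/2)\subset B(x,r)$; combined with Proposition \ref{CSJ-equi} this yields a capacity bound $\mathrm{Cap}^{(r)}(B(x,r/2), B(x,r)) \le C V(x,r)/\phi(r)$, which one converts into a tail bound $\bP^x(\tau^{(r)}_{B(x,r)} \le t) \le Ct/\phi(r)$ via a Dynkin-type martingale argument applied to $\vp(X^{(r)}_{t\wedge\tau})$. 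Taking $t \le c\phi(r)$ with $c$ small enough then forces $\bP^x(\tau_{B(x,r)} \le t) \le 1/2$, whence $\bE^x[\tau_{B(x,r)}] \ge c\phi(r)/2$.

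The main obstacle I expect is the lower bound, and within it, converting the integrated/energy information supplied by $\CS^{(r)}(\phi)$ into the pointwise tail estimate $\bP^x(\tau^{(r)}_{B(x,r)} \le t) \lesssim t/\phi(r)$ for \emph{every} $x$ in, say, $B(x_0, r/4)$ rather than for a.e.\ $x$. This is where the $L^1$-mean value inequality for truncated Dirichlet forms (Proposition \ref{P:mvi2}(ii)) is expected to play a decisive role, since the subharmonic quantity in question (a truncated version of $\phi(r)-\bE^\cdot[\tau^{(r)}_{B}]$) must be estimated pointwise from its $L^1$-norm, and the ``enlargement'' by $\rho=r$ in the mean value inequality matches exactly the scale of the truncation. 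A secondary technical point is ensuring that the ``near-diagonal'' value of $x$ relative to $B(x,r)$ is preserved across all the successive reductions, which requires careful use of $\VD$ together with the doubling properties of $\phi$.
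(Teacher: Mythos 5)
Your high-level decomposition matches the paper's route exactly: the upper half $\E_{\phi,\le}$ is extracted from $\FK(\phi)$ alone (the paper cites \cite[Lemma 4.14]{CKW1}, and your Nash-iteration-plus-spectral-gap sketch is a fair proxy for that argument), and the lower half $\E_{\phi,\ge}$ is where $\CS(\phi)$, $\J_{\phi,\le}$ and the $L^1$-mean value inequality of Proposition \ref{P:mvi2} carry the weight (the paper invokes Proposition \ref{P:mvi2} and then follows \cite[Lemmas 4.15--4.17]{CKW1}). You also correctly identify the $\rho$-truncation at scale $\rho=r$ with the Meyer decomposition and the large-jump bound from Lemma \ref{intelem}.

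The genuine gap is in the step ``\ldots this yields a capacity bound $\mathrm{Cap}^{(r)}(B(x,r/2),B(x,r))\le CV(x,r)/\phi(r)$, which one converts into a tail bound $\bP^x(\tau^{(r)}_{B(x,r)}\le t)\le Ct/\phi(r)$ via a Dynkin-type martingale argument applied to $\vp(X^{(r)}_{t\wedge\tau})$.'' A capacity (equivalently $\sE^{(\rho)}$-energy) estimate for $\vp$ is an $L^2$-type bound, not a pointwise bound on $\sL^{(\rho)}\vp$; Dynkin's formula applied to $\vp(X^{(r)}_{t\wedge\tau})$ produces a term $\int_0^{t\wedge\tau}\sL^{(\rho)}\vp(X^{(r)}_s)\,ds$ which you cannot control from the energy of $\vp$ alone, so the proposed step does not go through as stated. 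The role you later assign to the $L^1$-MVI is also misdiagnosed: it is not an ``a.e.\ to everywhere'' upgrade. What actually happens (and what the cited lemmas in \cite{CKW1} do) is that the capacity bound from Proposition \ref{CSJ-equi} is first turned into an \emph{integrated} lower bound $\frac{1}{\mu(B)}\int_B \bE^x[\tau^{(\rho)}_B]\,d\mu \ge c\,\phi(r)$ (e.g.\ by testing the Green potential against the cut-off and using Cauchy--Schwarz), and only then is the $L^1$-MVI (Proposition \ref{P:mvi2}(ii)), applied to the bounded $\sE^{(\rho)}$-subharmonic function of the form $\text{const}\cdot\phi(r)-\bE^\cdot[\tau^{(\rho)}_B]$, used to convert that $L^1$ information into a pointwise bound. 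So the MVI is doing the substantive $L^1\to L^\infty$ conversion, not merely regularising an a.e.\ statement; without the intermediate $L^1$ estimate, the Dynkin/capacity step you describe has nothing to feed it.
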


\begin{proof} According to \cite[Lemma 4.14]{CKW1}, $\E_{\phi,\le} $ holds under $\VD$, \eqref{polycon} and $\FK(\phi)$.
On the other hand, by Proposition \ref{P:mvi2}, we have the $L^1$-mean value
inequality \eqref{e:mvi} under the assumptions of Proposition \ref{P:exit}.
Then by the proofs of \cite[Lemmas 4.15, 4.16 and 4.17]{CKW1}, we  obtain $\E_{\phi,\ge}$.  \qed
\end{proof}

\begin{proposition}\label{ehi-ukdd}  Suppose that  $\VD$, \eqref{polycon}, $\FK(\phi)$, $\E_\phi$ and $\J_{\phi, \le}$ hold. Then $\UHKD(\phi)$ is satisfied, i.e., there is a constant $c>0$ such that for all $x\in M_0$ and $t>0$,
$$p(t, x,x)\le \frac{c}{V(x,\phi^{-1}(t))}.$$
 \end{proposition}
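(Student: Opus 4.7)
The plan is to combine $\FK(\phi)$ with the exit time estimate $\E_\phi$ via the classical Nash method, obtaining first an on-diagonal Dirichlet heat kernel bound on balls and then transferring it to $p(t,x,x)$ itself. This is consistent with the probabilistic strategy used for the pure jump case in \cite{CKW1}, but some care is needed because of the diffusive part.

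First, from $\FK(\phi)$ one derives a local Nash-type inequality: for $u \in \sF$ with $\supp u \subset B(x_0, r)$,
\begin{equation*}
\|u\|_2^{2(1+\nu)} \le C \phi(r)\, V(x_0, r)^\nu \, \sE(u,u)\, \|u\|_1^{2\nu}.
\end{equation*}
This is standard: apply $\FK(\phi)$ to the level sets $D_s = \{|u|>s\}\subset\supp u\subset B(x_0,r)$, combine with $\lambda_1(\supp u)\|u\|_2^2\le \sE(u,u)$, and optimize over $s$ via Cavalieri. Plugging this into the standard Nash argument---differentiating $\|P_t^B f\|_2^2$ for the Dirichlet semigroup on $B=B(x_0,r)$ to obtain $\frac{d}{dt}\|P_t^B f\|_2^2 \le -c\,\phi(r)^{-1}V(x_0,r)^{-\nu}\,\|P_t^B f\|_2^{2(1+\nu)}\|f\|_1^{-2\nu}$ and integrating---yields $\|P_t^B\|_{1\to 2}^2 \le C/V(x_0,\phi^{-1}(t))$ for $0<t\le\phi(r)$. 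By duality and semigroup composition, this upgrades to
\begin{equation*}
p^B(t,y,z) \le \frac{C}{V(x_0,\phi^{-1}(t))} \quad \text{for all } y,z \in B,\ 0<t\le\phi(r).
\end{equation*}

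Finally, to pass from $p^B$ to $p$ I would fix $x\in M_0$ and $t>0$, take $r=\phi^{-1}(t/\delta)$ with $\delta$ from $\EP_{\phi,\le,\eps}$, set $B=B(x,r)$, and use the Dynkin--Hunt decomposition
\begin{equation*}
p(t,x,x) \le p^B(t,x,x) + \bE^x\bigl[p(t-\tau_B, X_{\tau_B}, x);\, \tau_B<t\bigr].
\end{equation*}
The first term is $\le C/V(x,\phi^{-1}(t))$ by the previous step. The main obstacle is the error term, which involves $p$ itself. The plan is to close this via a bootstrap: define
\begin{equation*}
\Psi(T) := \sup_{0<s\le T,\, y\in M_0} V(y,\phi^{-1}(s))\, p(s,y,y),
\end{equation*}
and use Proposition \ref{P:2.4} (under \eqref{polycon}, $\E_\phi \Longrightarrow \EP_{\phi,\le,\eps}$) to bound $\bP^x(\tau_B<t)\le \eps<1$. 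Combining the pointwise Cauchy--Schwarz bound $p(s,y,z)\le \sqrt{p(s,y,y)\,p(s,z,z)}$ with $\VD$ to compare $V(X_{\tau_B},\phi^{-1}(s))$ with $V(x,\phi^{-1}(t))$ converts the error term into $\eps'\,\Psi(T)$ with some $\eps'<1$. Absorbing into the left-hand side produces $\Psi(T)\le C_1/(1-\eps')$ uniformly in $T$, which is exactly $\UHKD(\phi)$. The delicate step is engineering the comparison so that the expectation of the random weight $1/V(X_{\tau_B},\phi^{-1}(t-\tau_B))$ is controlled uniformly by $1/V(x,\phi^{-1}(t))$ times a constant strictly less than $1/\eps$, which is what makes the bootstrap close.
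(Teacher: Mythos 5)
Your proposal correctly identifies the ingredients (FK $\to$ Nash $\to$ Dirichlet on-diagonal bound $\to$ transfer to the full kernel) and this is indeed the general spirit of the cited argument in \cite{CKW1}. However, the closing step (the bootstrap in your step 3) has a genuine gap that you flag as ``delicate'' but do not resolve, and I believe it cannot be resolved in the form you describe.

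After Cauchy--Schwarz and insertion of $\Psi(T)$, the Dynkin--Hunt error term becomes
\begin{equation*}
\Psi(T)\,\bE^x\!\left[\tau_B<t;\ \frac{1}{\sqrt{V(X_{\tau_B},\phi^{-1}(t-\tau_B))\,V(x,\phi^{-1}(t-\tau_B))}}\right].
\end{equation*}
As $\tau_B\uparrow t$ the increment $t-\tau_B$ vanishes, so $\phi^{-1}(t-\tau_B)\to 0$ and the ratio $V(x,\phi^{-1}(t))/V(x,\phi^{-1}(t-\tau_B))$ is unbounded. Having $\bP^x(\tau_B<t)\le\eps<1$ from $\EP_{\phi,\le,\eps}$ does not control a weighted expectation with an unbounded weight; in fact under $\VD$ alone (no $\RVD$ is assumed in this proposition) the expectation need not even be finite, since $V(x,\rho)$ can decay fast as $\rho\to 0$. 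The additional factor from comparing $V(X_{\tau_B},\cdot)$ to $V(x,\cdot)$ only makes matters worse, since $d(x,X_{\tau_B})\ge r\gg\phi^{-1}(t-\tau_B)$ near the singularity, giving another large factor $\bigl(1+d(x,X_{\tau_B})/\phi^{-1}(t-\tau_B)\bigr)^{d_2/2}$. So the absorption constant is not ``strictly less than $1/\eps$'' as claimed. A secondary gap: one must first establish $\Psi(T)<\infty$ before any absorption argument is legitimate, and you do not address this.

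The paper's proof (delegated verbatim to \cite[Theorem 4.25]{CKW1}) takes a different route that avoids the small-time singularity: it works with the $\rho$-truncated Dirichlet form $(\sE^{(\rho)},\sF)$ and the associated process $X^{(\rho)}$, for which the exit position from a ball is at bounded distance $\le \rho$ and the Nash/Davies machinery gives a diagonal bound $q^{(\rho)}(t,x,x)\le\frac{c}{V(x,\phi^{-1}(t))}e^{ct/\phi(\rho)}$ (cf.\ Lemma \ref{P:truncated} in this paper). One then uses $\J_{\phi,\le}$ and Meyer's decomposition to add back the long-range jumps, and optimizes $\rho$ against $t$. The truncation is exactly what removes the blow-up you cannot control. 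Minor: in the Nash inequality you derive, the factor should be $V(x_0,r)^{-\nu}$ rather than $V(x_0,r)^{\nu}$, and the resulting Dirichlet bound $p^B(t,\cdot,\cdot)\le C/V(x_0,\phi^{-1}(t))$ holds at $t\asymp\phi(r)$ but not uniformly for all $t\le\phi(r)$ (that would need $\RVD$); you effectively only use it at $t\asymp\phi(r)$, so this is cosmetic, but worth noting.
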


\begin{proof} The proof
is the same as that for \cite[Theorem 4.25]{CKW1}, and so we omit the details here.
\qed\end{proof}

\subsection{$\UHKD(\phi)+\J_{\phi_j,\le}+ \E_\phi\Longrightarrow \UHK(\phi_c, \phi_j)$}\label{S:4.3}

First we have the following  by the proof of \cite[Lemma 5.2]{CKW1}.

\begin{lemma}\label{P:truncated} Under $\VD$ and \eqref{polycon}, if
$\UHKD(\phi)$, $\J_{\phi,\le}$ and $\E_{\phi}$ hold, then the $\rho$-truncated Dirichlet form $(\sE^{(\rho)}, \sF)$ has the heat kernel $q^{(\rho)}(t,x,y)$, and it satisfies that for any $t> 0$ and all $x,y\in M_0$,
$$q^{(\rho)}(t,x,y)\le c_1\bigg(\frac{1}{V(x,\phi^{-1}(t))}+ \frac{1}{V(y,\phi^{-1}(t))}\bigg) \exp\left(c_2\frac{t}{\phi(\rho)}-c_3\frac{d(x,y)}{\rho}\right),$$ where $c_1,c_2,c_3$ are positive constants independent of $\rho$.
Consequently, for any $t> 0$ and all $x,y\in M_0$,
$$q^{(\rho)}(t,x,y)\le \frac{c_4}{V(x,\phi^{-1}(t))}\bigg(1+ \frac{d(x,y)}{\phi^{-1}(t)}\bigg)^{d_2} \exp\left(c_2\frac{t}{\phi(\rho)}-c_3\frac{d(x,y)}{\rho}\right).$$
\end{lemma}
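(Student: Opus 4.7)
The plan is to adapt the Davies exponential perturbation argument of \cite[Lemma 5.2]{CKW1} to our mixed local/non-local setting. The proof combines (a) an on-diagonal upper bound for $q^{(\rho)}$ with (b) a Davies-style exponential off-diagonal bound, joined by the standard Davies--Gaffney chaining. For (a), I would use the form comparison $\sE^{(\rho)}\le \sE\le \sE^{(\rho)}+(c_0/\phi(\rho))\|\cdot\|_2^2$, where the second inequality follows from $\J_{\phi,\le}$ and Lemma \ref{intelem}. This transfers $\UHKD(\phi)$ from $(\sE,\sF)$ to $(\sE^{(\rho)},\sF)$ at the cost of an $e^{c_0 t/\phi(\rho)}$ factor, yielding both the existence of $q^{(\rho)}$ and the preliminary estimate $q^{(\rho)}(t,x,x)\le C e^{c_0 t/\phi(\rho)}/V(x,\phi^{-1}(t))$.

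For (b), I would fix $x_0\in M$ and $\lambda\in(0,1]$, and take the $(\lambda/\rho)$-Lipschitz test function $\psi(z):=\lambda(R\wedge d(z,x_0))/\rho$ for some large $R$. The perturbed semigroup $T^\psi_t:=e^{-\psi}P^{(\rho)}_t e^{\psi}$ has integral kernel $e^{\psi(y)-\psi(x)}q^{(\rho)}(t,x,y)$, and its associated quadratic form differs from $\sE^{(\rho)}$ by a symmetric perturbation. The non-local portion of that perturbation is integrated only over $\{d(x,y)\le\rho\}$, where $|\psi(x)-\psi(y)|\le\lambda$; Lemma \ref{intelem} then bounds it by $C(e^\lambda+e^{-\lambda}-2)/\phi(\rho)\cdot\|f\|_2^2\le C_1\lambda^2/\phi(\rho)\cdot\|f\|_2^2$ for $\lambda\in(0,1]$. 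The local portion $\int f^2\,d\Gamma_c(\psi,\psi)$ is handled exactly as in the classical Davies method for strongly local Dirichlet forms, producing an error of the same order $C_1\lambda^2/\phi(\rho)\cdot\|f\|_2^2$; here \eqref{e:1.11} is what makes this consistent, since $\phi(\rho)\asymp\phi_c(\rho)$ on the short scales where the local perturbation is relevant. Consequently $\|T^\psi_t\|_{2\to 2}\le\exp(C_1\lambda^2 t/\phi(\rho))$.

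Combining (a) and (b) via Davies--Gaffney yields the pointwise bound $q^{(\rho)}(t,x,y)\le C\bigl(V(x,\phi^{-1}(t))^{-1}+V(y,\phi^{-1}(t))^{-1}\bigr)\exp\bigl(C_2\lambda^2 t/\phi(\rho)-\lambda d(x,y)/\rho\bigr)$, after symmetrizing $2/\sqrt{V(x,\phi^{-1}(t))V(y,\phi^{-1}(t))}\le V(x,\phi^{-1}(t))^{-1}+V(y,\phi^{-1}(t))^{-1}$. Optimizing over $\lambda\in(0,1]$ (the extremal choice being $\lambda:=1\wedge(d(x,y)\phi(\rho)/(2C_2 t\rho))$) collapses the quadratic-in-$\lambda$ exponent into $c_2 t/\phi(\rho)$ and produces the first claimed bound. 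The second bound follows immediately from $\VD$: since $B(x,\phi^{-1}(t))\subset B(y,\phi^{-1}(t)+d(x,y))$, \eqref{e:vd2} gives $V(x,\phi^{-1}(t))\le C(1+d(x,y)/\phi^{-1}(t))^{d_2}V(y,\phi^{-1}(t))$, so the two-term factor collapses into $1/V(x,\phi^{-1}(t))$ times the polynomial correction $(1+d(x,y)/\phi^{-1}(t))^{d_2}$. The main obstacle is the Davies perturbation for the strongly local part: without any intrinsic-metric assumption on $\sE^{(c)}$, one must verify that $\Gamma_c(\psi,\psi)$ contributes at the scale $1/\phi(\rho)$ rather than merely $1/\rho^2$, and this is precisely what \eqref{polycon} combined with \eqref{e:1.11} ensures.
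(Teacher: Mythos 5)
Your Davies--Gaffney framework matches the structure of the cited proof (\cite[Lemma 5.2]{CKW1}), and steps (a), the non-local perturbation bound, the AM--GM symmetrization, and the derivation of the second display from the first via $\VD$ are all fine. The genuine gap is in your treatment of the strongly local part of the perturbation, and you flag it yourself but then dismiss it with a claim that does not hold.

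You assert that ``$\Gamma_c(\psi,\psi)$ contributes at the scale $1/\phi(\rho)$ rather than merely $1/\rho^2$, and this is precisely what \eqref{polycon} combined with \eqref{e:1.11} ensures.'' But \eqref{polycon} and \eqref{e:1.11} are constraints only on the scale functions $\phi_c,\phi_j,\phi$; they say nothing whatsoever about the energy measure $\Gamma_c$ of a Lipschitz test function. In this paper's abstract metric-measure Dirichlet setting there is no assumed comparability of the intrinsic metric of $\sE^{(c)}$ with $d$, so a $(\lambda/\rho)$-Lipschitz $\psi$ need not satisfy $d\Gamma_c(\psi,\psi)\le C(\lambda/\rho)^2\,d\mu$ at all — and even if it did, that yields a perturbation of size $\lambda^2/\rho^2$, not $\lambda^2/\phi(\rho)$. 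These two scalings are not comparable in general (take $\phi_j(r)=r^{2.5}$ for $r\ge1$, which is allowed since only $\beta_{1,\phi_j}>0$ is required; then $\phi(\rho)=\rho^{2.5}>\rho^2$ and the Lipschitz bound gives a strictly worse exponential factor than the one claimed). So the classical Davies argument with a Lipschitz $\psi$ simply cannot produce the $e^{c_2 t/\phi(\rho)}$ factor in the mixed local/non-local setting.

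What actually controls the local part in the paper's framework is not \eqref{polycon} or \eqref{e:1.11} but the generalized capacity/cut-off Sobolev machinery: under the hypotheses of the lemma one has $\E_\phi\Rightarrow\EP_{\phi,\le,\eps}\Rightarrow\Gcap(\phi)$ by Proposition~\ref{P:2.4}, and then $\Gcap(\phi)+\J_{\phi,\le}\Rightarrow\CS(\phi)$ by Proposition~\ref{P:csp}. The Davies-type perturbation function must therefore be built from the cut-off functions supplied by $\CS(\phi)$ (or $\Gcap(\phi)$), whose $\Gamma$-energy is controlled by $1/\phi(\rho)$ by construction, rather than from an arbitrary $d$-Lipschitz ramp. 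This is the step your proposal is missing, and without it the proof does not close.
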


 \ \

From now till the end of this subsection, we will assume that $\J_{\phi_j,\le}$ is satisfied.
Note that since $\phi(r)\le \phi_j(r)$ for all $r>0$,
condition $\J_{\phi_j,\le}$ implies $\J_{\phi,\le}$.

Recall that
there is close relation between $p(t,x,y)$ and $q^{(\rho)}(t,x,y)$  via
Meyer's decomposition, e.g.\ see \cite[Section 7.2]{CKW1}. In particular, according to \cite[(4.34) and Proposition 4.24 (and its proof)]{CKW1}, for any $t,\rho>0$ and all $x,y\in M_0$,
\begin{equation}\label{e:trun-org}
p(t,x,y)\le q^{(\rho)}(t,x,y)+ \frac{c_1t}{V(x,\rho)\phi_j(\rho)}\exp\left(\frac{c_1t}{\phi(\rho)}\right),
\end{equation}
where $c_1$ is independent of $t,\rho>0$ and $x,y\in M_0.$

The following lemma is essentially taken from \cite[Lemma 4.3]{BKKL}, which is partly motivated by the proof of \cite[Proposition 5.3]{CKW1}. For the sake of the completeness and for further applications,
we spell out its proof here.

\begin{lemma}\label{L:selftail}
Assume that $\VD$, \eqref{polycon},
$\UHKD(\phi)$, $\J_{\phi_j,\le}$ and $\E_\phi$ hold. Let $f:\bR_+\times \bR_+\to \bR_+$ be a measurable function such that $t\mapsto f(r,t)$ is non-increasing for all $r>0$, and that $r\mapsto f(r,t)$ is non-decreasing for all $t>0$. Suppose that the following hold:
\begin{itemize}
\item[\rm (i)] For each $b>0$, $\sup_{t>0} f(b\phi^{-1}(t),t)<\infty$.
\item [\rm (ii)] There exist constants $\eta\in (0,\beta_{1,\phi_j}]$ and $a_1,c_1>0$ such that for all $x\in M_0$ and $r,t>0$,
$$\int_{B(x,r)^c} p(t,x,y)\,\mu(dy)\le c_1\left(\frac{\phi_j^{-1}(t)}{r}\right)^\eta+c_1\exp\left(-a_1 f(r,t)\right).$$\end{itemize}
Then  there exist constants $k,c_0>0$ such that for all $x,y\in M_0$ and $t>0$,
\begin{align*}p(t,x,y)\le &\frac{c_0t}{V(x,d(x,y))\phi_j(d(x,y))}+\frac{c_0}{V(x,\phi^{-1}(t))}\left(1+\frac{d(x,y)}{\phi^{-1}(t)}\right)^{d_2}\exp\left(-a_1k f(d(x,y)/(16 k),t)\right).\end{align*}
Furthermore, the conclusion still holds true for any $t\in (0,T]$ or
$t\in [T,\infty)$ with some $T>0$, if assumptions $(i)$ and $(ii)$
above are restricted on the corresponding time interval.
\end{lemma}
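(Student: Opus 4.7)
The plan is to combine Meyer's decomposition at scale $\rho=R/(16k)$, where $R:=d(x,y)$ and $k$ is a positive integer to be fixed from the exponents of $\VD$ and \eqref{polycon}, with an iterative chaining bootstrap that upgrades the integrated tail hypothesis (ii) to a pointwise off-diagonal bound on the truncated heat kernel $q^{(\rho)}$.

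First I would dispose of the range $t\ge\phi(\rho)$. Here $\phi^{-1}(t)\ge\rho$, so by the monotonicity of $f$ in its first argument and hypothesis (i), $f(R/(16k),t)\le f(\phi^{-1}(t),t)\le C_1<\infty$. Then $\UHKD(\phi)$, together with the Cauchy--Schwarz inequality $p(t,x,y)\le p(t,x,x)^{1/2}p(t,y,y)^{1/2}$ and the $\VD$ comparison $V(y,\phi^{-1}(t))\ge c(1+R/\phi^{-1}(t))^{-d_2}V(x,\phi^{-1}(t))$, produces the bound $p(t,x,y)\le c(1+R/\phi^{-1}(t))^{d_2/2}/V(x,\phi^{-1}(t))$; inserting the harmless factor $e^{a_1 k C_1}\cdot e^{-a_1 k C_1}$ fits this inside the second term of the conclusion. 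In the complementary range $t<\phi(\rho)$, Meyer's decomposition \eqref{e:trun-org} yields $p(t,x,y)\le q^{(\rho)}(t,x,y)+c't/(V(x,\rho)\phi_j(\rho))$, since $e^{ct/\phi(\rho)}=O(1)$; and $\VD$ combined with the doubling of $\phi_j$ gives $V(x,R)\phi_j(R)\le c(16k)^{d_2+\beta_{2,\phi_j}}V(x,\rho)\phi_j(\rho)$, so the Meyer residue already matches the first term $c_0 t/(V(x,R)\phi_j(R))$ of the conclusion.

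The core obstacle, and the part requiring the most care, is the pointwise bound on $q^{(\rho)}$. My plan is first to transfer the integrated tail in (ii) to the truncated process: by Meyer's construction, $X$ is obtained from $X^{(\rho)}$ by adding back the jumps of size $>\rho$, so for $t\le\phi(\rho)$ one has $\bP^x(X^{(\rho)}_t\in B(x,r)^c)\le e^{ct/\phi_j(\rho)}\bP^x(X_t\in B(x,r)^c)\le c'' F(r,t)$, where $F(r,t):=(\phi_j^{-1}(t)/r)^\eta+\exp(-a_1 f(r,t))$. I would then iterate the semigroup identity $q^{(\rho)}(t,x,y)=\int q^{(\rho)}(t/k,x,z)\,q^{(\rho)}((k-1)t/k,z,y)\,\mu(dz)$ in $k$ steps of size $t/k$, at each step splitting on whether the intermediate position $z$ lies in $B(x,R/(16k))$: the far piece is bounded, via the truncated integrated tail, by a factor $F(R/(16k),t/k)$ multiplied by a $\sup_z q^{(\rho)}((k-1)t/k,z,y)$ controlled through Lemma~\ref{P:truncated} and $\VD$; the close piece is recursed into $q^{(\rho)}((k-1)t/k,z,y)$, and since $d(z,y)\ge R(1-1/(16k))$ the iteration hypothesis persists at level $k-1$. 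The accumulated exponential pieces multiply, using both monotonicities of $f$ (so that $f(R/(16k),t/k)\ge f(R/(16k),t)$) and the doubling of $\phi$, to give $\exp(-a_1 k f(R/(16k),t))$; the polynomial remainders $(\phi_j^{-1}(\cdot)/r)^\eta$ at each step are, via $\eta\le\beta_{1,\phi_j}$ and the doubling of $\phi_j$, absorbed into the already extracted $t/(V(x,R)\phi_j(R))$ term; and the $(1+R/\phi^{-1}(t))^{d_2}$ prefactor arises from the $\VD$ comparison between $V(x,\phi^{-1}(t))$ and the volumes at intermediate scales produced by Lemma~\ref{P:truncated}. The final sentence of the lemma concerning restricted time intervals follows because every estimate above is pointwise in $t$.
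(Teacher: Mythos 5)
Your outline matches the paper's up to the crucial step: Meyer's decomposition at scale $\rho\asymp d(x,y)/k$, disposing of the on-diagonal range $t\gtrsim\phi(\rho)$ via $\UHKD(\phi)$ and $\VD$, transferring the integrated tail of $p$ to an exit-time bound for the truncated process, and producing a pointwise bound on $q^{(\rho)}$ from an integrated one. Two points, one minor and one fatal, separate your plan from a working proof.

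The minor point is that the transfer from $p$ to the truncated process is additive, not multiplicative. From Meyer's first-big-jump coupling, $\bP^x(X^{(\rho)}_t\in B^c)\le\bP^x(X_t\in B^c)+\bP^x(T_1\le t)\le\bP^x(X_t\in B^c)+ct/\phi_j(\rho)$; there is no useful inequality of the form $\bP^x(X^{(\rho)}_t\in B^c)\le e^{ct/\phi_j(\rho)}\bP^x(X_t\in B^c)$, since the truncated exit event can carry positive probability on $\{T_1\le t\}$ even when the original one does not. This is easy to fix: include the extra term $ct/\phi_j(\rho)$ in your $F$, as the paper does when defining $\Phi(r,t)$; it is then absorbed, together with the $(\phi_j^{-1}(t)/\rho)^{\eta k}$ contribution, into the first (polynomial) term of the conclusion using $\eta\le\beta_{1,\phi_j}$ and a sufficiently large integer $k$.

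The fatal point is that the proposed $k$-fold Chapman--Kolmogorov iteration does not produce the required $k$-th power of the tail. Splitting on whether the intermediate point $z$ stays inside $B(x,R/(16k))$ and recursing on the close piece generates a tree of $2^k$ branches, and the all-close branch never picks up a single factor of $F$: it terminates at a base case $q^{(\rho)}(t/k,z_{k-1},y)$ with $d(z_{k-1},y)\gtrsim R$, which Lemma~\ref{P:truncated} bounds only by a quantity with a \emph{fixed} exponential factor $e^{-cR/\rho}\asymp e^{-c'k}$, a constant depending on $k$ alone. But the conclusion requires $\exp(-a_1 k\, f(R/(16k),t))$, and $f(R/(16k),t)$ can be arbitrarily large as $R/\phi^{-1}(t)\to\infty$. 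So the all-close branch caps your decay at a constant, and the argument does not close. What produces the $k$-th power in the paper is an \emph{exit-time} iteration, not a position-at-fixed-time iteration: the event $\{X^{(\rho)}_t\notin B(x,2kr)\}$ forces the truncated process to cross $k$ consecutive annuli of width $2r$, so by the strong Markov property one gets $\int_{B(x,2kr)^c}q^{(\rho)}(t,x,\cdot)\,d\mu\le\Phi(r,t)^k$ (this is the role of the cited Lemma 7.1 of \cite{CKW1}). Only after this does the paper apply a single Chapman--Kolmogorov split at $t/2$, splitting $z$ into $B(x,r/2)^c\cup B(y,r/2)^c$, pairing the integrated tail $\Phi^k$ with the on-diagonal bound of Lemma 5.1 of \cite{CKW1} to obtain the pointwise estimate. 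Your plan needs to be restructured around this exit-time argument; the Chapman--Kolmogorov split should appear exactly once, not $k$ times.
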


\begin{proof} We only consider the case that $t\in (0,\infty)$, since the other cases can be treated similarly.
We first note that, by $\E_\phi$ and $\J_{\phi_j,\le}$, the Dirichlet form $(\sE, \sF)$ is conservative
by Proposition \ref{P:3.2}(ii).
For fixed $x_0\in M$,   set $B_s:=B(x_0,s)$ for all $s>0$. By the conservativeness of the Dirichlet form $(\sE, \sF)$,
the strong Markov property, assumption (ii) and the fact that $t\mapsto f(r,t)$ is non-increasing, we have that for any $x\in B_{r/4}\cap M_0$ and $t\ge 0$,
\begin{align*}
 \bP^x(\tau_{B_r}\le t)\le c_1\left(\frac{\phi_j^{-1}(t)}{r}\right)^\eta+c_1\exp\left(-a_1 f(r/4,t)\right);
\end{align*}
see the end of the  proof for Lemma \ref{Conserv}.  For any $\rho>0$ and any subset $D\subset M$, denote by $\tau_D^{(\rho)}=\inf\{t>0: X^{(\rho)}_t\notin D\}$,
 where $(X^{(\rho)}_t)_{t\ge0}$ is the symmetric Hunt process associated with the $\rho$-truncated Dirichlet form $(\sE^{(\rho)},\sF)$.
 By \cite[Lemma 7.8]{CKW1}, $\J_{\phi_j,\le}$ and Lemma \ref{intelem}, we have that for all $x\in B_{r/4}\cap M_0$ and $r>0$,
$$
\bP^x (\tau_{B_r}^{(r)}\le t ) \le c_1\left(\frac{\phi_j^{-1}(t)}{r}\right)^\eta+c_1\exp\left(-a_1 f(r/4,t)\right)+\frac{c_2t}{\phi_j(r)}=:\Phi(r,t).
$$
This together with \cite[Lemma 7.1]{CKW1}   yields that for all $t>0$, $x\in M_0$ and $k\ge 1$,
\begin{equation}\label{e:ietrat}
\int_{B(x,2kr)^c}q^{(\rho)}(t,x,y)\,\mu(dy)\le \Phi(r,t)^k.
\end{equation}

 Let $k=[(\beta_{2,\phi_j}+2d_2)/\eta]+1$. For any $x,y\in M_0$ and $t>0$ with $4k\phi^{-1}(t)\ge d(x,y)$,  it follows from  the assumption that
 $r\mapsto f(r,t)$ is non-decreasing and assumption (i) that $f(d(x,y)/(16 k),t)\le f(\phi^{-1}(t)/4,t)\le A<\infty.$
  Thus, for any $x,y\in M_0$ and $t>0$ with
 $4k\phi^{-1}(t)\ge d(x,y)$, according to $\UHKD(\phi)$ and $\VD$,
\begin{align*}p(t,x,y)\le &c_3\left(\frac{1}{V(x,\phi^{-1}(t))}+\frac{1}{V(y,\phi^{-1}(t))}\right)\\
\le& \frac{c_4e^{a_1kA}}{V(x,\phi^{-1}(t))}\left(1+\frac{d(x,y)}{\phi^{-1}(t)}\right)^{d_2} \exp\left(-a_1k f(d(x,y)/(16 k),t)\right) .\end{align*}
Next  we consider the case that $x,y\in M_0$ and $t>0$ with $4k\phi^{-1}(t)\le d(x,y)$. Letting $r=d(x,y)$ and $\rho=r/(4k)$, it holds
\begin{align*}q^{(\rho)}(t,x,y)&=\int_M q^{(\rho)}(t/2,x,z)q^{(\rho)}(t/2,z,y)\,\mu(dz)\\
&\le\left(\int_{B(x,r/2)^c}+\int_{B(y,r/2)^c}\right)q^{(\rho)}(t/2,x,z)q^{(\rho)}(t/2,z,y)\,\mu(dz)\\
&\le \frac{c_5}{V(x,\phi^{-1}(t))}\left(1+\frac{d(x,y)}{\phi^{-1}(t)}\right)^{d_2} \Phi(\rho, t/2)^k. \end{align*}
Here, the last inequality follows from \eqref{e:ietrat} and $\VD$ as well as
$$q^{(\rho)}(t,x,z)\le \frac{c_0}{V(x,\phi^{-1}(t))} e^{c_0t/\phi(\rho)}\le \frac{c_0 e^{c_0}}{V(x,\phi^{-1}(t))},\quad x,z\in M_0, t>0\textrm{ with }\phi^{-1}(t)\le \rho ,$$ which follows from \cite[Lemma 5.1]{CKW1} (based on $\UHKD(\phi)$ and $\E_\phi$) and \cite[Lemma 7.8]{CKW1} (based on $\J_{\phi_j,\le}$ and the fact that $\phi_j(r)\ge \phi(r)$ for all $r>0$).
Since $\rho\ge \phi^{-1}(t)\ge \phi_j^{-1}(t)$ and $k\beta_{1,\phi_j}\ge k\eta\ge \beta_{2,\phi_j}+2d_2$, by \eqref{polycon},
\begin{align*}\left(\frac{\phi_j^{-1}(t)}{\rho}\right)^{\eta k}+\left(\frac{t}{\phi_j(\rho)}\right)^k&\le c_6\left[\left(\frac{\phi_j^{-1}(t)}{\rho}\right)^{\beta_{2,\phi_j}+2d_2}+ \left(\frac{\phi_j^{-1}(t)}{\rho}\right)^{k\beta_{1,\phi_j}}\right]\le
c_7\left(\frac{\phi_j^{-1}(t)}{\rho}\right)^{\beta_{2,\phi_j}+2d_2}.
\end{align*}
Thus  for all $x,y\in M_0$ and $t,\rho>0$ with $\rho\ge
\phi^{-1}(t)\ge\phi_j^{-1}(t)$,
\begin{align*}&\frac{1}{V(x,\phi^{-1}(t))}\left(1+\frac{d(x,y)}{\phi^{-1}(t)}\right)^{d_2}\left[\left(\frac{\phi_j^{-1}(t)}{\rho}\right)^{\eta k}+\left(\frac{t}{\phi_j(\rho)}\right)^k\right]\\
&\le \frac{c_7}{V(x,\phi^{-1}(t))}\left(1+\frac{d(x,y)}{\phi^{-1}(t)}\right)^{d_2}\left(\frac{\phi_j^{-1}(t)}{\rho}\right)^{\beta_{2,\phi_j}+2d_2}\le  \frac{c_8}{V(x,\rho)}\left(\frac{\rho}{\phi_j^{-1}(t)}\right)^{2d_2}\left(\frac{\phi_j^{-1}(t)}{\rho}\right)^{\beta_{2,\phi_j}+2d_2}\\
&\le \frac{c_9 t}{V(x,\phi^{-1}_j(t))\phi_j(d(x,y))}, \end{align*}
where the second inequality follows from $\VD$ and the fact that
$\rho\ge \phi^{-1}(t)\ge \phi_j^{-1}(t)$,  while in the last inequality
we used \eqref{polycon}. Hence, for any $x,y\in M_0$ and $t>0$ with
$4k\phi^{-1}(t)\le d(x,y),$
\begin{align*}
 &q^{(\rho)}(t,x,y) \\
 &\le  \frac{c_{10}}{V(x,\phi^{-1}(t))}\left(1+\frac{d(x,y)}{\phi^{-1}(t)}\right)^{d_2}
   \left(\left(\frac{\phi_j^{-1}(t)}{\rho}\right)^{\eta k}+\left(\frac{t}{\phi_j(\rho)}\right)^k+\exp\left(-a_1k f(\rho/4,t)\right)\right)\\
& \le\frac{c_{11}t}{V(x,d(x,y))\phi_j(d(x,y))}
+\frac{c_{11}}{V(x,\phi^{-1}(t))}\left(1+\frac{d(x,y)}{\phi^{-1}(t)}\right)^{d_2}\exp\left(-a_1k
f(d(x,y)/(16 k),t)\right).
\end{align*}
The desired assertion now follows from
\eqref{e:trun-org}, $\J_{\phi_j,\le}$ and Lemma \ref{intelem}.
    \qed\end{proof}

\begin{proposition}\label{thm:ujeuhkds}
Under $\VD$ and \eqref{polycon}, if $\UHKD(\phi)$, $\J_{\phi_j,\le}$ and $\E_{\phi}$ hold, then we have  $\UHK(\phi_c, \phi_j)$.
\end{proposition}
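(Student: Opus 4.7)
The plan is to apply Lemma \ref{L:selftail}, with $f(r,t) = r/\bar\phi_c^{-1}(t/r)$ for $t\in(0,1]$ and $f\equiv 0$ for $t\in[1,\infty)$. First, $\J_{\phi_j,\le}$ implies $\J_{\phi,\le}$ and, together with $\E_\phi$, Proposition \ref{P:3.2}(ii) gives that $(\sE,\sF)$ is conservative. For $t\in(0,1]$, using $\bar\phi_c(\phi_c^{-1}(t))\asymp t/\phi_c^{-1}(t)$ from \eqref{e:1.12} and the scaling \eqref{e:effdiff}, a direct computation gives $f(b\phi^{-1}(t),t)\asymp b^{\beta/(\beta-1)}$ for some $\beta\in[\beta_{1,\phi_c},\beta_{2,\phi_c}]$ (since $\phi^{-1}(t)=\phi_c^{-1}(t)$ there), which is bounded uniformly in $t\in(0,1]$, verifying hypothesis (i) of Lemma \ref{L:selftail} on $(0,1]$. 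For $t\in[1,\infty)$, hypothesis (i) holds trivially with $f\equiv 0$, and by Remark \ref{R:1.22}(iii) only the polynomial jump bound is needed there since $p^{(c)}$ is dominated by $p^{(j)}$.

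The main task is to verify hypothesis (ii): a tail estimate
\begin{equation*}
\int_{B(x,r)^c} p(t,x,y)\,\mu(dy) \le c_1 \Bigl(\frac{\phi_j^{-1}(t)}{r}\Bigr)^{\beta_{1,\phi_j}} + c_1\exp\!\bigl(-a_1 f(r,t)\bigr).
\end{equation*}
The polynomial term comes from the probabilistic Meyer decomposition with truncation at $\rho=r$: by $\J_{\phi_j,\le}$ and Lemma \ref{intelem}, $\bP^x(X_t\in B(x,r)^c) \le \bP^x(X^{(r)}_t\in B(x,r)^c) + ct/\phi_j(r)$, and $ct/\phi_j(r) \le c(\phi_j^{-1}(t)/r)^{\beta_{1,\phi_j}}$ by \eqref{polycon}. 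For the exponential term, our hypotheses imply (via Propositions \ref{P:3.1}, \ref{P:2.4} and \ref{P:csp}) that $\FK(\phi)$, $\Gcap(\phi)$ and $\CS(\phi)$ all hold, so the truncated cut-off Sobolev inequality $\CS^{(\rho)}(\phi)$ holds by Lemma \ref{L:cs-trun} and the $L^2$-mean value inequalities of Proposition \ref{P:mvi2} apply to $q^{(\rho)}$. Combining a Davies-type exponential perturbation on the truncated form with $\CS^{(\rho)}(\phi)$ and an appropriate Lipschitz weight yields the sub-Gaussian estimate $\bP^x(X^{(r)}_t\in B(x,r)^c) \le c\exp(-a_1\,r/\bar\phi_c^{-1}(t/r))$, which is sharper than the bound provided by Lemma \ref{P:truncated} alone.

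With hypothesis (ii) in hand, Lemma \ref{L:selftail} applied separately on $(0,1]$ and $[1,\infty)$ yields
\begin{equation*}
p(t,x,y) \le \frac{c_0 t}{V(x,d(x,y))\phi_j(d(x,y))} + \frac{c_0}{V(x,\phi^{-1}(t))}\Bigl(1+\frac{d(x,y)}{\phi^{-1}(t)}\Bigr)^{d_2} \exp\!\Bigl(-c\,\frac{d(x,y)}{\bar\phi_c^{-1}(t/d(x,y))}\Bigr)
\end{equation*}
for $t\le 1$, and the same bound with the exponential replaced by $1$ for $t\ge 1$. The polynomial prefactor $(1+d(x,y)/\phi^{-1}(t))^{d_2}$ is absorbed into the Gaussian exponential at the cost of a smaller constant, because by \eqref{e:effdiff} the exponent $d(x,y)/\bar\phi_c^{-1}(t/d(x,y))$ grows like a power strictly greater than one of $d(x,y)/\phi_c^{-1}(t)$ when $d(x,y)\ge\phi_c^{-1}(t)$, dominating any polynomial factor. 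Combining with $\UHKD(\phi)$ to extract the minimum with $1/V(x,\phi^{-1}(t))$ and using Corollary \ref{C:diff} to rewrite the exponential as $p^{(c)}(c't,x,y)$ produces $\UHK(\phi_c,\phi_j)$.

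The main obstacle is deriving the sub-Gaussian tail in hypothesis (ii): the linear-in-$d/\rho$ exponent provided by Lemma \ref{P:truncated}, once integrated over $B(x,r)^c$ against the polynomial volume factor $(1+d(x,y)/\phi^{-1}(t))^{d_2}$, does not yield the required sub-Gaussian tail $\exp(-a_1 r/\bar\phi_c^{-1}(t/r))$. Promoting this linear exponent to the sharp sub-Gaussian one relies on an additional Davies-type argument combined with $\CS^{(\rho)}(\phi)$, for which it is crucial that our three hypotheses already imply $\CS(\phi)$ via the chain of Propositions \ref{P:3.1}, \ref{P:2.4} and \ref{P:csp}. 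A secondary complication is the split of the time range at $t=1$, forced by the transition $\phi^{-1}=\phi_c^{-1}$ on $(0,1]$ versus $\phi^{-1}=\phi_j^{-1}$ on $[1,\infty)$ which makes the natural $f(r,t)=r/\bar\phi_c^{-1}(t/r)$ only admissible for the former interval.
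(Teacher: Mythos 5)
Your overall strategy (reduce to Lemma \ref{L:selftail} via a tail estimate of the form \eqref{e:diffjump1}) matches the paper's, and the reduction of hypothesis (i) for $t\in(0,1]$ is fine, but the verification of hypothesis (ii) contains a genuine error, and the treatment of $t\ge 1$ does not close.

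The central gap is the claimed estimate $\bP^x(X^{(r)}_t\in B(x,r)^c) \le c\exp\bigl(-a_1\,r/\bar\phi_c^{-1}(t/r)\bigr)$ for the $r$-truncated process: this is false in general. Even with jumps capped at $r$, the truncated process can exit $B(x,r)$ via a few jumps of size comparable to $r$, giving a contribution of order a power of $t/\phi_j(r)$, which decays only polynomially in $t$ for fixed $r$, whereas the proposed right-hand side decays super-polynomially as $t\to 0$. No Davies-type argument combined with $\CS^{(\rho)}(\phi)$ can repair this, since the heavy tail comes from the jump part of the truncated form, not the diffusive part. Relatedly, your assertion that Lemma \ref{P:truncated} cannot produce the required sub-Gaussian tail is backwards: the paper obtains the exponential part of \eqref{e:diffjump1} precisely from Lemma \ref{P:truncated}, by taking the truncation level $\rho_n=c_*2^{n\alpha}\bar\phi_c^{-1}(t/r)$ to depend on the annulus index $n$, integrating \eqref{e:trun-org} annulus by annulus, and tuning $c_*$ so the negative part of the Davies exponent dominates. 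Even this only covers the regime $C_0\phi_c^{-1}(t)\le r\le r_*(t)$, where $r_*(t)$ is the root of a transcendental equation whose existence is the content of Lemma \ref{L:keyexist}. For $r\ge r^*(t)\asymp\phi_c^{-1}(t)\log^{N}\bigl(\phi_c^{-1}(t)/\phi_j^{-1}(t)\bigr)$ the exponential contribution is absorbed into the polynomial and one instead uses Lemma \ref{L:lemmastep1}, and the intermediate range $r_*(t)\le r\le r^*(t)$ requires a separate bridging estimate. None of this structure, which is the technical heart of the paper's proof, appears in your proposal.

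Finally, for $t\ge 1$, applying Lemma \ref{L:selftail} with $f\equiv 0$ is not enough: the conclusion then reads $p(t,x,y)\le c_0t/(V(x,d(x,y))\phi_j(d(x,y))) + c_0V(x,\phi^{-1}(t))^{-1}(1+d(x,y)/\phi^{-1}(t))^{d_2}$, whose second term grows polynomially in $d(x,y)$ and yields nothing beyond $\UHKD(\phi)$ once you intersect with the on-diagonal bound; you do not recover the required off-diagonal decay $t/(V\phi_j)$. The paper instead applies Lemma \ref{L:lemmastep1} (i.e.\ Lemma \ref{L:selftail} with the stretched-exponential $f(r,t)=(r/\phi^{-1}(t))^{1/N}$), and then shows by the computation leading to \eqref{e:4.4} that the stretched-exponential term is itself dominated by $t/(V(x,d(x,y))\phi_j(d(x,y)))$ once $d(x,y)\ge\phi_j^{-1}(t)$. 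This absorption step is needed and is missing from your argument.
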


To prove Proposition \ref{thm:ujeuhkds},
we will use Lemma \ref{L:selftail} and need the following lemma.

\begin{lemma}\label{L:lemmastep1}
Assume that $\VD$, \eqref{polycon},
$\UHKD(\phi)$, $\J_{\phi_j,\le}$ and $\E_\phi$ hold.
Then there exist constants $a, c>0$ and $N\in \bN$ such that for all $x,y\in M_0$ and $t>0$,
$$p(t,x,y)\le \frac{ct}{V(x,d(x,y))\phi_j(d(x,y))}+ \frac{c}{V(x,\phi^{-1}(t))}\exp\left(-\frac{a d(x,y)^{1/N}}{\phi^{-1}(t)^{1/N}}\right).$$
\end{lemma}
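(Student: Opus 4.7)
The plan is to apply Lemma \ref{L:selftail} with the seed function $f(r,t) := (r/\phi^{-1}(t))^{1/N_0}$ for an integer $N_0 \ge 1$ to be chosen. This $f$ is non-decreasing in $r$, non-increasing in $t$, and satisfies $f(b\phi^{-1}(t),t) = b^{1/N_0}$, so assumption (i) of Lemma \ref{L:selftail} holds immediately. The whole argument therefore reduces to verifying assumption (ii): there exist $\eta \in (0,\beta_{1,\phi_j}]$ and constants $a_1,c_1>0$ so that
\begin{equation*}
\int_{B(x,r)^c} p(t,x,y)\,\mu(dy) \le c_1\bigl(\phi_j^{-1}(t)/r\bigr)^\eta + c_1\exp\!\bigl(-a_1(r/\phi^{-1}(t))^{1/N_0}\bigr).
\end{equation*}
By Proposition \ref{P:3.2}(ii), the standing assumptions $\E_\phi$ and $\J_{\phi_j,\le}$ already give conservativeness of $(\sE,\sF)$, which is needed below.

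To establish this tail bound, I would use Meyer's decomposition at scale $\rho$: conservativeness together with the bound on the large-jump intensity (Lemma \ref{intelem}, via $\J_{\phi_j,\le}$) gives
\begin{equation*}
\int_{B(x,r)^c} p(t,x,y)\,\mu(dy) \le \int_{B(x,r)^c} q^{(\rho)}(t,x,y)\,\mu(dy) + \frac{ct}{\phi_j(\rho)},
\end{equation*}
where $q^{(\rho)}$ is the heat kernel of the $\rho$-truncated form. A choice $\rho \asymp \phi_j^{-1}(t)\bigl(r/\phi_j^{-1}(t)\bigr)^{1-1/N_0}$ ensures via \eqref{polycon} for $\phi_j$ that the second term is bounded by $c'(\phi_j^{-1}(t)/r)^{\eta}$ with $\eta=(1-1/N_0)\beta_{1,\phi_j}\in(0,\beta_{1,\phi_j}]$. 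Combining Lemma \ref{P:truncated}'s pointwise bound
\begin{equation*}
q^{(\rho)}(t,x,y) \le \frac{c}{V(x,\phi^{-1}(t))}\Big(1+\frac{d(x,y)}{\phi^{-1}(t)}\Big)^{d_2}\exp\!\Big(\frac{c_2 t}{\phi(\rho)}-\frac{c_3 d(x,y)}{\rho}\Big)
\end{equation*}
with $\VD$ and the annular decomposition $B(x,r)^c=\bigcup_{k\ge 0}(B(x,2^{k+1}r)\setminus B(x,2^k r))$ telescopes to a bound of the form $c\exp\!\bigl(-a(r/\phi^{-1}(t))^{1/N_0}\bigr)$, provided $N_0$ is taken large enough relative to $\beta_{2,\phi}$ and $d_2$ so that both the polynomial prefactor and the factor $\exp(c_2 t/\phi(\rho))$ are dominated by the desired fractional-exponential decay in $r/\rho \simeq (r/\phi_j^{-1}(t))^{1/N_0}$.

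Once this tail estimate is in hand, Lemma \ref{L:selftail} directly yields
\begin{equation*}
p(t,x,y) \le \frac{c_0 t}{V(x,d(x,y))\phi_j(d(x,y))} + \frac{c_0}{V(x,\phi^{-1}(t))}\Big(1+\frac{d(x,y)}{\phi^{-1}(t)}\Big)^{d_2}\exp\!\bigl(-a'(d(x,y)/\phi^{-1}(t))^{1/N_0}\bigr),
\end{equation*}
and the polynomial prefactor is absorbed into the exponential by the elementary inequality $(1+s)^{d_2}\le C\exp\!\bigl((a'/2)s^{1/N_0}\bigr)$ valid for all $s\ge 0$, producing the statement with $N=N_0$. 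The main obstacle is the delicate optimization of $\rho$ in the Meyer step: it must simultaneously render $t/\phi_j(\rho)$ polynomially small in $\phi_j^{-1}(t)/r$ and render the exponent $c_2 t/\phi(\rho)-c_3 d(x,y)/\rho$ large and negative over the whole tail region. This is further complicated by the switch of scales at $t=1$ (where $\phi^{-1}$ changes from $\phi_c^{-1}$ to $\phi_j^{-1}$), so the regimes $t\le 1$ and $t\ge 1$ may need to be handled separately, using \eqref{e:1.11} and \eqref{polycon} for both $\phi_c$ and $\phi_j$ to control the potentially large factor $t/\phi(\rho)$ when $\rho\le 1$.
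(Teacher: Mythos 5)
Your overall plan coincides with the paper's: invoke Lemma \ref{L:selftail} with the seed $f(r,t)=(r/\phi^{-1}(t))^{1/N}$ and verify its hypothesis (ii) by Meyer's decomposition together with the bound on the truncated kernel from Lemma \ref{P:truncated} and an annular decomposition, then absorb the polynomial prefactor into the exponential. Your implementation differs from the paper's in one stylistic way: you use a \emph{single} truncation scale $\rho$ for the whole integral $\int_{B(x,r)^c}p(t,x,\cdot)\,d\mu$ together with the probabilistic form of Meyer's decomposition ($\bP^x(X_t\notin B(x,r))\le \bP^x(X^{(\rho)}_t\notin B(x,r))+\bP^x(\text{large jump before }t)$), whereas the paper uses a sequence $\rho_n=2^{n\alpha}r^{1-1/N}\phi^{-1}(t)^{1/N}$ adapted to the $n$-th annulus together with the pointwise inequality \eqref{e:trun-org}. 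Both can be made to work; note that with a single $\rho$ the pointwise inequality \eqref{e:trun-org} would \emph{not} integrate (the Meyer correction term is spatially constant, so $\int_{B(x,r)^c}$ of it is infinite), which is why the paper lets $\rho_n$ grow — your switch to the tail-probability version of Meyer is what makes the single-scale approach viable.

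There is, however, a genuine gap in the choice of scale. You take $\rho\asymp\phi_j^{-1}(t)^{1/N_0}r^{1-1/N_0}$, but the right choice is $\rho\asymp\phi^{-1}(t)^{1/N_0}r^{1-1/N_0}$ (which is what the paper uses). The difference matters for $t\le 1$, where $\phi^{-1}(t)=\phi_c^{-1}(t)\ge\phi_j^{-1}(t)$ with a ratio $L:=\phi_c^{-1}(t)/\phi_j^{-1}(t)$ that blows up as $t\to 0$. With your $\rho$, for $r$ in the range $\phi^{-1}(t)\le r\le L^{1/(N_0-1)}\phi^{-1}(t)$ one has $\rho<\phi^{-1}(t)$, so $t/\phi(\rho)$ — and hence the factor $\exp(c_2t/\phi(\rho))$ appearing in Lemma \ref{P:truncated} — is of order $L^{\beta_{2,\phi}/N_0}$ and unbounded. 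Taking $N_0$ large does not repair this: it only shrinks the exponent, while the base $L$ is unbounded; and the range of bad $r$ cannot be dismissed trivially, since there the claimed power bound $(\phi_j^{-1}(t)/r)^\eta\lesssim L^{-\eta}$ is small while $\int_{B(x,r)^c}p\,d\mu$ may be of order $1$. You flag this obstacle (``control the potentially large factor $t/\phi(\rho)$'') but offer no concrete resolution beyond suggesting a case split at $t=1$. The fix is precisely the replacement $\phi_j^{-1}(t)^{1/N_0}\leadsto\phi^{-1}(t)^{1/N_0}$: then $r\ge\phi^{-1}(t)$ forces $\rho\ge\phi^{-1}(t)$, so $t/\phi(\rho)\le 1$; and since $\phi^{-1}(t)\ge\phi_j^{-1}(t)$ one still has $\phi_j^{-1}(t)/\rho\le(\phi_j^{-1}(t)/r)^{1-1/N_0}$ and thus $t/\phi_j(\rho)\lesssim(\phi_j^{-1}(t)/r)^{(1-1/N_0)\beta_{1,\phi_j}}$, which is exactly the polynomial term hypothesis (ii) asks for. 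With this correction your argument goes through.
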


\begin{proof}
We   claim
that there exist $a_1,c_1>0$ such that for all $x\in M_0$ and $t,r>0$
 \begin{equation}\label{e:set1}\int_{B(x,r)^c}p(t,x,y)\,\mu(dy)\le c_1\left(\frac{\phi_j^{-1}(t)}{r}\right)^\eta+c_1\exp\left(-\frac{a_1 r^{1/N}}{\phi^{-1}(t)^{1/N}}\right),\end{equation} where $\eta=\beta_{1,\phi_j}-((d_2+\beta_{1,\phi_j})/N)\in (0,\beta_{1,\phi_j}]$ (by taking $N$ large enough).
 If \eqref{e:set1} holds, then the assertion follows from Lemma \ref{L:selftail} by taking $f(r,t)=(r/\phi^{-1}(t))^{1/N}.$

 When $r\le \phi^{-1}(t)$, \eqref{e:set1} holds trivially with $c_1=e^{a_1}$.
 So it suffices to consider
 the case that $r\ge \phi^{-1}(t)$. According to \eqref{e:trun-org} and Lemma \ref{P:truncated}, for any $t,\rho>0$ and $x,y\in M_0$,
  $$
 p(t,x,y)\le  \frac{c_2}{V(x,\phi^{-1}(t))}\left(1+\frac{d(x,y)}{\phi^{-1}(t)} \right)^{d_2}\exp\left(\frac{c_2t}{\phi(\rho)}-\frac{c_3d(x,y)}{\rho}\right)
   +\frac{c_2t}{V(x,\rho)\phi_j(\rho)}\exp\left(\frac{c_2t}{\phi(\rho)}\right).
$$
Take $\alpha\in (d_2/(d_2+\beta_{1,\phi_j}),1)$, and define
$$
\rho_n:=\rho_n(r,t)=2^{n\alpha} r^{1-1/N} \phi^{-1}(t)^{1/N},\quad n\in \bN\cup\{0\}.
$$
Since $r\ge \phi^{-1}(t)$, $\phi^{-1}(t)\le \rho_n\le 2^{n\alpha}r$ for all $n\in \bN\cup\{0\}$. In particular, $t\le \phi(\rho_n)$ for all $n\in \bN\cup\{0\}$.
Thus, for any $x\in M_0$ and $t,r>0$,
 \begin{align*}\int_{B(x,r)^c}p(t,x,y)\,\mu(dy)&=\sum_{n=0}^\infty\int_{B(x,2^{n+1}r)\backslash B(x,2^nr)} p(t,x,y)\,\mu(dy)\\
 &\le \frac{c_2e^{c_2}}{V(x,\phi^{-1}(t))}\sum_{n=0}^\infty \left(1+\frac{2^{n+1}r}{\phi^{-1}(t)}\right)^{d_2}\exp\left(-\frac{c_32^nr}{\rho_n}\right)V(x,2^{n+1}r)\\
 &\quad+ c_2e^{c_2}t\sum_{n=0}^\infty \frac{V(x,2^{n+1}r)}{V(x,\rho_n)\phi_j(\rho_n)}\\
 &=:I_1+I_2. \end{align*} On the one hand, by the definition of $\rho_n$ and $\VD$,
 \begin{align*}I_1&=\frac{c_2e^{c_2}}{V(x,\phi^{-1}(t))}\sum_{n=0}^\infty\left(1+\frac{2^{n+1}r}{\phi^{-1}(t)}\right)^{d_2}\exp\left(-c_32^{n(1-\alpha)}
 \left(\frac{r}{\phi^{-1}(t)}\right)^{1/N}\right)V(x,2^{n+1}r)\\ &\le  c_4\sum_{n=0}^\infty\left(1+\frac{2^{n+1}r}{\phi^{-1}(t)}\right)^{2d_2}\exp\left(-c_32^{n(1-\alpha)}\left(\frac{r}{\phi^{-1}(t)}\right)^{1/N}\right)\\
 &\le c_5\sum_{n=0}^\infty \exp\left(-\frac{c_3}{2}2^{n(1-\alpha)}\left(\frac{r}{\phi^{-1}(t)}\right)^{1/N}\right)\le c_6\exp\left(-c_7\left(\frac{r}{\phi^{-1}(t)}\right)^{1/N}\right),  \end{align*} where in the second inequality we used the fact that there is a constant $c_8>0$ such that for all $n\ge1$ and $s\ge1$,
 $$\left(1+ 2^{n+1}s\right)^{2d_2}\le c_8\exp\left(\frac{c_3}{2}2^{n(1-\alpha)}s^{1/N}\right).$$  On the other hand, according to  $\VD$ and \eqref{polycon},
 \begin{align*}I_2&\le c_9\sum_{n=0}^\infty\left(\frac{2^nr}{\rho_n}\right)^{d_2}\left(\frac{\phi_j^{-1}(t)}{\rho_n}\right)^{\beta_{1,\phi_j}}\\
 &\le c_{10}\sum_{n=0}^\infty 2^{n((1-\alpha)d_2-\alpha\beta_{1,\phi_j})}\left(\frac{\phi_j^{-1}(t)}{r}\right)^{\beta_{1,\phi_j}-((d_2+\beta_{1,\phi_j})/N)}\le c_{11} \left(\frac{\phi_j^{-1}(t)}{r}\right)^\eta,\end{align*} where the first and the second inequalities follow from the facts that $\phi(t)\le \phi_j(t)$ and so $\phi^{-1}(t)\ge \phi^{-1}_j(t)$, and  in the last inequality we used  $\eta=\beta_{1,\phi_j}-((d_2+\beta_{1,\phi_j})/N)$ and
 $\alpha\in (d_2/(d_2+\beta_{1,\phi_j}),1)$.

 Combining estimates for $I_1$ and $I_2$, we obtain \eqref{e:set1}. The proof is complete.
 \qed\end{proof}

Now, we   give the

\medskip

\noindent{\bf Proof of Proposition \ref{thm:ujeuhkds}.}\quad The proof is split into two cases.

(1) We first consider the case that $t\geq 1$.
  By $\UHKD(\phi)$, we only need to check the case that $x,y\in M_0$ and $t\ge1$ with $d(x,y)\ge \phi_j^{-1}(t)$.  First, according to Lemma \ref{L:lemmastep1}, there exist constants $a, c>0$ and $N\in \bN$ such that for all $x,y\in M_0$ and $t\ge1$,
$$p(t,x,y)\le \frac{ct}{V(x,d(x,y))\phi_j(d(x,y))}+ \frac{c}{V(x,\phi_j^{-1}(t))}\exp\left(-\frac{a d(x,y)^{1/N}}{\phi_j^{-1}(t)^{1/N}}\right).$$
Furthermore, we find by $\VD$ and \eqref{polycon} that
\begin{align} \frac{1}{V(x,\phi_j^{-1}(t))}\exp\left(-\frac{a d(x,y)^{1/N}}{\phi_j^{-1}(t)^{1/N}}\right)
&\le \frac{c_1}{V(x,d(x,y))}\left(\frac{d(x,y)}{\phi_j^{-1}(t)}\right)^{d_2}\exp\left(-\frac{a d(x,y)^{1/N}}{\phi_j^{-1}(t)^{1/N}}\right) \nonumber\\
&\le  \frac{c_2}{V(x,d(x,y))}\exp\left(-\frac{a}{2}\frac{ d(x,y)^{1/N}}{\phi_j^{-1}(t)^{1/N}}\right) \nonumber\\
&\le \frac{c_3}{V(x,d(x,y))}\left(\frac{\phi_j^{-1}(t)}{d(x,y)}\right)^{\beta_{2,\phi_j}} \nonumber\\
&\le \frac{c_3t}{V(x,d(x,y))\phi_j(d(x,y))},  \label{e:4.4}
\end{align}
where in the second and the third inequalities we used
the facts  that
$$
r^{d_2} \le c_4 e^{ar^{1/N}/2} \quad \hbox{and} \quad  e^{-ar^{1/N}/2}\le
c_5r^{-\beta_{2,\phi_j}} \quad \hbox{for } r\ge 1,
$$
 respectively. Hence  for
all $x,y\in M_0$ and $t\ge1$ with $d(x,y)\ge \phi_j^{-1}(t)$,
$$p(t,x,y)\le \frac{c_6t}{V(x,d(x,y))\phi_j(d(x,y))}.$$

We
claim that for any $t\ge1$ and $x,y\in M_0$ with $d(x,y)\ge \phi_j^{-1}(t)$,
$$\frac{1}{V(x,\phi_c^{-1}(t))}\exp\left(-c_7\frac{d(x,y)}{\bar \phi_c^{-1}(t/d(x,y))}\right)\le \frac{c_{11}t}{V(x,d(x,y))\phi_j(d(x,y))} .$$
Indeed, since $\phi_c(t)\ge \phi_j(t)$ for all $t\ge 1$, for any $t\ge1$ and $x,y\in M_0$ with $d(x,y)\ge \phi_j^{-1}(t)$,
\begin{align*}&\frac{1}{V(x,\phi_c^{-1}(t))}\exp\left(-c_7\frac{d(x,y)}{\bar \phi_c^{-1}(t/d(x,y))}\right)\\
&\le \frac{c_8}{V(x,d(x,y))}\left(1+\frac{d(x,y)}{\phi_c^{-1}(t)}\right)^{d_2}\exp\left(-c_7\frac{d(x,y)}{\bar \phi_c^{-1}(t/d(x,y))}\right)\\
&\le \frac{c_8}{V(x,d(x,y))}\left(1+\frac{\phi_c(d(x,y))}{t}\right)^{d_2/\beta_{1,\phi_c}}\exp\left(-c_9\left(\frac{\phi_c(d(x,y))}{t}\right)^{1/(\beta_{2,\phi_c}-1)}\right)\\
&\le \frac{c_{10}}{V(x,d(x,y))}\exp\left(-\frac{c_9}{2}\left(\frac{\phi_c(d(x,y))}{t}\right)^{1/(\beta_{2,\phi_c}-1)}\right)\\
&\le \frac{c_{10}}{V(x,d(x,y))}\exp\left(-\frac{c_9}{2}\left(\frac{\phi_j(d(x,y))}{t}\right)^{1/(\beta_{2,\phi_c}-1)}\right)\\
&\le \frac{c_{11}t}{V(x,d(x,y))\phi_j(d(x,y))},  \end{align*} where
in the first inequality we used $\VD$, the second inequality follows
from \eqref{polycon}, \eqref{e:1.12} and \eqref{e:effdiff}, and in the third and the
last inequalities we applied the following two inequalities
$$(1+r)^{d_2/\beta_{1,\phi_c}}\le c_{12} \exp\left(\frac{c_9}{2} r^{1/(\beta_{2,\phi_c}-1)}\right),\quad r\ge1$$
and $$\exp\left(-\frac{c_9}{2}r^{1/(\beta_{2,\phi_c}-1)}\right)\le
c_{13} r^{-1},\quad r\ge1,$$ respectively.
This establishes $\UHK(\phi_c, \phi_j)$ for the case that $t\ge 1$.

(2) We next consider the case of $t\leq 1$.
 It suffices to
 consider the case
 when
  $x,y\in M_0$ and $0<t\le 1$ with $d(x,y)\ge \phi_c^{-1}(t)$.
    By Lemma \ref{L:selftail}, it is enough to show
    that there exist constants $\eta\in (0,\beta_{1,\phi_j}]$ and $c_1,c_2>0$ such that for any $x\in M_0$, $0<t\le 1$ and $r>0$,
 \begin{equation}\label{e:diffjump1}\int_{B(x,r)^c}p(t,x,y)\,\mu(dy)\le c_1\left(\frac{\phi_j^{-1}(t)}{r}\right)^{\eta}+c_1\exp\left(-c_2\frac{r}{\bar \phi_c^{-1}(t/r)}\right).\end{equation} Indeed, by \eqref{e:diffjump1}
 and Lemma \ref{L:selftail} with $f(r,t)=r/\bar\phi_c^{-1}(t/r)$, we have that for any $x,y\in M_0$ and $0<t\le 1$,
 $$p(t,x,y)\le c_{3} \left(\frac{t}{V(x,d(x,y))\phi_j(d(x,y))}+\frac{1}{V(x,\phi_c^{-1}(t))}  \exp\left(- \frac{c_0^*d(x,y)}{\bar \phi_c^{-1}(t/d(x,y))}\right)\right).$$
The desired assertion follows.
In the following, we will prove \eqref{e:diffjump1}. For this, we will consider four different cases.

(i) If $r\le C_0\phi_c^{-1}(t)$ for some
constant $C_0>0$ whose exact valued will be determined
in the step (ii) below, then by the non-decreasing property of
$\bar \phi_c$, \eqref{e:1.12} and \eqref{e:effdiff},
 \begin{align*}\exp\left(c_2\frac{r}{\bar \phi_c^{-1}(t/r)}\right)\le &\exp\left(c_2\frac{C_0\phi_c^{-1}(t)}{\bar \phi_c^{-1}(t/(C_0\phi_c^{-1}(t)))}\right)\le e^{c_4}.\end{align*} Hence, \eqref{e:diffjump1} holds trivially by taking $c_1=e^{c_4}.$

(ii) Suppose that $C_0\phi_c^{-1}(t)\le r\le r_*(t)$, where $r_*(t)$
is to be determined later. For any $n\in \bN\cup\{0\}$, define
$$\rho_n=c_*2^{n\alpha}\bar\phi_c^{-1}\left(\frac{ t}{r}\right)$$ with $d_2/(d_2+\beta_{1,\phi_j})<\alpha<1$, where $c_*$ is also determined later.  Then by
 \eqref{e:trun-org} and Lemma  \ref{P:truncated}, for any $x,y\in M_0$ with
$2^nr\le d(x,y)\le 2^{n+1}r$ and $0<t\le 1$,
\begin{align*}p(t,x,y)\le & \frac{c_{1}}{V(x,\phi_c^{-1}(t))}\left(1+\frac{2^{n+1}r}{\phi_c^{-1}(t)}\right)^{d_2}\exp\left(\frac{c_{1} t}{\phi(\rho_n)}-\frac{c_{2} 2^nr}{\rho_n}\right)+\frac{c_{1} t}{V(x,\rho_n)\phi_j(\rho_n)}\exp\left(\frac{c_{1}t}{\phi(\rho_n)}\right)\\
\le &\frac{c_{3}}{V(x,\phi_c^{-1}(t))}\left(1+\frac{2^{n+1}r}{\phi_c^{-1}(t)}\right)^{d_2}\exp\left(\frac{c_{1} t}{\phi_c(\rho_n)}-\frac{c_{2} 2^nr}{\rho_n}\right)+\frac{c_{3} t}{V(x,\rho_n)\phi_j(\rho_n)}\exp\left(\frac{c_{1}t}{\phi_c(\rho_n)}\right),\end{align*}
where in the last inequality we used the fact that if $\rho_n\ge1$, then $\phi(\rho_n)=\phi_j(\rho_n)\ge \phi_j(1)=1\ge t$ for all $t\in (0,1]$. Hence, for any $x\in M_0$, $0<t\le
1$ and $C_0\phi_c^{-1}(t)\le r\le r_*(t)$,
\begin{align*}&\int_{B(x,r)^c} p(t,x,y)\,\mu(dy)\\
&=\sum_{n=0}^\infty \int_{B(x,2^{n+1}r)\backslash B(x,2^nr)}p(t,x,y)\,\mu(dy)\\
&\le\sum_{n=0}^\infty\frac{c_{4}
V(x,2^{n+1}r)}{V(x,\phi_c^{-1}(t))}\left(1+\frac{2^{n+1}r}{\phi_c^{-1}(t)}\right)^{d_2}\exp\left(\frac{c_5t}{c_*^{\beta_{1,\phi_c}}2^{n\alpha\beta_{1,\phi_c}} \phi_c(\bar \phi_c^{-1}(t/r))}-\frac{c_22^{n(1-\alpha)} r}{c_*\bar\phi_c^{-1}(t/r)}\right) \\
&\quad+\sum_{n=0}^\infty \frac{c_{4} t V(x,2^{n+1}r)}{V(x,\rho_n)\phi_j(\rho_n)}\exp\left(\frac{c_5t}{c_*^{\beta_{1,\phi_c}}2^{n\alpha\beta_{1,\phi_c}} \phi_c(\bar \phi_c^{-1}(t/r))}\right)\\
&=:I_1+I_2,  \end{align*} where the inequality above follows from \eqref{polycon}, and
$c_1,\cdots, c_5$
are independent of $c_*$.
On the one hand, by taking
$c_*=(1+(2c^*_2c_5/c_2))^{1/(\beta_{1,\phi_c}-1)}$ (with $c^*_2$ being the constant $c_2$ in \eqref{e:1.12}), we find by $\VD$ and \eqref{e:1.12} that
\begin{align*}I_1\le & c_{6}\sum_{n=0}^\infty \left(\frac{2^nr}{\phi_c^{-1}(t)}\right)^{2d_2} \exp\left(-c_{7}2^{n(1-\alpha)}\frac{r}{\bar\phi_c^{-1}(t/r)}\right)\le c_{8}\exp\left(-c_{9}\frac{r}{\bar\phi_c^{-1}(t/r)}\right). \end{align*} Here and in what follows, the constants will depend on $c_*$.  On the other hand, according to $\VD$ and \eqref{polycon} again,
\begin{align*}I_2&\le c_{10}\exp\left(\frac{c_{11}r}{\bar \phi_c^{-1}(t/r)}\right)\sum_{n=0}^\infty \left(\frac{2^n r}{\rho_n}\right)^{d_2} \frac{\phi_j(r)}{\phi_j(\rho_n)}\frac{t}{\phi_j(r)}\\
&\le c_{12}\exp\left(\frac{c_{11}r}{\bar \phi_c^{-1}(t/r)}\right)\left(\frac{r}{\bar\phi_c^{-1}(t/r)}\right)^{d_2+\beta_{2,\phi_j}}\frac{t}{\phi_j(r)}\sum_{n=0}^\infty
2^{n(d_2 -\alpha (d_2+\beta_{1,\phi_j}))}\\
&\le c_{13}\exp\left(\frac{c_{14}r}{\bar \phi_c^{-1}(t/r)}\right)\frac{t}{\phi_j(r)}, \end{align*} where in the last inequality we used the condition $d_2/(d_2+\beta_{1,\phi_j})<\alpha<1$ and the fact that $$r^{d_2+\beta_{2,\phi_j}}\le  c_{15} e^r,\quad r\ge c_{16}>0.$$

We note
that the argument up to here is
independent of the definition of $r_*(t)$ and the choice of the
constant $C_0$. Now, according to Lemma \ref{L:keyexist} below, we
can find constants $C_0, c_{17}>0$
and a unique $r_*(t)\in (C_0\phi_c^{-1}(t),\infty)$ such that
\begin{equation}\label{e:constant1-}\exp\left(\frac{2c_{14}r}{\beta_{1,\phi_j}\bar \phi_c^{-1}(t/r)}\right)
\le \frac{c_{17} r}{\phi_j^{-1}(t)} ,\quad C_0\phi_c^{-1}(t)\le r\le
r_*(t)\end{equation} and
$$\exp\left(\frac{2c_{14}r}{\beta_{1,\phi_j}\bar
\phi_c^{-1}(t/r)}\right)\ge \frac{c_{17} r}{\phi_j^{-1}(t)},\quad
r\ge r_*(t), $$ as well as
\begin{equation}\label{e:constant1--}\exp\left(\frac{2c_{14}r_*(t)}{\beta_{1,\phi_j}\bar
\phi_c^{-1}(t/r_*(t))}\right)= \frac{c_{17}
r_*(t)}{\phi_j^{-1}(t)}.\end{equation}
(Here,   without loss of generality we may and do  assume that
${2c_{14}} \geq {\beta_{1,\phi_j}}$.)
 Then due to
\eqref{polycon} again,
$$I_2\le c_{18}\left(\frac{r}{\phi_j^{-1}(t)}\right)^{\beta_{1,\phi_j}/2}\left(\frac{t}{\phi_j(r)}\right)\le c_{19}\left(\frac{\phi_j^{-1}(t)}{r}\right)^{\beta_{1,\phi_j}/2}.$$
Putting $I_1$ and $I_2$ together, we
obtain
\eqref{e:diffjump1}.

Next we estimate $r_*(t)$ from above and below since they are needed in steps (iii) and (iv).
We first consider the lower bound for $r_*(t)$.
By
\eqref{polycon}, \eqref{e:1.12} and \eqref{e:effdiff}, we have
\begin{equation}\label{e:lowerbounduse}\exp\left(\frac{2c_{14} r}{\beta_{1,\phi_j}\bar\phi_c^{-1}(t/r)}\right)
\le
\exp\left(c_{20}\left(\frac{r}{\phi_c^{-1}(t)}\right)^{\beta_{2,\phi_c}/(\beta_{1,\phi_c}-1)}\right).\end{equation}
Hence, \eqref{e:constant1-} holds if
$$\exp\left(c_{20}\left(\frac{r}{\phi_c^{-1}(t)}\right)^{\beta_{2,\phi_c}/(\beta_{1,\phi_c}-1)}\right)\le \frac{c_{17} r}{\phi_j^{-1}(t)},\quad
C_0\phi_c^{-1}(t)\le r\le r_*(t);$$
namely,
\begin{align*}&\log c_{17}+  \log \frac{r}{\phi_c^{-1}(t)}+  \log \frac{\phi_c^{-1}(t)}
{\phi_j^{-1}(t)}\ge
c_{20}\left(\frac{r}{\phi_c^{-1}(t)}\right)^{\beta_{2,\phi_c}/(\beta_{1,\phi_c}-1)}\end{align*}
holds for all $ C_0\phi_c^{-1}(t)\le r\le r_*(t)$.
Hence, we have
$$r_*(t)\ge C_1
\phi_c^{-1}(t)\log^{(\beta_{1,\phi_c}-1)/\beta_{2,\phi_c}}(\phi^{-1}_c(t)/\phi^{-1}_j(t))$$
for some constant $C_1>0$ which is independent of $t$.  For the upper
bound of $r_*(t)$, similar to the argument for
\eqref{e:lowerbounduse}, we have
$$\exp\left(\frac{2c_{14} r}{\beta_{1,\phi_j}\bar\phi_c^{-1}(t/r)}\right)
\ge
\exp\left(c_{21}\left(\frac{r}{\phi_c^{-1}(t)}\right)^{\beta_{1,\phi_c}/(\beta_{2,\phi_c}-1)}\right).$$
Hence, we have $$r_*(t)\le C_2
\phi_c^{-1}(t)\log^{(\beta_{2,\phi_c}-1)/\beta_{1,\phi_c}}(\phi^{-1}_c(t)/\phi^{-1}_j(t)),$$
where $C_2>0$ is also independent of $t$.

 (iii)
Suppose that $r\ge r^*(t):=C_3
\phi_c^{-1}(t)\log^{N}(\phi^{-1}_c(t)/\phi^{-1}_j(t))$, where $N\in
\bN$ is given in Lemma \ref{L:lemmastep1}, and $C_3>0$ is determined
later.
 According to Lemma \ref{L:lemmastep1}, there exist constants $a, c>0$ and $N\in \bN$ such that for all $x,y\in M_0$ and $0<t\le 1$ with $d(x,y)\ge r^*(t)$,
\begin{align*}p(t,x,y)\le &\frac{ct}{V(x,d(x,y))\phi_j(d(x,y))}+ \frac{c}{V(x,\phi_c^{-1}(t))}\exp\left(-\frac{a d(x,y)^{1/N}}{\phi_c^{-1}(t)^{1/N}}\right)\\
\le&\frac{ct}{V(x,d(x,y))\phi_j(d(x,y))}+ \frac{c_1}{V(x,d(x,y))}\left(\frac{d(x,y)}{\phi_c^{-1}(t)}\right)^{d_2}\exp\left(-\frac{a d(x,y)^{1/N}}{\phi_c^{-1}(t)^{1/N}}\right)\\
\le& \frac{ct}{V(x,d(x,y))\phi_j(d(x,y))}+ \frac{c_2}{V(x,d(x,y))}\exp\left(-\frac{a}{2}\frac{ d(x,y)^{1/N}}{\phi_c^{-1}(t)^{1/N}}\right),\end{align*} where in the second inequality we used $\VD$ and the last inequality follows from the fact that
$$r^{d_2}\le c_3 e^{ar^{1/N}/2},\quad r\ge c_4>0.$$
Without loss of generality, we may and do assume that $N\ge
(\beta_{2,\phi_c}-1)/\beta_{1,\phi_c}$. In particular, $r^*(t)\ge
r_*(t)$ by the upper bond for $r_*(t)$ mentioned at the end of step
(ii) and also by choosing $C_3>0$ large enough in the definition of
$r^*(t)$ if necessary.

 Next, suppose
that there is a constant $C_3>0$ in the definition of $r^*(t)$ such
that
\begin{equation}\label{e:constant1+}\exp\left(\frac{a}{2}\frac{r^{1/N}}{\phi_c^{-1}(t)^{1/N}}\right)\ge\frac{ c_5 \phi_j(r)}{t},\quad r\ge r^*(t)\end{equation}
holds for some $c_5>0$. Then,
for all $x,y\in M_0$ and $0<t\le 1$ with $d(x,y)\ge r^*(t)$,
we have
$$p(t,x,y)\le \frac{c_6t}{V(x,d(x,y))\phi_j(d(x,y))}.$$ Hence, by  \eqref{polycon} and Lemma \ref{intelem}, for all $x\in M_0$, $t\in (0,1]$ and $r\ge r^*(t)$,
$$
\int_{B(x,r)^c}p(t,x,y)\,\mu(dy)\le c_6 \int_{B(x,r)^c}\frac{t}{V(x,d(x,y))\phi_j(d(x,y))}\,\mu(dy) \le  \frac{c_7t}{\phi_j(r)}\le
c_{8}\left(\frac{\phi_j^{-1}(t)}{r}\right)^{\beta_{1,\phi_j}},
$$
proving \eqref{e:diffjump1}.

Finally, we verify that  \eqref{e:constant1+} indeed holds
by using  the idea of the argument for the lower bound of
$r_*(t)$ in the end of step (ii).
By \eqref{polycon},
$$ \frac{ c_5 \phi_j(r)}{t}\le c_9 \left(\frac{r}{\phi_j^{-1}(t)}\right)^{\beta_{2,\phi_j}}.
$$
Hence
\eqref{e:constant1+} is a consequence of the following inequality
$$\exp\left(\frac{a}{2}\frac{r^{1/N}}{\phi_c^{-1}(t)^{1/N}}\right)\ge  c_9 \left(\frac{r}{\phi_j^{-1}(t)}\right)^{\beta_{2,\phi_j}},\quad r\ge r^*(t);$$ that is,
$$ \frac{a}{2}\left(\frac{r}{\phi_c^{-1}(t)}\right)^{1/N}\ge \log c_9+ \beta_{2,\phi_j} \log \frac{r}{\phi_c^{-1}(t)}
+\beta_{2,\phi_j}\log \frac{\phi_c^{-1}(t)}{\phi_j^{-1}(t)},\quad
r\ge r^*(t).$$
The above inequality  clearly is true by  a suitable choice of $C_3>0$
so  that \eqref{e:constant1+}
holds true.

(iv) Let $r\in [r_*(t), r^*(t)]$. For any $x\in M_0$, $0<t\le 1$ and $r\in [r_*(t),r^*(t)]$, we find by the conclusion in step (ii) that
\begin{align*}\int_{B(x,r)^c}p(t,x,y)\,\mu(dy)&\le \int_{B(x,r_*(t))^c} p(t,x,y)\,\mu(dy)\\
&\le c_1\left(\frac{\phi_j^{-1}(t)}{r_*(t)}\right)^{\beta_{1,\phi_j}/2}+c_1\exp\left(-c_2\frac{r_*(t)}{\bar\phi_c^{-1}(t/r_*(t))}\right)\\
&=:I_1+I_2.\end{align*}
It follows from
$r_*(t)\ge C_1 \phi_c^{-1}(t)\log^{(\beta_{1,\phi_c}-1)/\beta_{2,\phi_c}}(\phi^{-1}_c(t)/\phi^{-1}_j(t))$ and $\phi_j^{-1}(t)\le \phi_c^{-1}(t)$ that
\begin{align*}I_1&\le c_3\left(\frac{\phi_j^{-1}(t)}{\phi_c^{-1}(t)\log^{(\beta_{1,\phi_c}-1)/\beta_{2,\phi_c}}(\phi^{-1}_c(t)/\phi^{-1}_j(t))}\right)^{\beta_{1,\phi_j}/2}\\
&\le c_3\left(\frac{\phi_j^{-1}(t)}{\phi_c^{-1}(t)\log^N(\phi^{-1}_c(t)/\phi^{-1}_j(t))}\right)^{((\beta_{1,\phi_c}-1)\beta_{1,\phi_j})/(2N\beta_{2,\phi_c})}\le c_4
\left(\frac{\phi_j^{-1}(t)}{r}\right)^{((\beta_{1,\phi_c}-1)\beta_{1,\phi_j})/(2N\beta_{2,\phi_c})}.
\end{align*} On the other hand, without loss of generality we may and do assume that $c_2\in (0,1)$ in the term $I_2$. By
\eqref{e:constant1--} and the fact that we can assume in
\eqref{e:constant1--} that
$ {2c_{14}} \geq {\beta_{1,\phi_j}}$,
there is a constant $\theta\in (0,1)$ such that
\begin{align*}I_2&\le c_5\left(\frac{\phi_j^{-1}(t)}{r_*(t)} \right)^{\theta}\le c_6\left(\frac{\phi_j^{-1}(t)}{\phi_c^{-1}(t)\log^{(\beta_{1,\phi_c}-1)/\beta_{2,\phi_c}}(\phi^{-1}_c(t)/\phi^{-1}_j(t))}\right)^{\theta}\\
&\le c_7\left(\frac{\phi_j^{-1}(t)}{\phi_c^{-1}(t)\log^N(\phi^{-1}_c(t)/\phi^{-1}_j(t))}\right)^{(\theta(\beta_{1,\phi_c}-1))/(N\beta_{2,\phi_c})}\le
c_8\left(\frac{\phi_j^{-1}(t)}{r}\right)^{(\theta(\beta_{1,\phi_c}-1))/(N\beta_{2,\phi_c})}.\end{align*}

Combining all the estimates above,
we get \eqref{e:diffjump1}
  with
$$\eta=\min\{((\beta_{1,\phi_c}-1)\beta_{1,\phi_j})/(2N\beta_{2,\phi_c}),
(\theta(\beta_{1,\phi_c}-1))/(N\beta_{2,\phi_c})\}.$$
This completes the proof.   \qed

The following lemma has been used in the proof above.

\begin{lemma}\label{L:keyexist}For any $C_*>0$, there exist constants $C_0,C^*>0$ such that
\begin{itemize}
\item[\rm (i)] for any $t\in (0,1]$ and $r\ge C_0\phi_c^{-1}(t)$, the function
$$r\mapsto F_{1,t}(r):=\frac{\exp\left(C_*r/\bar\phi_c^{-1}(t/r)\right)}{r/\bar\phi_c^{-1}(t/r)}$$ is strictly increasing.

\item[\rm (ii)] for any $t\in (0,1]$, there is a unique $r_*(t)\in (C_0\phi_c^{-1}(t),\infty)$ such
that $ F_{1,t}(r_*(t))=F_{2,t}(r_*(t)),$
$$ F_{1,t}(r)<F_{2,t}(r),\quad r\in (C_0\phi_c^{-1}(t), r_*(t))$$
and $$F_{1,t}(r)>F_{2,t}(r),\quad r\in ( r_*(t),\infty),$$ where
$$F_{2,t}(r)=\frac{C^*\bar\phi_c^{-1}(t/r)}{\phi_j^{-1}(t)}.$$
\end{itemize}
\end{lemma}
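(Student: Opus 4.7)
The plan is to reduce $F_{1,t}$ to an expression in one variable via the substitution $u = u(r,t) := r/\bar\phi_c^{-1}(t/r)$, so that $F_{1,t}(r) = h(u(r,t))$ with $h(u) := e^{C_* u}/u$. Since $\bar\phi_c$ is strictly increasing on $\bR_+$ by \eqref{e:effdiff}, so is $\bar\phi_c^{-1}$; hence as $r$ grows with $t$ fixed, the numerator of $u$ strictly increases while $\bar\phi_c^{-1}(t/r)$ strictly decreases, so $r \mapsto u(r,t)$ is strictly increasing. A quantitative version via the two-sided power scaling in \eqref{e:effdiff}, combined with \eqref{e:1.12} to identify $\bar\phi_c^{-1}(t/\phi_c^{-1}(t)) \asymp \phi_c^{-1}(t)$, yields positive constants $c_1,c_2$ and positive exponents $\gamma_1 \le \gamma_2$ such that
\[
c_1 (r/\phi_c^{-1}(t))^{\gamma_1} \le u(r,t) \le c_2 (r/\phi_c^{-1}(t))^{\gamma_2}
\quad\text{for } r \ge \phi_c^{-1}(t),\ t \in (0,1].
\]

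For (i), a direct computation gives $h'(u) = e^{C_* u}(C_* u - 1)/u^2$, so $h$ is strictly increasing on $(1/C_*,\infty)$. Choose $C_0 > 0$ large in terms of $C_*$ alone so that the lower bound above yields $u(C_0\phi_c^{-1}(t),t) \ge c_1 C_0^{\gamma_1} > 1/C_*$ uniformly in $t\in(0,1]$. Then on $[C_0\phi_c^{-1}(t),\infty)$, the function $F_{1,t}(r) = h(u(r,t))$ is the composition of two strictly increasing functions and is therefore strictly increasing, establishing (i).

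For (ii), note that $r \mapsto F_{2,t}(r) = C^* \bar\phi_c^{-1}(t/r)/\phi_j^{-1}(t)$ is strictly decreasing, so $F_{1,t} - F_{2,t}$ is strictly increasing on $[C_0\phi_c^{-1}(t),\infty)$ and any crossing is unique. At $r = C_0\phi_c^{-1}(t)$, the upper bound above gives $u(C_0\phi_c^{-1}(t),t) \le c_2 C_0^{\gamma_2}$, hence $F_{1,t}(C_0\phi_c^{-1}(t)) \le K$ for some $K = K(C_0,C_*)$ uniform in $t$. On the other hand, the matching lower scaling from \eqref{e:effdiff} combined with \eqref{e:1.12} gives $\bar\phi_c^{-1}(t/(C_0\phi_c^{-1}(t))) \ge c_3 \phi_c^{-1}(t) C_0^{-1/(\beta_{1,\phi_c}-1)}$, so
\[
F_{2,t}(C_0\phi_c^{-1}(t)) \ge c_3 C^* C_0^{-1/(\beta_{1,\phi_c}-1)} \, \phi_c^{-1}(t)/\phi_j^{-1}(t) \ge c_3 C^* C_0^{-1/(\beta_{1,\phi_c}-1)},
\]
where the last inequality uses \eqref{e:1.11}, which forces $\phi_j^{-1}(t) \le \phi_c^{-1}(t)$ for $t\in(0,1]$. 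Choosing $C^*$ large in terms of $C_0$ and $C_*$ makes the right-hand side exceed $K$, so $F_{1,t} < F_{2,t}$ at the left endpoint uniformly in $t$. As $r \to \infty$, $u(r,t) \to \infty$ (by the lower bound in the first display) forces $F_{1,t}(r) \to \infty$, while $\bar\phi_c^{-1}(t/r)\to 0$ forces $F_{2,t}(r) \to 0$. Strict monotonicity of $F_{1,t} - F_{2,t}$ together with the opposite signs at the two ends yields, by the intermediate value theorem, a unique $r_*(t) \in (C_0\phi_c^{-1}(t),\infty)$ with $F_{1,t}(r_*(t)) = F_{2,t}(r_*(t))$ and the stated strict inequalities on either side.

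The main obstacle is keeping every estimate uniform in $t \in (0,1]$ and fixing the constants in the correct order: $C_0$ is chosen first depending only on $C_*$ to push $u$ past the threshold $1/C_*$ where $h$ becomes monotone, and only afterwards is $C^*$ chosen depending on $C_0$ and $C_*$ to dominate the uniform bound $K$ at the left endpoint. The two-sided power behaviour of $\bar\phi_c^{-1}$ furnished by \eqref{e:effdiff} is precisely what turns these otherwise delicate asymptotic balances into $t$-independent inequalities.
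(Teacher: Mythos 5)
Your proof is correct and follows essentially the same approach as the paper's: reduce to the single-variable function $s\mapsto s^{-1}e^{C_*s}$, use the power-type scaling of $\bar\phi_c$ from \eqref{e:effdiff} (together with \eqref{e:1.12} and \eqref{polycon}) to bound $u(r,t)=r/\bar\phi_c^{-1}(t/r)$ uniformly in $t$, choose $C_0$ to push $u$ past the monotonicity threshold, and then choose $C^*$ afterward so that $F_{2,t}$ dominates $F_{1,t}$ at the left endpoint while the opposite holds as $r\to\infty$.
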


\begin{proof} (i) We know from \eqref{polycon} and \eqref{e:effdiff} that, if $r\ge C_0\phi_c^{-1}(t)$ with $C_0$ large enough, then
$$\frac{r}{\bar\phi_c^{-1}(t/r)}\ge c_1 \left(\frac{\phi_c(r)}{t}\right)^{1/(\beta_{2,\phi_c}-1)}\ge c_2\left(\frac{r}{\phi_c^{-1}(t)}\right)
^{\beta_{1,\phi_c}/(\beta_{2,\phi_c}-1)}\ge c_2
C_0^{\beta_{1,\phi_c}/(\beta_{2,\phi_c}-1)},$$ where the constants
$c_1$ and $c_2$ are independent of $C_0$. Note further that the
function $r\mapsto r/\bar\phi_c^{-1}(t/r)$ is strictly increasing
due to the strictly increasing property of the function
$\bar\phi_c(r)$ on $\bR_+$. Then the first required assertion
follows from the fact that
the function $s\mapsto s^{-1}{e^{C_*s}}$ is strictly increasing on $[c_2
C_0^{\beta_{1,\phi_c}/(\beta_{2,\phi_c}-1)},\infty)$, by choosing
$C_0$ (depending on $C_*$) large enough.

(ii) We fix $t\in (0,1]$. As seen from (i) and its proof that the function $F_{1,t}(r)$
is strictly increasing on $[C_0\phi_c^{-1}(t),\infty)$ with
$F_{1,t}(\infty)=\infty$. On the other hand, due to the strictly
increasing property of the function $\bar\phi_c(r)$ on $\bR_+$ and
\eqref{e:effdiff}, we know that the function $F_{2,t}(r)$ is
strictly decreasing on $[C_0\phi_c^{-1}(t),\infty)$ with
$F_{2,t}(C_0\phi_c^{-1}(t))=\frac{C^*\bar\phi_c^{-1}(t/(C_0\phi_c^{-1}(t)))}{\phi_j^{-1}(t)}$
and $F_{2,t}(\infty)=0.$

Furthermore, according to \eqref{polycon} and \eqref{e:effdiff}
again, we can obtain that
$F_{1,t}(C_0\phi_c^{-1}(t))\le c_3$ and
$$F_{2,t}(C_0\phi_c^{-1}(t))=\frac{C^*\bar\phi_c^{-1}(t/(C_0\phi_c^{-1}(t)))}{\phi_j^{-1}(t)}\ge C^*\bar\phi_c^{-1}(t/(C_0\phi_c^{-1}(t))) \ge
C^*c_4,$$ where $c_3, c_4$ are independent of $C^*$.

Combining with both conclusions above and taking $C^*=2c_3/c_4$, we
then prove the second desired assertion. \qed
\end{proof}

\begin{remark}\label{Heat:remark}\rm
Assume that $\VD$, \eqref{polycon},
$\UHKD(\phi)$, $\J_{\phi,\le}$ and $\E_\phi$ hold.
By considering the cases of $d(x, y) \leq \phi^{-1}(t)$ and $d(x, y) \ge \phi^{-1}(t)$ separately and using similar argument as those for Lemma \ref{L:lemmastep1}
for the second case, we have
$$
\frac{1}{V(x,\phi^{-1}(t))}\exp\left(-\frac{a d(x,y)^{1/N}}{\phi^{-1}(t)^{1/N}}\right)
\leq   c_1\left(\frac{1}{V(x,\phi^{-1}(t))} \wedge \frac{t}{V(x,d(x,y))\phi(d(x,y))}\right).
$$
Thus
it follows from Lemma \ref{L:lemmastep1}
 and $\UHKD(\phi)$
 that
\begin{equation}\label{heat:remark1}
p(t,x,y)\le c_2\left(\frac{1}{V(x,\phi^{-1}(t))} \wedge \frac{t}{V(x,d(x,y))\phi(d(x,y))}\right).
\end{equation}
Clearly, compared with $\UHK(\phi_c, \phi_j)$, \eqref{heat:remark1} is far from the optimality.
However, inequality \eqref{heat:remark1} is  useful in the derivation of  characterizations of parabolic Harnack inequalities in the next section.
For our later use, we say  $\UHK_{weak}(\phi)$  holds if the heat kernel satisfies the upper bound estimates \eqref{heat:remark1}.
\end{remark}

\medskip

\noindent{\bf Proof of Theorem \ref{T:main-1}.} The ${\rm (ii)}\Longrightarrow{\rm (i)}$ part follows from
Theorem \ref{thm:ujeuhkds} and
Proposition \ref{P:3.2}(ii).
By Propositions \ref{P:3.1},
\ref{l:jk} and \ref{T:gdCS}, we get
${\rm (i)}\Longrightarrow {\rm (iii)}$.  Clearly $ {\rm (iii)}\Longrightarrow  {\rm (iv)}$ by Proposition \ref{P:csp}, while $ {\rm (iv)}\Longrightarrow{\rm (ii)}$ follows from Propositions \ref{P:exit} and \ref{ehi-ukdd}.
This proves the theorem.
\qed

\section{Characterizations of two-sided heat kernel estimates}\label{S:5}

In this section, we will establish
stable
characterizations of two-sided heat kernel estimates. Since we have obtained characterizations for $\UHK(\phi_c, \phi_j)$ in Theorem \ref{T:main-1},
we will mainly be concerned with
lower bound estimates for heat kernel in this section.
We first need some definitions.

\begin{definition}\label{PER}
\rm \begin{description}
\item{(i)} We say that the
\emph{parabolic H\"older regularity}
($\PHR(\phi)$)
holds for the Markov process $X$ if there exist  constants $c>0$,
$\theta\in (0, 1]$ and $\eps \in (0, 1)$ such that for every $x_0 \in M $, $t_0\ge0$, $r>0$ and for
 every bounded measurable function $u=u(t,x)$ that is  caloric in
$Q(t_0,x_0,\phi(r), r)$,    there is a properly exceptional set ${\cal N}_u\supset {\cal N}$ so that
\be\label{e:phr}
|u(s,x)-u(t, y)|\le c\left( \frac{\phi^{-1}(|s-t|)+d(x, y)}{r} \right)^\theta  \esssup_{ [t_0, t_0+\phi (r)] \times M}|u|
\ee
 for  every
 $s, t\in (t_0+\phi(r)-\phi(\eps r), t_0+\phi (r))$ and $x, y \in B(x_0, \eps r)\setminus {\cal N}_u$.

\item{(ii)} We say that the \emph{elliptic H\"older regularity}
($\EHR$)
holds for the process $X$, if there exist constants $c>0$,
$\theta\in (0, 1]$ and $\eps \in (0, 1)$ such that  for every $x_0 \in M $, $r>0$ and for
 every bounded measurable function $u$ on $M$ that is harmonic in $B(x_0, r)$, there is a properly exceptional set ${\cal N}_u\supset {\cal N}$
so that
\be\label{e:ehr}
|u(x)-u(y)| \leq c   \left( \frac{d(x, y)}{r}\right)^\theta\esssup_{M}  |u| \ee
for any $x,$ $y \in B(x_0, \eps r)\setminus {\cal N}_u.$

\end{description}
\end{definition}

Clearly $\PHR(\phi) \Longrightarrow \EHR$. Note that in the definition of $\PHR(\phi)$ (resp. $\EHR$) if the inequality \eqref{e:phr} (resp. \eqref{e:ehr}) holds for some $\varepsilon\in(0,1)$, then it holds for all $\eps \in(0,1)$ (with possibly different constant $c$). We take $\EHR$ for example. For every $x_0 \in M $ and $r>0$, let $u$ be a bounded function on $M$ such that it is harmonic in $B(x_0, r)$. Then for any $\eps' \in (0, 1)$
    and $x\in B(x_0,\eps'r)\setminus {\cal N}_u$, $u$ is harmonic on $B(x,(1-\eps')r)$. Applying \eqref{e:ehr} for $u$ on
    $B(x,(1-\eps')r))$, we find that for any $y\in B(x_0,\eps'r)\setminus {\cal N}_u$ with $d(x,y)\le (1-\eps')\eps r$, $$|u(x)-u(y)|\le c \left( \frac{d(x, y)}{r}\right)^\theta\esssup_{z\in M}  |u(z)|.$$ This implies that for any $x,y\in B(x_0,\eps'r)\setminus {\cal N}_u$, \eqref{e:ehr} holds with $c'=c\vee \frac{2}{[(1-\eps')\eps]^\theta}.$

\subsection{$\PI(\phi)+ \J_{\phi_j} +
\CS(\phi)\Longrightarrow \HK_- (\phi_c, \phi_j)$}\label{subsection:lower}

\begin{proposition}\label{L:log-l}
 Let $B_r=B(x_0,r)$ for some $x_0\in M$ and $r>0$.
Assume that $u\in \sF^{B_R}_{loc}$ is a bounded and superharmonic function in a ball
$B_R$ such that $u\ge 0$ on $B_R$. If $\VD$, \eqref{polycon}, $\CS(\phi)$ and $\J_{\phi,
\le}$ hold, then for any $l>0$ and $0<2r\le R$,
\begin{align*}&\int_{B_r}\,d \Gamma_c(\log (u+l),\log (u+l))+\int_{B_r\times B_r}\left[\log  \Big(\frac{u(x)+l}{u(y)+l} \Big)\right]^2\, J(dx,dy)\\
&\le \frac{c_1V(x_0,r)}{\phi(r)}\bigg(1+\frac{\phi(r)}{\phi(R)}\frac{\T_\phi\,(u_-; x_0,R)}{l}\bigg),\end{align*}
 where $\T_\phi\,(u_-;x_0,R)$ is the nonlocal tail of $u_-$ with respect to $\phi(r)$ in $B(x_0,R)$ defined as \eqref{def-T}, and  $c_1$ is a constant independent of $u$, $x_0$, $r$, $R$ and $l$.
 \end{proposition}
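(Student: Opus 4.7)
The plan is to apply the Moser-type test $\varphi := \psi^2/(u+l) \in \sF^{B_R}$ to the superharmonicity relation $\sE(u, \varphi) \ge 0$, where $\psi \in \sF_b$ is a cut-off function for $B_r \subset B_{3r/2}$ (admissible since $2r \le R$ forces $3r/2 \le R$) supplied by Corollary \ref{C:cswk-1} applied with $f \equiv 1$; this gives $\int d\Gamma(\psi,\psi) \le c V(x_0, r)/\phi(r)$ via $\VD$, while $\mathrm{dist}(\mathrm{supp}\,\psi, B_R^c) \ge R/4$. Since $u$ is bounded and $u + l \ge l > 0$ on $B_{3r/2}$, $\varphi$ is a valid nonnegative bounded test function. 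Splitting $\sE(u, \varphi) = \sE^{(c)}(u, \varphi) + \sE^{(j)}(u, \varphi) \ge 0$, the local part is handled by the chain rule and the Cauchy--Schwarz inequality \eqref{e:sc-ine} to produce
\[
\sE^{(c)}(u, \varphi) \le -\tfrac12 \int \psi^2 \, d\Gamma_c(\log(u+l), \log(u+l)) + C \int d\Gamma_c(\psi, \psi).
\]

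For the jump part, decompose $\sE^{(j)}(u, \varphi) = I_{\rm in} + 2 I_{\rm out}$ with $I_{\rm in}$ integrating over $B_R \times B_R$ and $I_{\rm out}$ over $B_{3r/2} \times B_R^c$. Set $a := u(x)+l$, $b := u(y)+l$, $\tau := \psi(x)$, $\eta := \psi(y)$, $s := \log(a/b)$. The algebraic identity
\[
(a-b)\Big(\frac{\tau^2}{a} - \frac{\eta^2}{b}\Big) = -(\tau^2 + \eta^2)(\cosh s - 1) + (\tau^2 - \eta^2) \sinh s,
\]
combined with $\cosh s - 1 \ge s^2/2$, a careful absorption of the cross-term via Young's inequality (split by the size of $|s|$), and the lower bound $\min(a, b) \ge l$ on $B_R$, yields the pointwise estimate
\[
(a-b)\Big(\frac{\tau^2}{a} - \frac{\eta^2}{b}\Big) \le -c_0\, (\tau \wedge \eta)^2 \,[\log(a/b)]^2 + C_0 (\tau - \eta)^2,
\]
and therefore $I_{\rm in} \le -c \int_{B_r \times B_r} [\log((u(x)+l)/(u(y)+l))]^2 J(dx,dy) + C \iint(\psi(x)-\psi(y))^2 J(dx,dy)$. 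For $I_{\rm out}$, using $\psi \equiv 0$ on $B_R^c$, the bound $(u(x) - u(y))/(u(x) + l) \le 1 + u_-(y)/l$ for $x \in B_{3r/2} \subset B_R$ (from $u(x) \ge 0$), Lemma \ref{intelem}, and the comparability $V(x, d(x, y))\,\phi(d(x,y)) \asymp V(x_0, d(x_0, y))\,\phi(d(x_0, y))$ for $x \in B_{3r/2}$ and $y \in B_R^c$ (since $d(x, y) \ge d(x_0, y)/4$ when $2r \le R$), the definition of $\T_\phi$, and $\VD$, I obtain
\[
I_{\rm out} \le \frac{cV(x_0, r)}{\phi(R)}\Big(1 + \frac{\T_\phi(u_-; x_0, R)}{l}\Big).
\]

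Summing $\sE^{(c)}(u, \varphi) + \sE^{(j)}(u, \varphi) \ge 0$ and moving the (nonnegative) log-energy terms to the left-hand side, then using the $\CS(\phi)$-based bound on $\int d\Gamma(\psi, \psi)$ and absorbing $V(x_0, r)/\phi(R) \le V(x_0, r)/\phi(r)$, yields the stated inequality. The main obstacle is the pointwise jump inequality for the integrand of $I_{\rm in}$: neither a direct AM--GM estimate (which kills the log term) nor a single application of Young's inequality to $(\tau^2-\eta^2)\sinh s$ suffices, since $|\sinh s|/(\cosh s - 1) \to \infty$ as $s \to 0$ and no uniform absorption constant exists. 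The remedy is a case split by $|s|$: on $\{|s| \le 1\}$ one uses $|\sinh s| \asymp |s|$ with Young's inequality; on $\{|s| > 1\}$ one exploits the uniform bound $|\sinh s| \le 3(\cosh s - 1)$ and absorbs into the main negative term $-(\tau^2+\eta^2)(\cosh s - 1)$. The non-degeneracy $\min(a, b) \ge l$ on $B_R$ is essential in ensuring that the resulting constants depend on neither $l$ nor $\|u\|_\infty$, so that all $u$-dependence of the right-hand side is absorbed into the tail $\T_\phi(u_-; x_0, R)$.
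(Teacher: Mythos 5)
Your proposal is correct and follows essentially the same route as the paper's proof: test the superharmonicity inequality $\sE(u,\cdot)\ge 0$ against $\psi^2/(u+l)$ for a cut-off $\psi$ for $B_r\subset B_{3r/2}$ with $\sE(\psi,\psi)\le c\,V(x_0,r)/\phi(r)$ (which the paper extracts from $\mathrm{Cap}(B_r,B_{3r/2})$ via Proposition \ref{CSJ-equi}, while you get it from Corollary \ref{C:cswk-1} with $f\equiv 1$ — same thing up to the $\sF_{loc}$ extension in Remark \ref{rek:scj}(ii)), then split $\sE=\sE^{(c)}+\sE^{(j)}$, handle the local part by the chain rule and Cauchy--Schwarz exactly as the paper does, and bound the nonlocal part by an in/out decomposition. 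The only real difference is that the paper simply cites \cite[Proposition 4.11]{CKW2} for the jump-part estimate
\[
\sE^{(j)}\!\Big(u,\tfrac{\psi^2}{u+l}\Big)\le \sE^{(j)}(\psi,\psi)+\frac{c\,V(x_0,r)}{\phi(R)\,l}\,\T_\phi(u_-;x_0,R)-\int_{B_r\times B_r}\Big[\log\tfrac{u(x)+l}{u(y)+l}\Big]^2 J(dx,dy),
\]
whereas you unroll that argument: the hyperbolic identity $(a-b)(\tau^2/a-\eta^2/b)=-(\tau^2+\eta^2)(\cosh s-1)+(\tau^2-\eta^2)\sinh s$ with $s=\log(a/b)$, the case-split Young absorption (which I checked: writing $\tau=\eta+\delta$ and using $\cosh s-1\ge s^2/2$ and, separately on $\{|s|\le 1\}$ and $\{|s|>1\}$, $1-e^{-|s|}\le |s|\wedge 1$ gives $-c_0(\tau\wedge\eta)^2 s^2+C_0(\tau-\eta)^2$), and the tail bound for $I_{\rm out}$ using $d(x,y)\asymp d(x_0,y)\ge R/4$, $\VD$, \eqref{polycon}, and the definition of $\T_\phi$. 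All of these are correct, and the $I_{\rm out}$ estimate $\le cV(x_0,r)\phi(R)^{-1}(1+\T_\phi(u_-;x_0,R)/l)$ is indeed dominated by the target right-hand side since $\phi(R)\ge\phi(r)$. So this is a valid proof that reproduces the paper's argument, with the CKW2-cited step proved inline rather than by reference.
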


\begin{proof}
For pure jump  Dirichlet forms, a similar statement
is given in
\cite[Proposition 4.11]{CKW2}. In the present setting, we need to take into account on Dirichlet forms with both local and non-local terms.
 According to $\CS(\phi)$, $\J_{\phi, \le}$ and Proposition \ref{CSJ-equi},
we can choose $\varphi\in \sF^{B_{3r/2}}$ related
to $\mbox{Cap} (B_r,B_{3r/2})$ so that
\begin{equation}\label{cap-estimate-1}
\sE(\varphi,\varphi)\le 2  \mbox{Cap} (B_r,B_{3r/2}) \le
\frac{c_1V(x_0,r)}{\phi(r)}.
 \end{equation}

 Let $u$ be  a bounded superharmonic function in a ball $B_R$. As
$\frac{\varphi^2}{u+l}\in \sF^{B_{3r/2}}$ for any $l>0$,
\begin{equation}\label{cap-estimate-2}
\sE\left(u,\frac{\varphi^2}{u+l}\right) \geq 0.
\end{equation}
By the proof of \cite[Proposition 4.11]{CKW2},
\begin{align*}
\sE^{(j)}\left(u,\frac{\varphi^2}{u+l}\right)\le& \sE^{(j)}(\varphi,\varphi)+
\frac{c_2V(x_0,r)}{\phi(R)l}\T_\phi\, (u_-; x_0,R)-\int_{B_r\times B_r}\left[\log  \Big(\frac{u(x)+l}{u(y)+l} \Big)\right]^2\, J(dx,dy).\end{align*} That is,
\begin{align*}\int_{B_r\times B_r}\left[\log  \Big(\frac{u(x)+l}{u(y)+l} \Big)\right]^2\, J(dx,dy)\le &\sE^{(j)}(\varphi,\varphi)+
\frac{c_2V(x_0,r)}{\phi(R)l}\T\, (u_-; x_0,R)-\sE^{(j)}\left(u,\frac{\varphi^2}{u+l}\right).
\end{align*}
 On the other hand, by the Leibniz and chain rules and the Cauchy-Schwarz inequality \eqref{e:sc-ine},
 \begin{align*}
&\int \vp^2\,d \Gamma_c(\log (u+l),\log (u+l))\\
 &= -\int \vp^2\,d\Gamma_c\left(u+l, \frac{1}{u+l}\right)= -\int \vp^2\,d\Gamma_c\left(u, \frac{1}{u+l}\right)\\
 &=-\int d\Gamma_c\left(u,\frac{\varphi^2}{u+l}\right)+2\int \frac{\vp}{u+l}\,d\Gamma_c(u,\vp)\\
 &\le-\int d\Gamma_c\left(u,\frac{\varphi^2}{u+l}\right) +2\int \,d\Gamma_c(\vp,\vp)+\frac{1}{2}\int \frac{\vp^2}{(u+l)^2}\,d\Gamma_c(u,u)\\
 &=- \int d\Gamma_c\left(u,\frac{\varphi^2}{u+l}\right) +2\int \,d\Gamma_c(\vp,\vp)+\frac{1}{2}\int \vp^2\,d\Gamma_c(\log (u+1),\log (u+1)). \end{align*} Namely, \begin{align*}&\int \vp^2\,d \Gamma_c(\log (u+l),\log (u+l))\le -2 \int d\Gamma_c\left(u,\frac{\varphi^2}{u+l}\right) +4\int \,d\Gamma_c(\vp,\vp).
\end{align*}
Hence,
$$
\int_{B_r}\,d \Gamma_c(\log (u+l),\log (u+l))\le 4 \sE^{(c)}(\vp,\vp)-2\sE^{(c)}\left(u,\frac{\varphi^2}{u+l}\right).
$$
Putting both estimates together, we conclude that
 \begin{align*}&\int_{B_r}\,d \Gamma_c(\log (u+l),\log (u+l))+\int_{B_r\times B_r}\left[\log  \Big(\frac{u(x)+l}{u(y)+l} \Big)\right]^2\, J(dx,dy)\\
 &\le \int_{B_r}\,d \Gamma_c(\log (u+l),\log (u+l))+2\int_{B_r\times B_r}\left[\log  \Big(\frac{u(x)+l}{u(y)+l} \Big)\right]^2\, J(dx,dy)\\
 &\le 2\sE^{(j)}(\varphi,\varphi)+ 4 \sE^{(c)}(\vp,\vp)+
\frac{2c_2V(x_0,r)}{\phi(R)l}\T_\phi\, (u_-; x_0,R)-2\sE\left(u,\frac{\varphi^2}{u+l}\right)\\
  &\le 4\sE(\vp,\vp)+\frac{2c_2V(x_0,r)}{\phi(R)l}\T_\phi\, (u_-; x_0,R)-2\sE\left(u,\frac{\varphi^2}{u+l}\right)\\
  &\le \frac{c_5V(x_0,r)}{\phi(r)}\bigg(1+\frac{\phi(r)}{\phi(R)}\frac{\T_\phi\,(u_-; x_0,R)}{l}\bigg),\end{align*}
  where in the last inequality we used \eqref{cap-estimate-1} and \eqref{cap-estimate-2}.
  The proof is complete. \qed
\end{proof}

With  Proposition \ref{L:log-l}, we can follow the arguments for \cite[Corollary 4.12
and Propisition 4.13]{CKW2} to obtain the following.

\begin{proposition}\label{P:EHR}
Assume  $\VD$, $\RVD$ and \eqref{polycon}. Then
$$ \PI(\phi)+\J_{\phi,\le}+\CS(\phi)\Longrightarrow \EHR.$$
\end{proposition}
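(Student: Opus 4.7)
The proof follows the De Giorgi--Nash--Moser iteration scheme adapted to Dirichlet forms with both local and non-local parts, mirroring the arguments of \cite[Corollary 4.12 and Proposition 4.13]{CKW2}. Since $\PI(\phi)$ combined with $\VD$ and $\RVD$ implies $\FK(\phi)$ by Proposition \ref{P:3.1}, the $L^2$-mean value inequality of Proposition \ref{P:mvi2g} is available for $\sE$-subharmonic functions, while Proposition \ref{L:log-l} supplies the logarithmic energy estimate for $\sE$-superharmonic functions. The plan is to combine these with $\PI(\phi)$ to derive a weak Harnack inequality for non-negative $\sE$-superharmonic functions, and then upgrade it to oscillation decay for harmonic functions.

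First, given a non-negative $\sE$-superharmonic function $u$ on $B_R := B(x_0,R)$ and $l>0$, I would apply $\PI(\phi)$ to $v := \log(u+l)$ and combine it with Proposition \ref{L:log-l} to obtain the bounded-mean-oscillation estimate
\begin{equation*}
\frac{1}{V(x_0,r)}\int_{B_r} |v - \bar{v}_{B_r}|^2\, d\mu \le c_1\bigg(1 + \frac{\phi(r)}{\phi(R)}\frac{\T_\phi(u_-;x_0,R)}{l}\bigg).
\end{equation*}
A metric-space John--Nirenberg argument, or equivalently the abstract Bombieri--Giusti iteration lemma employed in \cite{CKW2}, converts this into a weak Harnack inequality: there exist $p_0>0$ and constants $c_2,c_3>0$ so that
\begin{equation*}
\bigg(\frac{1}{V(x_0,r)}\int_{B_r} u^{p_0}\, d\mu\bigg)^{1/p_0} \le c_2 \essinf_{B_{r/2}} u + c_3 \frac{\phi(r)}{\phi(R)}\T_\phi(u_-;x_0,R).
\end{equation*}
Combining this with the $L^{p_0}$-mean value estimate for $\sE$-subharmonic functions, which follows from Proposition \ref{P:mvi2g} via a standard Moser covering and interpolation, and applying both bounds to $u - \essinf_{B_r} u$ and $\esssup_{B_r} u - u$ for a harmonic $u$ on $B(x_0,R)$, one obtains an oscillation-reduction inequality
\begin{equation*}
\essosc_{B_{r/2}} u \le \theta\, \essosc_{B_r} u + c_4 (r/R)^{\eta} \esssup_M |u|
\end{equation*}
for some $\theta\in(0,1)$ and $\eta>0$. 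Iterating this bound over the geometric sequence $r_n = 2^{-n}R$ then yields \eqref{e:ehr}.

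The principal obstacle is the careful control of the nonlocal tail terms throughout the iteration. Unlike the local setting, superharmonicity on $B_R$ does not decouple the behavior of $u$ inside and outside $B_R$, so the tails $\T_\phi(u_\pm;x_0,R)$ must be tracked through each Moser step and ultimately absorbed into $\esssup_M |u|$; this is where \eqref{polycon} provides the requisite power decay in $r/R$. Because $(\sE,\sF)$ contains both a strongly local term and a pure-jump term, every intermediate estimate must simultaneously control contributions from $\Gamma_c$ and from $J$, which is precisely what the formulations of Propositions \ref{L:log-l} and \ref{P:mvi2g} are designed to accommodate. Consequently, no essentially new difficulty arises from the mixed nature of $(\sE,\sF)$ beyond the careful bookkeeping already present in the pure-jump case treated in \cite{CKW2}, and that reference can therefore be followed closely.
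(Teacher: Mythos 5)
Your proposal is correct and follows essentially the same route as the paper: the paper's proof of Proposition \ref{P:EHR} simply invokes Proposition \ref{L:log-l} and then defers to the argument of \cite[Corollary 4.12 and Proposition 4.13]{CKW2}, and what you have written out (log-energy estimate for superharmonic functions combined with $\PI(\phi)$ to get a BMO bound, an abstract John--Nirenberg/Bombieri--Giusti lemma to upgrade it to a weak Harnack inequality, and the resulting oscillation-decay iteration, with $\FK(\phi)$ supplied via Proposition \ref{P:3.1} so that the mean-value inequality of Proposition \ref{P:mvi2g} is available) is precisely that deferred argument spelled out. One minor remark: in the oscillation-decay step, applying the weak Harnack inequality to both $u-\essinf_{B_r}u$ and $\esssup_{B_r}u-u$ already yields $\essosc_{B_{r/2}}u\le\theta\essosc_{B_r}u + \text{tail}$ without a separate $L^{p_0}$-mean value estimate, but the mean value inequality is in any case needed inside the Bombieri--Giusti machinery, so invoking Proposition \ref{P:mvi2g} is appropriate.
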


\medskip

\begin{proposition}\label{ejlhk} If $\VD$, $\RVD$, \eqref{polycon}, $\PI(\phi)$,
$\J_{\phi,\le}$ and
$\CS(\phi)$ hold, then we have $\NDL(\phi)$.
\end{proposition}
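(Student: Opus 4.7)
I would follow the general bootstrapping scheme of \cite{CKW1,CKW2}, adapted to the present mixed local–nonlocal setting, and combine it with the machinery already assembled in Sections \ref{section2}--\ref{section4} of this paper.

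First, I would cascade the hypotheses through the earlier propositions to extract all auxiliary estimates that will be needed. Specifically, Proposition \ref{P:3.1} upgrades $\PI(\phi)$ to $\FK(\phi)$ using $\VD+\RVD$; Proposition \ref{P:exit} then yields $\E_\phi$ from $\FK(\phi)+\J_{\phi,\le}+\CS(\phi)$; Proposition \ref{ehi-ukdd} converts this into $\UHKD(\phi)$; Proposition \ref{P:3.2}(ii) gives conservativeness of $(\sE,\sF)$; Remark \ref{Heat:remark} provides the weak off-diagonal upper bound $\UHK_{weak}(\phi)$; and Proposition \ref{P:EHR} delivers $\EHR$. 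These are all I will need.

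Second, the core analytic step is the on-diagonal lower bound. Using $\E_\phi$ and a standard strong-Markov iteration (if $\alpha:=\sup_{y}\bP^y(\tau_B>t)<1$, then $\bP^x(\tau_B>nt)\le\alpha^n$, and integrating gives $\bE^x[\tau_B]\le t/(1-\alpha)^2$, contradicting $\bE^x[\tau_B]\gtrsim\phi(r)$), I would establish that there exist $\delta,\varepsilon\in(0,1)$ such that for all $x_0\in M$, $r>0$, $x\in B(x_0,r/4)\cap M_0$ and $t\le\delta\phi(r)$,
\[
\bP^x\bigl(\tau_{B(x_0,r)}>t\bigr)\ge\tfrac12.
\]
A Cauchy--Schwarz application then yields, with $B=B(x_0,r)$,
\[
\tfrac12 \;\le\; \bP^x(\tau_B>t) \;=\; \int_B p^B(t,x,z)\,\mu(dz) \;\le\; \mu(B)^{1/2}\!\left(\int_B p^B(t,x,z)^2\,\mu(dz)\right)^{1/2} \;=\; \mu(B)^{1/2}\,p^B(2t,x,x)^{1/2},
\]
so $p^B(2t,x,x)\ge 1/(4\mu(B))$. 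Choosing $r$ comparable with $\phi^{-1}(t)$ and invoking $\VD$, this becomes $p^{B(x_0,r)}(t,x,x)\ge c/V(x_0,\phi^{-1}(t))$ for $x\in B(x_0,\varepsilon\phi^{-1}(t))$; by monotonicity of $p^{D}(t,\cdot,\cdot)$ in $D$, the same holds for $p^{B(x_0,R)}$ whenever $R\ge\phi^{-1}(t)/\varepsilon$, i.e., whenever $t\le\phi(\varepsilon R)$.

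Third, the main obstacle is to propagate this on-diagonal estimate to the genuine near-diagonal lower bound $p^B(t,x,y)\ge c/V(x_0,\phi^{-1}(t))$ for all $x,y\in B(x_0,\varepsilon\phi^{-1}(t))\cap M_0$. My plan here is to combine Chapman--Kolmogorov
\[
p^B(2t,x,y)\;=\;\int_B p^B(t,x,z)\,p^B(t,z,y)\,\mu(dz)
\]
with a mass-concentration estimate. The tail bound
\[
\int_{d(x,z)\ge\lambda\phi^{-1}(t)}p(t,x,z)\,\mu(dz)\le C\lambda^{-\beta_{1,\phi}},
\]
obtained by dyadic decomposition from $\UHK_{weak}(\phi)$, together with the survival estimate above, shows that a fixed positive fraction of the mass of $p^B(t,x,\cdot)$ (and of $p^B(t,\cdot,y)$) lives in a ball $A=B(x_0,\lambda\phi^{-1}(t))\cap B$ of volume comparable with $V(x_0,\phi^{-1}(t))$. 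The diagonal lower bound, applied at varying centres in $A$ and transferred along $A$ by the Hölder continuity of $\sE$-harmonic functions from $\EHR$ (applied, e.g., to suitable truncated Green functions of $(\sE,\sF^B)$ that are harmonic off the diagonal), turns the mass-concentration information into a pointwise lower bound on the kernel, closing the argument. The delicate point is this transfer from diagonal to near-diagonal without a full parabolic Harnack inequality at our disposal: it requires balancing the $\UHK_{weak}(\phi)$ tail estimate against the oscillation control from $\EHR$, while coping with the two-scale nature of $\phi=\phi_c\wedge\phi_j$ and the fact that $M$ is not assumed connected.
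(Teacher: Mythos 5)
Your plan reproduces the paper's strategy for this proposition: the hypotheses are cascaded through Proposition \ref{P:3.1} ($\PI(\phi)+\RVD\Rightarrow\FK(\phi)$), Proposition \ref{P:exit} ($\FK(\phi)+\J_{\phi,\le}+\CS(\phi)\Rightarrow\E_\phi$), and Proposition \ref{P:EHR} ($\PI(\phi)+\J_{\phi,\le}+\CS(\phi)\Rightarrow\EHR$), after which the paper simply \emph{cites} the proof of \cite[Proposition 4.9]{CKW2} for the implication $\E_\phi+\EHR\Rightarrow\NDL(\phi)$. You identify exactly the right intermediate ingredients and the correct citation is one line away. Your derivation of the on-diagonal lower bound $p^{B}(t,x,x)\ge c/V(x_0,\phi^{-1}(t))$ from $\E_\phi$ via the survival estimate and Cauchy--Schwarz is correct (though the survival bound is most cleanly obtained as $\E_\phi\Rightarrow\EP_{\phi,\le,\eps}$, as in Proposition \ref{P:2.4}, rather than by the contradiction you sketch).

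The genuine gap is in your third step, which you yourself flag as ``the delicate point.'' The transfer from the pure diagonal bound $p^B(2t,x,x)\ge c/\mu(B)$ to the near-diagonal bound $p^B(t,x,y)\ge c/V(x_0,\phi^{-1}(t))$ is precisely the content of \cite[Proposition 4.9]{CKW2}, and your sketch does not close it. Chapman--Kolmogorov plus the mass-concentration estimate from $\UHK_{weak}(\phi)$ gives that a fixed fraction of the mass of $p^B(t,x,\cdot)$ sits in a ball of radius $\sim\phi^{-1}(t)$, but to conclude $p^B(2t,x,y)\ge\int_A p^B(t,x,z)p^B(t,z,y)\,\mu(dz)\ge c/V$ you would need pointwise lower bounds on $p^B(t,z,y)$ for a large set of $z$ --- which is circular. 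Invoking $\EHR$ for ``truncated Green functions'' is the right instinct (Green functions are harmonic off the diagonal and so $\EHR$ controls their oscillation), but you do not explain how to pass from Green-function regularity to a \emph{pointwise heat-kernel} lower bound, nor how to quantify the resulting estimate uniformly over the region $B(x_0,\eps\phi^{-1}(t))$ with the two scales of $\phi=\phi_c\wedge\phi_j$ in play. Since the paper deliberately outsources exactly this step to \cite[Proposition 4.9]{CKW2} rather than redoing it, either cite that result (which uses only $\E_\phi+\EHR$, so your extra detour through $\UHKD$, conservativeness and $\UHK_{weak}$ is unnecessary) or supply a complete argument for the transfer.
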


\begin{proof}
It follows from Proposition \ref{P:exit} that under $\VD$, $\RVD$ and \eqref{polycon},
$$
 \PI (\phi)+\J_{\phi,\le}+\CS(\phi)\Longrightarrow \E_\phi,
$$
where we also used the fact that $\PI(\phi)$ implies $\FK(\phi)$ by
Proposition \ref{P:3.1}.
With this and Proposition \ref{P:EHR}, the desired assertion essentially follows from the proof of \cite[Proposition 4.9]{CKW2}.
\qed \end{proof}

The following proposition establishes the
 $ {\rm (v)}\Longrightarrow{\rm (i)}$ part in Theorem \ref{T:main}.

 \begin{proposition}\label{ejlhk2} Suppose $\VD$, $\RVD$, \eqref{polycon}, $\PI(\phi)$, $\J_{\phi_j}$ and
 $\CS(\phi)$ hold. Then we have $\HK_- (\phi_c, \phi_j)$.
\end{proposition}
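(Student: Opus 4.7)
\medskip
\noindent\textbf{Proof plan for Proposition \ref{ejlhk2}.}\quad
The target $\HK_-(\phi_c,\phi_j)$ has two parts: the full upper bound of $\UHK(\phi_c,\phi_j)$, and the weaker lower bound \eqref{e:1.31} consisting of a near-diagonal piece plus a one-big-jump piece in the far region $d(x,y)>c_1\phi^{-1}(t)$. My strategy is to combine the analytic ingredients already developed in Sections \ref{section2}--\ref{S:4.3} for the upper bound and for $\NDL(\phi)$, and then to produce the off-diagonal lower bound via a classical Meyer/L\'evy-system argument that uses only $\E_\phi$, $\NDL(\phi)$ and $\J_{\phi_j,\ge}$.

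First, I would assemble the upper bound. Since $\PI(\phi)\Rightarrow \FK(\phi)$ by Proposition \ref{P:3.1}, and since Proposition \ref{P:exit} gives $\E_\phi$ under $\VD$, \eqref{polycon}, $\FK(\phi)$, $\CS(\phi)$ and $\J_{\phi,\le}$, the hypotheses imply $\FK(\phi)+\J_{\phi_j,\le}+\CS(\phi)$ and hence, by Theorem \ref{T:main-1}, $\UHK(\phi_c,\phi_j)$ holds together with the conservativeness of $(\sE,\sF)$ (the latter also follows from Proposition \ref{P:3.2}(ii)). For the near-diagonal lower bound I invoke Proposition \ref{ejlhk}, which yields $\NDL(\phi)$ directly from $\VD$, $\RVD$, \eqref{polycon}, $\PI(\phi)$, $\J_{\phi,\le}$ and $\CS(\phi)$. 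By the definition of $\NDL(\phi)$, after absorbing constants using \eqref{polycon} and $\VD$ (cf.\ Remark \ref{R:1.22}(iv)) one obtains the first term in \eqref{e:1.31}.

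The main work is the off-diagonal lower bound when $d(x,y)\ge c_1\phi^{-1}(t)$ for a suitably large $c_1$. Fix such $x,y,t$ and set $r=\phi^{-1}(t)/(8c_1)$, so that $B(x,2r)\cap B(y,2r)=\emptyset$ and $d(u,v)\asymp d(x,y)$ for $u\in B(x,r)$, $v\in B(y,r)$. By $\E_\phi$ and L\'evy's maximal inequality (together with conservativeness), there exist $\delta,\eta_0\in(0,1)$ independent of $t,x,y$ such that
\begin{equation*}
\bP^x(\tau_{B(x,r)}>\delta t)\ge \eta_0,\qquad \bP^z(\tau_{B(y,r)}>\delta t/2)\ge \eta_0\quad \text{for all }z\in B(y,r/2)\cap M_0.
\end{equation*}
Using the L\'evy system of $X$ (applied to the disjoint sets $B(x,r)$ and $B(y,r)$), the definition of $\J_{\phi_j,\ge}$ and $\VD$, one then obtains
\begin{equation*}
\bP^x\bigl(X_{\tau_{B(x,r)}}\in B(y,r),\,\tau_{B(x,r)}\le \delta t\bigr)\;\ge\;\eta_0\,\delta t\cdot \inf_{u\in B(x,r)}J(u,B(y,r))
\;\ge\;\frac{c_2\, t\,V(y,r)}{V(x,d(x,y))\phi_j(d(x,y))}.
\end{equation*}
Applying the strong Markov property at $\tau_{B(x,r)}$ and then $\NDL(\phi)$ inside $B(y,r)$ at the remaining time $\ge \delta t/2$ (which is comparable to $t$ and satisfies the required time-radius scaling after adjusting constants), I would arrive at
\begin{equation*}
p(2t,x,y)\ \ge\ \frac{c_3\,t\,V(y,r)}{V(x,d(x,y))\phi_j(d(x,y))}\cdot \frac{c_4}{V(y,\phi^{-1}(t))}\ \ge\ \frac{c_5\, t}{V(x,d(x,y))\phi_j(d(x,y))},
\end{equation*}
where in the last step I use $V(y,r)\asymp V(y,\phi^{-1}(t))$ from $\VD$. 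Rescaling $t\to t/2$ completes \eqref{e:1.31}.

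The main obstacle I anticipate is the bookkeeping in the off-diagonal step: making sure the near-diagonal term of $\NDL(\phi)$ applies at the correct time $\delta t/2$ and on balls of radius comparable to $\phi^{-1}(t)$, and that the constant $c_1$ in \eqref{e:1.31} is chosen large enough (depending on the parameters of $\E_\phi$ and $\NDL(\phi)$) so that the L\'evy-system computation yields a clean lower bound. This is technical but follows a standard template already used in \cite{CKW2} for the pure-jump case; the diffusive part plays no role here because the relevant region is $d(x,y)>c_1\phi^{-1}(t)$, where a single large jump dominates, and $\NDL(\phi)$ packages all the information about the diffusive behavior needed to land near $y$.
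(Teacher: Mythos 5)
Your proposal is correct and takes essentially the same route as the paper's proof: near-diagonal lower bound from $\NDL(\phi)$ via Proposition~\ref{ejlhk}, an exit-time plus L\'evy-system (one-big-jump) argument for the off-diagonal region $d(x,y)\ge c_1\phi^{-1}(t)$, and the upper bound from Proposition~\ref{P:3.1} together with the ${\rm(iv)}\Rightarrow{\rm(i)}$ part of Theorem~\ref{T:main-1}. The only difference is that the paper delegates the one-big-jump computation to step (ii) of the proof of \cite[Proposition~5.4]{CKW1}, whereas you reconstruct it explicitly.
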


\begin{proof} According to Proposition
\ref{ejlhk}, we have $\NDL(\phi)$. In particular, for any $x,y\in M_0$ and $t>0$ with $d(x,y)\le c_0\phi^{-1}(t)$ for some constant $c_0>0$,
$$p(t,x,y)\ge  \frac{c_1}{V(x,\phi^{-1}(t))}.$$
On the other hand, under our assumptions, we can get from the arguments in step (ii) for the proof of \cite[Proposition 5.4]{CKW1} that for all $x,y\in M_0$ and $t>0$ with $d(x,y)\ge c_0\phi^{-1}(t)$,
$$
p(t,x,y)\ge \frac{c_2 t}{V(x,d(x,y))\phi_j(d(x,y))}.
$$
This establishes the heat kernel lower bound \eqref{e:1.31} for $\HK_- (\phi_c, \phi_j)$.
The upper bound of $\HK_- (\phi_c, \phi_j)$ follows from Proposition \ref{P:3.1}
and the equivalence between (i) and (iv) of Theorem \ref{T:main-1}.
 \qed\end{proof}

\subsection{From $\HK_- (\phi_c, \phi_j)$ to $\HK (\phi_c, \phi_j)$}
To consider $\HK (\phi_c, \phi_j)$,
we assume in addition
that $(M,d,\mu)$ is connected and satisfies the chain
condition (see the end of Remark \ref{R:1.22}\,(i)).
We  emphasize that
the results in the previous sections hold true without this additional assumption on the state space $(M,d,\mu)$.

\begin{proposition}\label{P:twosided}
Suppose that $(M,d,\mu)$ is connected and satisfies the chain condition.
Under  $\VD$ and \eqref{polycon},  $\HK_-(\phi_c, \phi_j)$
implies $\HK (\phi_c, \phi_j)$.
In particular, if $\VD$, $\RVD$, \eqref{polycon}, $\PI(\phi)$, $\J_{\phi_j}$ and
 $\CS(\phi)$ hold, then  so does $\HK (\phi_c, \phi_j)$.
\end{proposition}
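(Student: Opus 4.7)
Since the upper bound in $\HK(\phi_c,\phi_j)$ coincides with that in $\HK_-(\phi_c,\phi_j)$, only the lower bound requires upgrading. Comparing the two lower bounds via the decomposition in Remark \ref{R:1.22}\,(iii), the only missing piece is the sub-Gaussian off-diagonal estimate
\begin{equation*}
p(t,x,y) \ge \frac{c}{V(x,\phi_c^{-1}(t))}\exp\left(-C\,\frac{d(x,y)}{\bar\phi_c^{-1}(t/d(x,y))}\right)
\end{equation*}
in the intermediate regime $0<t\le 1$ and $c_1\phi_c^{-1}(t)\le d(x,y)\le t_*$, where $t_*$ is the threshold at which the jumping term $p^{(j)}(t,x,y)$ takes over. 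For $t\ge 1$ one has $\phi=\phi_j$, and the lower bound supplied by $\HK_-(\phi_c,\phi_j)$ already furnishes the dominant contribution to $\HK(\phi_c,\phi_j)$, so nothing needs to be done there.

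The plan is to establish the missing sub-Gaussian bound by a chaining argument along a quasi-geodesic provided by the chain condition. Fix $t\in(0,1]$ and $x,y\in M_0$ with $r:=d(x,y)\ge c_1\phi_c^{-1}(t)$. Let $n$ be the smallest integer comparable to the quantity $m(t,r)=r/\bar\phi_c^{-1}(t/r)$ appearing in \eqref{e:1.22}. Using \eqref{e:scdf}, \eqref{e:1.12} and \eqref{e:effdiff}, this choice can be calibrated so that $C_0\,r/n \le (c_1/4)\phi_c^{-1}(t/n)$, where $C_0$ is the constant in the chain condition. Invoke the chain condition to produce points $x_0=x,x_1,\ldots,x_n=y$ with $d(x_{i-1},x_i)\le C_0\,r/n$, and set $B_i=B(x_i,\eta\phi_c^{-1}(t/n))$ for a sufficiently small constant $\eta>0$. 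Then for any $z_i\in B_i$ and $z_{i+1}\in B_{i+1}$ one has $d(z_i,z_{i+1})\le c_1\phi_c^{-1}(t/n)$, so each such pair lies in the near-diagonal range of $\HK_-(\phi_c,\phi_j)$ at the time scale $t/n$.

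Apply the Chapman--Kolmogorov identity followed by the near-diagonal lower bound of $\HK_-(\phi_c,\phi_j)$ to get
\begin{equation*}
p(t,x,y)\ge\int_{B_1}\!\cdots\!\int_{B_{n-1}}\prod_{i=0}^{n-1} p(t/n,z_i,z_{i+1})\,\mu(dz_1)\cdots\mu(dz_{n-1})\ge \frac{c_2^n}{V(x,\phi_c^{-1}(t/n))},
\end{equation*}
where in the last step each factor is bounded below by $c_3/V(x_i,\phi_c^{-1}(t/n))$ and the ratios $\mu(B_i)/V(x_i,\phi_c^{-1}(t/n))$ are uniformly bounded below via $\VD$. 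Applying $\VD$ together with \eqref{polycon} once more gives $V(x,\phi_c^{-1}(t/n))\le c_4\, n^{d_2/\beta_{1,\phi_c}}\,V(x,\phi_c^{-1}(t))$, so that the polynomial factor $n^{d_2/\beta_{1,\phi_c}}$ can be absorbed into $c_2^n=\exp(-n\log(1/c_2))$ at the cost of shrinking $c_2$ slightly. Since $n\asymp d(x,y)/\bar\phi_c^{-1}(t/d(x,y))$, this yields the desired sub-Gaussian lower bound, completing the proof.

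The main technical obstacle is the simultaneous calibration of $n$, $\eta$, and the chain constant $C_0$ so that \emph{every} pair $(z_i,z_{i+1})$ in consecutive balls lies within the near-diagonal window while $n$ is kept of the exact order $d(x,y)/\bar\phi_c^{-1}(t/d(x,y))$: any slack in $n$ degrades the exponential rate from the sharp $d(x,y)/\bar\phi_c^{-1}(t/d(x,y))$ to something strictly worse. This balance is precisely where the scaling relation \eqref{e:scdf} for $\bar\phi_c$ interacts with the chain condition, and it explains why the connectedness and chain condition of $(M,d,\mu)$ cannot be dispensed with in passing from $\HK_-$ to $\HK$.
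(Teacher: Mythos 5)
Your proof is correct and follows essentially the same route as the paper: both reduce via Remark~\ref{R:1.22}(ii)--(iii) to the case $t\in(0,1]$, $d(x,y)\gtrsim\phi_c^{-1}(t)$, and then run the standard chaining argument along the chain-condition sequence with $n\asymp m(t,r)=r/\bar\phi_c^{-1}(t/r)$, applying the near-diagonal lower bound $\NL(\phi)$ at the time scale $t/n$ and absorbing the polynomial volume factor $n^{d_1/\beta_{2,\phi_c}}$ into the exponential.
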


\begin{proof}
By $\HK_- (\phi_c, \phi_j)$ and \eqref{e:1.25},
we only need to verify the case that $t\in (0,1]$ and $x,y\in M_0$ with $d(x,y)\ge c_0\phi_c^{-1}(t)$ for some constant $c_0>0$. The proof is based on the standard chaining argument, e.g.\ see the proof of \cite[Proposition 5.2(i)]{BGK}.

 First we assume $\HK_-(\phi_c, \phi_j)$ holds.
Then, by Remark \ref{R:1.22}(ii), $\NL (\phi)$ holds.
Furthermore, in view of Remark \ref{R:1.22}(iii), it suffices to consider the case that $t\in [0, 1]$.
Now, fix $t\in (0,1]$
and $x,y\in M_0$. Let $r=d(x,y)$. Since the space $(M,d,\mu)$ satisfies the chain condition, there exists a constant $C>0$ such that, for any $x,y\in
M$ and for any $n\in {\mathbb N}$, there exists a sequence
$\{x_{i}\}_{i=0}^{n}\subset M$ such that $x_{0}=x$, $x_{n}=y$ and
$
d(x_{i},x_{i+1})\leq C { d(x,y)}/{n}$ for all $i=0,1,\cdots,n-1$. In the following, we
set
$n=m:=m(t,r),$ where $m(t,r)$ is defined by
\eqref{e:1.22}. Define $r_n= C {r}/{(3n)}$. In particular, by \eqref{e:scdf},
\eqref{e:1.12} and \eqref{polycon}, $r_n\asymp \phi_c^{-1}(t/n)$.
By $\NL (\phi)$,
for all $z_i\in B(x_i, r_n)$ and $0\le i\le n+1$,
$$p(t/n, z_i,z_{i+1})\ge \frac{c_1}{V(z_i, \phi_c^{-1}(t/n))}.$$ Hence,
\begin{align*}&p(t,x,y)\\
&\ge \int_{B(z_1,r_n)}p(t/n,x,z_1)\,\mu(dz_1) \int_{B(z_2,r_n)}p(t/n,z_1,z_2)\,\mu(dz_2)\cdots \int_{B(z_{n-1},r_n)}p(t/n,z_{n-1},y)\,\mu(dz_{n-1})\\
&\ge \frac{c_2}{V(x,\phi_c^{-1}(t/n))} c_3^n\ge \frac{c_4}{V(x,\phi_c^{-1}(t))}n^{d_1/\beta_{2,\phi_c}} c_3^n\ge  \frac{c_5}{V(x,\phi_c^{-1}(t))} c_6^n,  \end{align*} where the constants $c_3,c_6\in (0,1)$, and in the third inequality we used $\VD$ and \eqref{polycon}. That is, we arrive at a lower bound of $p(t,x,y)$ with the form given in \eqref{eq:fibie3},
thanks to $\VD$ and \eqref{polycon} again.
This gives $\HK (\phi_c, \phi_j)$ and hence the first assertion of this theorem.
The last assertion follows from the first one,
Proposition \ref{ejlhk} and the fact that $\NDL(\phi)$ implies $\NL(\phi)$.
\qed\end{proof}

We need the following simple lemma
for
the proof of  Theorem \ref{T:main},
which holds without the connectedness and the chain condition on the state space $(M,d,\mu)$.

\begin{lemma}\label{near-} Under $\VD$ and \eqref{polycon}, $\UHK(\phi_c, \phi_j)$ and $\NL(\phi)$ together imply $\NDL(\phi)$. \end{lemma}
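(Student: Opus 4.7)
The plan is to use the Dynkin--Hunt formula
\begin{equation*}
p(t,x,y) = p^B(t,x,y) + \bE^x\big[p(t-\tau_B, X_{\tau_B}, y)\mathbf{1}_{\{\tau_B<t\}}\big], \qquad B=B(x_0,r),
\end{equation*}
and to show that, provided $\varepsilon>0$ is chosen small enough, the expectation on the right is at most half of the lower bound for $p(t,x,y)$ that $\NL(\phi)$ supplies. Fix $0<t\le\phi(\varepsilon r)$, so $\phi^{-1}(t)\le\varepsilon r$, and $x,y\in B(x_0,\varepsilon\phi^{-1}(t))\cap M_0$. Then $d(x,y)\le 2\varepsilon\phi^{-1}(t)$, and if $\varepsilon$ is chosen smaller than half the constant from $\NL(\phi)$, then $\NL(\phi)$ together with $\VD$ yields $p(t,x,y)\ge c_0/V(x_0,\phi^{-1}(t))$ for some fixed $c_0>0$.

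On the event $\{\tau_B<t\}$ one has $X_{\tau_B}\notin B$, so $d(X_{\tau_B},y)\ge r-\varepsilon\phi^{-1}(t)\ge r(1-\varepsilon^2)\ge r/2$ as soon as $\varepsilon\le 1/\sqrt{2}$. It therefore suffices to estimate
\begin{equation*}
I(\varepsilon):=\sup\{p(s,z,y): 0<s\le t,\ z\in M_0,\ d(z,y)\ge r/2\}.
\end{equation*}
In this off-diagonal regime $\UHK(\phi_c,\phi_j)$ provides $p(s,z,y)\le c\,(p^{(c)}(c_*s,z,y)+p^{(j)}(s,z,y))$. For the jump piece, the bound $p^{(j)}(s,z,y)\le s/(V(z,d(z,y))\phi_j(d(z,y)))$, combined with $s\le t\le\phi(\varepsilon r)\le\phi_j(\varepsilon r)$ and the scaling of $\phi_j$ in \eqref{polycon}, gives $s/\phi_j(d(z,y))\le C\varepsilon^{\beta_{1,\phi_j}}$. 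For the Gaussian piece, Corollary \ref{C:diff} together with $\VD$ and the inequality $V(z,d(z,y))\le C(d(z,y)/\phi_c^{-1}(c_*s))^{d_2}V(z,\phi_c^{-1}(c_*s))$ yields
\begin{equation*}
p^{(c)}(c_*s,z,y)\le\frac{C}{V(z,d(z,y))}\left(\frac{d(z,y)}{\phi_c^{-1}(c_*s)}\right)^{d_2}\exp\left(-c'\frac{d(z,y)}{\bar\phi_c^{-1}(c_*s/d(z,y))}\right).
\end{equation*}
Uniformly for $s\in(0,t]$ and $d(z,y)\ge r/2$, the scalings \eqref{polycon} and \eqref{e:effdiff} show that $d(z,y)/\bar\phi_c^{-1}(c_*s/d(z,y))\ge c/\varepsilon$ while $d(z,y)/\phi_c^{-1}(c_*s)$ is bounded above by a fixed power of that quantity; since $\beta_{1,\phi_c}>1$, the super-polynomial exponential absorbs the polynomial prefactor and yields a bound of the form $h(\varepsilon)/V(z,d(z,y))$ with $h(\varepsilon)\to0$ as $\varepsilon\to0$. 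A further application of $\VD$ (using $d(z,x_0)\le d(z,y)+\varepsilon\phi^{-1}(t)\le 2d(z,y)$) converts $V(z,d(z,y))^{-1}$ into $\le C/V(x_0,r)\le C/V(x_0,\phi^{-1}(t))$.

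Putting these estimates together, $I(\varepsilon)\le C(\varepsilon^{\beta_{1,\phi_j}}+h(\varepsilon))/V(x_0,\phi^{-1}(t))$. Choosing $\varepsilon$ small enough that $C(\varepsilon^{\beta_{1,\phi_j}}+h(\varepsilon))\le c_0/2$ produces $p^B(t,x,y)\ge c_0/(2V(x_0,\phi^{-1}(t)))$, which is exactly $\NDL(\phi)$. The main technical obstacle is the uniform control of the Gaussian part $p^{(c)}$ throughout the full range $s\in(0,t]$: the prefactor $V(z,\phi_c^{-1}(c_*s))^{-1}$ coming directly from $\UHK(\phi_c,\phi_j)$ blows up as $s\to0$, so one must first trade it for $V(z,d(z,y))^{-1}$ via $\VD$ and then control the resulting polynomial factor in $d(z,y)/\phi_c^{-1}(c_*s)$ by the super-polynomial Gaussian decay, using crucially $\beta_{1,\phi_c}>1$.
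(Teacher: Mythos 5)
Your proof is correct and follows the same underlying strategy as the paper: a Dynkin--Hunt decomposition $p^{B}(t,x,y)=p(t,x,y)-\bE^{x}\bigl[p(t-\tau_B,X_{\tau_B},y)\mathbf{1}_{\{\tau_B<t\}}\bigr]$ together with an off-diagonal upper bound for $p(s,z,y)$ over $0<s\le t$, handling the jump and Gaussian contributions separately (the Gaussian part requires the trade of $V(z,\phi_c^{-1}(c_*s))^{-1}$ for $V(z,d(z,y))^{-1}$ via $\VD$ before the super-polynomial decay absorbs the blow-up as $s\to0$, exactly as you note). The only real difference is organizational: you spell out the Dynkin--Hunt step and verify the supremum bound at scale $r\ge\phi^{-1}(t)/\varepsilon$, arranging for it to be $o(1)/V(x_0,\phi^{-1}(t))$ as $\varepsilon\to0$, whereas the paper quotes the reduction from \cite{BGK} and \cite{BBK2} and only verifies the constant-order bound $\sup_{0\le s\le t}\sup_{y\in B(x,r),\,z\notin B(x,2r)}p(s,y,z)\le c_1/V(x,\phi^{-1}(t))$ at the scale $r\asymp\phi^{-1}(t)$; both formulations feed the same mechanism.
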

\begin{proof} We will use the ideas of the argument in \cite[Subsection 4.1]{BGK} and the proof of \cite[Lemma 3.2]{BBK2}. By carefully checking these proofs, to obtain the required assertion we only need to verify that for any $x\in M_0$, $t>0$ and $r>0$ with $r\asymp \phi^{-1}(t)$,
$$\sup_{0\le s\le t} \sup_{y\in B(x,r),z\in B(x,2r)^c} p(s,y,z)\le \frac{c_1}{V(x,\phi^{-1}(t))}.$$

According
to $\VD$, \eqref{polycon} and $\UHK(\phi_c, \phi_j)$,  for any $x\in M_0$, $t\ge1$ and $r>0$ with $r\asymp \phi^{-1}(t)=\phi_j^{-1}(t)$,
$$\sup_{0\le s\le t} \sup_{y\in B(x,r),z\in B(x,2r)^c} p(s,y,z)\le \sup_{y\in B(x,r)}\frac{c_2t}{V(y,r)\phi_j(r)}\le \frac{c_3}{V(x,r)}\le \frac{c_4}{V(x,\phi^{-1}(t))}.$$
On the other hand, also by $\VD$, \eqref{polycon} and $\UHK(\phi_c, \phi_j)$,  for any $x\in M_0$, $t\in (0,1]$ and $r>0$ with $r\asymp \phi^{-1}(t)
=\phi_c^{-1}(t)$,
\begin{align*}&\sup_{0\le s\le t}\sup_{y\in B(x,r),z\in B(x,2r)^c} p(s,y,z) \\
&\le  \sup_{y\in B(x,r)}\frac{c_5t}{V(y,r)\phi_j(r)}+\sup_{0\le s\le t} \sup_{y\in B(x,r)}\frac{c_5}{V(y,\phi^{-1}_c(s))}\exp\left(-c_6\frac{r}{\bar\phi_c(s/r)}\right)\\
&\le \frac{c_7t}{V(x,r)\phi_c(r)} +\frac{c_7}{V(x,\phi^{-1}_c(t))}\sup_{0\le s\le c_*\phi_c(r)} \left(\frac{r}{\phi^{-1}_c(s)}\right)^{d_2}\exp\left(-c_6\frac{r}{\bar\phi_c^{-1}(s/r)}\right)\\
&\le \frac{c_7t}{V(x,r)\phi_c(r)} +\frac{c_8}{V(x,\phi^{-1}_c(t))}\sup_{0\le s\le c_*\phi_c(r)} \left(\frac{\phi_c(r)}{s}\right)^{d_2/\beta_{1,\phi_c}}\exp\left(-c_9\left(\frac{\phi_c(r)}{s}\right)^{1/(\beta_{2,\phi_c}-1)}\right)\\
&\le \frac{c_7t}{V(x,r)\phi_c(r)} +\frac{c_{10}}{V(x,\phi^{-1}_c(t))}\sup_{0\le s\le c_*\phi_c(r)} \exp\left(-c_{11}\left(\frac{\phi_c(r)}{s}\right)^{1/(\beta_{2,\phi_c}-1)}\right)\\
&\le \frac{c_{12}}{V(x,\phi_c^{-1}(t))},
 \end{align*} where in the second inequality we
 used the fact that $\phi_c(r)\le c_0 \phi_j(r)$ for all $r\in (0,r_0]$ and some $r_0>0$, in the third inequality we applied \eqref{e:1.12} and \eqref{e:effdiff}, and the fourth inequality follows from the following elementary inequality:
 $$r^{d_2/\beta_{1,\phi_c}}\le c_{13} \exp\left(\frac{c_{9}}{2}r^{1/(\beta_{2,\phi_c}-1)}\right),\quad r\ge c_{14}>0.
$$
 Combining both conclusions above, we get the desired conclusion. \qed
\end{proof}

We are
now in a position to give the

\medskip

\noindent{\bf Proof of Theorem \ref{T:main}.}
It is obvious that ${\rm (i)}\Longrightarrow {\rm (ii)}$, thanks to Proposition \ref{l:jk}. ${\rm (ii)}\Longrightarrow {\rm (iii)}$ follows from Lemma \ref{near-}.
By Proposition \ref{P:4.1},
under $\VD$ and \eqref{polycon}, $\NDL(\phi)$ implies $\PI(\phi)$, and $\NDL(\phi)$ also implies $\E_\phi$ under the additional assumption $\RVD$.
With these at hand, we have $ {\rm (iii)}\Longrightarrow {\rm (iv)}$
by the ${\rm (ii)}\Longrightarrow {\rm (iii)}$ part of Theorem \ref{T:main-1} and Proposition \ref{T:gdCS}.
$ {\rm (iv)}\Longrightarrow {\rm (v)}$ has been proved in Proposition \ref{P:csp}.
Furthermore, by Proposition
\ref{ejlhk2},
$ {\rm (v)}\Longrightarrow{\rm (i)}$.
When the space $(M,d,\mu)$ is connected and satisfies the chain condition,
$ {\rm (v)}\Longrightarrow {\rm (vi)}$ has been proven in Proposition \ref{P:twosided}. Clearly,
$ {\rm (vi)} \Longrightarrow{\rm (i)}$. This completes the proof of the theorem. \qed

\section{Characterizations of parabolic Harnack inequalities}\label{S:6}

The goal of this section is to present three different characterizations of parabolic Harnack inequalities, see Theorems \ref{Thm: prop1}, \ref{Thm: prop2} and \ref{Thm:anal}.

By the arguments in \cite[Section 3.1]{CKW2}, we have the
following consequences of $\PHI(\phi)$. Note that, though
\cite{CKW2} is concerned with
pure jump non-local Dirichlet forms,  the
arguments in \cite[Section 3.1]{CKW2} works for
general symmetric Dirichlet forms $(\sE, \sF)$ under  the present setting.

 \begin{proposition}\label{uodest} Assume that $\VD$, \eqref{polycon} and $\PHI(\phi)$ hold. Then
  $\UHKD(\phi)$ and $\NDL(\phi)$, as well as $\UJS$, hold true.
 Consequently, $\PI(\phi)$ and $\E_{\phi,\ge}$ hold, and $X:=\{X_t\}_{t\ge0}$ is conservative. If furthermore $\RVD$ is satisfied, then we also have $\FK(\phi)$ and $\E_{\phi,\le}$ $($and so $\E_{\phi}$$)$.
 \end{proposition}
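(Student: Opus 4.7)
The plan is to follow the scheme of \cite[Section 3.1]{CKW2}, where the analogous statements were proved for pure-jump symmetric Dirichlet forms, and to verify that each step still goes through in the mixed diffusion-plus-jump setting (i.e.\ that the strongly local term $\sE^{(c)}$ causes no obstruction). Once $\UHKD(\phi)$, $\NDL(\phi)$ and $\UJS$ are in hand, the remaining consequences follow from general results already proved earlier in the paper.

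I would first establish $\UHKD(\phi)$ and $\NDL(\phi)$ by running PHI on the natural caloric functions. For $\UHKD(\phi)$, fix $x\in M_0$ and $t>0$, and note that $u(s,z):=p(s,z,y)$ is parabolic on any cylinder avoiding $y$; applying $\PHI(\phi)$ on a cylinder of radius $r\asymp\phi^{-1}(t)$ centred at $(t,x)$ and bounding the $L^2$-norm of $u$ via the Chapman--Kolmogorov identity $\int p(s,z,y)^2\,\mu(dz)=p(2s,y,y)$ yields $p(t,x,x)\le c/V(x,\phi^{-1}(t))$. For $\NDL(\phi)$, apply $\PHI(\phi)$ to the space-time caloric function $u(s,z):=P_s^B{\bf 1}_{B'}(z)$ with $B=B(x_0,r)$ and $B'=B(x_0,\phi^{-1}(\eps\phi(r)))$; conservativeness plus a $\EP_{\phi,\le,\eps}$-type exit estimate furnishes a lower bound on $u$ on a suitable small sub-cylinder, and then $\PHI(\phi)$ upgrades this into a pointwise lower bound on the Dirichlet heat kernel $p^B(t,x,y)$. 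The computations are purely probabilistic/functional-analytic and do not distinguish between $\sE^{(c)}$ and $\sE^{(j)}$.

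Next I would derive $\UJS$ via the L\'evy system formula. For $x\neq y$, fix $0<r\le d(x,y)/2$ and apply $\PHI(\phi)$ to the function $u(s,z):=p(s,z,y)$, which is parabolic on $(0,\infty)\times B(x,r)$. Picking $t\asymp\phi(r)$ of the right size, the essential supremum on the small ball $B(x,r)$ controls $p(t,\cdot,y)$ pointwise from above by an average, and the L\'evy system identity combined with $\NDL(\phi)$ (applied on a small time scale) identifies $p(t,z,y)$ with a multiple of $V(z,d(z,y))^{-1}\phi_j(d(z,y))^{-1}$ times $t$, hence with a multiple of $J(z,y)$. Taking $r\to 0$ produces exactly the averaging inequality \eqref{ujs}. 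The strongly local part contributes nothing here since it does not generate jumps; the L\'evy system for $X$ is determined by $J$ alone.

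Finally, the secondary consequences are immediate from earlier results of the paper. From $\NDL(\phi)$ and $\VD+\eqref{polycon}$, Proposition \ref{P:4.1}(i) gives $\PI(\phi)$ and $\E_{\phi,\ge}$; since $\NDL(\phi)$ trivially implies $\NL(\phi)$, Proposition \ref{P:3.2}(i) yields conservativeness. Under the additional $\RVD$, Proposition \ref{P:3.1} upgrades $\UHKD(\phi)$ to $\FK(\phi)$, and then $\E_{\phi,\le}$ follows from $\VD+\eqref{polycon}+\FK(\phi)$ as in \cite[Lemma 4.14]{CKW1} (which, like the other cited arguments in \cite{CKW1}, is probabilistic and does not use the pure-jump nature). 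I expect the main obstacle to be the bookkeeping in the $\UJS$ step: one must be careful that the lower bound on $p(t,z,y)$ used to recover $J(z,y)$ really comes from a single jump and not from the diffusive propagation, which is handled by choosing the time scale $t\ll\phi(d(x,y))$ so that diffusive excursions to distance $d(x,y)$ are exponentially suppressed by the Gaussian factor in $\UHK_{weak}$ (established via \eqref{heat:remark1}).
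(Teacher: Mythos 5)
Your proposal follows essentially the same route as the paper: the paper's entire proof of Proposition~\ref{uodest} is the single line ``See \cite[Propositions 3.1, 3.2, 3.5 and 3.6]{CKW2} for the proof'', and the remark preceding it explains that those arguments are probabilistic and carry over unchanged to Dirichlet forms with a strongly local component. Your sketch of how $\UHKD(\phi)$ and $\NDL(\phi)$ are extracted from $\PHI(\phi)$, and your chain of deductions for the secondary conclusions (Propositions~\ref{P:4.1}(i), \ref{P:3.2}(i), \ref{P:3.1} and \cite[Lemma 4.14]{CKW1}), all match what the cited material does.

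Two points in your $\UJS$ discussion need to be tightened. First, your claim that $\NDL(\phi)$ plus the L\'evy system ``identifies $p(t,z,y)$ with a multiple of $V(z,d(z,y))^{-1}\phi_j(d(z,y))^{-1}$ times $t$'' is not available here: $\PHI(\phi)$ alone does \emph{not} yield $\J_{\phi_j}$ (this is one of the central subtleties the paper highlights, cf.\ Example~\ref{PHIexm} and the remark after Theorem~\ref{C:1.25}). The identification one actually uses is the abstract one, $p(t,z,y)\asymp t\,J(z,y)$ for small $t$ and separated $z,y$, coming from the L\'evy system / Beurling--Deny decomposition, with $J$ the raw jumping density; no $\phi_j$-bound is assumed. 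Second, appealing to $\UHK_{weak}(\phi)$ through \eqref{heat:remark1} to kill the diffusive contribution is circular: \eqref{heat:remark1} itself is derived from $\UHKD(\phi)$, $\J_{\phi,\le}$ and $\E_\phi$, but $\J_{\phi,\le}$ is obtained downstream from $\UJS+\NDL(\phi)$, so it is not yet in hand. The correct, non-circular observation is simply that for $z,y$ with $d(z,y)$ bounded away from zero, the purely diffusive part of the heat kernel decays faster than any power of $t$, so $\lim_{t\to 0}\bP^z(X_t\in A)/t$ for $A$ separated from $z$ still picks out only the jump intensity $J(z,A)$; this uses only $\UHKD(\phi)$ and the exit-time bounds already established at that stage, not the full $\UHK_{weak}(\phi)$. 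With those two fixes the proposal is sound and reproduces the cited argument.
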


  \begin{proof}
	See \cite[Propositions 3.1, 3.2, 3.5 and 3.6]{CKW2}
	for the proof.
	\qed  \end{proof}

 We point out that $\PHI(\phi)$ alone can not guarantee $\J_{\phi_j,\le}$.
 Indeed, following the argument of \cite[Corollary 3.4]{CKW2}, under $\VD$, \eqref{polycon}, $\UJS$ and $\NDL(\phi)$, since $\phi(r)\le \phi_j(r)$ for all $r>0$,
 we can only obtain $\J_{\phi,\le}$, which is weaker than $\J_{\phi_j,\le}$.
 See Example \ref{PHIexm} for a concrete counterexample.
 In spite of this, we still have the following statement.

\begin{proposition}\label{ndl-holder}
Assume that $\VD$, \eqref{polycon}, $\NDL(\phi)$, $\E_{\phi,\le}$ and $\J_{\phi,\le}$ hold. For every $\delta \in (0, 1)$,
there exist positive constants
$C>0$ and $\gamma\in(0,1]$, where $\gamma$ is independent of $\delta$,
 so that  for any bounded caloric function $u$ in
$Q(t_0,x_0,\phi(r),r)$,
there is a properly exceptional set ${\cal N}_u\supset {\cal N}$ such that
$$|u(s,x)-u(t, y)|\le C\left( \frac{\phi^{-1}(|s-t|)+d(x, y)}{r} \right)^\gamma  \esssup_{ [t_0, t_0+\phi (r)] \times M}\,|u|
$$
for  every $s, t\in (t_0+\phi(r)-\phi(\delta r), t_0+\phi (r))$ and $x, y \in B(x_0, \delta r)\setminus {\cal N}_u$.
 In other words, under $\VD$ and \eqref{polycon}, $\NDL(\phi)+\E_{\phi,\le}+\J_{\phi,\le}$ imply {\rm PHR}$(\phi)$ and {\rm EHR}. In particular, $\PHI(\phi)$ implies {\rm PHR}$(\phi)$ and {\rm EHR}.
 \end{proposition}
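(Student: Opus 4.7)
The plan is to establish the H\"older estimate by a probabilistic oscillation reduction argument, adapted from \cite[Section 3]{CKW2} (developed there for pure-jump Dirichlet forms) to the present mixed local-plus-jump setting. The key point is that $\NDL(\phi)$, $\E_{\phi,\le}$ and $\J_{\phi,\le}$ are all stated with respect to the same common scaling $\phi = \phi_c \wedge \phi_j$, so Dirichlet heat kernel lower bounds, exit time upper bounds, and far-jump intensities scale consistently. Under this consistency the Bass--Levin / Chen--Kumagai iteration proceeds without any need to distinguish the diffusive and pure-jump parts of the Dirichlet form.

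Concretely, I would first prove an oscillation reduction lemma of the following form: there exist constants $\theta \in (0,1)$, $\kappa > 1$ and $C>0$ such that for any caloric $u$ on $Q = Q(t_0, x_0, \phi(\kappa r), \kappa r)$ and $Q' = (t_0 + \phi(r)/2, t_0 + \phi(r)) \times B(x_0, r)$,
\[
\essosc_{Q'} u \le \theta \essosc_{Q} u + C\, \T_\phi\bigl(u - a; x_0, \kappa r\bigr)
\]
for a suitable constant $a$ depending on $u$. After normalizing so $0 \le u \le 1$ on $Q$, one fixes a middle time $s^*$ and a ball $B^* = B(x_0, 2r)$, then splits $B^*$ into the ``high'' set $A_+ = \{u(s^*,\cdot)\ge 1/2\}$ and its complement; WLOG $\mu(A_+)\ge \mu(B^*)/2$. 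For $(t,x)\in Q'$ with $t<s^*$, the caloric property yields $u(t,x) = \bE^{(t,x)}[u(Z_{\tau_U})]$ for a suitable relatively compact $U\subset [t, s^*]\times B^*$. Splitting this expectation, case (i), where the process hits $A_+$ before exiting $B^*$, contributes at least $(1/2)\bP^{(t,x)}(\text{hit } A_+)$; this hitting probability is bounded below by a positive constant via $\NDL(\phi)$ applied to the Dirichlet heat kernel $p^{B^*}(\cdot,x,\cdot)$ combined with $\mu(A_+)\ge \mu(B^*)/2$ and the exit-time estimate $\E_{\phi,\le}$. Case (ii), corresponding to exit from $B^*$ (by a continuous path or by a long jump), is bounded above using $\E_{\phi,\le}$ together with $\J_{\phi,\le}$ and Lemma \ref{intelem} (which gives jump intensity $\int_{B(x,r)^c} J(x,y)\,\mu(dy) \le c/\phi(r)$); the resulting contribution is proportional to $\T_\phi(u; x_0, \kappa r)$.

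The H\"older estimate then follows by iterating the oscillation reduction over nested cylinders $Q_n = Q(t_0, x_0, \phi(\kappa^{-n} r), \kappa^{-n} r)$. Since $\T_\phi(u;x_0,\rho) \le c \, \esssup_{[t_0, t_0+\phi(r)]\times M}|u|$ uniformly in $\rho$ (by Lemma \ref{intelem}), the tail contributions sum as a geometric series and give $\essosc_{Q_n} u \le C' \theta^n \esssup_{[t_0, t_0+\phi(r)]\times M}|u|$. A standard chaining argument then produces the modulus of continuity in the statement with exponent $\gamma = \log(1/\theta)/\log\kappa$, which depends only on the constants in $\NDL(\phi)$, $\E_{\phi,\le}$ and $\J_{\phi,\le}$, hence is independent of $\delta$; $\delta$ only affects the prefactor $C$ via how many iteration steps are needed to reach the $\delta$-shrunken sub-cylinder. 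EHR is the stationary specialization. For the final assertion $\PHI(\phi)\Longrightarrow\mathrm{PHR}(\phi)+\mathrm{EHR}$, apply Proposition \ref{uodest} to extract $\NDL(\phi)$, $\E_\phi$ (hence $\E_{\phi,\le}$), $\UHKD(\phi)$ and $\UJS$; combining $\UJS$ with $\UHKD(\phi)$ and $\NDL(\phi)$ in the spirit of \cite[Corollary 3.4]{CKW2} yields $\J_{\phi,\le}$ (weaker than $\J_{\phi_j,\le}$, as noted after Proposition \ref{uodest}), and then we invoke the implication just proved. The main obstacle will be the careful bookkeeping of the jump tail $\T_\phi(u; \cdot, \cdot)$ through the iteration: because the non-local part of $(\sE,\sF)$ allows the process to leave $B^*$ by a long jump, case (ii) cannot be dropped at a single step but must be absorbed into a geometric sum, for which the uniform $L^\infty$ bound on $u$ in the whole slab $[t_0, t_0+\phi(r)]\times M$ (and not only in $Q$) is essential.
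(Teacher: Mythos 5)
Your strategy is the one the paper intends: the published proof simply cites \cite[Proposition~3.8]{CKW2} and notes that the argument transfers because $\NDL(\phi)$, $\E_{\phi,\le}$ and $\J_{\phi,\le}$ are all anchored to the common scale $\phi=\phi_c\wedge\phi_j$. Your case split --- a hitting probability lower bound via $\NDL(\phi)$, and exit/long-jump control via $\E_{\phi,\le}$, $\J_{\phi,\le}$ and Lemma~\ref{intelem} --- together with the observation that continuous (diffusive) exits land on $\partial B^*$ inside the enveloping cylinder while only jump exits need the nonlocal tail, is exactly the point, as is the closing reduction $\PHI(\phi)\Rightarrow\PHR(\phi)$ via Proposition~\ref{uodest}.

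The iteration step, however, contains a genuine gap. You assert that because $\T_\phi(u;x_0,\rho)\le c\,\esssup|u|$ uniformly in $\rho$, ``the tail contributions sum as a geometric series'' and give $\essosc_{Q_n}u\le C'\theta^n\esssup|u|$. This inference is false: if the additive error in the recursion $\essosc_{Q_{n+1}}u\le\theta\,\essosc_{Q_n}u + C\,\esssup|u|$ is uniformly of size $\esssup|u|$, then iterating only yields $\essosc_{Q_n}u=O(\esssup|u|)$, with no decay at all. The geometric decay in \cite[Prop.~3.8]{CKW2} comes from a more delicate source: jumps from $B(x_0,r_{k+1})$ into intermediate annuli $B(x_0,r_j)\setminus B(x_0,r_{j+1})$ with $j<k$ are estimated by the oscillation bounds over the $Q_j$ that the induction has \emph{already established}, weighted by jump probabilities $\lesssim\phi(r_{k+1})/\phi(r_{j+1})\lesssim\kappa^{-(k-j)\beta_{1,\phi}}$ coming from $\E_{\phi,\le}$ and $\J_{\phi,\le}$ via the L\'evy system; jumps leaving $Q_0$ altogether contribute $\lesssim\esssup|u|\cdot\phi(r_{k+1})/\phi(r_0)$, which does decay; and the induction closes only after checking that the target decay rate $\theta$ satisfies $\theta>\kappa^{-\beta_{1,\phi}}$ so the geometric sum over annuli converges. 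You identify this bookkeeping as ``the main obstacle'' but then try to replace it with the uniform tail bound, which is a different and insufficient argument. A secondary issue: the cylinders $Q_n=Q(t_0,x_0,\phi(\kappa^{-n}r),\kappa^{-n}r)$ you iterate over are all pinned at the bottom time $t_0$, whereas $\PHR(\phi)$ requires oscillation decay in cylinders pinned at the top time $t_0+\phi(r)$, consistent with the shape of $Q'$ in your own oscillation-reduction lemma.
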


\begin{proof} The proof is the same as that of \cite[Proposition 3.8]{CKW2}, so it is omitted.\qed
\end{proof}

We next  present  characterizations of $\PHI(\phi)$.
Recall that in Remark \ref{Heat:remark} upper bound estimate \eqref{heat:remark1} of the heat kernel $p(t,x,y)$ is named by   $\UHK_{weak}(\phi)$.

\begin{theorem}\label{Thm: prop1}
Assume that $\mu$ and $\phi$ satisfy $\VD$, $\RVD$ and \eqref{polycon} respectively. Then the following  holds
\begin{align*}
\PHI(\phi) \Longleftrightarrow \UHK_{weak}(\phi) + \NDL(\phi) +\UJS.
\end{align*}
\end{theorem}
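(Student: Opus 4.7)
\medskip

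\noindent\textbf{Proof proposal.}
The plan is to treat the two implications separately, relying heavily on the machinery already assembled in the paper.

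\emph{Forward direction.} Assume $\PHI(\phi)$. Proposition \ref{uodest} directly gives $\NDL(\phi)$, $\UJS$, together with $\UHKD(\phi)$, $\E_\phi$, $\PI(\phi)$ and the conservativeness of $X$. It therefore remains only to upgrade these consequences to $\UHK_{weak}(\phi)$. First I will derive $\J_{\phi,\le}$ from $\UJS+\NDL(\phi)$ (under $\VD$ and \eqref{polycon}) by the same argument used to prove \cite[Corollary 3.4]{CKW2}: the near-diagonal lower bound of $p(t,x,y)$ together with the Lévy system identity produces a pointwise upper bound on $J(x,y)$ of the right form, and $\UJS$ spreads this bound to $\mu\times \mu$-a.e.\ points. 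Once $\J_{\phi,\le}$ is available, the argument of Remark \ref{Heat:remark} (which is the analogue of Lemma \ref{L:lemmastep1} using the weaker $\J_{\phi,\le}$ in place of $\J_{\phi_j,\le}$) turns $\UHKD(\phi)+\E_\phi+\J_{\phi,\le}$ into $\UHK_{weak}(\phi)$.

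\emph{Reverse direction.} Assume $\UHK_{weak}(\phi)+\NDL(\phi)+\UJS$. The strategy is to verify in turn each hypothesis required by Proposition \ref{ndl-holder} and then to promote the resulting parabolic H\"older regularity to the full $\PHI(\phi)$. Proposition \ref{P:4.1} already provides $\PI(\phi)$ and $\E_{\phi,\ge}$ from $\NDL(\phi)$ alone (using $\VD$ and $\RVD$). The upper bound $\E_{\phi,\le}$ is obtained from $\UHK_{weak}(\phi)$ by the standard tail estimate argument: integrating the weaker upper bound $p(t,x,y)\le c t/[V(x,d(x,y))\phi(d(x,y))]$ outside a ball $B(x,r)$ and applying \eqref{polycon} yields $\bP^x(\tau_{B(x,r)}\le t)\le ct/\phi(r)$, which gives $\E_{\phi,\le}$ via Khasminskii's lemma as in Lemma \ref{Conserv}. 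Third, $\J_{\phi,\le}$ follows from $\NDL(\phi)+\UJS$ exactly as in the forward direction. Proposition \ref{ndl-holder} then supplies $\PHR(\phi)$ (and $\EHR$) with a uniform H\"older exponent.

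\emph{From $\PHR(\phi)$ to $\PHI(\phi)$.} This is the main obstacle. I plan to follow the Bass--Levin/Chen--Kumagai scheme used in the pure-jump analogue in \cite{CKW2}, adapted to the mixed diffusion--jump setting. Given a nonnegative parabolic function $u$ on $Q(t_0,x_0,\phi(C_4R),C_5R)$, I normalize by $a:=\essinf_{Q_+} u$ and work with $u+\varepsilon$ for small $\varepsilon>0$, ultimately letting $\varepsilon\downarrow0$. The argument will combine: (a) the near-diagonal lower bound provided by $\NDL(\phi)$ to propagate positivity; (b) the weak upper bound $\UHK_{weak}(\phi)$ to control the probability of excursions that travel far in short time; (c) the exit time bound $\E_\phi$ to control the tail of $\tau_{B(x_0,C_5R)}$; (d) $\UJS$ to show that a uniformly positive lower bound at a single (space-time) point spreads by a single jump to a set of positive measure in $Q_+$, which, combined with the balayage representation
\[
u(t,x)=\bE^{(t,x)}\!\left[u(Z_{\tau_U})\right]
\]
for suitable relatively compact $U\subset Q_+$, yields a uniform lower bound on $u$. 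Parabolic H\"older regularity $\PHR(\phi)$ lets one pass from essential bounds to pointwise bounds on $Q_\pm$ (outside the common exceptional set). The main technical point, compared with the pure-jump case of \cite{CKW2}, is that the diffusive component contributes continuous paths that must also be controlled near the boundary of the balls; this is handled by the $L^1$-mean value inequality in Section \ref{s:cac} (which is available because $\J_{\phi,\le}$ and $\CS(\phi)$ can be derived from our hypotheses) and by standard chaining via $\NDL(\phi)$ applied to overlapping small balls. Assembling these pieces in the standard way produces the sup/inf comparison \eqref{e:phidef} with constants depending only on the data, proving $\PHI(\phi)$.
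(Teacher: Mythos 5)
Your forward direction matches the paper's proof step for step: Proposition~\ref{uodest} supplies $\NDL(\phi)$, $\UJS$, $\UHKD(\phi)$, $\E_\phi$, and conservativeness; the remark following that proposition gives $\J_{\phi,\le}$ from $\UJS+\NDL(\phi)$ via the argument of \cite[Corollary 3.4]{CKW2}; and Remark~\ref{Heat:remark} then produces $\UHK_{weak}(\phi)$. For the reverse direction, your overall structure (verify the hypotheses of Proposition~\ref{ndl-holder}, obtain $\PHR(\phi)$, then upgrade to $\PHI(\phi)$ by the scheme of \cite[Subsection 4.1]{CKW2}) is also the route the paper takes, though the paper compresses it into a single citation to \cite{CKW2} plus the observation that $\UHK_{weak}(\phi)$ supplies the bound $p(t,x,y)\le c_1/V(x,\phi^{-1}(t))$ when $d(x,y)\asymp\phi^{-1}(t)$.

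There is, however, one genuine error in the reverse direction: your derivation of $\E_{\phi,\le}$. You assert that integrating $\UHK_{weak}(\phi)$ off the diagonal yields $\bP^x(\tau_{B(x,r)}\le t)\le ct/\phi(r)$, and that Khasminskii's lemma then gives $\E_{\phi,\le}$. This is backwards. The tail bound $\bP^x(\tau_{B(x,r)}\le t)\le ct/\phi(r)$ is exactly $\EP_{\phi,\le}$, which says the exit time is unlikely to be \emph{small}; it is the ingredient behind $\E_{\phi,\ge}$ (choose $t$ with $ct/\phi(r)\le 1/2$ and bound $\bE^x[\tau]\ge t\,\bP^x(\tau>t)$), not $\E_{\phi,\le}$. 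What Khasminskii's lemma requires is a uniform bound of the form $\sup_z\bP^z(\tau_{B(z,r)}>t_0)\le\rho<1$, which in this framework comes from the on-diagonal upper bound $\UHKD(\phi)$ together with $\RVD$ (equivalently from $\FK(\phi)$), not from $\EP_{\phi,\le}$. The fix is trivial and was already available to you: you invoked Proposition~\ref{P:4.1} to get $\PI(\phi)$ and $\E_{\phi,\ge}$, but under the theorem's standing assumption $\RVD$, part (ii) of that same proposition delivers the full $\E_\phi$ directly from $\NDL(\phi)$. The detour through a tail estimate was unnecessary, and as written it does not establish $\E_{\phi,\le}$.
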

\begin{proof} According to Proposition \ref{uodest}, we have the assertion that $$\PHI(\phi)\Longrightarrow \UHKD(\phi)+\NDL(\phi)+\UJS +\E_\phi+  \,\J_{\phi,\le},$$ where we note that $\RVD$ is only used to prove $\PHI(\phi)\Longrightarrow \E_{\phi,\le}$. Then by Remark \ref{Heat:remark}, we get
$$\PHI(\phi)\Longrightarrow\UHK_{weak}(\phi) + \NDL(\phi) +\UJS.$$

For $\UHK_{weak}(\phi) + \NDL(\phi) +\UJS\Longrightarrow\PHI(\phi)$,
we can follow most of the arguments
in \cite[Subsection 4.1]{CKW2}.
One different point is that
in the proof of \cite[Lemma 4.1]{CKW2} (see the arXiv version of the paper
\cite{CKW2}),
we need to verify that under $\VD$ and \eqref{polycon}, $\UHK_{weak}(\phi)$ implies
the following:
for any $x,y\in M_0$ and $t>0$ with $d(x,y)\asymp \phi^{-1}(t)$,
$$p(t,x,y)\le \frac{c_1}{V(x,\phi^{-1}(t))}.$$
Yet,
this inequality is a direct consequence of \eqref{heat:remark1}. The proof is complete.  \qed \end{proof}

The next  characterization of $\PHI(\phi)$ involves the property of exit times $\E_\phi$.

\begin{theorem}\label{Thm: prop2} Assume that $\mu$ and $\phi$ satisfy $\VD$, $\RVD$ and \eqref{polycon} respectively. Then the following  hold
\begin{align*}
\PHI(\phi)&\Longleftrightarrow \PHR(\phi)+\E_\phi+ \UJS +\J_{\phi,\le}\\
&\Longleftrightarrow  \EHR +\E_\phi +\UJS+\J_{\phi,\le }.
\end{align*}
\end{theorem}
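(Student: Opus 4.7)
The plan is to establish the cyclic chain
$$
\PHI(\phi) \Rightarrow \PHR(\phi)+\E_\phi+\UJS+\J_{\phi,\le} \Rightarrow \EHR+\E_\phi+\UJS+\J_{\phi,\le} \Rightarrow \PHI(\phi).
$$
The forward step is assembled from machinery already in place: Proposition~\ref{uodest} produces $\UHKD(\phi)$, $\NDL(\phi)$, $\UJS$, and $\E_\phi$ from $\PHI(\phi)$ (the $\E_{\phi,\le}$ part uses $\RVD$); integrating $\UJS$ against the near-diagonal lower heat kernel bound supplied by $\NDL(\phi)$, in the spirit of \cite[Corollary~3.4]{CKW2}, yields the pointwise upper bound $\J_{\phi,\le}$; finally $\PHR(\phi)$ is the content of Proposition~\ref{ndl-holder} applied to $\NDL(\phi)+\E_{\phi,\le}+\J_{\phi,\le}$. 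The middle implication is tautological, as a time-independent harmonic function is caloric and \eqref{e:phr} applied to such $u$ reduces to \eqref{e:ehr}.

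The heart of the argument is the last implication. By Theorem~\ref{Thm: prop1} it suffices to deduce $\UHK_{weak}(\phi)+\NDL(\phi)$ from $\EHR+\E_\phi+\J_{\phi,\le}$, since $\UJS$ is already assumed. First I would secure $\UHKD(\phi)$; once it is in hand, Remark~\ref{Heat:remark} combined with the self-tail machinery of Lemma~\ref{L:lemmastep1} promotes $\UHKD(\phi)+\E_\phi+\J_{\phi,\le}$ to $\UHK_{weak}(\phi)$. For $\NDL(\phi)$, conservativeness is granted by Proposition~\ref{P:3.2}(ii); $\E_{\phi,\ge}$ then gives a lower bound on $\bP^x(X_t\in B(x_0,\eps\phi^{-1}(t)),\,\tau_{B(x_0,r)}>t)$ for $t\le \phi(\eps r)$, which translates into an averaged lower bound for the Dirichlet heat kernel $\int_{B(x_0,\eps\phi^{-1}(t))}p^{B(x_0,r)}(t,x,y)\,\mu(dy)$, and $\EHR$ applied to the harmonic profile $y\mapsto p^{B(x_0,r)}(t,x,y)$ propagates this mean bound to the pointwise lower bound \eqref{NDLdf1}, following the scheme used in the pure-jump case of \cite{CKW2}.

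The main obstacle is extracting $\UHKD(\phi)$ from $\EHR+\E_\phi+\J_{\phi,\le}$ with no a priori Poincar\'e or cut-off Sobolev assumption. The natural route is to derive $\FK(\phi)$ and then appeal to Proposition~\ref{ehi-ukdd}. On an open $D\subset B(x_0,r)$, I would combine a Meyer-type decomposition $\sE=\sE^{(\rho)}+(\sE-\sE^{(\rho)})$ at scale $\rho\asymp r$, where Lemma~\ref{intelem} handles the long-range jump contribution through $\J_{\phi,\le}$, with $\EHR$ applied to the $L^\infty$-bounded Dirichlet semigroup: an $\sE$-subharmonic function in $B(x_0,r)$ is H\"older continuous by $\EHR$, and $\E_{\phi,\le}$ controls the mass of the process starting in $D$ that has not yet left $D$ by time $\phi(r)$. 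Together these should give a strict contraction $\|P^D_{\phi(r)}\|_{\infty\to\infty}\le 1-c\,(\mu(D)/V(x_0,r))^{\nu}$ for some $\nu>0$; iterating and applying spectral theory then yields the Faber--Krahn bound $\lam_1(D)\ge C\phi(r)^{-1}(V(x_0,r)/\mu(D))^{\nu}$, i.e.\ \eqref{e:fki}. I expect this analytic step, which must balance the simultaneously-present diffusive and heavy-tailed jump components of $\sE$ through separate but compatible estimates, to be the principal technical hurdle, and this is where the new ideas of the paper must be invoked.
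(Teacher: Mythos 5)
Your forward direction and the (trivial) middle implication match the paper exactly: $\PHI(\phi)\Rightarrow\NDL(\phi)+\UHKD(\phi)+\UJS+\E_\phi+\J_{\phi,\le}$ via Proposition~\ref{uodest}, then $\PHR(\phi)$ via Proposition~\ref{ndl-holder}. Your framing of the converse is also the right one: invoke Theorem~\ref{Thm: prop1}, so it suffices to produce $\UHK_{weak}(\phi)+\NDL(\phi)$ from $\EHR+\E_\phi+\J_{\phi,\le}$, with $\NDL(\phi)$ coming from $\EHR+\E_\phi$ by the scheme of \cite[Proposition~4.9]{CKW2}.

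Where you diverge is the route to $\FK(\phi)$ and hence $\UHKD(\phi)$. You flag ``extracting $\UHKD(\phi)$ from $\EHR+\E_\phi+\J_{\phi,\le}$ with no a priori Poincar\'e or cut-off Sobolev assumption'' as the principal hurdle and sketch a contraction argument $\|P^D_{\phi(r)}\|_{\infty\to\infty}\le 1-c(\mu(D)/V(x_0,r))^\nu$ that would need to balance the diffusive and heavy-tailed parts of $\sE$ simultaneously. That argument is left unexecuted and is not at all what the paper does; more importantly, it is unnecessary. You have already obtained $\NDL(\phi)$ at this point, and $\NDL(\phi)$ implies the Poincar\'e inequality $\PI(\phi)$ by Proposition~\ref{P:4.1}(i) (under $\VD$ and \eqref{polycon}); then $\PI(\phi)\Rightarrow\FK(\phi)$ is Proposition~\ref{P:3.1}, which is precisely where $\RVD$ enters. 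With $\FK(\phi)$ in hand, Proposition~\ref{ehi-ukdd} gives $\UHKD(\phi)$, Remark~\ref{Heat:remark} upgrades this to $\UHK_{weak}(\phi)$, and Theorem~\ref{Thm: prop1} closes the cycle. In short, your ``main obstacle'' dissolves once you feed $\NDL(\phi)$ back into the existing $\NDL\Rightarrow\PI\Rightarrow\FK$ pipeline rather than trying to rederive $\FK(\phi)$ from scratch by a spectral contraction argument.
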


\begin{proof} According to Propositions \ref{ndl-holder} and \ref{uodest}, we have
$$
\PHI(\phi)\Longrightarrow \PHR(\phi)+\E_\phi+ \UJS +\J_{\phi,\le}
 \Longrightarrow \EHR +\E_\phi +\UJS+\J_{\phi,\le }.
$$
For $\EHR +\E_\phi +\UJS+\J_{\phi,\le }\Longrightarrow \PHI(\phi)$, we mainly follow the arguments in \cite[Subsection 4.2]{CKW2}. In particular, the proof of \cite[Proposition 4.9]{CKW2} yields that $\EHR$ together with $\E_\phi$ imply $\NDL(\phi)$. Then according to the argument of \cite[Proposition 3.5]{CKW2}, $\FK(\phi)$ holds,
thanks to $\RVD$.
By Proposition \ref{ehi-ukdd},
 under $\FK(\phi)$, $\E_\phi$ and $\J_{\phi,\le}$, $\UHKD(\phi)$ holds true. This along with Remark \ref{Heat:remark} yields that $\UHK_{weak}(\phi)$ holds.
Combining all these with Theorem \ref{Thm: prop1}, we have
$$\EHR +\E_\phi +\UJS+\J_{\phi,\le }\Longrightarrow \PHI(\phi).$$ The proof is complete. \qed  \end{proof}

Finally, we turn to
the stable
analytic characterization of $\PHI(\phi)$.

\begin{theorem}\label{Thm:anal} Assume that $\mu$ and $\phi$ satisfy $\VD$, $\RVD$ and \eqref{polycon} respectively. Then the following  hold
\begin{align*}
\PHI(\phi)&\Longleftrightarrow \PI(\phi)+\J_{\phi,\le }+\Gcap(\phi)+\UJS\\
 &\Longleftrightarrow \PI(\phi)+\J_{\phi,\le }+\CS(\phi)+\UJS.
\end{align*}
\end{theorem}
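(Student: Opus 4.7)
The plan is to prove the theorem by establishing the cycle
$$\PHI(\phi) \Longrightarrow \PI(\phi)+\J_{\phi,\le}+\Gcap(\phi)+\UJS \Longrightarrow \PI(\phi)+\J_{\phi,\le}+\CS(\phi)+\UJS \Longrightarrow \PHI(\phi),$$
relying almost entirely on results already assembled in Sections 2--5 and on the two preceding characterizations in this section (Theorems \ref{Thm: prop1} and \ref{Thm: prop2}). The equivalence between conditions (v) and (vi) in the statement is the easiest part: Proposition \ref{P:csp} gives $\Gcap(\phi)+\J_{\phi,\le}\Longrightarrow \CS(\phi)$ directly, so (v)$\Longrightarrow$(vi) is immediate.

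For $\PHI(\phi)\Longrightarrow \PI(\phi)+\J_{\phi,\le}+\Gcap(\phi)+\UJS$, I would collect the consequences of $\PHI(\phi)$ established in Proposition \ref{uodest}: under $\VD$, $\RVD$ and \eqref{polycon}, $\PHI(\phi)$ yields $\UHKD(\phi)$, $\NDL(\phi)$, $\UJS$, $\PI(\phi)$ and $\E_\phi$. From $\NDL(\phi)+\UJS$ (together with $\VD$ and \eqref{polycon}), the argument of \cite[Corollary 3.4]{CKW2} already cited between Propositions \ref{uodest} and \ref{ndl-holder} delivers $\J_{\phi,\le}$ (not $\J_{\phi_j,\le}$, which is exactly the point of stating $\J_{\phi,\le}$ here). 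Finally, Proposition \ref{P:2.4} turns $\E_\phi$ into $\Gcap(\phi)$, and the four required conditions are in hand.

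For the hardest direction, $\PI(\phi)+\J_{\phi,\le}+\CS(\phi)+\UJS\Longrightarrow \PHI(\phi)$, I would reduce to Theorem \ref{Thm: prop1} by producing $\UHK_{weak}(\phi)+\NDL(\phi)+\UJS$. The pure-jump near-diagonal lower bound $\NDL(\phi)$ comes from Proposition \ref{ejlhk}, which requires exactly $\VD+\RVD+\eqref{polycon}+\PI(\phi)+\J_{\phi,\le}+\CS(\phi)$. For $\UHK_{weak}(\phi)$, I would first invoke $\PI(\phi)\Longrightarrow \FK(\phi)$ (Proposition \ref{P:3.1}, where $\RVD$ is used), then Proposition \ref{P:exit} to pass from $\FK(\phi)+\J_{\phi,\le}+\CS(\phi)$ to $\E_\phi$, then Proposition \ref{ehi-ukdd} to conclude $\UHKD(\phi)$, and finally Remark \ref{Heat:remark} (valid under $\VD$, \eqref{polycon}, $\UHKD(\phi)$, $\J_{\phi,\le}$ and $\E_\phi$) to upgrade $\UHKD(\phi)$ to $\UHK_{weak}(\phi)$. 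Feeding $\UHK_{weak}(\phi)+\NDL(\phi)+\UJS$ into Theorem \ref{Thm: prop1} closes the cycle.

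The step I would expect to require the most care is the extraction of $\J_{\phi,\le}$ from $\PHI(\phi)$, because $\PHI(\phi)$ is genuinely too weak to recover $\J_{\phi_j,\le}$ (as emphasized after Proposition \ref{uodest} and illustrated in Example \ref{PHIexm}); one must carefully invoke the right truncated form of the argument of \cite[Corollary 3.4]{CKW2} so that the tail bound is in terms of $\phi$ rather than $\phi_j$. Everything else is a bookkeeping exercise: each required implication already appears as a numbered proposition or is a direct transcription of such.
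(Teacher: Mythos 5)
Your proposal is correct, and your cycle is the same cycle the paper runs; only the choice of intermediate characterization differs, in ways worth noting.

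For $\PHI(\phi)\Longrightarrow\Gcap(\phi)$, you go directly from $\E_\phi$ (Proposition \ref{uodest}) to $\Gcap(\phi)$ via Proposition \ref{P:2.4}. This is cleaner than the paper's route, which first observes that $\PHI(\phi)$ gives $\UHK_{weak}(\phi)$ (via Theorem \ref{Thm: prop1}) together with conservativeness (Proposition \ref{P:3.2}(ii)), then extracts $\EP_{\phi,\le}$ by repeating the strong-Markov argument in the proof of Lemma \ref{Conserv}, and only then invokes Proposition \ref{P:2.4}. Since Proposition \ref{P:2.4} already records $\E_\phi\Longrightarrow\EP_{\phi,\le,\eps}\Longrightarrow\Gcap(\phi)$, your shortcut is legitimate and avoids a detour.

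For the hardest direction, $\PI(\phi)+\J_{\phi,\le}+\CS(\phi)+\UJS\Longrightarrow\PHI(\phi)$, you reduce to Theorem \ref{Thm: prop1} by building $\UHK_{weak}(\phi)+\NDL(\phi)+\UJS$. The chain $\PI\Rightarrow\FK$ (Proposition \ref{P:3.1}, using $\RVD$), $\FK+\J_{\phi,\le}+\CS\Rightarrow\E_\phi$ (Proposition \ref{P:exit}), $\FK+\E_\phi+\J_{\phi,\le}\Rightarrow\UHKD$ (Proposition \ref{ehi-ukdd}), $\UHKD+\J_{\phi,\le}+\E_\phi\Rightarrow\UHK_{weak}$ (Remark \ref{Heat:remark}), plus $\NDL$ from Proposition \ref{ejlhk}, all checks out. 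The paper instead reduces to Theorem \ref{Thm: prop2}: it uses Proposition \ref{P:EHR} ($\PI+\J_{\phi,\le}+\CS\Rightarrow\EHR$), the same $\FK$ and $\E_\phi$ steps, and then $\EHR+\E_\phi+\UJS+\J_{\phi,\le}\Rightarrow\PHI$. The two routes are not independent — Proposition \ref{ejlhk} itself uses Proposition \ref{P:EHR} internally, and Theorem \ref{Thm: prop2} is proved from Theorem \ref{Thm: prop1} — so you are traversing the same underlying graph of implications by a slightly longer path. Your extraction of $\J_{\phi,\le}$ from $\NDL(\phi)+\UJS$, and the explicit caution that only $\J_{\phi,\le}$ (not $\J_{\phi_j,\le}$) can come out of $\PHI(\phi)$, match the discussion in the paper exactly. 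In short: correct, essentially equivalent in content, with a mild simplification in one place and a mild lengthening in another.
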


\begin{proof}
By Proposition \ref{P:exit}, Theorem \ref{Thm: prop2} and Proposition
\ref{P:EHR},
$$
\PI(\phi)+\J_{\phi,\le }+\CS(\phi)+\UJS\Longrightarrow \PHI(\phi),
$$
where we used Proposition \ref{P:3.1} that $\PI(\phi)$ implies $\FK(\phi)$ under the additional assumption $\RVD$.

It follows from Proposition \ref{P:csp} that
$$
\PI(\phi)+\J_{\phi,\le }+\Gcap(\phi)+\UJS\Longrightarrow \PI(\phi)+\J_{\phi,\le }+\CS(\phi)+\UJS.
$$
Finally, according to Proposition \ref{uodest} and Theorem \ref{Thm: prop1}, we have $$  \PHI(\phi)\Longrightarrow \NDL(\phi)+\UJS +\E_\phi+  \,\J_{\phi,\le}+\UHK_{weak}(\phi).$$
This in particular implies that the Dirichlet form $(\sE, \sF)$ is conservative by Proposition \ref{P:3.2}(ii).
Furthermore, by the argument of \cite[Proposition 3.5]{CKW2}, $\NDL(\phi)\Longrightarrow \PI(\phi)$.
We note that, from the proof of Lemma \ref{Conserv}, we can see that $\UHK_{weak}(\phi)$ along with the fact $(\sE,\sF)$ is conservative implies $\EP_{\phi,\le}$,
which in turn gives us $\Gcap(\phi)$
by Proposition \ref{P:2.4}.
 Thus, we prove that
$$
\PHI(\phi)\Longrightarrow \PI(\phi)+\J_{\phi,\le }+\Gcap(\phi)+\UJS.
$$
The proof is complete.
\qed
\end{proof}

\noindent{\bf Proof of Theorem  \ref{C:1.25}.}
As noted earlier, the equivalence between {\rm (i)} and {\rm (ii)} of Theorem \ref{C:1.25} has been  proved in Theorem \ref{Thm: prop1},   while
the equivalence between
{\rm (i)},  {\rm (iii)} and  {\rm (iv)}  has been established in Theorem \ref{Thm: prop2}.
The equivalence between {\rm (i)},  {\rm (v)} and  {\rm (vi)}  follows from Theorem \ref{Thm: prop1},  Theorem \ref{Thm: prop2}
and  Theorem  \ref{Thm:anal}.  The last assertion of Theorem \ref{C:1.25} follows from the equivalence between {\rm (i)} and  {\rm (v)} of Theorem \ref{C:1.25}
and from   Theorem \ref{T:main}.
\qed

\section{Examples/Applications}\label{Sect7}

In this section, we give some examples/applications of our results.

\begin{example}\label{PHIexm} {\bf ($\PHI(\phi)$ alone does not imply $\J_{\phi_j,\le}$)}\,\,
\rm Let $M=\bR^d$, and
$$
J(x,y)\asymp \begin{cases} \frac{1}{|x-y|^{d+\alpha}}&\quad |x-y|\le 1; \\
 \frac{1}{|x-y|^{d+\beta}} &\quad  |x-y|\ge1,\end{cases}
 $$
 where $\alpha,\beta\in (0,2)$. We consider the following regular Dirichlet form
 $$
 \sE(f,g)=\int_{\bR^d}\nabla f(x)\cdot A(x) \nabla g(x)\,dx+\iint_{\bR^d\times \bR^d} (f(x)-f(y))(g(x)-g(y))J(x,y)\,dx\,dy
 $$
 and
 $\sF=\overline{C_1(\bR^d)}^{\sE_1}$, where $A(x)$ is a measurable $d\times d$ matrix-valued function
on $\bR^d$ that is
uniformly elliptic and bounded. It has been proven
in \cite[Theorem 1.4]{CK3} that $\HK (\phi_c, \phi_j)$ holds with
$\phi_c(r)=r^2$ and $\phi_j(r)=r^\alpha {\bf 1}_{\{r\le 1\}}+
r^\beta {\bf 1}_{\{r\ge 1\}}$. Hence by \eqref{eq:5-6Fuz},
$\PHI(\phi)$ holds with
$$
\phi(r)=\phi_c(r)\wedge \phi_i(r)=r^\beta\wedge r^2.
$$
Since $\PHI(\phi)$ holds regardless of the choice of $\alpha\in (0,2)$,
$\PHI(\phi)$ alone  can not imply the upper bound of the jumping kernel.
\end{example}

\noindent {\bf Example \ref{Lip-dom}} {\bf(continued)}~
Here we provide proof of Example \ref{Lip-dom}.

First, consider a reflected Brownian motion $\{B_t\}_{t\ge0}$ on $U$. It is known that its heat kernel enjoys Aronson-type Gaussian bounds, see for instance
\cite[Theorem 2.31]{GSC}. (In fact, as discussed in \cite[Theorem 2.31]{GSC}, similar results hold
for reflected Brownian motions on inner uniform domains in Harnack-type Dirichlet spaces, so the results in this example can be extended to that framework.)
In particular, according to \cite[Theorem 2.31]{GSC}, we know that the following Poincar\'e inequality for the strongly local part of $(\sE,W^{1,2}(U))$ given by \eqref{eq:DFw}, i.e.,
there exist constants $C_1>0 $ and $\kappa_1\ge1$ such that
for any  ball
$B_r:=B(x,r)$
with $x\in U$ and $r>0$
and for any $f \in W^{1,2}(U)\cap B_b(U)$,
$$
\int_{B_r} (f-\ol f_{B_r})^2\, d\mu \le C_1 r^2\left(\int_{B_{\kappa_1 r}} \nabla f(z)\cdot A(z) \nabla
f(z)\,dz\right).
$$
On the other hand, for $(\sE,W^{1,2}(U))$ given by \eqref{eq:DFw}, it is obvious that condition $\J_{\phi_j}$ holds with $\phi_j(r)=r^\alpha$, which in turn yields that there exist constants $C_2>0 $ and $\kappa_2\ge1$ such that
for any  ball $B_r=B(x,r)$ with $x\in U$ and $r>0$  and for any $f \in W^{1,2}(U)\cap B_b(U)$,
$$
\int_{B_r} (f-\ol f_{B_r})^2\, d\mu \le C_2 r^\alpha\left(\int_{B_{\kappa_2 r}}\int_{B_{\kappa_2 r}}
\frac{(f(y)-f(z))^2}{d(y,z)^{d+\alpha}}c(y,z)\,dy\,dz\right).
$$
Hence,
we have, for any  ball $B_r=B(x,r)$
with $x\in U$ and $r>0$  and for any $f \in W^{1,2}(U)\cap B_b(U)$,
\begin{align*}
&\int_{B_r} (f-\ol f_{B_r})^2\, d\mu \\
&\le C_0 (r^2\wedge r^\alpha)\left( \int_{B_{\kappa_0 r}} \nabla f(z)\cdot A(z) \nabla
f(z)\,dz+ \int_{B_{\kappa_0 r}}\int_{B_{\kappa_0 r}}
\frac{(f(y)-f(z))^2}{d(y,z)^{d+\alpha}}c(y,z)\,dy\,dz\right)
\end{align*} with $C_0=C_1\vee C_2$ and $\kappa_0=\kappa_1\vee \kappa_2$. That is, $\PI(\phi)$ holds with $\phi(r)=r^2\wedge r^\alpha$.
Note further that, in the present setting
$\CS(\phi)$
holds trivially,
as mentioned in Remark \ref{rek:scj}\,(iii).
Therefore, it immediately
follows from our stability theorem (Theorem \ref{T:main}) that the
heat kernel for the Dirichlet form \eqref{eq:DFw} enjoys the
estimates $\HK (\phi_c, \phi_j)$, hence $\PHI (\phi)$ as well.

An alternative way to prove Example \ref{Lip-dom} is
to use subordination of Brownian motion and use
the transferring method
to be discussed below.

\medskip

The stability results in Theorems \ref{T:main}, \ref{T:main-1} and \ref{C:1.25}  allow us to obtain heat kernel estimates and parabolic Harnack inequalities for a large class of symmetric diffusions with
jumps using ``transferring method"; that is, by first establishing heat kernel estimates and parabolic Harnack inequalities for
a particular diffusion with jumps with
nice
jumping kernel $J(x, y)$, we   then use
Theorems \ref{T:main}, \ref{T:main-1} and \ref{C:1.25} to obtain heat kernel estimates and parabolic Harnack inequalities for other symmetric jump processes
whose strongly local parts along the diagonal are comparable to that of the original process and
whose jumping kernels
are comparable to $J(x, y)$.
Examples for the pure jump case have been
given in \cite[Section 6.1]{CKW1} and \cite[Section 5]{CKW2}.

In the following,
we illustrate this method
for symmetric Dirichlet forms \eqref{e:1.1}
on a $d$-sets $M$ on which there exists a diffusion  whose heat kernel enjoys (sub-)Gaussian estimates
as in \eqref{e:1.7}.

\begin{example}\label{Exmne1}{\bf (Diffusion with jumps on $d$-set.)} \quad \rm
Let  $(M, d , \mu)$ be an Alfhors $d$-regular set.
Suppose that there is a $\mu$-symmetric diffusion $\{Z_t; t\ge0; \bP_x, x\in M\}$ on $M$
such that
it has a transition density function $q(t, x, y)$  with respect to the measure $\mu$ that has the following two-sided estimates:
\begin{equation}\label{e:1.7}
q(t, x, y) \asymp t^{-d/\beta} \exp \left( - \left(\frac{  d(x, y)^\beta }{ t} \right)^{1/(\beta -1)}  \right),
\quad t>0, x, y\in M
\end{equation}
 for some $\beta \geq 2$. Denote by $(\bar\sE,\bar\sF)$ the corresponding Dirichlet form.
   A prototype
  is a Brownian motion on the
   $D$-dimensional    unbounded Sierpi\'{n}ski gasket; see for instance \cite{BP}. In this case,
$d=\log (D+1)/\log 2$ is the Hausdorff dimension of
the gasket, and $\beta=\log (D+3)/\log 2$ is called the walk dimension in \eqref{e:1.7}.

Take any $\alpha \in (0, \beta)$, and a symmetric strongly local regular Dirichlet form $(\sE^{(c)}, \bar \sF)$
in $L^2(M; \mu)$ with the property that $\sE^{(c)} (f,f)\asymp \bar\sE (f,f)$ for all
$f\in \bar\sF$. Consider the following regular Dirichlet form  $(\sE,\bar\sF)$ in $L^2(M; \mu)$ defined by
 \begin{equation}\label{eq:DFdsetw}
\sE (u,v)=
\sE^{(c)} (u,v)
+ \int_{M}\int_{M} (u(x)-u(y))(v(x)-v(y))
\frac{c(x,y)}{d(x,y)^{d+\alpha}}\,\mu(dx)\,\mu(dy),
\end{equation}
where
$c(\cdot,\cdot)$ is a symmetric measurable
function on $M\times M$
that is bounded between two positive constants.
Define $\phi_c (r)= r^\beta$, $\phi_j (r)=r^\alpha$ and $\phi (r)=\phi_c (r) \wedge \phi_j (r)= r^\beta\wedge r^\alpha$.
We claim that $(\sE,\bar\sF)$ enjoys
 $\HK (\phi_c, \phi_j)$
and $\PHI (\phi)$.

Below we prove this using subordination and the transferring method.
First, let $\{\xi_t\}_{t\ge0}$ be a $\gamma$-stable subordinator with non-zero drift,
independent of $\{Z_t\}_{t\ge0}$, such that its Laplace exponent $\bar \phi$ is given by \begin{equation}\label{eq:suborg}
\bE[\exp (-\lambda \xi_t)]=\exp
(-t\bar\phi(\lambda)),\quad \lambda, t>0
\end{equation}
with
$\bar\phi(\lambda)=\lambda+\lambda^{\gamma}$ for $\gamma:=\alpha/\beta\in(0,1)$.
The process $\{X_t\}_{t\ge0}$ defined by $X_t=Z_{\xi_t}$ for any $t\ge0$ is called a $\gamma$-stable subordinated process with drift.
Let $\{\eta_t(u):t>0,u\ge0\}$ be the transition density of $\{\xi_t\}_{t\ge0}$, and $\nu(z)$ be the L\'evy density of $\{\xi_t\}_{t\ge0}$. Let $p(t,x,y)$ be the heat kernel of $\{X_t\}_{t\ge0}$, and $J(x,y)$ be the jumping density of $\{X_t\}_{t\ge0}$. Then, it is known that
\begin{align*}
p(t,x,y)&=\int_0^\infty q(u,x,y)\eta_t(u)\,du,\quad t>0,x,y\in M,\\J(x,y)&=\int_0^\infty q(u,x,y)\nu(u)\,du,\quad x,y\in M.
\end{align*}

By \cite[Section 6.1]{CKW1}, we know that
$$J(x,y)\simeq \frac{1}{d(x,y)^{d+\beta\gamma}}
=\frac{1}{d(x,y)^{d+\alpha}},\quad x,y\in M.$$ Namely, $\J_{\phi_j}$ holds with $\phi_j(r)=r^{\alpha}$.

On the other hand, by the definition of $\{\xi_t\}_{t\ge0}$, $\eta_t(u)=\eta_t^{(\gamma)}(u-t)$, where $\eta_t^{(\gamma)}(u)$ corresponds to the transition density of the standard $\gamma$-stable subordinator (without drift). Then,
\begin{align*}p(t,x,y)&=\int_0^\infty q(u,x,y)\eta^{(\gamma)}_t(u-t)\,du=\int_t^\infty q(u,x,y)\eta_t^{(\gamma)}(u-t)\,du\\
&=\int_0^\infty q(u+t,x,y)\eta_t^{(\gamma)}(u)\,du=\int_M q(t,x,z) \int_0^\infty q(u,z,y)\eta^{(\gamma)}_t(u)\,du\,\mu(dz)\\
&=:\int_M q(t,x,z) q^{(\gamma)}(t,z,y)\,\mu(dz),\end{align*} where $\{q^{(\gamma)}(t,x,y): t>0,x,y\in M\}$ is the heat kernel corresponding to the standard $\gamma$-stable subordination of the process $Z$. In particular, according to \cite[Section 6.1]{CKW1},
$$q^{(\gamma)}(t,x,y)\simeq t^{-d/\alpha}\wedge\frac{t}{d(x,y)^{d+\alpha}},\quad t>0,x,y\in M.$$
Furthermore, by standard calculations (see the proof of \cite[Theorem 2.13]{SV} for the case that $\beta=2$, $\gamma=\alpha/2$ and $M=\bR^d$), one can check that $p(t,x,y)$ enjoys the form of \eqref{HKjum} with $V(x,r)\simeq r^d$, $\phi_c(r)=r^\beta$ and $\phi_j(r)=r^{\alpha}$. This is,
the heat kernel for $\{X_t\}_{t\ge0}$ enjoys $\HK (\phi_c, \phi_j)$ (hence $\PHI (\phi)$ as well) with $\phi(r)=r^\beta\wedge r^\alpha$.

 Let $Y=\{Y_t, t\geq 0; \bP^Y_x, x\in M\}$ be the Hunt process associated with the regular Dirichlet form
 $(\sE, \bar \sF)$ in $L^2(M; \mu)$ given by \eqref{eq:DFdsetw}.
 Clearly, $\J_{\phi_j}$ holds and
 so does $\PI(\phi)$ for $(\sE, \bar \sF)$
 by the same argument as in Example \ref{Lip-dom}.
Note that
if two local Dirichlet forms are comparable, then their
energy measures are also comparable
(see for instance \cite[Section 3.2]{FOT}).
Hence we see that $\CS(\phi)$
holds for the process $Y$ because it holds for $\{X_t\}_{t\ge0}$.
Now thanks to Theorem \ref{T:main}, we obtain
$\HK (\phi_c, \phi_j)$,  and consequently $\PHI (\phi)$
 for the Hunt process $Y$, or equivalently,  for the Dirichlet form $(\sE, \bar \sF)$ in $L^2(M; \mu)$.
 \end{example}

It is desirable to prove Example \ref{Exmne1} directly
(i.e. without using the subordination) from
the stability results of the diffusion and the jump process as we did in Example \ref{Lip-dom}. However, it is highly non-trivial to verify
$\CS(\phi)$
when $\beta>2$. Indeed, in that case the
cut-off
functions for the diffusion and the jump process may be different and we cannot simply sum up two forms.

 \medskip

We end this section a remark on two possible extensions of Example \ref{Exmne1}.

\begin{remark}\rm  (i) We can start from more general symmetric diffusions on general measure metric spaces. For example, let $(M,d,\mu)$ be a metric measure space as in the setting of this paper that is connected and also satisfies $\VD$ and the chain condition. Assume that there is a $\mu$-symmetric conservative diffusion process $\{Z_t\}$
whose heat kernel enjoys \eqref{eq:fibie2}.
This includes
symmetric diffusions on  certain fractal-like manifolds; see \cite[Section 6.1]{CKW1}.

(ii) We can also consider more general subordinator. For instance, let $\{\xi_t\}_{t\ge0}$ be a  subordinator with non-zero drift such that its Laplace exponent $\bar \phi$ defined by \eqref{eq:suborg} has the following form
$$
\bar \phi(\lambda)=b\lambda+\phi_0(\lambda),
$$
where
$b>0$, and $\phi_0(r)$ satisfies \eqref{polycon000} with
$\beta_{1,\phi_0},\beta_{2,\phi_0}\in (0,1)$, and the associated L\'evy measure $\nu(dz)$ of $\phi_0(r)$ has a density function $\nu(z)$ with respect to the Lebesgue measure such that
the function $ t \mapsto t\nu(t)$ is  non-increasing
  on $(0, \infty).$
Under these assumptions, two-sided estimates for the transition density of the subordinator corresponding to $\phi_0(r)$ recently have been obtained in \cite[Theorem 4.4]{CKKW}. Thus, with aid of  \cite[Theorem 4.4]{CKKW}, the argument of Example \ref{Exmne1} could be workable for this larger class of subordinators.
\end{remark}

\bigskip

\noindent \textbf{Acknowledgements.}
The research of Zhen-Qing Chen is partially supported by Simons Foundation Grant 520542, a
 Victor Klee Faculty Fellowship at UW, and NNSFC grant   11731009.
 \ The research of Takashi Kumagai is supported
by JSPS KAKENHI Grant Number JP17H01093 and by the Alexander von Humboldt Foundation.\
 The research of Jian Wang is supported by the National
Natural Science Foundation of China (No.\ 11831014), the Program for Probability and Statistics: Theory and Application (No.\ IRTL1704) and the Program for Innovative Research Team in Science and Technology in Fujian Province University (IRTSTFJ).

 {\small

}

 \vskip 0.3truein

\noindent {\bf Zhen-Qing Chen}

\smallskip \noindent
Department of Mathematics, University of Washington, Seattle,
WA 98195, USA

\noindent and School of Mathematics and Statistics, Beijing Institute of Technology, China

\noindent
E-mail: \texttt{zqchen@uw.edu}

\medskip

\noindent {\bf Takashi Kumagai}:

\smallskip \noindent
 Research Institute for Mathematical Sciences,
Kyoto University, Kyoto 606-8502, Japan

\noindent Email: \texttt{kumagai@kurims.kyoto-u.ac.jp}

\medskip

\noindent {\bf Jian Wang}:

\smallskip \noindent
College of Mathematics and Informatics \& Fujian Key Laboratory of Mathematical Analysis and Applications (FJKLMAA), Fujian Normal
University, 350007, Fuzhou, P.R. China.

\noindent Email: \texttt{jianwang@fjnu.edu.cn}

\end{document}